\documentclass[a4paper, 12pt]{amsart}

\makeatletter
\def\@tocline#1#2#3#4#5#6#7{\relax
  \ifnum #1>\c@tocdepth 
  \else
    \par \addpenalty\@secpenalty\addvspace{#2}%
    \begingroup \hyphenpenalty\@M
    \@ifempty{#4}{%
      \@tempdima\csname r@tocindent\number#1\endcsname\relax
    }{%
      \@tempdima#4\relax
    }%
    \parindent\z@ \leftskip#3\relax \advance\leftskip\@tempdima\relax
    \rightskip\@pnumwidth plus4em \parfillskip-\@pnumwidth
    #5\leavevmode\hskip-\@tempdima
      \ifcase #1
       \or\or \hskip 1em \or \hskip 2em \else \hskip 3em \fi%
      #6\nobreak\relax
    \hfill\hbox to\@pnumwidth{\@tocpagenum{#7}}\par
    \nobreak
    \endgroup
  \fi}
\makeatother


\usepackage{amsmath,amssymb,amsthm,url}
\usepackage[utf8]{inputenc}
\usepackage[T1]{fontenc}
\usepackage{quiver}
\usepackage{libertine}
\usepackage[libertine,cmintegrals,cmbraces]{newtxmath}
\usepackage[shortlabels]{enumitem}
\usepackage{mathtools}
\usepackage{adjustbox}
\usepackage{xcolor}
\urlstyle{sf}
\usepackage{tikz}
\usepackage{tikz-cd}
\usepackage{nicefrac}
\usepackage{dsfont}
\usepackage{todonotes}
\usepackage{a4wide}

\AtBeginDocument{%
   \def\MR#1{}
} 
\usepackage{euscript}
\usepackage[all]{xy}

\usepackage[pagebackref]{hyperref}
  
\hypersetup{%
  bookmarksnumbered=true,
  colorlinks=true,%
  linkcolor=blue,%
  citecolor=blue,%
  filecolor=blue,%
  menucolor=blue,%
  urlcolor=blue,%
  pdfnewwindow=true,%
  pdfstartview=FitBH}



\setlength{\parskip}{1ex}
\setlength{\parindent}{0pt}

\usepackage{xcolor}
\usepackage{amsmath}
\definecolor{seagreen}{RGB}{46,139,87}
\definecolor{maroon}{RGB}{128,0,0}
\definecolor{darkviolet}{RGB}{148,0,211}
\definecolor{twelve}{RGB}{100,100,170}
\definecolor{thirteen}{RGB}{100,150,50}
\definecolor{fourteen}{RGB}{200,0,0}
\definecolor{fifteen}{RGB}{0,200,0}
\definecolor{sixteen}{RGB}{0,0,200}
\definecolor{seventeen}{RGB}{200,0,200}
\definecolor{eighteen}{RGB}{0,200,200}


\newcommand{\bb}[1]{\mathbb{#1}}

\renewcommand{\mathrm}[1]{{\mathsf{#1}}}




\newcommand{\s}{\mathrm{Sp}}
\DeclareMathOperator{\T}{\mathrm{S}_\ast}


\DeclareMathOperator{\nat}{\mathsf{nat}}


  \newcommand{\adjunction}[4]{
\xymatrix{
#1:#2 \ar@<.5ex>[r] &
\ar@<.5ex>[l] #3:#4
}}

\newtheorem{thm}{Theorem}[subsection]
\newtheorem{prop}[thm]{Proposition}

\newtheorem{lem}[thm]{Lemma}
\newtheorem{cor}[thm]{Corollary}
\newtheorem{conj}[thm]{Conjecture}

\newtheorem*{thm*}{Theorem}

\theoremstyle{definition}
\newtheorem{example}[thm]{Example}
\newtheorem{remark}[thm]{Remark}
\newtheorem{definition}[thm]{Definition}
\newtheorem{question}[thm]{Question}
\newtheorem{ex}[thm]{Example}

\title{Unstable $1$-semiadditivity as classifying Goodwillie towers}
\author{Connor Malin} \thanks{
Max Planck Institute for Mathematics, Bonn. \texttt{malin@mpim-bonn.mpg.de}}

\begin{document}

\maketitle

\begin{abstract}
  A stable $\infty$-category is $1$-semiadditive if the norms for all finite group actions are equivalences. In the presence of $1$-semiadditivity, Goodwillie calculus simplifies drastically.  We introduce two variants of $1$-semiadditivity for an $\infty$-category $C$ and study their relation to the Goodwillie calculus of functors $C \rightarrow \s(C)$. We demonstrate that these variations of $1$-semiadditivity are complete obstructions to the problem of endowing $\partial_\ast F$ with either a right module or a divided power right module structure which completely classifies the Goodwillie tower of $F$.  We find applications to algebraic localizations of spaces, the Morita theory of operads, and bar-cobar duality of algebras. Along the way, we address several milestones in these areas including: Lie structures in the Goodwillie calculus of spaces, spectral Lie algebra models of $v_h$-periodic homotopy theory, and the Poincaré/Koszul duality of $E_d$-algebras.
\end{abstract}

\tableofcontents

\section{Introduction}
A stable $\infty$-category $C$ is called $1$-semiadditive if for all finite groups $G$ and $G$-actions $X \in C^{BG}$ the Tate construction, defined as the cofiber of the norm
\[X^{tG}:=\mathrm{cofiber}(X_{hG} \rightarrow X^{hG}),\]
is contractible. Suppose $C=L\s$ is a compactly generated Bousfield localization of spectra $\s$, the unit $L\bb{S}$ not necessarily compact. In this case, the condition for $1$-semiadditivity can be weakened to either equivalent condition that should hold for all $n$, $X \in L\s^{B\Sigma_n}$, and $Z \in L\s^\omega$:
 \[(X \wedge Z^{\wedge n})^{t\Sigma_n} \simeq \ast,\]
 \[(X \wedge \partial_n (L\Sigma^\infty \mathrm{Map}(Z,-)  )^\vee)^{t\Sigma_n} \simeq \ast.\]
Here $C^\omega$ denotes the subcategory of compact objects, $\partial_n$ denotes the $n$th Goodwillie derivative in the sense of Goodwillie calculus, and $(-)^\vee$ denotes the local Spanier--Whitehead dual. Suppose now that we drop the assumption that $C:=L\s$ and instead only assume that it is pointed, compactly generated, and equipped with an identification $\s(C)\simeq L\s$. We say $C$ is \textit{infinitesimally $1$-semiadditive} if for all $n$, $X\in L\s^{B\Sigma_n}$, and compact $c \in C^\omega$
\[(X \wedge (\Sigma^\infty_C c)^{\wedge n})^{t\Sigma_n} \simeq \ast.\]
We say $C$ is \textit{differentially $1$-semiadditive} if for all $n$, $X\in L\s^{B\Sigma_n}$, and compact $c \in C^\omega$ that
\[(X \wedge \partial_n (L\Sigma^\infty \mathrm{Map}(c,-)  )^\vee)^{t\Sigma_n} \simeq \ast.\]
We are motivated by the following examples:
\begin{enumerate}
    \item If $O$ is a reduced operad in $L\s$, then $\mathrm{Alg}_O$ is infinitesimally $1$-semiadditive, if and only if $\mathrm{Alg}_O$ is differentially $1$-semiadditive, if and only if $L\s$ is $1$-semiadditive.
    \item The $\infty$-category of spaces $\T$ is differentially $1$-semiadditive, but not infinitesimally $1$-semiadditive.
    \item The $\infty$-category of $v_h$-periodic spaces \cite{telescopichomotopybous} is both infinitesimally $1$-semiadditive and differentially $1$-semiadditive.
\end{enumerate}
We will understand unstable $1$-semiadditivity through the use of operadic right module structures on the Goodwillie derivatives of finitary functors $F \in \mathrm{Fun}^\omega(C,L\s)$. A functor is finitary if it preserves  filtered colimits.

\subsection{Main results}
In stating our results, we freely use terminology associated to operads, their algebras, and their right modules. We will also reference their various theories of Koszul duality. Definitions may be found in Section \ref{section: right module stuff}, Section \ref{section: right module primitive stuff}, and Section \ref{section: goodwillie calculus algebras}

We will assume $C$ is a compactly generated, differentiable\footnote{A differentiable $\infty$-category $C$ is a pointed presentable category in which filtered colimits commute with finite limits.} $\infty$-category. Furthermore, we assume $C$ is equipped with an equivalence $\s(C) \simeq L\s$ to a Bousfield localization of $\s$ and has the property that the symmetric sequence of derivatives $\partial_\ast \mathrm{Id}_C \in \mathrm{SymSeq}(L\s)$ is levelwise dualizable. We call such $\infty$-categories \textit{differentially dualizable}.

If $(D,\otimes)$ is a stable symmetric monoidal $\infty$-category with $L$-local mapping spectra $D(-,-)$, the coendomorphism operad of $d \in (D,\otimes)$ is the $L$-local spectral operad defined via
\[\mathrm{CoEnd}(d)(n):= D(d,d^{\otimes n}),\]
and similarly the endomorphism operad is the $L$-local spectral operad defined via
\[\mathrm{End}(d)(n):= D(d^{\otimes n},d).\]

\begin{thm}
   If $C$ is differentially dualizable, the Goodwillie derivatives lift to a symmetric monoidal functor with respect to the pointwise smash product of functors and Day convolution of right modules
   \[\partial_\ast: (\mathrm{Fun}^\omega(C,L\s),\wedge) \rightarrow (\mathrm{RMod}_{K(\mathrm{CoEnd}(\Sigma^\infty_C))},\circledast).\]
Given $F\in \mathrm{Fun}^\omega(C,L\s)$ such that $\partial_\ast F$ is levelwise dualizable, there is an equivalence of right modules
   \[\partial_\ast F \simeq K(\nat(F,(\Sigma^\infty_C)^{\wedge \ast}))\]
   where $\nat(F,(\Sigma^\infty_C)^{\wedge \ast})$ is a right module over the operad $\mathrm{CoEnd}(\Sigma^\infty_C))$ by postcomposition of natural transformation spectra.
\end{thm}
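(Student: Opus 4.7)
The plan is to prove the two parts of the theorem in sequence: first establish the right module structures on both sides entirely formally, then verify the explicit identification via Yoneda after reducing to representable finitary functors, and finally lift to a symmetric monoidal equivalence via the chain rule for pointwise smash products.

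To set up the module structures, I would first observe that the operad structure on $\mathrm{CoEnd}(\Sigma^\infty_C)$ arises from composition of natural transformations: given $\phi: \Sigma^\infty_C \to (\Sigma^\infty_C)^{\wedge k}$ and $\psi_i: \Sigma^\infty_C \to (\Sigma^\infty_C)^{\wedge n_i}$, one smashes and composes to land in $(\Sigma^\infty_C)^{\wedge \sum n_i}$. For any $F \in \mathrm{Fun}^\omega(C, L\s)$, the symmetric sequence $\nat(F, (\Sigma^\infty_C)^{\wedge \ast})$ acquires a right module structure over this operad by postcomposition; applying $K$ levelwise (well-defined under the differentially dualizable hypothesis) transports this to a right module over $K(\mathrm{CoEnd}(\Sigma^\infty_C))$. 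The corresponding module structure on $\partial_\ast F$ should be extracted from the identification of derivatives with stabilized cross effects, where precomposition with cooperadic comultiplications on $(\Sigma^\infty_C)^{\wedge n}$ induces the expected operadic action after dualizing.

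The identification formula $\partial_\ast F \simeq K(\nat(F,(\Sigma^\infty_C)^{\wedge \ast}))$ is then proven by reducing to representables. Both functors in $F$ commute with the appropriate colimits: $\partial_\ast$ preserves colimits on finitary functors (since cross effects do), while $\nat$ takes colimits in $F$ to limits, which $K$ under the dualizability assumption converts back to colimits. Since finitary functors are generated under colimits by representables $L\Sigma^\infty_C\mathrm{Map}(c,-)$ for compact $c \in C$, it suffices to verify the equivalence for such $F$. For these, Yoneda gives $\nat(L\Sigma^\infty_C\mathrm{Map}(c,-),(\Sigma^\infty_C)^{\wedge n}) \simeq (\Sigma^\infty_C c)^{\wedge n}$, reducing the claim to the computation $\partial_n L\Sigma^\infty_C\mathrm{Map}(c,-) \simeq K((\Sigma^\infty_C c)^{\wedge n})$, which is a classical derivative-of-representables calculation encoding the second form of the $1$-semiadditivity condition appearing earlier in the introduction.

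Finally, to upgrade $\partial_\ast$ to a symmetric monoidal functor, I would invoke the chain rule for Goodwillie derivatives of a pointwise smash product:
\[ \partial_n(F \wedge G) \simeq \bigvee_{i+j=n} (\Sigma_n)_+ \wedge_{\Sigma_i \times \Sigma_j} \partial_i F \wedge \partial_j G, \]
which is exactly the Day convolution of the symmetric sequences $\partial_\ast F$ and $\partial_\ast G$. Compatibility of this identification with the right module structures follows because both sides arise from the same composition-of-natural-transformations principle. The main obstacle will be promoting this pointwise agreement into a genuine symmetric monoidal equivalence at the $\infty$-categorical level with all higher coherences: this requires packaging the derivative functor with its full operadic functoriality, which is cleanest to do by passing through a model-independent construction of Taylor coefficients as a symmetric monoidal left Kan extension from representables, where the symmetric monoidality is automatic. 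Verifying this extension coincides with the Goodwillie derivatives (rather than with some a priori different notion of "linearization coefficients") is the most technically delicate step.
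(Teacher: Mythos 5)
Your overall plan (set up the module structure on $\nat(F,(\Sigma^\infty_C)^{\wedge\ast})$ by postcomposition, reduce to representables via Yoneda, and extend to a symmetric monoidal structure by Kan extension) is broadly the same as the paper's, but there is a concrete gap in the reduction step for the second part of the statement.

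The reduction-to-representables argument for the identification $\partial_\ast F \simeq K(\nat(F,(\Sigma^\infty_C)^{\wedge\ast}))$ does not go through as you describe. You claim that $\nat$ sends colimits in $F$ to limits and $K$ ``converts back to colimits'' under dualizability, but $K$ only restricts to an equivalence (in particular, a colimit-preserving functor) on the full subcategory of levelwise dualizable right modules. Writing a finitary $F$ as a colimit of representables produces a limit of modules $\nat(L\Sigma^\infty R_{c_i},-)$, and $K$ of that limit is not a colimit of $K$'s unless all intermediate stages are dualizable — which is not guaranteed and certainly not a consequence of the hypothesis that the single object $\partial_\ast F$ is levelwise dualizable. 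Indeed, the hypothesis ``$\partial_\ast F$ is levelwise dualizable'' is not preserved under colimits, so no colimit argument can establish a statement conditional on it. The paper handles this part through the machinery of the upper and lower fake Goodwillie towers: it first proves, via the weak stable chain rule, that $\partial_\ast L\Sigma^\infty R_c$ is levelwise dualizable for compact $c$ (which is where the dualizability comes from — this is not a ``classical'' calculation, and it does not ``encode'' the differential $1$-semiadditivity condition), then shows that the fake tower $P^\mathrm{fake}_n(F)$ built from $\partial^\ast F = \mathrm{Hom}^\Sigma(F,\Sigma^\infty_C)$ receives a filtered comparison map from the Goodwillie tower whose layers are $\Sigma_n$-norms, which identifies $\partial_\ast F$ with $K(\partial^\ast F)$ when the latter is dualizable.

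For the symmetric monoidal structure the two approaches are closer. You want to package the derivative as a symmetric monoidal Kan extension from representables; the paper does essentially this, but it achieves the coherence you flag as ``the most technically delicate step'' by a specific route: it identifies $\partial^\ast$ (your $\nat(-,(\Sigma^\infty_C)^{\wedge\ast})$) with the restricted enriched Yoneda embedding onto the full subcategory generated by the $(\Sigma^\infty_C)^{\wedge n}$, invokes the symmetric monoidality of enriched Yoneda with respect to Day convolution, checks the lax structure maps are equivalences by reducing to homogeneous functors, and then transports through $K$ (symmetric monoidal on the dualizable subcategory) and extends along the Yoneda lemma in $C^\omega\times C^\omega$. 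Your heuristic that ``both sides arise from the same composition-of-natural-transformations principle'' is essentially correct in spirit but would have to be made precise along exactly these lines.
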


A natural example to consider is the $\infty$-category $\mathrm{Alg}_O$ where $O$ is a reduced, levelwise dualizable operad in $L\s$. In this case, the stabilization functor is given by the functor of $O$-algebra indecomposables written $\mathrm{Indecom}_O$. Goodwillie calculus allows us to show that the Koszul dual operad $K(O)$ supports the universal operadic coaction on $\mathrm{Indecom}_O(A)$ which is natural in $A \in \mathrm{Alg}_O$.

\begin{thm}\label{thm: first theorem}
    If $O\in \mathrm{Operad}(L\s)$ is a reduced levelwise dualizable operad, then the map of operads \[K(O) \rightarrow \mathrm{CoEnd}(\mathrm{Indecom}_O)\] induced by the coaction of $K(O)$ on $\mathrm{Indecom}_O(A)$ is an equivalence.
\end{thm}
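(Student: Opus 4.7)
The strategy is to specialize the preceding theorem to $C = \mathrm{Alg}_O$ and $F = \Sigma^\infty_C = \mathrm{Indecom}_O$, and to match the resulting abstract identification of $\partial_\ast \mathrm{Indecom}_O$ against a direct computation in terms of $O$.

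First I would verify that $\mathrm{Alg}_O$ satisfies the standing assumptions of the preceding theorem: it is compactly generated and differentiable, the adjunction $\mathrm{Indecom}_O \dashv \mathrm{TrivialAlg}_O$ identifies $\mathrm{Sp}(\mathrm{Alg}_O)$ with $L\s$, and the levelwise dualizability hypothesis on $\partial_\ast \mathrm{Indecom}_O$ (and on $\partial_\ast \mathrm{Id}_{\mathrm{Alg}_O}$) follows from that of $O$ via the chain rule. Then I would apply the preceding theorem with $F = \mathrm{Indecom}_O$. Because $F$ coincides with $\Sigma^\infty_C$, the right $\mathrm{CoEnd}(\mathrm{Indecom}_O)$-module $\nat(F, (\Sigma^\infty_C)^{\wedge \ast})$ is tautologically the operad $\mathrm{CoEnd}(\mathrm{Indecom}_O)$ acting on itself by postcomposition, so the theorem yields
\[\partial_\ast \mathrm{Indecom}_O \simeq K(\mathrm{CoEnd}(\mathrm{Indecom}_O))\]
as right $K(\mathrm{CoEnd}(\mathrm{Indecom}_O))$-modules.

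Independently, I would identify $\partial_\ast \mathrm{Indecom}_O$ using the Goodwillie calculus of $\mathrm{Alg}_O$ developed in the referenced sections. The key inputs are the equation $\mathrm{Indecom}_O \circ \mathrm{Free}_O = \mathrm{Id}_{L\s}$, the identification $\partial_\ast \mathrm{Free}_O \simeq O$ equipped with its canonical left $O$-module structure, and the chain rule for Goodwillie derivatives. These combine to present $\partial_\ast \mathrm{Indecom}_O$ as the right $O$-module whose operadic composition with $O$ recovers the unit symmetric sequence — equivalently, as the regular right module over the Koszul dual $K(O)$, with operadic structure classifying the natural $K(O)$-coaction on $\mathrm{Indecom}_O(A)$.

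Finally, comparing the two identifications, the symmetric sequence $\partial_\ast \mathrm{Indecom}_O$ inherits two compatible right-module structures fitting into the Koszul dual of the natural map $K(O) \to \mathrm{CoEnd}(\mathrm{Indecom}_O)$. Invoking that Koszul duality is an equivalence on reduced levelwise dualizable operads forces the original map to be an equivalence. The main obstacle is in the third step: the chain-rule computation must produce a right-module structure that literally matches the one coming from the coaction of $K(O)$ on $\mathrm{Indecom}_O(A)$, so that the comparison in the final step genuinely realizes the natural map named in the theorem rather than a merely underlying equivalence of symmetric sequences. This requires a careful tracking of Koszul-duality conventions and of the bar-cobar duality between $\mathrm{Free}_O$ and $\mathrm{Indecom}_O$.
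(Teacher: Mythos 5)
The plan is sound at the level of bookkeeping hypotheses, but the central comparison step does not work as stated, and the genuine content of the paper's proof is elided.

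The fatal issue is in applying the preceding theorem to $F = \mathrm{Indecom}_O = \Sigma^\infty_C$. Since $\Sigma^\infty_C$ is a linear (left adjoint) functor, $\partial_\ast \Sigma^\infty_C$ is the unit symmetric sequence $1$, concentrated in degree one. On the other side, $\nat(\Sigma^\infty_C, (\Sigma^\infty_C)^{\wedge \ast})$ is the regular right $\mathrm{CoEnd}(\Sigma^\infty_C)$-module, and $K$ applied to a regular module collapses it to $1$ (this is $\mathrm{RMod}^\Sigma_O(O,1) \simeq 1$). So the theorem specializes to the tautology $1 \simeq 1$; it carries no information about the operad $\mathrm{CoEnd}(\mathrm{Indecom}_O)$. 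More fundamentally, the proposed strategy of comparing two right-module structures on $\partial_\ast \mathrm{Indecom}_O$ cannot work: since this symmetric sequence is $1$ concentrated in degree one, every reduced operad acts trivially, so the comparison in your final paragraph has nothing to grip and cannot produce the map $K(O) \rightarrow \mathrm{CoEnd}(\mathrm{Indecom}_O)$, let alone certify it an equivalence. (The claim that $\partial_\ast \mathrm{Free}_O \simeq O$ is also off — $\mathrm{Free}_O$ preserves colimits and so is $1$-excisive; you likely mean $\partial_\ast(\mathrm{Forget}_O \circ \mathrm{Free}_O) \simeq O$ or $\partial_\ast \mathrm{Id}_{\mathrm{Alg}_O} \simeq O$.)

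The paper avoids this degeneracy by working with the non-degenerate object $\nat(\mathrm{Indecom}_O \circ \mathrm{Triv}_O^{\mathrm{Alg}}, \mathrm{Id}_{L\s}^{\wedge n})$, identified with $\mathrm{CoEnd}(\mathrm{Indecom}_O)(n)$ via Lemma \ref{lem: coend is retract of dual derivatives} and Proposition \ref{prop:product rule implies coend is koszul dual to identity}, and with $K(O)(n)$ via the splitting $\mathrm{Indecom}_O \circ \mathrm{Triv}_O^{\mathrm{Alg}} \simeq \bigvee_i B(O)(i) \wedge_{h\Sigma_i} \mathrm{Id}_{L\s}^{\wedge i}$ together with Lemma \ref{lem:dualizable natural trans spec}. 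The remaining and by far hardest part — showing that the explicit map induced by the coaction realizes this abstract equivalence — is done by evaluating at the trivial algebra $\mathrm{Triv}_O^{\mathrm{Alg}}(L\bb{S}^{\vee n})$, checking the evaluation map is a split injection, and exploiting that $B(\mathrm{Triv}^{\mathrm{Alg}}_O(L\bb{S}^{\vee n}))$ is the cofree conilpotent $K(O)$-coalgebra. You correctly flag this as "the main obstacle" but do not propose a mechanism for overcoming it, and that mechanism is the proof. As it stands, the proposal identifies the shape of the theorem but not a route to it.
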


This fact has an immediate application to the Morita theory of operads. Recall that the \textit{Picard group} $\mathrm{Pic}(D,\otimes)$ of a symmetric monoidal $\infty$-category $(D,\otimes)$  is the group of $\otimes$-invertible objects, up to equivalence.

\begin{thm}
If $O,P$ are reduced and levelwise dualizable operads in $L\s$, then the following are equivalent:
   \begin{itemize}
       \item $\mathrm{Alg}_O$ is equivalent to $\mathrm{Alg}_P$.
       \item There is $X \in \mathrm{Pic}(L\s,\wedge)$ and an equivalence of operads
       \[O \wedge \mathrm{End}(X) \simeq P.\]
   \end{itemize}
\end{thm}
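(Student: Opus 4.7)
The plan is to prove each direction separately. For the backward direction ($\Leftarrow$), given $X \in \mathrm{Pic}(L\s,\wedge)$ I would construct an equivalence $\mathrm{Alg}_O \to \mathrm{Alg}_{O \wedge \mathrm{End}(X)}$ by $A \mapsto X \wedge A$. The operad $\mathrm{End}(X)$ acts naturally via $\mathrm{End}(X)(n) \wedge X^{\wedge n} \to X$, and this combined with the $O$-structure on $A$ assembles into an $(O \wedge \mathrm{End}(X))$-algebra structure on $X \wedge A$. Invertibility of $X$ makes the functor $B \mapsto X^{-1} \wedge B$ (with the analogous twisted structure) an inverse.

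For the forward direction ($\Rightarrow$), suppose $\Phi \colon \mathrm{Alg}_O \simeq \mathrm{Alg}_P$. Taking stabilizations gives an auto-equivalence of $L\s$; since $L\s$ is freely generated under colimits by $L\bb{S}$, this equivalence must be smashing with some $X \in \mathrm{Pic}(L\s,\wedge)$, explicitly $X := \Phi_\ast(L\bb{S})$. Consequently $\mathrm{Indecom}_P \circ \Phi \simeq X \wedge \mathrm{Indecom}_O$ as functors $\mathrm{Alg}_O \to L\s$.

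Applying Theorem \ref{thm: first theorem} gives $K(O) \simeq \mathrm{CoEnd}(\mathrm{Indecom}_O)$ and $K(P) \simeq \mathrm{CoEnd}(\mathrm{Indecom}_P) \simeq \mathrm{CoEnd}(X \wedge \mathrm{Indecom}_O)$. Invertibility of $X$ yields the levelwise identification
\[\nat(X \wedge F,\, X^{\wedge n} \wedge F^{\wedge n}) \simeq X^{\wedge(n-1)} \wedge \nat(F, F^{\wedge n}),\]
which I would upgrade to an equivalence of operads $\mathrm{CoEnd}(X \wedge F) \simeq \mathrm{End}(X^{-1}) \wedge \mathrm{CoEnd}(F)$. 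Hence $K(P) \simeq \mathrm{End}(X^{-1}) \wedge K(O)$. Applying $K$ to both sides, using involutivity of Koszul duality on reduced, levelwise dualizable operads together with the identification $K(\mathrm{End}(X^{-1}) \wedge Q) \simeq \mathrm{End}(X) \wedge K(Q)$, yields $P \simeq O \wedge \mathrm{End}(X)$.

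The main obstacle is upgrading these levelwise equivalences to equivalences of operads. Concretely, one must verify that operadic composition respects the twist in $\mathrm{CoEnd}(X \wedge F) \simeq \mathrm{End}(X^{-1}) \wedge \mathrm{CoEnd}(F)$, and that the bar construction computing $K$ is compatible with smashing by a rank-one operad of the form $\mathrm{End}(X^{\pm 1})$. Both should follow formally from monoidal functoriality of the relevant constructions on symmetric sequences, but care is required because the composition product on symmetric sequences is not itself symmetric monoidal; in practice this is a matter of checking that the $\Sigma_n$-equivariant twists by $X^{\wedge(n-1)}$ interact correctly with operadic $\circ_i$ operations, which reduces to the defining properties of the endomorphism operad of an invertible object.
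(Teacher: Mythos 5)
Your proof is correct and takes a genuinely different, more direct route from the paper's. The paper deduces this theorem from the stronger three-way equivalence of Theorem~\ref{thm:poly morita equiv} (categorical equivalence $\iff$ polynomial equivalence $\iff$ existence of $X$), establishing the forward direction by passing through the symmetric monoidal category $(\mathrm{Poly}^\omega(\mathrm{Alg}_O,L\s),\wedge)$ and invoking Lemma~\ref{lem: preserves homogeneous} and Proposition~\ref{prop: polynomial equivalence and coend}; the invertible $X$ appears there as the coefficient of the $1$-homogeneous functor $f(\Sigma^\infty_{\mathrm{Alg}_O}) \simeq X \wedge \Sigma^\infty_{\mathrm{Alg}_P}$ under the induced polynomial equivalence $f$. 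Your route instead uses functoriality of stabilization directly: $\s(\Phi)$ is a colimit-preserving auto-equivalence of $L\s$, hence smashing with an invertible $X$, which gives $\mathrm{Indecom}_P\circ\Phi \simeq X\wedge\mathrm{Indecom}_O$ in one step. Both proofs then converge, applying Theorem~\ref{thm: coend in algebras} and Koszul duality for reduced levelwise-dualizable operads. Your approach is more elementary and avoids the polynomial-calculus machinery entirely; the paper's approach additionally buys the equivalence with polynomial equivalences, which is reused throughout \S\ref{section: polynomial geometry}.

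One gloss correction on the key step: the justification that $\s(\Phi)$ is smashing should not be that ``$L\s$ is freely generated under colimits by $L\bb{S}$'' --- that reasoning would require $L\bb{S}$ to be compact, which the paper explicitly does not assume. The correct justification is that the restriction $\mathrm{Fun}^L(L\s,L\s)\hookrightarrow\mathrm{Fun}^L(\s,L\s)\simeq L\s$ is an equivalence: fully faithful by the universal property of Bousfield localization, and essentially surjective because $L$ is symmetric monoidal, so that for $X\in L\s$ and $L$-acyclic $W$ one has $L(X\wedge W)\simeq LX\wedge_{L\s}LW\simeq 0$, whence $Y\mapsto L(X\wedge Y)$ factors through $L\s$. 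With that replacement, the step is fine. Your flagged obstacle (upgrading the levelwise identifications to equivalences of operads) is real but formal: $\mathrm{CoEnd}$ is lax symmetric monoidal and the lax structure maps become equivalences upon smashing with an invertible spectrum, giving $\mathrm{CoEnd}(X\wedge F)\simeq\mathrm{CoEnd}(X)\wedge\mathrm{CoEnd}(F)$ as operads; similarly $K(\mathrm{CoEnd}(X)\wedge Q)\simeq\mathrm{CoEnd}(X)^\vee\wedge K(Q)$ because the bar construction computing $K$ commutes levelwise with smashing against a rank-one operad. This is in effect what the paper's Proposition~\ref{prop: abstract rmod equiv} records.
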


When $C= \T$, the $\infty$-category of spaces, Theorem \ref{thm: first theorem} recovers the original right action of the spectral Lie operad $\mathrm{lie}:=K(\mathrm{com})$ on $\partial_\ast F$ discovered by Arone-Ching \cite{ACOperads}. However, there is more structure hidden in the Goodwillie derivatives in the case of $\T$. Namely the derivatives have the structure of a \textit{divided power right module} over $\mathrm{lie}$. Divided powers encode lifts of right module structure maps along $\Sigma_n$-norms, as in Definition \ref{definition: dp rmod}. If we let $\epsilon$ denote the augmentation ideal of an augmented right $\mathrm{com}$-comodule, we have the following description of the derivatives of polynomial functors on $\T$.

\begin{prop}[cf. {\cite[Proposition 6.4]{ACClassification}, \cite[Theorem 3.82]{crosseffectsclassification}}]

        Given $F\in \mathrm{Poly}^\omega(\T,\s)$, there is an equivalence of right $\mathrm{lie}$-modules
    \[\partial_\ast F \simeq B(\epsilon (F|_{\mathrm{FinSet}_\ast})),\]
   and the righthand side naturally admits the structure of a divided power right $\mathrm{lie}$-module which classifies the Goodwillie tower of $F$.
\end{prop}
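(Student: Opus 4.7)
The plan is to specialize the previous theorem to $C = \T$, identify the coendomorphism operad of $\Sigma^\infty$ with $\mathrm{com}$, match natural transformations with cross-effects via Yoneda on pointed finite sets, and invoke the bar-cobar duality $K(\mathrm{com}) \simeq \mathrm{lie}$.

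First I would identify $\mathrm{CoEnd}(\Sigma^\infty \colon \T \to \s)$ with $\mathrm{com}$. Since $\Sigma^\infty$ is a symmetric monoidal left adjoint, $(\Sigma^\infty)^{\wedge n}(X) \simeq \Sigma^\infty(X^{\wedge n})$, and the adjoint Yoneda computation $\map(\Sigma^\infty, G) \simeq G(S^0)$ gives $\mathrm{CoEnd}(\Sigma^\infty)(n) \simeq (\Sigma^\infty S^0)^{\wedge n} \simeq \mathbb{S}$, with the operadic structure matching $\mathrm{com}$. Hence $K(\mathrm{CoEnd}(\Sigma^\infty)) \simeq K(\mathrm{com}) = \mathrm{lie}$, and the previous theorem specializes to an equivalence $\partial_\ast F \simeq K(\nat(F, (\Sigma^\infty)^{\wedge \ast}))$ of right $\mathrm{lie}$-modules.

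Second, I would identify the symmetric sequence $n \mapsto \nat(F, (\Sigma^\infty)^{\wedge n})$ with $\epsilon F|_{\mathrm{FinSet}_\ast}$ as a right $\mathrm{com}$-comodule. Polynomial finitary functors on $\T$ are determined by their restriction to $\mathrm{FinSet}_\ast$ via Taylor-tower analysis, and applying Yoneda on $\mathrm{FinSet}_\ast$ to the restriction of $(\Sigma^\infty)^{\wedge n}$ computes $\nat(F, (\Sigma^\infty)^{\wedge n}) \simeq \epsilon F(n_+)$, with the basepoint component absorbed into the augmentation ideal $\epsilon$. The right $\mathrm{com}$-comodule structure obtained from postcomposition with the cooperad of $\mathrm{com}$ matches the one on cross-effects coming from the cocommutative comonoid structure on finite pointed sets.

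Third, bar-cobar Koszul duality for right modules (Section \ref{section: right module stuff}) identifies $K$ applied to a right $\mathrm{com}$-comodule with its bar construction $B$, yielding the desired equivalence $\partial_\ast F \simeq B(\epsilon F|_{\mathrm{FinSet}_\ast})$. For the divided power enhancement, $\epsilon F|_{\mathrm{FinSet}_\ast}$ carries a natural divided power right $\mathrm{com}$-comodule structure because the $\Sigma_n$-equivariant diagonals in $\mathrm{FinSet}_\ast$ supply the lifts of the coaction through $\Sigma_n$-norms required by Definition \ref{definition: dp rmod}; this structure is preserved by the bar construction. The main obstacle is the second step: each ingredient (Yoneda on $\mathrm{FinSet}_\ast$, the polynomial approximation principle, and the cross-effects classification) is standard, but assembling them into an equivalence of right $\mathrm{com}$-comodules feeding coherently into $K$ requires care, and in practice I would follow the analogous identifications carried out in \cite[Proposition 6.4]{ACClassification} and \cite[Theorem 3.82]{crosseffectsclassification}.
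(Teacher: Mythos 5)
There is a genuine gap in two places, and the route diverges from the paper's in a way that doesn't close.

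First, the claimed computation $\nat(F,(\Sigma^\infty)^{\wedge n}) \simeq \epsilon F(n_+)$ in your second step is false. Take $F = \Sigma^\infty = \Sigma^\infty R_{S^0}$. By enriched Yoneda the left side is $(\Sigma^\infty)^{\wedge n}(S^0) \simeq \mathbb{S}$ for every $n$. The right side is the Pirashvili augmentation-ideal part of $k_+ \mapsto \Sigma^\infty(k_+) \simeq \bigvee_{i=1}^k \mathbb{S}$, which is concentrated in degree $1$, so $\epsilon \Sigma^\infty(n_+) \simeq \ast$ for $n>1$. These already disagree in degree $2$. The actual relationship between $\nat(F,(\Sigma^\infty)^{\wedge n})$ and the restriction $F|_{\mathrm{FinSet}_\ast}$ is of the form of a bar-cobar duality, not a levelwise equivalence, which is precisely what makes this proposition nontrivial.

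Second, your step 3 conflates two distinct constructions. The functor $K$ of Proposition \ref{prop: koszul duality right modules} is defined on \emph{right modules} and produces right $K(O)$-modules; the functor $B$ of Section \ref{section: right module primitive stuff} is defined on \emph{right comodules} and produces (divided power) right $K(O)$-modules. The object $\epsilon(F|_{\mathrm{FinSet}_\ast})$ is a right $\mathrm{com}$-comodule (a presheaf on $\mathrm{Env}(\mathrm{com})$), while $\nat(F,(\Sigma^\infty)^{\wedge \ast})$ is a right $\mathrm{com}$-module (by postcomposition). Section \ref{section: right module primitive stuff} explicitly flags the connection between these two bar-cobar dualities as ``rather subtle,'' and the text following the proposition warns that going through the Dwyer--Rezk classification ``has little chance of applying to a wider class of categories.'' Your plan assumes the two are the same.

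The paper's proof takes a different route and sidesteps these issues. It establishes the result for $P_n\Sigma^\infty R_X$ with $X$ compact, using the explicit decomposition of the right $\mathrm{com}$-module $\Sigma^\infty[k]_+^{\wedge\ast}$ into free modules indexed by subsets of $[k]$, whose Koszul dual is a wedge of trivial $\mathrm{lie}$-modules. Mapping out of trivial modules is computed via the primitives $\Omega$, and the $\Sigma$-finiteness of $K(\Sigma^\infty X^{\wedge\ast})$ is what permits inverting $\Omega$ via Lemma \ref{lem: counit of bar cobar is equiv on sigma finite}. That $\Sigma$-finiteness hypothesis (and the lemma that exploits it) is missing entirely from your proposal, yet it is the step that actually produces the bar construction $B$ on the right-hand side. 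To make your approach rigorous you would need to (a) correct the comodule identification to something involving the full Pirashvili decomposition rather than a single value $F(n_+)$, and (b) prove a comparison between the right-module functor $K$ and the comodule bar functor $B$ in this case — both of which amount to redoing the paper's argument.
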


Before understanding these divided powers, we first consider the Goodwillie calculus of $v_h$-periodic pointed spaces $\T_{v_h}$ which is simpler from a formal perspective. The $\infty$-category $\T_{v_h}$ has the property that its stabilization $\s(\T_{v_h})\simeq \s_{T(h)}$ is $1$-semiadditive \cite{heutsannals,kuhn_2004}. Here $T(h)$ refers to the telescopic spectrum associated to a type $h$ complex in the sense of chromatic homotopy theory. In the presence of $1$-semiadditivity, the categories of divided power right modules and right modules coincide as a consequence of the fact the norm is an equivalence. In this setting, we extend Heuts's recent Lie algebra characterization of $v_h$-periodic spaces \cite{heutsannals} to polynomial functors. Recall that $\Phi: \T_{v_h} \rightarrow \s_{T(h)}$ is the Bousfield-Kuhn functor \cite{telescopichomotopybous} which has a section given by $\Omega^\infty_{T(h)}: \s_{T(h)} \rightarrow \T_{v_h}$.

\begin{thm}
    There is an equivalence of operads in $\s_{T(h)}$
\[\mathrm{lie}_{T(h)} \simeq \mathrm{End}(\Phi)\]
under which Heuts's spectral Lie algebra models are given by the action
\begin{center}
   \[\begin{tikzcd}
	{\T_{v_h}} & {\mathrm{Alg}_{\mathrm{End}(\Phi)}} \\
	X & {\Phi(X)}
	\arrow[from=1-1, to=1-2]
	\arrow[maps to, from=2-1, to=2-2]
	\arrow["{\mathrm{End}(\Phi)}"{pos=0.4}, shift left=4, from=2-2, to=2-2, loop, in=325, out=35, distance=10mm]
\end{tikzcd}\]
\end{center}

The assignment
\[R \mapsto B(R,\mathrm{lie},\Phi(-))\]
induces an equivalence of categories 
    \[  \mathrm{RMod}_{\mathrm{lie}_{T(h)}}^{<\infty}\simeq\mathrm{Poly}^\omega(\T_{v_h},\s_{T(h)}).\]
      
\end{thm}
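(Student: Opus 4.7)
The plan is to address the operad identification and the categorical equivalence separately, with both steps leveraging the general framework of the paper together with Heuts' equivalence from \cite{heutsannals}.

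For $\mathrm{lie}_{T(h)} \simeq \mathrm{End}(\Phi)$, I would proceed via Heuts' theorem, which identifies a suitable subcategory of $\T_{v_h}$ with $\mathrm{Alg}_{\mathrm{lie}_{T(h)}}$ in such a way that $\Phi$ corresponds to the forgetful functor $U$. Standard operadic yoga then yields $\mathrm{End}(\Phi)(n) = \nat(U^{\wedge n}, U) \simeq \mathrm{lie}_{T(h)}(n)$: a natural transformation is determined by its value on a free algebra on one generator, which is precisely the datum of an element of $\mathrm{lie}_{T(h)}(n)$, and composition matches operadic multiplication. A more self-contained route, in the spirit of the paper, is to apply Theorem \ref{thm: first theorem} to $O = \mathrm{lie}_{T(h)}$, obtaining $\mathrm{CoEnd}(\Phi) \simeq K(\mathrm{lie}_{T(h)}) \simeq \mathrm{com}_{T(h)}$, and then to invoke the $1$-semiadditivity of $\s_{T(h)}$, which makes all $\Sigma_n$-norms equivalences and hence relates $\mathrm{End}(\Phi)$ to the Koszul dual of $\mathrm{CoEnd}(\Phi)$; this gives $\mathrm{End}(\Phi) \simeq K(\mathrm{com}_{T(h)}) \simeq \mathrm{lie}_{T(h)}$. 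The tautological $\mathrm{End}(\Phi)$-action on $\Phi(X)$ then matches Heuts' spectral Lie algebra structure essentially by construction.

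For the equivalence $\mathrm{RMod}_{\mathrm{lie}_{T(h)}}^{<\infty} \simeq \mathrm{Poly}^\omega(\T_{v_h}, \s_{T(h)})$, the forward functor $R \mapsto B(R, \mathrm{lie}, \Phi(-))$ evidently produces a polynomial functor of degree bounded by the polynomial degree of $R$. I would construct the inverse by sending $F \in \mathrm{Poly}^\omega(\T_{v_h}, \s_{T(h)})$ to $\partial_\ast F$, equipped with its natural right $\mathrm{lie}_{T(h)}$-module structure coming from the first part of the theorem and the general Goodwillie-derivative machinery of the paper. Since $\s_{T(h)}$ is $1$-semiadditive, divided power right modules and right modules coincide, so the general classification theorems (which use the joint infinitesimal and differential $1$-semiadditivity of $\T_{v_h}$ noted in the introduction) specialize directly to yield the stated equivalence of categories.

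The main obstacle is verifying that these two functors are mutually inverse. This requires, on one hand, a derivatives computation $\partial_\ast B(R, \mathrm{lie}, \Phi(-)) \simeq R$, amounting to bar-cobar duality for right $\mathrm{lie}$-modules in the $T(h)$-local setting, and on the other hand a reconstruction formula $F \simeq B(\partial_\ast F, \mathrm{lie}, \Phi(-))$ for polynomial $F$, reflecting convergence of a bar-type resolution built out of the Goodwillie tower. Both are made possible by the double $1$-semiadditivity of $\T_{v_h}$: it forces the relevant Tate constructions to vanish and ensures the Goodwillie tower splits cleanly at each polynomial stage.
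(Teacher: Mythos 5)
The second half of your proposal (the equivalence $\mathrm{RMod}_{\mathrm{lie}_{T(h)}}^{<\infty}\simeq \mathrm{Poly}^\omega(\T_{v_h},\s_{T(h)})$) is essentially on track: combining Proposition \ref{prop: unit and counit} with $T(h)$-local Tate vanishing gives that $\partial_\ast$ is an equivalence onto bounded right $\mathrm{lie}_{T(h)}$-modules, and Proposition \ref{prop: generalized factorization cohomology} identifies the inverse as $B(R,\mathrm{lie}_{T(h)},\Phi(-))$ in the $1$-semiadditive setting, which is what the paper does.

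The first half has a real gap. Your ``standard operadic yoga'' step claims $\mathrm{End}(\Phi)(n) = \nat(\Phi^{\wedge n},\Phi) \simeq \mathrm{lie}_{T(h)}(n)$ because a natural transformation is determined on the free algebra on one generator. This $1$-categorical reasoning fails in the spectral setting: for a general reduced operad $O$, the canonical map $O \rightarrow \mathrm{End}(\mathrm{Forget}_O)$ is \emph{not} an equivalence, and the paper explicitly flags this (``there are obstructions to this map being an equivalence''). The extra-degeneracy argument only produces a \emph{retraction} of $O(n)$ onto $\nat(\mathrm{Forget}_O^{\wedge n},\mathrm{Forget}_O)$. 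Showing it is actually an equivalence in the $T(h)$-local case is the content of Proposition \ref{prop: operad is endomorphism of forget locally}, whose proof needs both coanalyticity of $\mathrm{Forget}_O\circ\mathrm{Free}_O$ (to reduce to finite wedges of homogeneous pieces, via Heuts' Lemma B.1) and $T(h)$-local Tate vanishing (to kill natural transformations between homogeneous and cohomogeneous functors of different degrees). Your proposal elides exactly this step. Your alternative ``self-contained'' route has a separate confusion: applying Theorem \ref{thm: first theorem} with $O=\mathrm{lie}_{T(h)}$ gives $K(\mathrm{lie}_{T(h)})\simeq \mathrm{CoEnd}(\mathrm{Indecom}_{\mathrm{lie}_{T(h)}})$, but under Heuts' equivalence the indecomposables functor corresponds to $\Sigma^\infty_{T(h)}$, not $\Phi$; $\Phi$ corresponds to the \emph{forgetful} functor. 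So the claim ``$\mathrm{CoEnd}(\Phi)\simeq K(\mathrm{lie}_{T(h)})$'' is not what that theorem yields, and there is no formal mechanism relating $\mathrm{End}(\Phi)$ to ``the Koszul dual of $\mathrm{CoEnd}(\Phi)$'' — the paper instead relies on Proposition \ref{prop: operad is endomorphism of forget locally} and Corollary \ref{cor: endomorphism of forget is endomorphism of derivatives locally} combined with the computation $\partial_\ast\Phi\simeq\mathrm{lie}_{T(h)}$ from \cite{behrensmalin}.
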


Instead of fixing $C$ and classifying its Goodwillie towers, we now fix a classification scheme and ask what properties $C$ must satisfy for such a classification to hold. If one asks when the right module structure alone classifies Goodwillie towers, as above, one is led to consider \textit{infinitesimally $1$-semiadditive} categories. Recall this means that for all $n$, $X\in L\s^{B\Sigma_n}$, and $c \in C^\omega$ that
\[(X \wedge (\Sigma^\infty_C c)^{\wedge n})^{t\Sigma_n} \simeq \ast.\]

\begin{thm}
     The following are equivalent for differentially dualizable $C$:  
    \begin{enumerate}
        \item $C$ is infinitesimally $1$-semiadditive. 
        \item $\s(C)$ is $1$-semiadditive.
        \item There is a symmetric monoidal equivalence $(\mathrm{Poly}^\omega(C,L\s),\wedge) \simeq (\mathrm{RMod}^{< \infty}_O,\circledast)$ for some operad $O$.  
        \item The derivative map $\partial_\ast:\mathrm{Fun}^\omega(C,L\s) \rightarrow \mathrm{RMod}_{K(\mathrm{CoEnd}(\Sigma^\infty_C))}$ classifies Goodwillie towers.

    \end{enumerate}
\end{thm}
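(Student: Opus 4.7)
My plan is to prove the cycle (2) $\Rightarrow$ (1) $\Rightarrow$ (4) $\Rightarrow$ (3) $\Rightarrow$ (1), together with a separate argument for (1) $\Rightarrow$ (2). Both (2) $\Rightarrow$ (1) and (4) $\Rightarrow$ (3) are formal: the first is an immediate specialization of the vanishing hypothesis; for the second, take $O := K(\mathrm{CoEnd}(\Sigma^\infty_C))$ and invoke the symmetric monoidality of $\partial_\ast$ established earlier in the excerpt.

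For (1) $\Rightarrow$ (2), the plan is to exploit compact generation of $L\s = \s(C)$ by the family $\{\Sigma^\infty_C c\}_{c \in C^\omega}$, which follows from the compact generation and differentiability hypotheses on $C$. Writing the unit $L\bb{S}$ as a filtered colimit built from this family and using cofinality of the diagonal $A \hookrightarrow A^n$ of a filtered category $A$, one identifies $L\bb{S}$ --- viewed as a $\Sigma_n$-object with trivial action --- with $\colim_\alpha (\Sigma^\infty_C c_\alpha)^{\wedge n}$ carrying the permutation action. Smashing with an arbitrary $Y \in L\s^{B\Sigma_n}$ and applying the Tate construction, which preserves filtered colimits for finite groups in the stable setting, gives
\[ Y^{t\Sigma_n} \simeq \colim_\alpha \bigl(Y \wedge (\Sigma^\infty_C c_\alpha)^{\wedge n}\bigr)^{t\Sigma_n} \simeq 0 \]
by (1); since $n$ was arbitrary this yields $1$-semiadditivity of $L\s$.

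The main content lies in (1) $\Rightarrow$ (4). I would construct an inverse to $\partial_\ast$ on finite-degree right $K$-modules (writing $K := K(\mathrm{CoEnd}(\Sigma^\infty_C))$) via the two-sided bar construction
\[ F_M(c) := |B_\bullet(M, K, \Sigma^\infty_C c)|. \]
A layer-by-layer computation identifies the $k$-th Goodwillie layer of $F_M$ at $c$ with $(M(k) \wedge (\Sigma^\infty_C c)^{\wedge k})_{h\Sigma_k}$; infinitesimal $1$-semiadditivity ensures that the Tate-type obstructions which would otherwise prevent the reassembly of $F_M$ from its layers collapse, so $F_M$ is polynomial and $\partial_\ast F_M \simeq M$ as right modules. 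The converse equivalence $F_{\partial_\ast F} \simeq F$ proceeds by induction on polynomial degree, matching extensions in the Goodwillie tower with the bar differential using Theorem~\ref{thm: first theorem} to identify the acting operad. The principal obstacle is the alignment of the bar filtration with the Goodwillie filtration: this is where $1$-semiadditivity is decisive, as it forces the operadic composition product governing the bar differential to agree with the Day convolution encoding the right module structure, eliminating residual Tate contributions.

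Finally, for (3) $\Rightarrow$ (1): any symmetric monoidal equivalence $\mathrm{Poly}^\omega(C, L\s) \simeq \mathrm{RMod}^{<\infty}_O$ must, by matching the module structure carried by $\Sigma^\infty_C$, identify $O$ with $K(\mathrm{CoEnd}(\Sigma^\infty_C))$, and hence the equivalence agrees with $\partial_\ast$. Since $\partial_\ast$ unconditionally factors through the category of divided-power right modules, hypothesis (3) forces the forgetful functor from divided-power to plain right modules to be an equivalence. The fibers of this forgetful functor are controlled precisely by $\Sigma_n$-Tate constructions of expressions $X \wedge (\Sigma^\infty_C c)^{\wedge n}$, so these Tate constructions must vanish --- exactly (1). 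Combined with (1) $\Rightarrow$ (2), the cycle closes.
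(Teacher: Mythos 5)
Your implications $(2)\Rightarrow(1)$ and $(4)\Rightarrow(3)$ are fine, but the other three steps each contain a genuine gap, and collectively they diverge from the paper's actual route (which runs through the unit/counit analysis of Proposition~\ref{prop: unit and counit}, Lemma~\ref{lem: stable tate}, the norm comparison on the fake Goodwillie tower for $(4)\Rightarrow(1)$, and Proposition~\ref{prop: abstract rmod equiv} for $(3)\Rightarrow(4)$).

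\textbf{On $(1)\Rightarrow(2)$.} Compact generation of $L\s$ by $\{\Sigma^\infty_C c\}_{c\in C^\omega}$ does \emph{not} mean that $L\bb{S}$ is a filtered colimit of objects of the form $\Sigma^\infty_C c$: it is a filtered colimit of compact objects, which are retracts of finite colimits of shifts of such objects. Passing through finite colimits and retracts is nontrivial precisely because $Z\mapsto Z^{\wedge n}$ is polynomial, not exact, so the class $\{Z : (Y\wedge Z^{\wedge n})^{t\Sigma_n}\simeq\ast\ \forall Y\}$ is not obviously thick. You also invoke, without justification, that $(-)^{t\Sigma_n}$ preserves filtered colimits; the paper never needs this and instead runs the Goodwillie-calculus argument of Lemma~\ref{lem: stable tate}: it shows derivatives classify polynomial functors $\s\to L\s$ (using Proposition~\ref{prop: unit and counit}), and then evaluates the resulting norm equivalence at $\bb{S}\in\s$, which genuinely is compact, sidestepping the filtered-colimit analysis entirely.

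\textbf{On $(1)\Rightarrow(4)$.} The proposed inverse $F_M(c):=|B_\bullet(M,K,\Sigma^\infty_C c)|$ is not well-formed for a general differentially dualizable $C$: the object $\Sigma^\infty_C c$ carries a natural $\mathrm{CoEnd}(\Sigma^\infty_C)$-coalgebra structure, not a $K(\mathrm{CoEnd}(\Sigma^\infty_C))$-algebra structure, so the two-sided bar construction has no evident meaning. In the $v_h$-periodic case a bar formula does hold (Theorem~\ref{thm: classification for vh periodic spaces}), but it relies on the Bousfield--Kuhn functor and the equivalence $\T_{v_h}\simeq\mathrm{Alg}_{\mathrm{lie}_{T(h)}}$, which have no analogue here. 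The paper's inverse is $\Theta^{\mathrm{RMod}}$, given by the right-module mapping spectrum $\mathrm{RMod}_{K(\mathrm{CoEnd}(\Sigma^\infty_C))}(K(\Sigma^\infty_C(-)^{\wedge\ast}),R)$, and the equivalence follows from the Tate-vanishing analysis in Proposition~\ref{prop: unit and counit}.

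\textbf{On $(3)\Rightarrow(1)$.} The assertion that $\partial_\ast$ ``unconditionally factors through the category of divided-power right modules'' is false and contradicts Theorem~\ref{thm: diff semiadditive classification}: such a lift exists (contractibly) if and only if $C$ is differentially $1$-semiadditive, which is strictly weaker than and not automatic from hypothesis~(3). Moreover, the fibers of $\mathrm{RM}_O^{\mathrm{dp}}\to\mathrm{RM}_O$ are controlled by Tate constructions involving the operad $K(\mathrm{CoEnd}(\Sigma^\infty_C))$, not directly by $(\Sigma^\infty_C c)^{\wedge n}$. The paper's route is $(3)\Rightarrow(4)$ via Proposition~\ref{prop: abstract rmod equiv} (which also accounts for an invertible-spectrum twist you elide), followed by $(4)\Rightarrow(1)$ via the norm map on the associated graded of the lower fake Goodwillie tower, which forces the norm to be an equivalence on $(\Sigma^\infty_C c)^{\wedge n}$.
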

The question of when divided power right module structures on the derivatives classify Goodwillie towers in $C$ is much more subtle, as the existence of divided powers depends on $C$ and, even if they exist, they need not classify Goodwillie towers.

Recall $C$ is \textit{differentially $1$-semiadditive} if for all $n$, $X\in L\s^{B\Sigma_n}$, and $c \in C^\omega$
\[(X \wedge \partial_n (L\Sigma^\infty \mathrm{Map}(c,-)  )^\vee)^{t\Sigma_n} \simeq \ast.\]
The vanishing of this class of Tate constructions leads to a rich theory of divided power structures on derivatives. We denote the category of divided power right $O$-modules by $\mathrm{RMod}^\mathrm{dp}_O$.
\begin{thm}
         
    The following are equivalent for differentially dualizable $C$:  
    \begin{enumerate}
        \item $C$ is differentially $1$-semiadditive.  
        \item There is a symmetric monoidal equivalence \[(\mathrm{Poly}^\omega(C,L\s),\wedge) \simeq (\mathrm{RMod}^\mathrm{< \infty,dp}_O,\circledast).\]   
        \item The derivatives have a contractible space of lifts \[\partial_\ast:\mathrm{Fun}(C^\omega,L\s) \rightarrow \mathrm{RMod}^\mathrm{dp}_{K(\mathrm{CoEnd}(\Sigma^\infty_C))},\]
        and these classify Goodwillie towers. 
        
    \end{enumerate}
\end{thm}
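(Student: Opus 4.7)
The plan is to establish the cycle $(1) \Rightarrow (2) \Rightarrow (3) \Rightarrow (1)$, mirroring the argument in the infinitesimally $1$-semiadditive case above while systematically tracking the Tate obstructions that govern divided power lifts. Writing $O := K(\mathrm{CoEnd}(\Sigma^\infty_C))$ for brevity, the symmetric monoidal enhancement of the derivative functor established earlier supplies
\[
\partial_\ast\colon (\mathrm{Poly}^\omega(C,L\s),\wedge) \longrightarrow (\mathrm{RMod}_O, \circledast),
\]
and the goal under (1) is to lift this through the forgetful functor $\mathrm{RMod}^{\mathrm{dp}}_O \to \mathrm{RMod}_O$ and show that the lift is a symmetric monoidal equivalence onto $\mathrm{RMod}^{<\infty,\mathrm{dp}}_O$.

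For $(1) \Rightarrow (2)$, I would introduce a candidate inverse via a bar construction analogous to the formula $R \mapsto B(R,\mathrm{lie},\Phi(-))$ appearing in the $v_h$-periodic theorem: given $R \in \mathrm{RMod}^{<\infty,\mathrm{dp}}_O$, define
\[
\Psi(R)(c) := B\bigl(R,\,O,\,(\Sigma^\infty_C c)^{\wedge \ast}\bigr).
\]
Verifying that $\Psi$ is essentially inverse to the divided power lift of $\partial_\ast$ reduces, layer by layer in the Goodwillie tower, to comparing the homotopy orbits $(R(n) \wedge (\Sigma^\infty_C c)^{\wedge n})_{h\Sigma_n}$ with the corresponding homotopy fixed points via the norm. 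The obstruction is precisely a twisted Tate construction of the form
\[
(X \wedge \partial_n(L\Sigma^\infty \mathrm{Map}(c,-))^\vee)^{t\Sigma_n},
\]
which vanishes by hypothesis, while the divided power structure on $R$ supplies the fixed-point side of the comparison functorially in $c$.

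The implication $(2) \Rightarrow (3)$ is essentially formal: $n$-excisive approximations present Goodwillie towers, correspond under (2) to the truncations of divided power right modules, and the symmetric monoidal equivalence forces the extension $\partial_\ast\colon \mathrm{Fun}(C^\omega,L\s) \to \mathrm{RMod}^{\mathrm{dp}}_O$ to exist with contractible space of lifts and to classify Goodwillie towers. For $(3) \Rightarrow (1)$, the contractibility of divided power lifts applied to the corepresented functors $L\Sigma^\infty \mathrm{Map}(c,-)$, smashed with arbitrary $X \in L\s^{B\Sigma_n}$, forces the Tate constructions appearing as obstructions to such lifts to vanish, which is exactly the statement of differential $1$-semiadditivity.

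The principal obstacle is the verification in $(1) \Rightarrow (2)$ that $\Psi(R)$ is genuinely polynomial of the expected degree and that its divided power right module of derivatives recovers $R$ on the nose rather than some twisted or averaged version. This will demand careful bookkeeping of the Day convolution structure on $\mathrm{RMod}^{\mathrm{dp}}_O$ together with a filtration argument whose associated graded is controlled precisely by the Tate constructions of hypothesis (1); the degeneration of this filtration depends on those Tate vanishings holding as contractibility statements natural in $X$ and $c$, rather than merely at the level of homotopy groups.
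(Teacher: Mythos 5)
The proposal has two genuine gaps, one structural and one conceptual.

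First, in $(1)\Rightarrow(2)$, the candidate inverse $\Psi(R)(c) := B(R,\,O,\,(\Sigma^\infty_C c)^{\wedge\ast})$ does not typecheck. A two-sided bar construction $B(R,O,A)$ requires $A$ to be an $O$-algebra (or a left $O$-module), and this is precisely what $\Phi(X)$ \emph{is} in the $v_h$-periodic theorem, because $\T_{v_h}$ happens to admit an algebraic model $\mathrm{Alg}_{\mathrm{lie}_{T(h)}}$. For a general differentially $1$-semiadditive $C$, the symmetric sequence $(\Sigma^\infty_C c)^{\wedge\ast}$ carries only a right $\mathrm{CoEnd}(\Sigma^\infty_C)$-module structure and no $O=K(\mathrm{CoEnd}(\Sigma^\infty_C))$-algebra structure. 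Indeed $C=\T$ is differentially $1$-semiadditive but admits no algebraic model by Corollary \ref{cor: diff sa algebra cat}, so no such bar construction formula can be available in general. The correct inverse is the right adjoint $\Theta^{\mathrm{dp}}(R)(c)=\mathrm{RMod}^{\mathrm{dp}}_O(\partial_\ast L\Sigma^\infty R_c,R)$; its comparison with the Goodwillie tower is controlled by the filtration of Proposition \ref{prop: filtration when domain tate vanish} and Remark \ref{remark: dual norms}, which is where the Tate hypothesis actually enters.

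Second, the claim that $(2)\Rightarrow(3)$ is ``essentially formal'' underestimates the content. An abstract symmetric monoidal equivalence $(\mathrm{Poly}^\omega(C,L\s),\wedge)\simeq(\mathrm{RMod}^{<\infty,\mathrm{dp}}_O,\circledast)$ comes with no a priori relation to the derivative functor $\partial_\ast$, and one must first argue that such an equivalence is forced to send homogeneous functors to right modules concentrated in a single degree, to determine $O$ up to twisting by $\mathrm{CoEnd}(X^\vee)$ for invertible $X$, and to lift $\partial_\ast$ up to that twist. This is the Morita-type rigidity content of Proposition \ref{prop: abstract dp rmod equiv} and is the reason the paper routes through $(2)\Rightarrow(1)$ and $(3)\Rightarrow(2)$ separately rather than $(2)\Rightarrow(3)$. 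Your $(3)\Rightarrow(1)$ sketch is closer to correct, but should be anchored explicitly in Lemma \ref{lem: computation of comonad for diff sa}: the hypothesis that the lift classifies Goodwillie towers forces the comonad $\partial_\ast\circ\Theta^{\mathrm{dp}}$ to agree with $\mathrm{RM}^{\mathrm{dp}}_{K(\mathrm{CoEnd}(\Sigma^\infty_C))}$, and the lemma identifies exactly when this can happen; the mere existence and contractibility of lifts is \emph{not} enough (e.g.\ $\s$ has trivial divided power lifts over $O=1$ that fail to classify towers).
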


Unlike the category $\T$ which is differentially $1$-semiadditive, the $\infty$-category $\mathrm{Alg}_O$ associated to $O\in \mathrm{Operad}(L\s)$ resists differential $1$-semiadditivity in a strong way. $\mathrm{Alg}_O$ is differentially $1$-semiadditive, if and only if $L\s$ is $1$-semiadditive. We say $C$ is \textit{differentially algebraic} if for some $O$ there is a symmetric monoidal equivalence
\[(\mathrm{Poly}^\omega(C,L\s),\wedge) \simeq (\mathrm{Poly}^\omega(\mathrm{Alg}_O,L\s),\wedge).\]
This is a partially linearized analogue of the property that $C$ admits a spectral algebraic model $C \simeq \mathrm{Alg}_O$. The property of being differentially algebraic is weaker and is preserved under passage to $n$-connected objects for reasonable notions of connectivity. For example, the category of rational spaces $(\T)_\mathbb{Q}$ is differentially algebraic even though it doesn't embed into $\mathrm{Alg}_{\mathrm{lie}_\mathbb{Q}}$ prior to taking simply connected objects.

We end with addressing whether certain sublocalizations of $\T$ called \textit{differential sublocalizations} (Definition \ref{definition: sublocalization}) are differentially algebraic. For $D \subset C$, a differential sublocalization $D[W^{-1}]$ of $C$ has the property that there is a canonical map
\[\partial_\ast \mathrm{Id}_C \rightarrow \partial_\ast \mathrm{Id}_{D[W^{-1}]}\]
which is identified with the unit of some Bousfield localization of $\s(C)$
\[\partial_\ast \mathrm{Id}_C \rightarrow L'\partial_\ast \mathrm{Id}_C.\]
We demonstrate that differential $1$-semiadditivity is preserved under differential localizations. As a consequence of the theory built up to this point, if we sublocalize $\T$ to force differential algebraicity then, it forces $1$-semiadditivity on the stabilization. 
\begin{cor}
    A differential sublocalization of $\T$ is differentially algebraic, if and only if it is infinitesimally $1$-semiadditive.
\end{cor}

\subsection{Acknowledgments}
 I am grateful to Mark Behrens, Gijs Heuts, Yuqing Shi, and Thomas Blom for many helpful discussions surrounding these topics. I thank Max Blans for formative conversations about $v_h$-periodic Goodwillie calculus at the 2022 EAST Autumn school in Nijmegen. This work grew out of the author's desire to more deeply understand Arone-Ching's remarkable program for studying Goodwillie calculus \cite{ACOperads,ACClassification,crosseffectsclassification,AroneChingManifolds}.

\subsection{Conventions}
By ``category'' we mean $\infty$-category. We write $\T$ for the category of pointed spaces. We write $\s$ for the category of spectra. We write $\mathrm{Map}(-,-)$ to denote the mapping spaces in a category. We consider only Borel $G$-spectra, i.e. spectra with a group action of $G$. We write $L\s^{BG}$ for this category. When defining adjunctions, we always write the left adjoint on the left and the right adjoint on the right.

When we make reference to enriched categories, it is based off Hinich's model of enriched categories \cite{HinichYoneda}. We use this model because an enriched category $C$ admits an enriched Yoneda embedding which is symmetric monoidal with respect to Day convolution when $C$ is symmetric monoidal \cite[8.4.3]{HinichColimits}. By a symmetric monoidal $L\s$-enriched category, we mean a commutative algebra object in $L\s$-enriched categories with the cartesian product. We write $\mathrm{Fun}_{L\s}(-,-)$ for enriched functor categories between $L\s$-enriched categories.

By a Bousfield localization $L:\s \rightarrow L\s$, we always mean a symmetric monoidal, stable, reflective localization. Every enriched category we consider is $L\s$-enriched in the following way, up to passing to a full enriched subcategory: start with a stable category, extract the canonical spectral enrichment, and observe the morphism spectra are $L$-local. By pushing forward the $\s$-enriched category along the unit $\s \rightarrow L\s$, we obtain an $L\s$-enriched category with the same morphism spectra as the stable category we started with. We denote full subcategories by $\langle c_i \rangle_{i \in I}$ where $\{c_i\}_{i \in I}$ is a set of objects in the category.

If $C$ is an enriched category, we will write $C(-,-)$ for the enriched morphism objects. In particular, in $L\s$ we write $L\s(-,-)$ for the internal $L$-local morphism spectra. Note that although $L\s(-,-)$ has the potential to be mistaken for $L(\s(-,-))$, this is no issue since the spectrum of morphisms into an $L$-local spectra is always $L$-local. We write $\nat(-,-)$ for the natural transformation spectrum (when it is exists)  between functors with values in a stable category.

We only consider operads in $(L\s,\wedge)$, and we abbreviate this category as $\mathrm{Operad}(L\s)$. For the most part, we restrict our attention to reduced operads $O$, i.e. those which satisfy $O(0)\simeq \ast,O(1)=L\bb{S}$. (co)Algebras over such operads are automatically non(co)unital.  Associated to an operad $O$ are many categories of relevant algebraic objects, for instance, there are categories of right modules and right comodules over an operad. Both categories have free and cofree objects. In order to avoid confusion, we emphasize in which category the relevant construction lands by writing its abbreviation in the upper right corner. We will emphasize the relevant operad by writing it in the lower right corner, e.g. if $O$ is a reduced operad, $\mathrm{Triv}_O^\mathrm{RComod}(-)$ denotes the functor which takes in a symmetric sequence and outputs the associated right comodule with a trivial coaction by $O$. We only treat operads in $L\s$, and we will only consider algebras, right modules, etc. defined inside $L\s$. Because of this we drop the category from our notation, e.g. we abbreviate $\mathrm{Alg}_O(L\s,\wedge)$ by $\mathrm{Alg}_O$. If, for instance, we wanted to consider the commutative algebra objects of $(L\s,\wedge)$ we would consider $\mathrm{Alg}_{L(\mathrm{com})}$ rather than the equivalent $\mathrm{Alg}_\mathrm{com}(L\s,\wedge)$. Although we do not use the cooperad structure on $B(1,O,1)$, we occasionally refer to the underlying symmetric sequence. We abbreviate this $B(O)$. 

In general, by ``truncation'' we mean an operation which on the underlying symmetric sequence of an object $R$ is defined by $R^{\leq n}(m)=R(m)$ if $m \leq n$ and $R^{\leq n}(m)=\ast$ otherwise. When such operations exist as endofunctors of the category, they give rise to a complete ``truncation filtration'' via maps $R \rightarrow R^{\leq n}$. We often consider full subcategories on objects which are $n$-truncated for a fixed $n$ or for an arbitrary $n$ which we call bounded. We write these with either $\leq n$ or $<\infty$ in the upper right corner, e.g. the bounded divided power right $O$-modules are $\mathrm{RMod}^\mathrm{<\infty,dp}_O$

\section{Right (co)modules over an operad} \label{section: modules and comodules}
This section relays some of the preliminary information on right (co)modules and Koszul duality necessary for the rest of the paper. Section \ref{section: right module stuff} is relevant to Section \ref{section: right modules in goodwillie calculus} about the role of right modules in Goodwillie calculus. Much of this background material is already known, though we also make reference to work in progress \cite{envBMT} which collects this information in the $\infty$-categorical setting; see also \cite{brantnerHeutsUniversal}. Section \ref{section: right module primitive stuff} is relevant to Section \ref{section: classifications} about the role of divided power right modules in Goodwillie calculus. Here we extend the bar-cobar duality introduced in \cite{AroneChingManifolds} to include divided power structures on the indecomposables of right $O$-comodules.

\subsection{Right modules over an operad} \label{section: right module stuff}
We consider operads in $L\s$, a Bousfield localization of $\s$. Spectral operads are defined in terms of the category of symmetric sequences of spectra \[\mathrm{SymSeq}(L\s):= \mathrm{Fun}(\bigsqcup_{n \in \mathbb{N}} B\Sigma_n, L\s ).\] Its composition product makes it into a monoidal category $(\mathrm{SymSeq}(\s),\circ)$, and algebras for the composition product form the category $\mathrm{Operad}(L\s)$.

More precisely, for $\s$ we use the definition studied in \cite{blans2024chainrulegoodwilliecalculus,brantnerthesis}. The Bousfield localization $L$ induces a monoidal localization of $(\mathrm{SymSeq}(\s),\circ)$  which we define to be $(\mathrm{SymSeq}(L\s),\circ))$. We denote the unit of $(\mathrm{SymSeq}(L\s),\circ)$ by $1$. It is given by the symmetric sequence which is contractible in every degree except the first where it is equal to $L\bb{S}$. 

Suppose $O$ is an operad in $L\s$, then we define the category $\mathrm{RMod}_O$ as the category of right module objects of $O$, i.e. symmetric sequences with a right action of $O$. This is a stable category, and there is an adjunction associated to the unit map $1 \rightarrow O$
\[
\adjunction{\mathrm{Free}^\mathrm{RMod}_O}{\mathrm{SymSeq}(L\s)}{\mathrm{RMod}_O}{\mathrm{Forget}_O}.
\]
 The right adjoint is restriction of right $1$-modules, i.e. symmetric sequences, along the unit. The left adjoint is induction along the unit.
If $O$ is further supplied with an augmentation $O \rightarrow 1$, there is a second adjunction
\[
\adjunction{\mathrm{Indecom}_O}{\mathrm{RMod}_O}{\mathrm{SymSeq}(L\s)}{\mathrm{Triv}^\mathrm{RMod}_O}.
\]
 The right adjoint is given by restricting a right $O$-module to a symmetric sequence, and the left adjoint is given by inducing along the augmentation, otherwise known as forming the indecomposables. The indecomposables are a homotopically coherent way of quotienting out the image of the operad composites, modulo what arises from composition with the unit $L\bb{S} \rightarrow O(1)$.
 
Koszul duality is a device which interchanges these two adjunctions. In order to define the Koszul dual of an augmented operad, we recall the existence of the Day convolution of right modules \cite{envBMT,brantnerHeutsUniversal} which has an underlying formula
\[(R \circledast S) (k) \simeq \bigvee_{i+j=k} \mathrm{ind}^{\Sigma_k}_{\Sigma_i \times \Sigma_j}R(i) \wedge S(j).\]
If $C$ is a stable symmetric monoidal $\infty$-category with $L$-local mapping spectra, the coendomorphism operad $\mathrm{CoEnd}(c)$ of $c \in (C,\otimes)$ is the endomorphism object of $c \in C$ with respect to the enrichment of $C$ in $(\mathrm{SymSeq}(L\s),\circ)$ \cite{envBMT} informally given by
\[C^{\Sigma}(c,d):= \{C(c,d^{\otimes n})\}_{n \in \mathbb{N}}.\]

\begin{definition}
    Let $O \rightarrow 1$ be an augmented operad in $L\s$. The Koszul dual of $O$ is the coendomorphism operad of $1\in(\mathrm{RMod}_O,\circledast)$
    \[K(O):= \mathrm{CoEnd}(1).\]
\end{definition}

It is an easy calculation that \[1^{\circledast n}\simeq \mathrm{Triv}_O(L \Sigma^\infty_+ \Sigma_n),\] and so understanding $K(O)$ is largely reduced to understanding the mapping spectra between trivial right $O$-modules, which can in turn be related to computing the indecomposables of trivial right $O$-modules. When $O$ is reduced, i.e. $O(0)=\ast,O(1)=L\bb{S}$, there is a unique augmentation of $O$. Koszul duality behaves rather well for reduced operads and their right modules. Given $X \in L\s$ we write $X^\vee$ for the $L$-local Spanier-Whitehead dual. We say a spectrum $X \in L\s$ is \textit{dualizable} if for all $Y \in L\s$ the natural map 
\[X^\vee \wedge Y \rightarrow L\s(X,Y)\]
is an equivalence.

The following is one of the main results of \cite{envBMT}.

\begin{prop}\label{prop: koszul duality right modules}
   For a reduced operad $O$ there is a symmetric monoidal adjunction
  \[ \adjunction{K}{(\mathsf{RMod}_{O},\circledast)}{(\mathsf{RMod}_{K(O)}^{\mathsf{op}},\circledast)}{K^{-1}}\]
  \[R \mapsto \mathrm{RMod}^\Sigma_O(R, 1)\]
  \[\mathrm{RMod}^\Sigma_{K(O)}(S, 1) \mapsfrom S\]
where either adjoint lifts $\mathrm{Indecom}_{(-)}(-)^\vee$.
   If $O$ is reduced and levelwise dualizable, this adjunction restricts to an equivalence between the subcategories of levelwise dualizable right modules, and there are equivalences of such levelwise dualizable operads and right modules, respectively,
   \[O \simeq K(K(O)),\]
   \[R \simeq K(K(R)).\]
\end{prop}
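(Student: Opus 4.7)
The plan is to realize both $K$ and $K^{-1}$ as instances of enriched Hom into the unit in a closed symmetric monoidal enriched category, verify the underlying symmetric sequence formula via the $\mathrm{Indecom}\dashv\mathrm{Triv}$ adjunction, and pass to biduality via a cell resolution argument.

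First, the category $(\mathrm{RMod}_O, \circledast)$ is enriched over $(\mathrm{SymSeq}(L\s), \circ)$ via $\mathrm{RMod}^\Sigma_O(-,-)$, and by definition $K(O) = \mathrm{CoEnd}(1)$ acts by postcomposition on $\mathrm{RMod}^\Sigma_O(R,1)$ for any $R$; this defines $K: \mathrm{RMod}_O \to \mathrm{RMod}_{K(O)}^{\mathrm{op}}$. Applying the same recipe inside $(\mathrm{RMod}_{K(O)}, \circledast)$ produces a contravariant functor landing in $\mathrm{RMod}_{K(K(O))}$; the canonical operad map $O \to K(K(O))$ (obtained from the tautological $O$-action on $1$ by the universal property of $K(K(O))$) allows restriction to the desired functor $K^{-1}$. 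The adjunction $K \dashv K^{-1}$ arises from the standard internal-Hom/tensor adjunction of the enriched symmetric monoidal structure, with unit given by double evaluation $R \to \mathrm{RMod}^\Sigma_{K(O)}(\mathrm{RMod}^\Sigma_O(R,1), 1)$.

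Second, since $O$ is reduced one has $1^{\circledast n} \simeq \mathrm{Triv}_O(L\Sigma^\infty_+ \Sigma_n)$, and the $\mathrm{Indecom}_O \dashv \mathrm{Triv}_O$ adjunction then computes
\[K(R)(n) \simeq \mathrm{SymSeq}(\mathrm{Indecom}_O(R), L\Sigma^\infty_+\Sigma_n) \simeq \mathrm{Indecom}_O(R)(n)^\vee,\]
which gives the advertised identification lifting $\mathrm{Indecom}_{(-)}(-)^\vee$. Symmetric monoidality of $K$ then follows from two facts: $\mathrm{Indecom}_O$ is strong symmetric monoidal for $\circledast$ as the left adjoint induced by the augmentation of $O$, and $L$-Spanier-Whitehead dualization interchanges Day convolution with itself on levelwise dualizable symmetric sequences (because Day convolution is a finite coproduct of inductions of smash products, so dualizing commutes with it). Combined, these yield the equivalence $K(R \circledast R') \simeq K(R) \circledast K(R')$ required for strong symmetric monoidality of $K$ viewed as a functor to the opposite category.

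Third, for the biduality under levelwise dualizability of both $O$ and $R$, the strategy is to verify that the unit $R \to K^{-1}K(R)$ is an equivalence. I would first check this on free right $O$-modules $\mathrm{Free}^\mathrm{RMod}_O(X) = X\circ O$ for levelwise dualizable $X$, where on underlying symmetric sequences the unit reduces to the double-dualization equivalence $X \simeq X^{\vee\vee}$, and one tracks the right-module structure through the bar-cobar cycle to confirm the lift to right modules. The general case follows by resolving any levelwise dualizable $R$ via a simplicial bar resolution by free right $O$-modules and showing that $K^{-1}K$ preserves the associated totalization. The main obstacle is precisely this last step: since $K$ and $K^{-1}$ are contravariant, the colimit resolution becomes an inverse limit of dualized layers, and one must argue this limit recovers $R$. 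The hypothesis that $O$ is levelwise dualizable is exactly what forces each bar layer to be a finite coproduct of dualizable pieces, yielding the required convergence. Specializing the established equivalence to $R = O$, which is levelwise dualizable as the free rank-one right $O$-module, then yields $O \simeq K(K(O))$.
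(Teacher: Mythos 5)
The paper does not prove this proposition; it cites it as ``one of the main results of \cite{envBMT}'' and moves on. So the comparison is against work in progress that I cannot see, and the review below is solely about the internal soundness of your proposal.

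The construction of $K$ and $K^{-1}$, the computation $K(R)(n)\simeq \mathrm{Indecom}_O(R)(n)^\vee$ via $1^{\circledast n}\simeq \mathrm{Triv}_O(L\Sigma^\infty_+\Sigma_n)$ and self-duality of $L\Sigma^\infty_+\Sigma_n$, and the symmetric monoidality argument (reducedness makes $\circledast$ a finite induced sum, so it commutes with $L$-dualization on levelwise dualizable objects) are all reasonable. The trouble is in the biduality step.

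Your plan is to first establish $R\simeq K^{-1}K(R)$ on free right $O$-modules, claiming this ``reduces to the double-dualization equivalence $X\simeq X^{\vee\vee}$,'' and then to derive $O\simeq K(K(O))$ at the end by specializing to $R=O$. This is circular. If one unwinds the free case carefully: $K(\mathrm{Free}_O(X))\simeq\mathrm{Triv}_{K(O)}(X^\vee)$, and $K^{-1}(\mathrm{Triv}_{K(O)}(X^\vee))\simeq \bigl(X^\vee\circ B(K(O))\bigr)^\vee$. Identifying this with $X\circ O$ as a right $O$-module is precisely the statement $B(K(O))^\vee\simeq O$, which \emph{is} the operad-level biduality $O\simeq K(K(O))$; it is already visible in the simplest case $X=1$, $R=O$, where $\mathrm{Indecom}_O(O)\simeq 1$ and the unit map becomes $O\to B(1,K(O),1)^\vee$. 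So the free-module base case does not reduce to $X\simeq X^{\vee\vee}$; it silently invokes the full strength of $O\simeq K(K(O))$, which therefore cannot be deduced at the end as a corollary. This biduality needs its own argument, typically an arity-wise finiteness/filtration argument exploiting reducedness (the bar construction $B(1,O,1)(n)$ is a finite realization), rather than a deduction from the right-module unit on free modules.

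Separately, the extension step has a real obstacle that the convergence remark does not resolve. Write $R\simeq |B_\bullet(R,O,O)|$. Since $K$ is a (contravariant) left adjoint it turns this geometric realization into a totalization, so $K(R)\simeq\mathrm{Tot}\,K(B_\bullet)$; but $K^{-1}$, being the (contravariant) right adjoint, sends colimits of $\mathrm{RMod}_{K(O)}$ to limits of $\mathrm{RMod}_O$, not limits to colimits, so it has no reason to commute with that totalization. Your assertion that levelwise dualizability ``forces each bar layer to be a finite coproduct of dualizable pieces, yielding the required convergence'' does not address this: the simplicial object $B_\bullet(R,O,O)$ is genuinely infinite, and the finiteness you want is \emph{arity-wise} (that $B(R,O,1)(n)$ is a finite colimit because $O$ is reduced), which is a different and stronger observation that should be made explicit, together with a truncation argument reducing to $\mathrm{RMod}_O^{\leq n}$ where the relevant diagrams are finite and so commute past both contravariant adjoints. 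As written the step is a gap.

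One smaller point: the proposition asserts a symmetric monoidal adjunction with \emph{no} dualizability hypothesis, whereas your argument only produces strong symmetric monoidality of $K$ ``on levelwise dualizable symmetric sequences.'' Without dualizability, $(-)^\vee:\mathrm{SymSeq}\to\mathrm{SymSeq}^{\mathrm{op}}$ is only oplax for $\circledast$, so some care is needed to say in what sense the adjunction is symmetric monoidal before restricting.
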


The quality of an operad being reduced is useful because it allows one to truncate right modules. More precisely, if $\mathrm{RMod}_O^{\leq n}$ denotes the full subcategory of $\mathrm{RMod}_O$ on right modules $R$ for which $R(m)\simeq \ast$ for $m>n$, then there is an adjunction
  \[ \adjunction{(-)^{\leq n}}{\mathsf{RMod}_{O}}{\mathsf{RMod}_{O}^{\leq n}}{\mathrm{inclusion}}\]
where $R^{\leq n}(m)=R(m)$ if $m \leq n$ and $R^{\leq n}(m)=\ast$ otherwise. We will use such notation throughout the paper to denote operations performed by truncation of the underlying symmetric sequence.

If $O$ is an operad in $L\s$, the \textit{envelope} $(\mathrm{Env}(O),\otimes)$ is an $(L\s,\wedge)$-enriched symmetric monoidal category associated to $O$ which completely encodes the operad $O$ \cite{envBMT}. The category $\mathrm{Env}(O)$ satisfies \[\mathrm{Env}(O)(n,1) = O(n),\] and the property that for objects $x_1,\dots, x_n$ and $y_1,\dots,y_m$ the natural map obtained using the symmetric monoidal structure 
\[\bigvee_{ \phi \in \mathrm{FinSet}(n,m)} \bigwedge_{i=1}^m \mathrm{Env}(O)(\otimes_{j \in \phi^{-1}(i)} x_j,y_i) \rightarrow \mathrm{Env}(O)(\otimes_{j=1}^n x_j, \otimes_{i=1}^m y_i)\]
is an equivalence.

The category $(\mathrm{Env}(O),\otimes)$ can be constructed \cite{envBMT} as the full symmetric monoidal subcategory
\[ \langle O^{\circledast n}\rangle_{n \in \mathbb{N}} = \langle \mathrm{Free}_O^\mathrm{RMod}(L \Sigma^\infty_+ \Sigma_n) \rangle_{n \in \mathbb{N}} \subset (\mathrm{RMod}_O,\circledast).\]
Dually, one has that $\mathrm{Env}(K(O))^\mathrm{op}$ is equivalent \cite{envBMT} to the full symmetric monoidal subcategory
\[ \langle 1^{\circledast n}\rangle_{n \in \mathbb{N}} = \langle \mathrm{Triv}_O^\mathrm{RMod}(L \Sigma^\infty_+ \Sigma_n) \rangle_{n \in \mathbb{N}} \subset (\mathrm{RMod}_O,\circledast).\]

\begin{definition}
    The $\mathrm{Cofree}^\mathrm{RMod}_O$ functor is the right adjoint of the $(\mathrm{Forget}_O,\mathrm{Cofree}^\mathrm{RMod}_O)$-adjunction
    \[
\adjunction{\mathrm{Forget}_O}{\mathrm{RMod}_O}{\mathrm{SymSeq}(L\s)}{\mathrm{Cofree}^\mathrm{RMod}_O}.
\]
\end{definition}

\begin{prop}\label{prop: cofree right module}
    If $S \in \mathrm{SymSeq}(L\s)$ and $O$ is an operad in $L\s$, there is an equivalence of symmetric sequences
    \[\mathrm{Cofree}^\mathrm{RMod}_O(S)(r) \simeq \prod_n \left[ \prod_{n \twoheadrightarrow r} L\s(O(n_1) \wedge \dots \wedge O(n_r), S(n)) \right]^{h\Sigma_n}.\]
\end{prop}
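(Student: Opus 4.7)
The plan is to verify the formula by unwinding the universal property of $\mathrm{Cofree}^{\mathrm{RMod}}_O$ as the right adjoint of the forgetful functor $\mathrm{Forget}_O : \mathrm{RMod}_O \to \mathrm{SymSeq}(L\s)$. Since the left adjoint of the forgetful functor is $\mathrm{Free}^{\mathrm{RMod}}_O(A) = A \circ O$, composing the two resulting adjunctions yields a natural equivalence
\[
\mathrm{Map}_{\mathrm{SymSeq}}(A, \mathrm{Cofree}^{\mathrm{RMod}}_O(S)) \simeq \mathrm{Map}_{\mathrm{SymSeq}}(A \circ O, S)
\]
for every symmetric sequence $A$, and by Yoneda it suffices to identify the right-hand side as $A$ varies over a generating family.

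To extract the value at level $r$ together with its $\Sigma_r$-action, I would test against $A = j_r(X)$, the symmetric sequence concentrated at level $r$ with value a $\Sigma_r$-spectrum $X \in L\s^{B\Sigma_r}$. Under the product decomposition $\mathrm{SymSeq}(L\s) \simeq \prod_n L\s^{B\Sigma_n}$ the left-hand side collapses to $\mathrm{Map}_{\Sigma_r}(X, \mathrm{Cofree}^{\mathrm{RMod}}_O(S)(r))$. The composition product formula, together with the fact that non-surjective arrangements contribute trivially because $O$ is reduced, gives
\[
(j_r(X) \circ O)(n) \simeq \bigl(X \wedge \bigvee_{f: \underline{n} \twoheadrightarrow \underline{r}} O(|f^{-1}(1)|) \wedge \dots \wedge O(|f^{-1}(r)|)\bigr)_{h\Sigma_r}.
\]
From here one unwinds the mapping spectrum in three standard steps: mapping out of the $\Sigma_r$-homotopy-orbits becomes $\Sigma_r$-equivariant mapping; mapping out of a wedge over surjections becomes a product; and the resulting $\Sigma_n$-action on that indexing set produces $h\Sigma_n$-fixed points when we additionally impose $\Sigma_n$-equivariance on $S(n)$. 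Yoneda applied to $X \in L\s^{B\Sigma_r}$ then identifies $\mathrm{Cofree}^{\mathrm{RMod}}_O(S)(r)$ with the claimed expression.

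The main obstacle is bookkeeping for the two commuting symmetric group actions: one has to verify that the $\Sigma_r$-action on the right-hand side of the claimed formula, coming from postcomposition of $\Sigma_r$ on the set of surjections $\underline{n} \twoheadrightarrow \underline{r}$ together with the induced reordering of the $r$ smash factors $O(|f^{-1}(i)|)$, agrees with the natural $\Sigma_r$-action on the level-$r$ value of a symmetric sequence. Once this is settled, the proposition follows formally from the chain of adjunctions outlined above.
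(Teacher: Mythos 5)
Your proof is correct and takes essentially the same route as the paper: the paper corepresents the level-$r$ evaluation by mapping out of $\mathrm{Free}^\mathrm{RMod}_O(L\Sigma^\infty_+\Sigma_r) = L\Sigma^\infty_+\Sigma_r\circ O$ and then applying the cofree adjunction, which is exactly your two-adjunction chain specialized to $A$ supported in degree $r$, with Yoneda in $L\s^{B\Sigma_r}$ replacing the explicit corepresentation by $L\Sigma^\infty_+\Sigma_r$. Your observation that restricting to surjections $n\twoheadrightarrow r$ uses $O(0)\simeq\ast$ is accurate; the paper leaves this implicit because its standing conventions take $O$ reduced.
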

\begin{proof}
    This description is dual to the formula for the free right $O$-module $S \circ O$ and can be obtained by observing
    \[\mathrm{Cofree}^\mathrm{RMod}_O(S)(r)\simeq \mathrm{RMod}_O(L \Sigma^\infty_+ \Sigma_r \circ O, \mathrm{Cofree}^\mathrm{RMod}_O(S))\]
    and using freeness to compute the righthand side.
\end{proof}

\begin{definition}
     The right $O$-module comonad $\mathrm{RM}_{O}$ is the comonad on $\mathrm{SymSeq}(L\s)$ associated to the $(\mathrm{Forget}_O,\mathrm{Cofree}^\mathrm{RMod}_O)$-adjunction.
\end{definition}

It was observed in \cite[Definition 6.9]{ACClassification} that the $(\mathrm{Forget}_O,\mathrm{Cofree}^\mathrm{RMod}_O)$-adjunction was comonadic.  As such, a right module structures on $S \in \mathrm{SymSeq}(L\s)$ is equivalent to the data of coherent structure maps
\[S(r) \rightarrow \prod_n \left[ \prod_{n \twoheadrightarrow r} L\s(O(n_1) \wedge \dots \wedge O(n_r), S(n)) \right]^{h\Sigma_n}.\]

We end this section with a result about the interaction of Koszul duality and Bousfield localization.

\begin{prop}\label{prop: localizing koszul dual}
    Suppose $O \in \mathrm{Operad}(L\s)$ and $L'L\s$ is a Bousfield localization of $L\s$. If both $O$ and $R \in \mathrm{RMod}_O$ are levelwise dualizable, then
    \[L'K(R) \simeq K(L'(R)).\]
\end{prop}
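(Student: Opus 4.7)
The plan is to unravel the Koszul dual via the formula from Proposition \ref{prop: koszul duality right modules} and then commute $L'$ past its two constituents separately: forming indecomposables, and Spanier--Whitehead duality. Concretely, that proposition identifies the underlying symmetric sequence of $K(R)$ with $\mathrm{Indecom}_O(R)^\vee$, so it suffices to produce levelwise equivalences $L'(\mathrm{Indecom}_O(R)(n)^\vee) \simeq \mathrm{Indecom}_{L'O}(L'R)(n)^\vee$ in $L'L\s$, naturally in $R$.

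First, I would show that $L'$ commutes with the formation of indecomposables. One can write $\mathrm{Indecom}_O(R)$ as the cofiber $\mathrm{cofib}(R \circ \bar{O} \to R)$, where $\bar{O}$ denotes the augmentation ideal of $O$, or equivalently as the geometric realization of a two-sided bar construction $B(1, O, R)$. Since $L'$ is a symmetric monoidal left adjoint, it preserves colimits and $\wedge$, and hence the composition product of symmetric sequences as well, which gives $L' \mathrm{Indecom}_O(R) \simeq \mathrm{Indecom}_{L'O}(L'R)$.

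Next, strong monoidality of $L'$ yields $(L'Y)^\vee \simeq L'(Y^\vee)$ in $L'L\s$ for any dualizable $Y \in L\s$, so it remains to check that $\mathrm{Indecom}_O(R)$ is levelwise dualizable. Since $O$ is reduced, the summands of $(R \circ \bar{O})(n)$ are indexed by partitions $n = n_1 + \cdots + n_k$ with each $n_i \geq 2$, and are therefore finite in number; each summand is a smash product of dualizable spectra, so $(R \circ \bar{O})(n)$ is dualizable, and hence so is its cofiber into $R(n)$. Chaining the equivalences now gives
\[
L'K(R)(n) \simeq L'(\mathrm{Indecom}_O(R)(n)^\vee) \simeq L'(\mathrm{Indecom}_O(R)(n))^\vee \simeq \mathrm{Indecom}_{L'O}(L'R)(n)^\vee \simeq K(L'R)(n).
\]

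The main obstacle I anticipate is upgrading this equivalence of underlying symmetric sequences into one of right modules over $K(L'O) \simeq L'K(O)$. Every step above is natural in $R$, and the right-module structures on both sides arise from the action of the coendomorphism operad of $1$ by postcomposition, an operation preserved by the symmetric monoidal functor $L'$; the operad-level identification $L'K(O) \simeq K(L'O)$ is then obtained by running the same argument for the trivial right $O$-module $R = 1$, which is levelwise dualizable.
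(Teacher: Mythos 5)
Your approach matches the paper's: reduce to underlying symmetric sequences, commute $L'$ past the bar construction (indecomposables) because $L'$ is a symmetric monoidal left adjoint, then commute $L'$ past $(-)^\vee$ on dualizable objects, with dualizability of $\mathrm{Indecom}_O(R)(n)$ coming from reducedness of $O$.

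However, there is a genuine error in how you argue dualizability. You claim that $\mathrm{Indecom}_O(R)$ is "the cofiber $\mathrm{cofib}(R \circ \bar{O} \to R)$, or equivalently the geometric realization of $B(1,O,R)$." These are not equivalent. The cofiber is only the one-step truncation of the bar construction $B(R,O,1)$; the full indecomposables $R\circ_O 1$ are the entire realization. A quick sanity check: for $R=O$ itself, $\mathrm{Indecom}_O(O)\simeq 1$ (extra degeneracy), but already in arity $4$ the cofiber $\mathrm{cofib}(O\circ\bar{O}\to O)(4)$ picks up a nontrivial contribution from the partition $4=2+2$, so it is not the indecomposables. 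Your dualizability argument is therefore applied to the wrong object; you have shown the cofiber is dualizable, not $\mathrm{Indecom}_O(R)(n)$.

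The gap is easily filled and your reducedness observation is exactly the right idea — it just has to be applied to the bar construction, as the paper does. Because $\bar{O}$ is concentrated in arities $\geq 2$, each nondegenerate $p$-simplex of $B(R,O,1)$ contributes only to arities $\geq 2p$ (roughly), so in a fixed arity $n$ the simplicial object $p\mapsto (R\circ O^{\circ p})(n)$ becomes degenerate for large $p$. Hence $B(R,O,1)(n)$ is a finite geometric realization. Each $p$-simplex is a finite wedge of smash products of dualizable spectra (the $\Sigma_k$-orbits in the composition product are free on the indexing surjections, so the homotopy orbits are strict quotients), so dualizability follows. With this correction, the rest of your proof goes through, and it coincides with the paper's argument; your final paragraph about the module structure is a reasonable extra precaution, though the proposition as stated only asserts an equivalence of underlying symmetric sequences and the paper accordingly stops there.
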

\begin{proof}
It suffices to show the natural map is an equivalence on the underlying symmetric sequences, i.e.
\[L'(\mathrm{Indecom}_O(R)^\vee)\simeq (\mathrm{Indecom}_{L'O}L'R)^\vee.\]
The indecomposables have a bar construction model $B(R,O,1)$ and since $L'$ is both symmetric monoidal and commutes with colimits\footnote{All Bousfield localizations commute with colimits since we always assume colimits of local spectra are computed in the category of local spectra.} 
\[L'(\mathrm{Indecom}_O(R))\simeq (\mathrm{Indecom}_{L'O}L'R).\]
Thus the question reduces to when $L'$-localization of the $L$-local dual of an $L$-local spectrum is equivalent to the $L'$-local dual of the $L'$-localization. This is the case for all dualizable $L$-local spectra. The hypotheses that $O$ is reduced and both $O$ and $R$ are levelwise dualizable imply that the geometric realization $B(R,O,1)(n)$ is a finite geometric realization, and so is a finite colimit of dualizable objects, hence is dualizable.
\end{proof}

\subsection{Divided power right modules over an operad} \label{section: right module primitive stuff}
If $G$ is a finite group, the norm \cite{klein_2001} is a certain natural map associated to a Borel $G$-spectrum $X \in L\s^{BG}$
\[X_{hG} \rightarrow X^{hG}.\]
The norm witnesses the homotopy orbits as the finitary approximation to homotopy fixed points as functors $L\s^{BG} \rightarrow L\s$ \cite{klein_2002}. In operad theory, the nomenclature ``divided powers'' is used to denote the extra data of a lift or an extension, depending on if the setting is algebraic or coalgebraic, along a $\Sigma_n$-norm.  This terminology is meant to invoke the emotions associated to the classical definition of a divided power polynomial ring
\[k[x,\frac{x^2}{2},\frac{x^3}{3!},\dots, \frac{x^n}{n!},\dots].\]
This name is justified since $1$-categorically the norm from $\Sigma_n$-orbits to $\Sigma_n$-fixed points is defined by
\[[x] \mapsto \sum_{g \in \Sigma_n} gx,\]
and to find a section of this requires division by $n!$. In the case the coefficient ring is a characteristic $0$ field, this is always possible, and one of many reasons why working over such fields simplifies life.

It is somewhat subtle to define divided powers for (co)algebraic objects over spectral operads because it is not obvious how to encode the necessary higher coherences. We will encode such coherences through a certain type of bar-cobar duality which is based on the indecomposables of operadic comodules. Bar-cobar dualities of this nature were originally studied in \cite{crosseffectsclassification,AroneChingManifolds}, but seem to be otherwise neglected in the literature. The connection between this bar-cobar duality and the one described in the previous section seems rather subtle, though can be understood in special cases through Goodwillie calculus, see Proposition \ref{prop: derivatives of polynomial} and also Lemma \ref{lem: representables in alg}.

\begin{definition}
        If $O$ is an operad in $L\s$, the category of right $O$-comodules is \[\mathrm{RComod}_O :=\mathrm{Fun}_{L\s}(\mathrm{Env}(O),L\s).\]
    \end{definition}

This definition is motivated by a formally dual characterization of right $O$-modules
\[\mathrm{RMod}_O \simeq\mathrm{Fun}_{L\s}(\mathrm{Env}(O)^\mathrm{op},L\s),\]
 which $1$-categorically \cite[Appendix A]{ACOperads} \cite[\S10.1]{MayZhangZou} is well known, and $\infty$-categorically will appear in \cite{envBMT}. If $O$ is augmented, then taking envelopes yields a symmetric monoidal functor
\[\mathrm{Env}(O) \rightarrow \bigsqcup_{n \in \mathbb{N}} L\Sigma^\infty_+ B{\Sigma_n} = \mathrm{Env}(1).\]

\begin{definition}
    If $O \rightarrow 1$ is an augmented operad, the $(\mathrm{Indecom}_O,\mathrm{Triv}^\mathrm{RComod}_O)$-adjunction
\[\adjunction{\mathrm{Indecom}_O}{\mathrm{RComod}_O}{\mathrm{SymSeq}(L\s)}{\mathrm{Triv}^\mathrm{RComod}_O}\]
    is given by induction and restriction along the augmentation of $O$.
\end{definition}

\begin{remark}
The indecomposables of a right $O$-comodule are a generalization of the indecomposables, or André-Quillen homology, of an $O$-algebra in the sense that there is a forgetful functor 
\[\mathrm{Alg}_O \rightarrow \mathrm{RComod}_O\]
\[A \mapsto \{A^{\wedge n}\}_{n\in \mathbb{N}}.\]
such that the diagram commutes:
\[\begin{tikzcd}
	{\mathrm{Alg}_O} & {L\s} \\
	{\mathrm{RComod}_O} & {\mathrm{SymSeq}(L\s)}
	\arrow["{\mathrm{Indecom}_O}"', from=1-1, to=1-2]
	\arrow["{(-)^{\wedge \ast}}", from=1-1, to=2-1]
	\arrow["{(-)^{\wedge \ast}}"', from=1-2, to=2-2]
	\arrow["{\mathrm{Indecom}_O}", from=2-1, to=2-2]
\end{tikzcd}\]

\end{remark}

For the remainder of this section we fix an operad $O$ in $L\s$ which is reduced and levelwise dualizable. The following definition of a divided power right module appears to have been first considered by Ching \cite[Remark 9.4]{BehrensRezk}.

\begin{definition}\label{definition: dp rmod}
    The divided power right $K(O)$-module comonad $\mathrm{RM}^\mathrm{dp}_{K(O)}$ is the comonad on $\mathrm{SymSeq}(L\s)$ associated to the $(\mathrm{Indecom}_O,\mathrm{Triv}^\mathrm{RComod}_O)$-adjunction. We denote its coalgebra category
    \[\mathrm{RMod}^\mathrm{dp}_{K(O)}:=\mathrm{Coalg}_{\mathrm{RM}^\mathrm{dp}_{K(O)}}. \]
\end{definition}
We denote the associated functor lifting $\mathrm{Indecom}_O$ to divided power right $K(O)$-modules by 
\[B:\mathrm{RComod}_O \rightarrow \mathrm{RMod}^\mathrm{dp}_{K(O)}.\]

In general, we write \[\mathrm{RM}^\mathrm{dp}_{O}:=\mathrm{RM}^\mathrm{dp}_{K(K(O))},\] and this is justified since for reduced, levelwise dualizable operads \[O \simeq K(K(O)).\]

The category $\mathrm{RComod}_O$ is defined as enriched presheaves on a symmetric monoidal enriched category, and so it inherits a Day convolution \cite[8.3.3]{HinichColimits}. Unlike the Day convolution of right modules, this depends on the right $O$-comodule structure rather than just the underlying symmetric sequence.

\begin{remark}
For formal reasons, $\mathrm{Indecom}_O$ is symmetric monoidal with respect to day convolution, and so its adjoint $\mathrm{Triv}_O^\mathrm{RComod}$  is lax symmetric monoidal. As a consequence of \cite[Theorem 1.10, Theorem 1.11]{heine2025monadicitytheoremhigheralgebraic}, $(\mathrm{Coalg}_{\mathrm{RM}^\mathrm{dp}_{K(O)}},\circledast)$ is a symmetric monoidal category
and the forgetful functor to symmetric sequences is symmetric monoidal. In other words it is computed via the Day convolution of the underlying symmetric sequences. We won't make substantial use of this symmetric monoidal structure, though it does appear in a later characterization of polynomial functors.
\end{remark}

It is expected that the $(\mathrm{Indecom}_O,\mathrm{Triv}^{\mathrm{RComod}}_O)$-adjunction is comonadic on bounded right $O$-comodules. This is proved by Arone-Ching in the case $O$ is $\Sigma$-finite \cite[Proposition 2.7]{AroneChingManifolds}, in which case divided power right modules are equivalent to right modules, though this fact is not yet obvious from our definition.

We now justify the use of the term ``divided powers''. As in \cite[Equation 2.6]{AroneChingManifolds}, one can supply the sequence of right $K(O)$-modules \[\mathrm{Tr}:=\{\mathrm{Triv}^{\mathrm{RMod}}_{K(O)}(L\Sigma^\infty_+\Sigma_n)\}_{n \in \mathbb{N}}\]
with the structure of a presheaf \footnote{This object is equivalently a right $O$-module in right $K(O)$-modules.}
\[\mathrm{Tr}:\mathrm{Env}(O)^\mathrm{op} \rightarrow \mathrm{RMod}_{K(O)}\]
\[\mathrm{Tr}(n) := K( \mathrm{Free}_O^{\mathrm{RMod}}(L\Sigma^\infty_+\Sigma_n) ).\]

If $C$ is an $O$-comodule, the symmetric sequence $\mathrm{Indecom}_O(C)$ may be explicitly computed as the coend 
\[\mathrm{Indecom}_O(C)\simeq \int^{n \in \mathrm{Env}(O)} \mathrm{Tr}(n) \wedge C(n),\]
and the presheaf structure on $T$ endows $\mathrm{Indecom}_O(C)$ with the structure of a right $K(O)$-module compatible with its coalgebra structure. We denote this functor\footnote{We have intentionally given this the same name as $B: \mathrm{RComod}_O \rightarrow \mathrm{RMod}^\mathrm{dp}_{K(O)}$ and distinguish them by context.}
\[B: \mathrm{RComod}_O \rightarrow \mathrm{RMod}_{K(O)}.\]

Classically, the comonad induced by an adjuction is constructed by witnessing it as a \textit{coendomorphism object} \cite[Section 4]{HA}, and so $\mathrm{RM}^\mathrm{dp}_{K(O)}$ has a universal property with respect to comonads coacting on $\mathrm{Indecom}_O$. Since we have produced a right $K(O)$-module structure on $\mathrm{Indecom}_O$, the universal property produces a map of comonads
\[\mathrm{RM}^\mathrm{dp}_{K(O)} \rightarrow \mathrm{RM}_{K(O)}.\]
As a consequence, a divided power right module has an underlying right module. The following appears as \cite[Proposition 3.67]{crosseffectsclassification} when $O =\Sigma^\infty_+ E_d$, and the given argument proves more generally:
\begin{prop}\label{prop: cofree divided power right modules}
   If $S$ is a bounded symmetric sequence, then
   \[\mathrm{Cofree}^{\mathrm{RMod}^\mathrm{dp}}_{O}(S)(r) \simeq \prod_n \left[ \prod_{n \twoheadrightarrow r} L\s(O(n_1) \wedge \dots \wedge O(n_r), S(n)) \right]_{h\Sigma_n},\]
   and the map of comonads
   \[\mathrm{RM}^\mathrm{dp}_{K(O)} \rightarrow \mathrm{RM}_{K(O)}\]
   is given on underlying symmetric sequences by the product of the norm maps.
\end{prop}

 In other words, a divided power right $O$-module structure is a lift, together with coherences, of a right $O$-module structure against the norm:
\begin{center}
\[\begin{tikzcd}
	& { \prod_n \left[ \prod_{n \twoheadrightarrow r} L\s(O(n_1) \wedge \dots \wedge O(n_r), R(n)) \right]_{h\Sigma_n}} \\
	{R(r)} & { \prod_n \left[ \prod_{n \twoheadrightarrow r} L\s(O(n_1) \wedge \dots \wedge O(n_r), R(n)) \right]^{h\Sigma_n}}
	\arrow[from=1-2, to=2-2]
	\arrow[dashed, from=2-1, to=1-2]
	\arrow[from=2-1, to=2-2]
\end{tikzcd}\]
\end{center}

In Arone--Ching's original work on the classification of polynomial functors \cite{ACClassification}, a different method of encoding such coherences was produced for $O=\mathrm{lie}$ which used Goodwillie calculus. It is a formal consequence of Theorem \ref{thm: diff semiadditive classification} that the Goodwillie calculus approach to divided power right $\mathrm{lie}$-modules agrees with the bar-cobar duality approach.
 
In any coalgebra category, one may consider the comonadic resolution of a coalgebra. Mapping into this yields a resolution of the mapping objects in the category.
Denote the $n$-fold composition of a functor $F$ by $F^n$. If $R,S$ are right $K(O)$-modules, this yields totalization formulas for mapping spectra
\[\mathrm{RMod}_O(R,S) \simeq \mathrm{Tot}(\mathrm{SymSeq}(R,\mathrm{RM}_O^\bullet(S))), \]
and if they are equipped with divided powers, then
\[\mathrm{RMod}^\mathrm{dp}_O(R,S) \simeq \mathrm{Tot}(\mathrm{SymSeq}(R,(\mathrm{RM}_O^\mathrm{dp})^\bullet(S))). \]

Generally, mapping objects in the category of right modules are much easier to understand than in the category of divided power right modules. From the perspective of these cosimplicial resolutions, this is due to the fact that $\mathrm{SymSeq}(-,-)$ is computed via $\Sigma_n$-homotopy fixed points, and it is more difficult to compute homotopy fixed points of homotopy orbits than it is to computed iterated homotopy fixed points. In particular, simply filtering the codomain by the truncation filtration $S^{\leq \ast}$ leads to a convergent filtration of $\mathrm{RMod}_O(R,S)$:
\begin{center}
\[\begin{tikzcd}
	& \vdots \\
	{L\s^{B\Sigma_{n+1}}(\mathrm{Indecom}_O(R)(n+1),S(n+1))} & {\mathrm{RMod}_O(R,S^{\leq n+ 1})} \\
	{L\s^{B\Sigma_n}(\mathrm{Indecom}_O(R)(n),S(n))} & {\mathrm{RMod}_O(R,S^{\leq n})} \\
	& \vdots
	\arrow[from=1-2, to=2-2]
	\arrow[from=2-1, to=2-2]
	\arrow[from=2-2, to=3-2]
	\arrow[from=3-1, to=3-2]
	\arrow[from=3-2, to=4-2]
\end{tikzcd}\]
\end{center}
Note that when restricted to levelwise dualizable operads and right modules, Koszul duality preserves the associated graded of this filtration
\[K(R)(n) \wedge^{h\Sigma_n} S(n) \simeq K(K(S))(n) \wedge^{h\Sigma_n} K(R)(n).\]

It is often useful to quantify the failure of the norm to be an equivalence through the \textit{Tate construction}
\[X^{tG}:=\mathrm{cofiber}(X_{hG} \rightarrow X^{hG}).\]
When $X^{tG}\simeq \ast$, we say $X$ has Tate vanishing. Tate vanishing can occur for a variety of reasons, not all of which interact well with the smash product. 
\begin{definition}
     We say that $X \in L\s^{BG}$ satisfies ideal Tate vanishing if for all $Y \in L\s^{BG}$, $X \wedge Y$ satisfies Tate vanishing.
\end{definition}

 The most common examples of Borel $G$-spectra with ideal Tate vanishing are the finite $G$-CW complexes. Another scenario in which ideal Tate vanishing occurs is when \textit{all} Tate constructions in $L\s^{BG}$ vanish. If this is the case for all finite groups $G$, the category is called $1$-semiadditive.

\begin{definition}
    Given an operad $O$ and a bounded symmetric sequence $S$, we say that $S$ satisfies $O$-ideal Tate vanishing if the map
    \[\mathrm{RM}_O^\mathrm{dp}(S) \rightarrow \mathrm{RM}_O(S)\]
    is an equivalence.
\end{definition}

\begin{lem}\label{lem: ideal implies o ideal}
    Suppose $O$ is an operad and $S$ is a bounded symmetric sequence and both are levelwise dualizable. If $S^\vee$ satisfies ideal Tate vanishing levelwise, then $S$ satisfies $O$-ideal Tate vanishing.
\end{lem}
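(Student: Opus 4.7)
The plan is to verify directly that the forgetful map $\mathrm{RM}_O^\mathrm{dp}(S) \to \mathrm{RM}_O(S)$ is an equivalence, by unfolding the explicit cofree formulas and reducing to a Tate vanishing statement that can be matched with the hypothesis on $S^\vee$ via Spanier-Whitehead duality.

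Combining Proposition \ref{prop: cofree right module} and Proposition \ref{prop: cofree divided power right modules}, the map under consideration can be written, for each arity $r$, as the product over $n$ of the norm maps
\[
[Z_n]_{h\Sigma_n} \longrightarrow [Z_n]^{h\Sigma_n},
\qquad
Z_n := \prod_{n \twoheadrightarrow r} L\s\bigl(O(n_1) \wedge \dots \wedge O(n_r),\, S(n)\bigr).
\]
Since $S$ is bounded, the outer product over $n$ is effectively finite, and it suffices to show $Z_n^{t\Sigma_n} \simeq \ast$ for each $n$. The inner product is indexed by the finite $\Sigma_n$-set of surjections $n \twoheadrightarrow r$ and is therefore a finite wedge in our stable setting. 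Using levelwise dualizability of $O$ to identify each factor as a smash, one obtains a $\Sigma_n$-equivariant equivalence
\[
Z_n \simeq V \wedge S(n), \qquad V := \bigvee_{n \twoheadrightarrow r} O(n_1)^\vee \wedge \dots \wedge O(n_r)^\vee,
\]
where $V$ is a finite wedge of smashes of dualizable spectra, hence itself dualizable in $L\s^{B\Sigma_n}$.

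Both $V$ and $S(n)$ being dualizable, $Z_n$ is dualizable in $L\s^{B\Sigma_n}$ with dual $Z_n^\vee \simeq V^\vee \wedge S(n)^\vee$. The key input is the self-duality of Tate vanishing on dualizable Borel $\Sigma_n$-spectra: for dualizable $X \in L\s^{B\Sigma_n}$, one has $X^{t\Sigma_n} \simeq \ast$ if and only if $(X^\vee)^{t\Sigma_n} \simeq \ast$. Granting this, the desired vanishing $Z_n^{t\Sigma_n} \simeq \ast$ is equivalent to $(V^\vee \wedge S(n)^\vee)^{t\Sigma_n} \simeq \ast$, which is exactly ideal Tate vanishing of $S(n)^\vee$ applied to $Y := V^\vee$.

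The main technical point is therefore the self-duality of Tate vanishing on dualizable objects, and this is where most of the care is needed. The argument I would give is that for dualizable $X \in L\s^{B\Sigma_n}$, both $X_{h\Sigma_n}$ and $X^{h\Sigma_n}$ are dualizable spectra, and Spanier-Whitehead duality intertwines them with $(X^\vee)^{h\Sigma_n}$ and $(X^\vee)_{h\Sigma_n}$ respectively (up to the standard Adams/Wirthm\"uller shift), sending the norm of $X$ to that of $X^\vee$; consequently one norm is an equivalence if and only if the other is, so the two Tate constructions vanish together. Alternatively, one can phrase the same self-duality as the assertion that the evaluation pairing $X \wedge X^\vee \to \mathbb{S}$ induces a perfect duality pairing $X^{t\Sigma_n} \wedge (X^\vee)^{t\Sigma_n} \to \mathbb{S}^{t\Sigma_n}$ of dualizable $\mathbb{S}^{t\Sigma_n}$-modules.
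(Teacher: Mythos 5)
Your proof follows the same route as the paper's: combine the explicit cofree formulas (Propositions \ref{prop: cofree right module} and \ref{prop: cofree divided power right modules}) to write the comparison map $\mathrm{RM}_O^\mathrm{dp}(S)(r) \to \mathrm{RM}_O(S)(r)$ as a product of $\Sigma_n$-norms of the dualizable spectra $Z_n \simeq V \wedge S(n)$, and then invoke the fact that Tate vanishing is insensitive to Spanier--Whitehead duality on dualizable Borel $G$-spectra to transfer the hypothesis on $S^\vee$ (applied with $Y = V^\vee$) to the desired vanishing of $Z_n^{t\Sigma_n}$. The paper states exactly this fact (phrased as $(X^\vee \wedge Y^\vee)^{tG}\simeq\ast \iff (X\wedge Y)^{tG}\simeq\ast$ for dualizable $X,Y$), cites the comonad formula, and concludes. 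So in structure you are aligned.

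However, the first of your two proposed justifications of the duality-invariance of Tate vanishing is wrong. For dualizable $X \in L\s^{B\Sigma_n}$ it is \emph{not} true that $X_{h\Sigma_n}$ and $X^{h\Sigma_n}$ are dualizable spectra: already for $X = L\bb{S}$ with the trivial action, $X_{h\Sigma_n} \simeq L\Sigma^\infty_+ B\Sigma_n$, which is not finite. Dualization always converts homotopy orbits to homotopy fixed points, $(X_{h\Sigma_n})^\vee \simeq (X^\vee)^{h\Sigma_n}$, but the reverse identification $(X^{h\Sigma_n})^\vee \simeq (X^\vee)_{h\Sigma_n}$ fails in general, so the claim that duality swaps the two sides of the norm and hence matches the norm maps does not go through. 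Your second, alternative justification — that the evaluation pairing induces a perfect pairing exhibiting $X^{t\Sigma_n}$ and $(X^\vee)^{t\Sigma_n}$ as dual dualizable $\bb{S}^{t\Sigma_n}$-modules — is the right idea: for $G$ finite, $(-)^{tG}$ is lax symmetric monoidal and its lax structure map is an equivalence against compact objects, so it carries dualizable Borel $G$-spectra to dualizable $\bb{S}^{tG}$-modules with $(X^{tG})^\vee \simeq (X^\vee)^{tG}$; since a dualizable module is zero iff its dual is zero, the two Tate constructions vanish together. You should drop the first argument and keep only the second (with the smash taken over $\bb{S}^{t\Sigma_n}$), since the paper itself just asserts the fact without proof.
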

\begin{proof}
    If $X,Y$ are both dualizable Borel $G$-spectra, then 
    \[(X^\vee \wedge Y^\vee)^{tG} \simeq \ast \iff (X \wedge Y)^{tG} \simeq \ast.\]
    This fact combined with the explicit description of the comonads of Proposition \ref{prop: cofree divided power right modules} implies the result since $O$ and $S$ are levelwise dualizable.
\end{proof}

\begin{prop}\label{prop: divided power mapping spectrum codomain tate vanish}
    Suppose $R,S$ are divided power right $O$-modules and that $S$ is bounded and has $O$-ideal Tate vanishing, then the natural map is an equivalence
    \[\mathrm{RMod}^\mathrm{dp}_O(R,S) \xrightarrow{\simeq} \mathrm{RMod}(R,S).\]
\end{prop}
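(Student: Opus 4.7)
I would use the cobar cosimplicial resolutions displayed in the paragraph immediately preceding the statement:
\[\mathrm{RMod}^\mathrm{dp}_O(R,S) \simeq \mathrm{Tot}\bigl(\mathrm{SymSeq}(R,(\mathrm{RM}_O^\mathrm{dp})^\bullet(S))\bigr), \qquad \mathrm{RMod}_O(R,S) \simeq \mathrm{Tot}\bigl(\mathrm{SymSeq}(R,\mathrm{RM}_O^\bullet(S))\bigr).\]
The comparison map in the statement is induced at each cosimplicial level by the comonad map $\mathrm{RM}_O^\mathrm{dp} \to \mathrm{RM}_O$, and since totalization preserves termwise equivalences of cosimplicial spectra, it is enough to show that the natural map
\[(\mathrm{RM}_O^\mathrm{dp})^k(S) \longrightarrow (\mathrm{RM}_O)^k(S)\]
is an equivalence of symmetric sequences for every $k \geq 0$.

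I would proceed by induction on $k$. The cases $k=0$ and $k=1$ are the identity and the hypothesis, respectively. For the inductive step, write $T := (\mathrm{RM}_O)^k(S) \simeq (\mathrm{RM}_O^\mathrm{dp})^k(S)$ (using the inductive hypothesis). Factoring the comparison map at level $k+1$ as
\[\mathrm{RM}_O^\mathrm{dp}((\mathrm{RM}_O^\mathrm{dp})^k(S)) \xrightarrow{\;\mathrm{RM}_O^\mathrm{dp}(\text{ind.\ hyp.})\;} \mathrm{RM}_O^\mathrm{dp}(T) \xrightarrow{\;\eta_T\;} \mathrm{RM}_O(T),\]
the first map is an equivalence by functoriality of $\mathrm{RM}_O^\mathrm{dp}$ applied to the inductive equivalence, and the second is an equivalence provided $T$ still satisfies $O$-ideal Tate vanishing. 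Everything thus rests on the technical subclaim: \emph{if $T$ is bounded and satisfies $O$-ideal Tate vanishing, then so does $\mathrm{RM}_O(T)$.} Boundedness of $\mathrm{RM}_O(T)$ is automatic from Proposition \ref{prop: cofree right module}, since the formula forces $\mathrm{RM}_O(T)(r) \simeq \ast$ for $r$ exceeding the bound of $T$.

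To establish the subclaim I would exploit the explicit formulas of Propositions \ref{prop: cofree right module} and \ref{prop: cofree divided power right modules}: both cofree constructions applied to $T$ at level $r$ are products over $n$ of $\Sigma_n$-fixed (resp.\ orbit) points of the same $\Sigma_n$-spectrum $A_{r,n}(T) := \prod_{n \twoheadrightarrow r} L\s(O(n_1) \wedge \dots \wedge O(n_r), T(n))$. Substituting $T(n) = \mathrm{RM}_O(S)(n)$ into the formula for $\mathrm{RM}_O(\mathrm{RM}_O(S))$ and commuting products and $L\s$-spectra past the inner homotopy fixed points, one expresses each $A_{r',r}(\mathrm{RM}_O T)$ as a finite homotopy limit of $\Sigma_r$-equivariant spectra built from the $A_{?,?}(T)$. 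Since $(-)^{t\Sigma_r}$ is exact and commutes with finite limits, the vanishing of the Tate constructions on the $A_{?,?}(T)$ (guaranteed by $O$-ideal Tate vanishing of $T$) propagates to vanishing on $A_{r',r}(\mathrm{RM}_O T)$, giving the subclaim.

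The hard part will be the bookkeeping in this rearrangement — in particular, tracking how the outer $\Sigma_r$-action permuting surjections $r \twoheadrightarrow r'$ and tensor factors of $O$ interacts with the inner $\Sigma_n$-fixed points appearing in $\mathrm{RM}_O(S)(n)$, and confirming that the total action is precisely the one for which the base-case vanishing for $T$ is applicable. A cleaner alternative that sidesteps much of this bookkeeping is to filter $S$ by its truncation $S^{\leq n}$; both mapping spectra are exact in the codomain variable, so a five-lemma argument on the associated finite filtrations would reduce the problem to the case where $S$ is concentrated in a single degree, in which case the cofree formulas collapse and the hypothesized Tate vanishing is immediately usable.
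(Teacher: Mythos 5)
Your proposal correctly unpacks what the paper's terse ``direct consequence'' gesture is actually asserting: the comparison map is a map of totalizations induced by the map of comonads $\mathrm{RM}_O^\mathrm{dp} \to \mathrm{RM}_O$, so one must show $(\mathrm{RM}_O^\mathrm{dp})^k(S) \to \mathrm{RM}_O^k(S)$ is an equivalence for all $k$, whereas the definition of $O$-ideal Tate vanishing supplies only $k=1$. Reducing to the subclaim that $\mathrm{RM}_O(T)$ inherits bounded $O$-ideal Tate vanishing from $T$ is exactly the right move, and the paper does not spell this out.

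The gap is in your proof of that subclaim. You argue that Tate vanishing for $A_{r',r}(\mathrm{RM}_O T)$ follows because $(-)^{t\Sigma_r}$ commutes with the ``finite homotopy limits'' used to assemble $\mathrm{RM}_O(T)$ out of the $A_{?,?}(T)$. But the limit hiding inside $\mathrm{RM}_O(T)(r) = \prod_n A_{n,r}(T)^{h\Sigma_n}$ is a homotopy limit over $B\Sigma_n$, which is \emph{not} finite, and the outer Tate construction does not commute with it. Concretely, $O$-ideal Tate vanishing of $T$ gives $A_{n,r}(T)^{h\Sigma_n} \simeq A_{n,r}(T)_{h\Sigma_n}$, but once you have passed to $\Sigma_n$-orbits the result can be a perfectly generic Borel $\Sigma_r$-spectrum (even with trivial residual action), and nothing a priori forces its Tate construction --- or the Tate constructions over the subgroups $\Sigma_{r_1} \times \cdots \times \Sigma_{r_{r'}}$ that appear as stabilizers of the non-free precomposition action of $\Sigma_r$ on surjections $r \twoheadrightarrow r'$ --- to vanish. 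This is precisely the ``homotopy fixed points of homotopy orbits'' difficulty the paper flags in the paragraph immediately preceding the statement. Your truncation-filtration fallback does not evade it either: once you reduce to $S$ concentrated in a single degree $n$, the comonad value $\mathrm{RM}_O(S)$ is again spread across degrees $\le n$, so the second cosimplicial level poses the identical problem. You have correctly located where the work lies, but what is missing is either a genuine equivariant argument that $O$-ideal Tate vanishing is closed under $\mathrm{RM}_O$ on bounded symmetric sequences, or a strengthening of the hypothesis to levelwise ideal Tate vanishing of $S^\vee$ (as in Lemma~\ref{lem: ideal implies o ideal}), which does visibly propagate through all iterates of the comonad.
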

\begin{proof}
    This is a direct consequence of the definition of $O$-ideal Tate vanishing and the cosimplicial formula for either mapping spectrum.
\end{proof}

There is a dual scenario in which we enforce Tate vanishing hypotheses on the domain. In that circumstance, the mapping spectrum in the category of divided power right modules behaves like an excisive approximation to the mapping spectrum of the underlying right modules.

\begin{prop}\label{prop: filtration when domain tate vanish}
    Suppose $R,S\in \mathrm{RMod}_O^\mathrm{dp}$, $R$ is levelwise dualizable, and $R^\vee$ has ideal Tate vanishing. The truncation filtration of $S$ induces a convergent filtration of $\mathrm{RMod}^\mathrm{dp}_O(R,S)$:
\[\begin{tikzcd}
	& \vdots \\
	{\mathrm{Indecom}_O(R)(n+1)^{\vee}\wedge_{h\Sigma_{n+1}}S(n+1)} & {\mathrm{RMod}^\mathrm{dp}_O(R,S^{\leq n+ 1})} \\
	{\mathrm{Indecom}_O(R)(n)^\vee\wedge_{h\Sigma_n}S(n)} & {\mathrm{RMod}^\mathrm{dp}_O(R,S^{\leq n})} \\
	& \vdots
	\arrow[from=1-2, to=2-2]
	\arrow[from=2-1, to=2-2]
	\arrow[from=2-2, to=3-2]
	\arrow[from=3-1, to=3-2]
	\arrow[from=3-2, to=4-2]
\end{tikzcd}\]
\end{prop}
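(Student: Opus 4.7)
The plan is to pull back the truncation filtration of $S$ to a tower of mapping spectra and identify the fibers using the comonadic cosimplicial resolution, with the ideal Tate vanishing of $R^\vee$ serving to convert the naturally occurring $\Sigma_r$-homotopy fixed points into $\Sigma_r$-homotopy orbits. The truncation functor $(-)^{\leq n}$ descends to divided power right $O$-modules, since the explicit formula of Proposition \ref{prop: cofree divided power right modules} shows that the cofree comonad $\mathrm{RM}^\mathrm{dp}_O$ preserves boundedness by $n$. This produces a cofiber sequence in $\mathrm{RMod}^\mathrm{dp}_O$
\[S^{\leq n} \longrightarrow S^{\leq n+1} \longrightarrow S_{n+1},\]
where $S_{n+1}$ is the symmetric sequence concentrated in degree $n+1$ with value $S(n+1)$, endowed with its induced divided power structure. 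Applying the exact functor $\mathrm{RMod}^\mathrm{dp}_O(R,-)$ yields the displayed fiber sequences, with fiber $\mathrm{RMod}^\mathrm{dp}_O(R,S_{n+1})$.

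The heart of the proof is the identification $\mathrm{RMod}^\mathrm{dp}_O(R,S_{n+1}) \simeq \mathrm{Indecom}_O(R)(n+1)^\vee \wedge_{h\Sigma_{n+1}} S(n+1)$. Using the comonadic resolution,
\[\mathrm{RMod}^\mathrm{dp}_O(R,S_{n+1}) \simeq \mathrm{Tot}\,\mathrm{SymSeq}(R, (\mathrm{RM}^\mathrm{dp}_O)^\bullet(S_{n+1})),\]
and by Proposition \ref{prop: cofree divided power right modules} each $(\mathrm{RM}^\mathrm{dp}_O)^k(S_{n+1})$ is concentrated in arities $\leq n+1$, with indexing tracking iterated surjections $n+1 \twoheadrightarrow \cdots \twoheadrightarrow r$ and carrying $\Sigma_{n+1}$-homotopy orbits from the source arity. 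Expanding $\mathrm{SymSeq}(R,-) = \prod_r L\s^{B\Sigma_r}(R(r),-)$ and using dualizability of $R$ rewrites each cosimplicial level as $\prod_r (R(r)^\vee \wedge -)^{h\Sigma_r}$. The ideal Tate vanishing of $R^\vee$ then replaces each $\Sigma_r$-fixed point by the corresponding $\Sigma_r$-orbit, after which the whole cosimplicial reorganizes into the cobar resolution of the dual of the standard bar construction $B(R,O,1)$ computing $\mathrm{Indecom}_O(R)$, smashed with $S(n+1)$ under the remaining external $\Sigma_{n+1}$-orbits. Its totalization yields the claimed formula.

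For convergence, I observe that at each fixed arity $r$ the symmetric sequence $S^{\leq n}(r)$ equals $S(r)$ once $n \geq r$, so $S \simeq \lim_n S^{\leq n}$ in $\mathrm{RMod}^\mathrm{dp}_O$. Since $\mathrm{RMod}^\mathrm{dp}_O(R,-)$ commutes with inverse limits, the tower $\{\mathrm{RMod}^\mathrm{dp}_O(R, S^{\leq n})\}_n$ converges to $\mathrm{RMod}^\mathrm{dp}_O(R,S)$.

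The main obstacle is the identification in the second step. While the analogous statement with codomain-side Tate vanishing (Proposition \ref{prop: divided power mapping spectrum codomain tate vanish}) collapses divided-power and ordinary right-module mapping spectra term-by-term via a single application of $O$-ideal Tate vanishing, here the ideal Tate vanishing of $R^\vee$ only directly affects the outer $\Sigma_r$-equivariance of the $\mathrm{SymSeq}$-pairing, not the inner $\Sigma_m$-orbit structure hardwired into the divided power comonad. The technical crux is that after converting the outer $\Sigma_r$-fixed points to orbits, the alternating pattern of internal and external orbits reassembles into the bar construction for the indecomposables with a single remaining external $\Sigma_{n+1}$-orbit, matching the formula and paralleling the ordinary right-module filtration displayed just before the proposition statement.
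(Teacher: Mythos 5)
Your strategy is genuinely different from the paper's, and it contains a gap exactly at the point you flag as "the main obstacle." The paper does not attempt to directly reorganize the cosimplicial totalization $\mathrm{Tot}\,\mathrm{SymSeq}(R,(\mathrm{RM}^\mathrm{dp}_O)^\bullet(S(n)))$ into a dual bar construction. Instead it first establishes the fiber identification when $S(n)$ is a \emph{free} Borel $\Sigma_n$-spectrum: in that case $S(n)$ has $O$-ideal Tate vanishing, so Proposition~\ref{prop: divided power mapping spectrum codomain tate vanish} collapses $\mathrm{RMod}^\mathrm{dp}_O(R,S(n))$ to $\mathrm{RMod}_O(R,S(n))$, the ordinary right-module filtration displayed just before the statement gives $\mathrm{Indecom}_O(R)(n)^\vee \wedge^{h\Sigma_n} S(n)$, and freeness turns fixed points into orbits. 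It then argues that $\mathrm{RMod}^\mathrm{dp}_O(R,S(n))$ preserves colimits in the variable $S(n)$ — here is where all three hypotheses do work: reducedness of $O$ makes the totalization finite, the $\mathrm{RM}^\mathrm{dp}_O$ formula is built from finite products and homotopy orbits, and ideal Tate vanishing of $R^\vee$ turns the $\mathrm{SymSeq}$ mapping-spectrum fixed points into orbits, so every level of the finite totalization commutes with colimits. Since free $\Sigma_n$-spectra generate $L\s^{B\Sigma_n}$ under colimits, the identification extends.

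Your proposal skips this reduction and instead asserts, after converting outer fixed points to orbits, that "the whole cosimplicial reorganizes into the cobar resolution of the dual of the standard bar construction $B(R,O,1)$ … smashed with $S(n+1)$." You then explicitly call this reassembly the technical crux and leave it unproved. This is precisely the content of the proposition — it is not a routine bookkeeping step. The cosimplicial object built from iterating $\mathrm{RM}^\mathrm{dp}_O$ interleaves homotopy orbits over many different symmetric groups with the $\Sigma_r$-equivariance of the $\mathrm{SymSeq}$ pairing; reassembling that into the dualized $O$-bar complex with a single external $\Sigma_{n+1}$-orbit requires a genuine argument (Fubini for orbits, finiteness of the realization, identification of cofaces), and it is not obviously true from the formulas alone. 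So while your outline points in a plausible direction and correctly identifies where all the hypotheses should enter, the proof as written is incomplete at its central step. If you want to salvage a direct computational proof, you would need to actually carry out the reassembly; otherwise, the paper's reduction-to-free-plus-colimit-preservation route is considerably cleaner and avoids the combinatorics entirely.
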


\begin{proof}
    The convergence follows from the convergence of the truncation filtrations in $\mathrm{RMod}_O^\mathrm{dp}$. It then suffices to show the equivalence
    \[\mathrm{RMod}^\mathrm{dp}_O(R,S(n)) \simeq \mathrm{Indecom}_O(R)(n)^\vee \wedge_{h\Sigma_n} S(n) \]
    where $S(n) = \mathrm{fiber} (S^{\leq n} \rightarrow S^{\leq n-1})$ is concentrated in degree $n$. This is true when $S(n)$ is a free Borel $\Sigma_n$-spectrum by Proposition \ref{prop: divided power mapping spectrum codomain tate vanish} and Tate vanishing. Thus, it suffices to show that $\mathrm{RMod}^\mathrm{dp}_O(R,S(n))$ preserves colimits in $S(n)$. By the reducedness of $O$, the totalization computing $\mathrm{RMod}^\mathrm{dp}_O(R,S(n))$ is a finite totalization; the formula for $\mathrm{RM}_O^\mathrm{dp}(S(n))$ is written using finite products and homotopy orbits; the mapping spectra of symmetric sequences appearing in the totalization are computed with homotopy orbits by the Tate vanishing hypotheses on $R^\vee$, and so the resulting totalization commutes with colimits in the variable $S(n)$, finishing the proof.
\end{proof}

\begin{remark}
    We expect that in the above scenario the mapping spectrum can be written as a comonadic cobar construction $\Omega(R^\vee,\mathrm{RM}_O^\mathrm{dp},S)$, similar to the divided power coalgebraic cobar constructions which appear in \cite{generalizedPoincare}. 
\end{remark}
\begin{remark}

The conclusion of Proposition \ref{prop: filtration when domain tate vanish} can be used to show that for a general $O$ there are not typically functors \[\mathrm{Free}^{\mathrm{RMod}_O^\mathrm{dp}}:\mathrm{SymSeq}(L\s) \rightarrow \mathrm{RMod}_O^\mathrm{dp},\] 
 \[\mathrm{Indecom}_O: \mathrm{RMod}_O^\mathrm{dp} \rightarrow \mathrm{SymSeq}(L\s)\]
defined, respectively, as left adjoints to $\mathrm{Forget}_O,\mathrm{Triv}_O^{\mathrm{RMod}^\mathrm{dp}}$.
This is because limits in $\mathrm{RMod}_O^\mathrm{dp}$, even those of divided power right modules concentrated in a single degree, are not necessarily computed levelwise.
\end{remark}

Earlier in the section, we recalled Arone--Ching's construction of the left adjoint in an adjunction which is in some sense an extension of bar-cobar duality between $O$-algebras and $K(O)$-coalgebras:
\[\adjunction{B}{\mathrm{RComod}_O}{\mathrm{RMod}_{K(O)}}{\Omega}.\]
For a general operad $O$, this adjunction is far from an equivalence. However, \cite[Proposition 2.7]{AroneChingManifolds} demonstrates that when $O$ is $\Sigma$-finite, or has some more general Tate vanishing hypotheses, this restricts to an equivalence on bounded right (co)modules. The following is readily extracted from the proof of \cite[Proposition 2.7]{AroneChingManifolds} when one observes that the $\Sigma$-finiteness of $K(O)$ used to prove \cite[
Lemma 3.71]{crosseffectsclassification} can be replaced by the assumption that
\[\mathrm{RM}_O^\mathrm{dp}(S) \rightarrow \mathrm{RM}_O(S) \]
is an equivalence, for a fixed $S$.

\begin{lem}\label{lem: counit of bar cobar is equiv on sigma finite}
    If $S$ is a bounded right $K(O)$-module which satisfies $K(O)$-ideal Tate vanishing, then the counit of the bar-cobar duality adjunction is an equivalence
    \[B \Omega S \xrightarrow{\simeq} S.\]
\end{lem}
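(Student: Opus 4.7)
The plan is to follow the template of Arone--Ching's proof of \cite[Proposition 2.7]{AroneChingManifolds}, isolating the one place where $\Sigma$-finiteness of $K(O)$ is used and showing that the weaker hypothesis of $K(O)$-ideal Tate vanishing on $S$ suffices there. First I would reduce to the case where $S$ is concentrated in a single degree: since $S$ is bounded, the truncation tower $S = S^{\leq N} \to S^{\leq N-1} \to \cdots \to S^{\leq 0} \simeq \ast$ is finite. The reducedness of $O$ (and hence of $K(O)$) implies that truncation of a right $K(O)$-module corresponds under $\Omega$ to truncation of a right $O$-comodule, and similarly for $B$; in particular, both $B$ and $\Omega$ preserve the respective truncation filtrations. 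Consequently the counit $B\Omega S \to S$ fits into a compatible map of finite towers, and it suffices to prove that it is an equivalence on each layer $S(n) = \mathrm{fiber}(S^{\leq n} \to S^{\leq n-1})$, a right $K(O)$-module concentrated in degree $n$. Since $K(O)$ is reduced, any such $S(n)$ is necessarily of the form $\mathrm{Triv}^{\mathrm{RMod}}_{K(O)}(W)$ for a Borel $\Sigma_n$-spectrum $W$, and $K(O)$-ideal Tate vanishing descends to each $S(n)$ by compatibility of $\mathrm{RM}^{\mathrm{dp}}_{K(O)}$ and $\mathrm{RM}_{K(O)}$ with truncations.

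Next I would unfold $B\Omega S(n)$ using the coend presentation $B(-) \simeq \int^{m \in \mathrm{Env}(O)} \mathrm{Tr}(m) \wedge (-)(m)$ together with the cofree formula for $\Omega$ derived from the $(B,\Omega)$-adjunction. The crucial observation is that $\Omega$ carries cofree right $K(O)$-modules to trivial right $O$-comodules: a direct Yoneda computation gives $\Omega \circ \mathrm{Cofree}^{\mathrm{RMod}}_{K(O)} \simeq \mathrm{Triv}^{\mathrm{RComod}}_O$, so that the underlying symmetric sequence of $B\Omega$ on such objects realizes $\mathrm{Indecom}_O \circ \mathrm{Triv}^{\mathrm{RComod}}_O = \mathrm{RM}^{\mathrm{dp}}_{K(O)}$ by Definition \ref{definition: dp rmod}. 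Tracing the adjunction counit through the identification on a single-degree layer $S(n)$ should recover the canonical natural transformation of comonads $\mathrm{RM}^{\mathrm{dp}}_{K(O)} \to \mathrm{RM}_{K(O)}$ evaluated at $S(n)$, which is an equivalence by the (inherited) hypothesis of $K(O)$-ideal Tate vanishing. Combined with the truncation reduction above, this would yield the claim.

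The main obstacle will be this explicit identification of the counit on a single-degree layer with the natural transformation $\mathrm{RM}^{\mathrm{dp}}_{K(O)} \to \mathrm{RM}_{K(O)}$. In the $\Sigma$-finite case addressed by \cite[Lemma 3.71]{crosseffectsclassification}, one freely interchanges homotopy orbits and homotopy fixed points via Spanier--Whitehead duality when matching the two presentations of cofree structures, and this is exactly where $\Sigma$-finiteness of $K(O)$ is invoked. The content of the preparatory remark is that this step depends only on the single input that $\mathrm{RM}^{\mathrm{dp}}_{K(O)}(S) \to \mathrm{RM}_{K(O)}(S)$ is an equivalence for the particular $S$ under consideration, which is precisely $K(O)$-ideal Tate vanishing. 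Once this replacement is in place, the rest of the Arone--Ching argument transfers verbatim to our setting.
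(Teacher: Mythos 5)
The paper does not really give a proof here: it simply points to Arone--Ching's proof of \cite[Proposition 2.7]{AroneChingManifolds} and notes that the one use of $\Sigma$-finiteness of $K(O)$ can be replaced by the hypothesis that $\mathrm{RM}^{\mathrm{dp}}_{K(O)}(S)\to\mathrm{RM}_{K(O)}(S)$ is an equivalence. You follow the same template, and two of your preparatory observations are correct and genuinely helpful: $B$ and $\Omega$ do preserve the truncation filtration of a bounded object (by reducedness of $K(O)$), and $\Omega\circ\mathrm{Cofree}^{\mathrm{RMod}}_{K(O)}\simeq\mathrm{Triv}^{\mathrm{RComod}}_O$ does follow by passing to right adjoints in $\mathrm{Forget}_{K(O)}\circ B\simeq\mathrm{Indecom}_O$.

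There is, however, a real error in the middle step. The relation $\Omega\circ\mathrm{Cofree}\simeq\mathrm{Triv}$ identifies the counit $B\Omega R\to R$, on underlying symmetric sequences, with the comonad map $\mathrm{RM}^{\mathrm{dp}}_{K(O)}(T)\to\mathrm{RM}_{K(O)}(T)$ \emph{only when $R=\mathrm{Cofree}^{\mathrm{RMod}}_{K(O)}(T)$ is cofree}. The single-degree layer $S(n)$ is a \emph{trivial} right $K(O)$-module concentrated in degree $n$, and a trivial module in one degree is not cofree: by Proposition \ref{prop: cofree right module}, $\mathrm{Cofree}^{\mathrm{RMod}}_{K(O)}(W)$ for $W$ in degree $n$ has nontrivial content in every degree $\leq n$. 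Correspondingly $\Omega(S(n))$ is $\mathrm{Prim}_{K(O)}(S(n))$, a genuine cobar-type object spread across degrees $\leq n$, not a trivial comodule, so $B\Omega S(n)$ is \emph{not} $\mathrm{RM}^{\mathrm{dp}}_{K(O)}(S(n))$ and the counit on $S(n)$ is not the comonad norm map. What you have actually proved is that the counit is an equivalence on cofree objects built from $S(n)$. Passing from there to $S(n)$ itself requires running the comonadic cobar resolution $S(n)\simeq\mathrm{Tot}\bigl(\mathrm{Cofree}^{\bullet+1}(S(n))\bigr)$, checking that $B\Omega$ commutes with this totalization (which uses that reducedness of $K(O)$ makes it degreewise finite on bounded input), and checking that the required Tate vanishing is available at every iterate $\mathrm{RM}^{k}_{K(O)}(S(n))$, not just at $S(n)$. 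Your sketch addresses neither, and the second point is exactly where the delicacy of the weakened hypothesis lives.

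You do flag the identification step as ``the main obstacle'' and ultimately defer to the $\Sigma$-finite argument, so the net content of your proposal is the same pointer to Arone--Ching that the paper gives. But the explicit claim that tracing the counit on $S(n)$ recovers $\mathrm{RM}^{\mathrm{dp}}_{K(O)}(S(n))\to\mathrm{RM}_{K(O)}(S(n))$ is false and should be replaced by the cofree/cobar-descent argument above before the sketch could be completed.
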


\section{Right modules in Goodwillie calculus}\label{section: right modules in goodwillie calculus}

\subsection{Right module structures on Goodwillie derivatives}\label{section: right module construction}
We fix a compactly generated, differentiable category $C$ together with an identification $\s(C) \simeq L\s$ to some Bousfield localization of $\s$. We denote the stabilization adjunction
\[\adjunction{\Sigma^\infty_C}{C}{L\s}{\Omega^\infty_C}.\]
Note that because $C$ is compactly generated $L\s= \s(C)$ must be as well.

We are motivated by three main examples: 

\begin{enumerate}
    \item  The category of pointed spaces
\[\adjunction{\Sigma^\infty}{\T}{\s}{\Omega^\infty},\]
\item The category of $v_h$-periodic pointed spaces $\T_{v_h}$, the object of study in unstable chromatic homotopy theory \cite{telescopichomotopybous},
\[\adjunction{\Sigma_{T(h)}^\infty}{\T_{v_h}}{\s_{T_{h}}}{\Omega^\infty_{T(h)}};\]
This category is compactly generated \cite[Proposition 3.14]{heutsannals}, and $\s(\T_{v_h})$ is identified with the localization of $\s$ at the telescopic spectrum $T(h)$ \cite[Remark 3.20]{heutsannals}.
\item The category of algebras over a reduced operad $O \in \mathrm{Operad}(L\s)$
\[\adjunction{\mathrm{Indecom}_O}{\mathrm{Alg}_O}{L\s}{\mathrm{Triv}^\mathrm{Alg}_O};\]
Here the stabilization functor is given by taking $O$-algebra indecomposables \cite{taqstabilization,Pereira_2013}. Its compact generation is witnessed by the free algebras on the compact generators of $L\s$. 
\end{enumerate}
One can produce additional interesting examples of $C$ such that $\s(C) \simeq L\s$ by applying Heuts's categorical Goodwillie approximations \cite{HeutsGoodwillieApprox} to any of the above examples.

Examples of compactly generated $L\s$ to keep in mind are $\s,(\s)_\mathbb{Q},\s_{T(h)},$ and $\s_{K(h)}$, where $K(h)$ is Morava $K$-theory \cite[Theorem 7.1]{Hovey1999MoravaKA}. If $L\s$ were not compactly generated, then the categories of algebras over a reduced $O \in \mathrm{Operad}(L\s)$ would not be compactly generated. Some notable examples of Bousfield localizations with no nontrivial compact objects at all are the localizations at $H\mathbb{F}_p$ and $\bigvee_{h \in \mathbb{N}} K(h)$ \cite[Corollary B.13.]{Hovey1999MoravaKA}.

Goodwillie calculus \cite{GoodCalcIII} was introduced to study categories of finitary  functors $\mathrm{Fun}^\mathrm{\omega}(C,D)$, i.e. those functors which preserves filtered colimits. Our requirement that $C$ is compactly generated is motivated entirely by the fact that restriction to the compact objects $C^\omega \subset C$ induces an equivalence
\[\mathrm{Fun}^\mathrm{\omega}(C,D) \simeq \mathrm{Fun}(C^\mathrm{\omega},D).\]
This equivalence allows for us to easily identify the right adjoints of colimit preserving functors out of $\mathrm{Fun}^\mathrm{\omega}(C,L\s)$ in terms of enriched natural transformations as a consequence of the Yoneda lemma. 

Goodwillie calculus attempts to approximate such functors by increasingly excisive\footnote{A functor is $n$-excisive if it takes strongly cocartesian $(n+1)$-cubes to cartesian $(n+1)$-cubes.} functors. If $\s(C)\simeq \s(D) \simeq L\s$, the Goodwillie tower of a finitary functor $G:C \rightarrow D$ takes the form

\adjustbox{scale=.9,center}{
\begin{tikzcd}
	&& G \\
	{P_\infty (G)} & \cdots & {P_n(G)} & \cdots & {P_1(G)} & {P_0(G)} \\
	&& {\Omega^\infty_D(\partial_nG \wedge_{h\Sigma_n} \Sigma^\infty_C(-)^{\wedge n})} && {\Omega^\infty_D(\partial_1G \wedge \Sigma^\infty_C(-))}
	\arrow[curve={height=6pt}, from=1-3, to=2-1]
	\arrow[from=1-3, to=2-3]
	\arrow[curve={height=-6pt}, from=1-3, to=2-5]
	\arrow[curve={height=-6pt}, from=1-3, to=2-6]
	\arrow[from=2-1, to=2-2]
	\arrow[from=2-2, to=2-3]
	\arrow[from=2-3, to=2-4]
	\arrow[from=2-4, to=2-5]
	\arrow[from=2-5, to=2-6]
	\arrow[from=3-3, to=2-3]
	\arrow[from=3-5, to=2-5]
\end{tikzcd}
}
The Borel $\Sigma_n$-spectra $\partial_n G \in L\s^{B\Sigma_n}$ are called the derivative spectra. The derivatives completely classify the functors which appear as the $n$th fiber of a Goodwillie tower. Such fibers are known as the $n$-homogeneous functors. It is useful to bundle all the derivatives together into a symmetric sequence \[\partial_\ast F :=\{\partial_n F\}_{n \in \mathbb{N}} \in \mathrm{SymSeq}(L\s).\] 

 The observation of Arone-Ching \cite{ACOperads}, proven universally by Blans-Blom \cite{blans2024chainrulegoodwilliecalculus}, is that if $F$ is a reduced functor\footnote{$F$ is reduced if $F(\ast)=\ast$.}, composable with $G$, then the symmetric sequences $\{\partial_n F\}_{n \in \mathbb{N}},\{\partial_n G\}_{n \in \mathbb{N}}$ can be equipped with certain operadic module structures which can be used to reconstruct the symmetric sequence $\partial_\ast (G \circ F)$ via the \textit{chain rule}.

Instead of using functor composition, our approach is loosely based on generalizing the duality between commutative models of rational spaces \cite{sullivanrational} and Lie algebra models of rational spaces \cite{quillenrational}. We associate to $c \in C^\omega$ an operadic coalgebra structure on $\Sigma^\infty_C c$ (the ``Sullivan part'') and through Koszul duality we extract information about the Goodwillie derivatives of the representable functor associated to $c$ (the ``Quillen part''). This approach uses Spanier-Whitehead duality in a fundamental way, and so various finiteness assumptions show up in our treatment.

We say that an object $c \in C$ is compact if $\mathrm{Map}(c,-)$ is finitary, i.e. commutes with filtered colimits. We say that a spectrum $X \in L\s$ is finite if it is built out of finitely many local cells, in particular this holds if $X$ is the localization of a finite complex. We write
\[X^\vee:= L\s(X,L\mathbb{S})\]
for the $L$-local Spanier-Whitehead dual of $X$. For $X,Y \in L\s$ there is a canonical map
\[X^\vee \wedge Y \rightarrow L\s(X,Y),\]
and we call $X$ dualizable if this map is an equivalence for all $Y$. It is important to note that in $L\s$ the following implications hold
\[\mathrm{compact} \implies \mathrm{finite} \implies \mathrm{dualizable}.\]
The first implication holds by a standard retraction argument, and the second follows from the fact that $L\bb{S}$ is dualizable. Furthermore, all of these qualities are closed under retracts.

\begin{definition}
    A category $C$ is differentially dualizable if $\partial_\ast (\Sigma^\infty_C\Omega^\infty_C)$ is levelwise dualizable.
\end{definition}

\begin{ex}
    If $O\in \mathrm{Operad}(L\s)$ is a reduced, levelwise dualizable operad, then $\mathrm{Alg}_O$ is differentially dualizable by a direct computation. This is because there is a splitting (see \cite[pg. 123]{chingHarper})
    \[\mathrm{Indecom}_O \circ \mathrm{Triv}_O^\mathrm{Alg} \simeq \bigvee_{i \in \mathbb{N}} B(O)(i) \wedge_{h\Sigma_i} \mathrm{Id}_{L\s}^{\wedge i} \]
    which implies \[\partial_\ast( \mathrm{Indecom}_O \circ \mathrm{Triv}_O^\mathrm{Alg}) \simeq B(O).\] 
On the other hand, \cite[Theorem 11.3]{Pereira_2013} asserts there is an equivalence of symmetric sequences \[\partial_\ast \mathrm{Id}_{\mathrm{Alg}_O} = O.\]
Hence for $C=\mathrm{Alg}_O$, bar-cobar duality implies $\partial_\ast (\Sigma^\infty_C \Omega^\infty_C)$ is levelwise dualizable, if and only if $\partial_\ast \mathrm{Id}_C$ is levelwise dualizable.  This relationship generalizes to arbitrary $C$ as a consequence of the chain rule \cite[Theorem 4.3.1]{blans2024chainrulegoodwilliecalculus}
\[\partial_\ast (\Sigma^\infty_C \Omega^\infty_C) \simeq B(1,\partial_\ast \mathrm{Id}_C,1).\]
Therefore, $C$ is differentially dualizable exactly when $\partial_\ast \mathrm{Id}_C$ is levelwise dualizable.
\end{ex}

Every stable presentably symmetric monoidal category with local morphism spectra has two canonical enrichments in $\mathrm{SymSeq}(L\s,\circ)$ \cite{envBMT} which are informally given by 
\[\mathrm{Hom}((-)^{\otimes n},(-)),\]
\[\mathrm{Hom}((-),(-)^{\otimes n}).\]
The endomorphism object with respect to the first enrichment define the endomorphism operad $\mathrm{End}(-)$ while the endomorphism object with respect to the second define the coendomorphism operad $\mathrm{CoEnd}(-)$. We primarily make use of the second enrichment, and we denote it $\mathrm{Hom}^{\Sigma}(-,-).$

Via adjunction, the natural transformation spectrum $\nat(F,G) \in L\s$ between pointed functors $F,G:C \rightarrow L\s$ may be computed as the enriched natural transformations between the enriched functors \[{L}\Sigma^\infty F,{L}\Sigma^\infty G:{L}\Sigma^\infty C \rightarrow L\s.\] Here the notation $L\Sigma^\infty (-)$ denotes the functor
    \[{L}\Sigma^\infty: \mathrm{Cat}_{\T} \rightarrow \mathrm{Cat}_{L\s}\]
    which takes a category enriched in pointed spaces and pushes it forward along the unit $\T \rightarrow L\s$. When $c$ is compact, the enriched Yoneda lemma \cite{HinichYoneda} allows us to compute natural transformations out of \[{L}\Sigma^\infty C(c,-):C^\omega \rightarrow L\s.\] We denote this stabilized representable by ${L}\Sigma^\infty R_c$. We note that when $C$ is compact $L\Sigma^\infty R_c: C \rightarrow L\s$ is finitary. 

\begin{lem}\label{lem:dualizable natural trans spec}
    There is an equivalence of functors \[\mathrm{Fun}^\omega(L\s,L\s) \rightarrow \mathrm{SymSeq}(L\s)\]
    \[\partial_n(-)^\vee \xrightarrow{\simeq} \nat(-,\mathrm{Id}_{L\s}^{\wedge n}).\] When restricted to the subcategory of $\mathrm{Fun}^\omega(L\s,L\s)$ consisting of functors with $n$th derivative dualizable, it implies an equivalence
    \[\partial_n(-) \simeq \nat(-,\mathrm{Id}_{L\s}^{\wedge n})^\vee.\]
    
\end{lem}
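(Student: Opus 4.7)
The plan is to reduce the claim to the case of enriched representables and then extend by a density argument, since both sides are contravariant in $F$ and take colimits of finitary functors to limits. The derivative functor $\partial_n$ preserves colimits (for instance because $D_n$ is exact, being the fiber of the colimit-preserving localizations $P_n \to P_{n-1}$ in the stable setting), Spanier--Whitehead duality $(-)^\vee$ sends colimits to limits, and $\nat(-,G)$ is formally contravariant in the first variable. Applied to the enriched co-Yoneda resolution of any $F \in \mathrm{Fun}^\omega(L\s,L\s)$ by the representables $L\Sigma^\infty R_c$ in the pushforward $L\Sigma^\infty L\s$-enriched setting, this reduces the problem to exhibiting the equivalence naturally for $F = L\Sigma^\infty R_c$ with $c \in L\s^\omega$.

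For such representables, the enriched Yoneda lemma gives
\[\nat(L\Sigma^\infty R_c, \mathrm{Id}_{L\s}^{\wedge n}) \simeq \mathrm{Id}_{L\s}^{\wedge n}(c) = c^{\wedge n}\]
with $\Sigma_n$-action by permutation of the smash factors. On the derivative side, I would use that for compact (hence dualizable) $c$ the representable admits the factorization $L\Sigma^\infty R_c \simeq L\Sigma^\infty \Omega^\infty (c^\vee \wedge -)$, and apply the Goodwillie chain rule for composition with a linear functor to get $\partial_n(L\Sigma^\infty R_c) \simeq \partial_n(L\Sigma^\infty \Omega^\infty) \wedge (c^\vee)^{\wedge n}$. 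The derivatives $\partial_n(L\Sigma^\infty \Omega^\infty) \simeq L\bb{S}$ have trivial $\Sigma_n$-action, by the standard Snaith-type derivative computation (or by $L$-localizing the classical stable calculation). Hence $\partial_n(L\Sigma^\infty R_c) \simeq (c^\vee)^{\wedge n}$ with permutation action, and dualizing using the dualizability of $c$ recovers $c^{\wedge n}$ with permutation action, matching the $\nat$ side naturally in $c$ and compatibly with the $\Sigma_n$-action.

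Assembling these levelwise equivalences over all $n$ produces the desired equivalence of symmetric sequences for representables, and the density argument promotes it to the natural equivalence of functors $\mathrm{Fun}^\omega(L\s,L\s) \to \mathrm{SymSeq}(L\s)$. For the second statement, if $\partial_n F$ is dualizable then applying $(-)^\vee$ to the established equivalence and using that double duality is the identity on dualizable spectra yields $\partial_n F \simeq \nat(F, \mathrm{Id}_{L\s}^{\wedge n})^\vee$. The main technical hurdle is handling the enriched framework carefully: both $\nat$ and the co-Yoneda resolution of $F$ are interpreted in Hinich's pushforward $L\Sigma^\infty L\s$-enrichment, whose representables are $L\Sigma^\infty R_c$ rather than the naive $L\s$-enriched representables $L\s(c,-)$, and tracking the $\Sigma_n$-equivariance through the chain rule and duality is the other piece of bookkeeping that must be done.
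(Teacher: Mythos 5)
Your proposal is correct, but it takes a genuinely different route from the paper's proof. The paper defines a canonical comparison map $\nat(F,\mathrm{Id}_{L\s}^{\wedge n}) \to (\partial_n F)^\vee$ by applying $\partial_n$ to a natural transformation, verifies it is an equivalence on $n$-homogeneous functors via the classification of homogeneous functors, and then exploits that $\mathrm{Id}^{\wedge n}$ is $n$-excisive to reduce to the finitely many layers $D_k F$ of the tower for $k \le n$; the remaining case $k<n$ is killed by observing that homogeneous functors with free coefficients are also \emph{cohomogeneous} (dual calculus), so they admit no natural transformations into the higher-degree cohomogeneous functor $\mathrm{Id}^{\wedge n}$, and a colimit argument over free $\Sigma_k$-coefficients finishes it. You instead reduce to the full subcategory of representables $L\Sigma^\infty R_c$ via the co-Yoneda resolution and compute both sides explicitly there, using the Yoneda lemma on one side and the factorization $L\Sigma^\infty R_c \simeq L\Sigma^\infty\Omega^\infty(c^\vee \wedge -)$ plus the weak chain rule (the Snaith computation of $\partial_\ast\Sigma^\infty\Omega^\infty$) on the other. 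Your route avoids dual calculus entirely, which is appealing, and it matches the strategy the paper itself uses later (e.g.\ Corollary \ref{cor:representables for spectra} and Proposition \ref{prop: right module construction of derivatives}), so it is not circular. The tradeoff is the bookkeeping you flag at the end: to promote the representable-level identifications to an equivalence of functors, you must first produce the comparison map globally (the paper's ``differentiate the natural transformation'' construction is the clean way to do this) and then verify it agrees with your explicit identifications \emph{naturally in} $c$ and $\Sigma_n$-equivariantly; the weak chain rule of \cite{chainruleforspec} is a priori only a homotopy-category statement, so this naturality needs some care, whereas the paper's argument sidesteps it by never leaving the universal-property framework.
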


\begin{proof}
 It suffices to prove first equivalence, as it implies the second under the dualizability hypothesis.  For a general functor, the comparison map is defined by taking the $n$th derivative of such a natural transformation. This is an equivalence when restricted to $n$-homogeneous functors by the classification of homogeneous functors. It then suffices to show that when $k<n$ the natural transformations of a $k$-homogeneous functor into $\mathrm{Id}_{L\s}^{\wedge n}$ is contractible. Since any $\Sigma_{k}$-spectrum is a colimit of free $\Sigma_{k}$-spectra, it suffices to prove contractibility for homogeneous functors with free coefficients.
    
    For such homogeneous functors, the $\Sigma_{k}$-norm is an equivalence, and so they may be expressed with fixed points rather than orbits. Thus, such homogeneous functors are also cohomogeneous functors of degree $k$, in the sense of dual calculus \cite{dualcalc}. As the target is cohomogeneous of a higher degree, the spectrum of natural transformations is contractible by the universal property of dual Goodwillie approximations.
\end{proof}

It is quite tempting to apply Lemma \ref{lem:dualizable natural trans spec} to compute $\partial_\ast L\Sigma^\infty R_c$ for $c \in L\s^\omega$. These functors are finitary and have their natural transformation spectra accessible through the Yoneda lemma. However, in life and in Goodwillie calculus there is no free lunch. To apply the lemma to compute Goodwillie derivatives, one needs to show that the derivatives of these stabilized representables are levelwise dualizable. Such results can often be proven from first principles, but we will instead appeal to a weak version of the chain rule due to Ching. The \textit{weak stable chain rule}  \cite[Theorem 1.15]{chainruleforspec} asserts that given finitary, composable functors of the form
\[L\s \xrightarrow{G} L\s \xrightarrow{F} L\s\]
or of the form
\[\s \xrightarrow{G} L\s \xrightarrow{F} L\s\]
where $G$ is pointed, there is an equivalence of symmetric sequences in $\mathrm{SymSeq}(L\s)$
\[\partial_\ast(F \circ G) \simeq \partial_\ast F \circ \partial_\ast G.\]
The weak stable chain rule can be equivalently formulated as derivatives being monoidal with respect to composition \textit{at the level of homotopy categories}. In contrast, the stable chain rule proved in \cite{blans2024chainrulegoodwilliecalculus}, almost two decades later, asserts that the derivatives assemble into a monoidal functor with respect to composition \textit{before} taking homotopy categories.

Given a spectrum $X$, we have a symmetric sequence \[X^{\wedge \ast}:=\{ X^{\wedge n} \}_{n \in \mathbb{N}_{>0}} \]
where we use the convention that when a symmetric sequence is not specified in a certain arity, in this case $0$, then it is contractible in that degree.

\begin{cor}\label{cor:representables for spectra}
    If $X \in L\s^\omega$, then there is an equivalence
    \[\partial_\ast L\Sigma^\infty R_X \simeq (X^\vee)^{\wedge \ast}.\]
\end{cor}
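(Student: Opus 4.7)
The plan is to identify the derivatives via Lemma~\ref{lem:dualizable natural trans spec} combined with enriched Yoneda, after using the weak stable chain rule to verify the necessary dualizability. First I would use that $X \in L\s^\omega$ is compact, hence dualizable, to factor
\[L\Sigma^\infty R_X \simeq L\Sigma^\infty \Omega^\infty \circ (X^\vee \wedge -) : L\s \to L\s.\]
Both factors are finitary endofunctors of $L\s$ and the inner one is reduced and linear, so the weak stable chain rule applies to give
\[\partial_\ast L\Sigma^\infty R_X \simeq \partial_\ast(L\Sigma^\infty \Omega^\infty) \circ \partial_\ast(X^\vee \wedge -).\]
Since $\partial_\ast(X^\vee \wedge -)$ is concentrated in degree one with value $X^\vee$, the composition product collapses to $\partial_n(L\Sigma^\infty \Omega^\infty) \wedge (X^\vee)^{\wedge n}$, which is dualizable since $X$ is; this verifies the hypothesis of Lemma~\ref{lem:dualizable natural trans spec}.

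With dualizability in place, I would apply Lemma~\ref{lem:dualizable natural trans spec} to rewrite
\[\partial_n L\Sigma^\infty R_X \simeq \nat(L\Sigma^\infty R_X, \mathrm{Id}_{L\s}^{\wedge n})^\vee,\]
and then invoke the enriched Yoneda lemma for the $L\s$-enriched stabilized representable: $\nat(L\Sigma^\infty R_X, G) \simeq G(X)$ for any $L\s$-enriched functor $G : L\s \to L\s$. Taking $G = \mathrm{Id}_{L\s}^{\wedge n}$ gives $X^{\wedge n}$, and dualizing together with dualizability of $X$ converts $(X^{\wedge n})^\vee$ into $(X^\vee)^{\wedge n}$, with the $\Sigma_n$-action matching the permutation of smash factors. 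This would conclude the proof.

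The main obstacle is the dualizability check via the chain rule; the rest is formal. An alternative worth mentioning is the purely computational route, which substitutes the classical identification $\partial_n(L\Sigma^\infty \Omega^\infty) \simeq L\bb{S}$ (with trivial $\Sigma_n$-action) directly into the chain rule output and reads off $(X^\vee)^{\wedge n}$ without invoking Lemma~\ref{lem:dualizable natural trans spec}. I prefer the Yoneda approach for exposition because it exhibits the equivariance transparently and dovetails with how the lemma gets reused in the treatment of representables outside the stable case.
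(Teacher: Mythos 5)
Your proposal is correct, and both routes you describe lead to the stated result. The decomposition you use, $L\Sigma^\infty R_X \simeq L\Sigma^\infty\Omega^\infty \circ (X^\vee \wedge -)$ as an endofunctor of $L\s$, is the natural one; the paper instead first passes through the fully faithful embedding $\mathrm{Fun}^\omega(L\s,L\s) \hookrightarrow \mathrm{Fun}^\omega(\s,L\s)$, $F \mapsto F\circ L$ (which preserves derivative spectra), and factors the image as $L \circ \Sigma^\infty\Omega^\infty \circ (\s(X,L\bb{S})\wedge -)$. The reason for this extra step is that the classical identification $\partial_\ast \Sigma^\infty\Omega^\infty \simeq \mathrm{com}$ is a statement about the functor $\s \to \s$, so working there makes the chain-rule input literally the Snaith splitting. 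Your route instead implicitly invokes the corresponding identification for $L\Sigma^\infty\Omega^\infty : L\s\to L\s$, which is true (it is equivalent to the differential dualizability of $L\s$, or can be transported along the same embedding), but it is worth flagging that this is where the small technical content is hiding in your version.

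One further observation: your preferred route through Lemma~\ref{lem:dualizable natural trans spec} and enriched Yoneda is essentially the sanity check the paper mentions at the \emph{end} of its proof (``which we note agrees with what one expects from Lemma~\ref{lem:dualizable natural trans spec} in combination with the Yoneda lemma''). Once the chain-rule step is done you already have both the values of the derivatives and their dualizability, so reading off $(X^\vee)^{\wedge\ast}$ directly — your ``alternative'' route — is the shorter path and is what the paper does. Your Yoneda packaging is logically redundant but has the expository virtue you point out: it makes the $\Sigma_n$-equivariance transparent and matches the pattern used throughout Section~\ref{section: right module construction} for representables on a general $C$.
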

\begin{proof}
   The fully faithful embedding
   \[\mathrm{Fun}^\omega(L\s,L\s) \subset \mathrm{Fun}^\omega(\s,L\s)\]
   \[F \mapsto F \circ L\]
   preserves derivative spectra, and so we will instead compute the derivatives of its image \[L\Sigma^\infty R_X(L(-)) \simeq L \Sigma^\infty \Omega^\infty\s(X,L(-)).\] If $Z$ is a finite spectrum, there is an equivalence \footnote{Typically, we would write $\s(X,L\mathbb{S})= L\s(X,L\mathbb{S})$ as $X^\vee$, given our tendency to work internally to $L\s$. In this case, it is an object of $\s$, and this notation is meant to emphasize that.}
   \[\s(X,LZ) \simeq \s(X,L\mathbb{S}) \wedge Z,\]
   since they agree on $Z = \bb{S}$ and both sides preserve finite colimits in $Z$.
   Hence, for finite $Z$ there is a decomposition
   \[L \Sigma^\infty \Omega^\infty \s(X,LZ)\simeq L \circ \Sigma^\infty \Omega^\infty \circ (\s(X,L\bb{S}) \wedge Z).\]
We may use the weak chain rule to see its derivatives are
\[L\bb{S} \circ \mathrm{com} \circ \s(X,L(\bb{S})) \in \mathrm{SymSeq}(L\s)\]
where the computation \[\partial_\ast \Sigma^\infty \Omega^\infty \simeq \mathrm{com}:= \{\bb{S}\}_{n \in \mathbb{N}}\]
is classical, following from the Snaith splitting \cite{snaithsplitting}.

By immediate computation, one has
\[\partial_\ast L\Sigma^\infty R_X \simeq (X^\vee)^{\wedge \ast} \in \mathrm{SymSeq}(L\s),\]
which we note agrees with what one expects from  Lemma \ref{lem:dualizable natural trans spec} in combination with the Yoneda lemma.
\end{proof}


\begin{lem}\label{lem: coend is retract of dual derivatives}
    For $C$ differentially dualizable, the symmetric sequence
    $\mathrm{CoEnd}(\Sigma^\infty_C)$  is a retract of $\partial_\ast (\Sigma^\infty_C\Omega^\infty_C)^\vee$. In particular, $\mathrm{CoEnd}(\Sigma^\infty_C)$ is levelwise dualizable.
\end{lem}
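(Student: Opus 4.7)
The plan is to realize both symmetric sequences as natural transformation spectra and to construct the retraction from the unit and counit of the stabilization adjunction. Unwinding definitions,
\[\mathrm{CoEnd}(\Sigma^\infty_C)(n) \simeq \nat(\Sigma^\infty_C, (\Sigma^\infty_C)^{\wedge n}),\]
where the natural transformations are between functors $C \to L\s$. The composite $\Sigma^\infty_C \Omega^\infty_C \colon L\s \to L\s$ is finitary ($\Sigma^\infty_C$ preserves colimits and $\Omega^\infty_C$ is finitary since $L\s$ is compactly generated), and by hypothesis $\partial_n(\Sigma^\infty_C \Omega^\infty_C)$ is dualizable for every $n$. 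Lemma \ref{lem:dualizable natural trans spec} therefore yields
\[\partial_n(\Sigma^\infty_C \Omega^\infty_C)^\vee \simeq \nat(\Sigma^\infty_C \Omega^\infty_C, \mathrm{Id}_{L\s}^{\wedge n}).\]

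Let $\eta \colon \mathrm{Id}_C \to \Omega^\infty_C \Sigma^\infty_C$ and $\epsilon \colon \Sigma^\infty_C \Omega^\infty_C \to \mathrm{Id}_{L\s}$ be the unit and counit of the stabilization adjunction. I would define a section $s$ by
\[s(\phi) \colon \Sigma^\infty_C \Omega^\infty_C \xrightarrow{\phi \Omega^\infty_C} (\Sigma^\infty_C \Omega^\infty_C)^{\wedge n} \xrightarrow{\epsilon^{\wedge n}} \mathrm{Id}_{L\s}^{\wedge n},\]
using the identification $(\Sigma^\infty_C)^{\wedge n} \circ \Omega^\infty_C = (\Sigma^\infty_C \Omega^\infty_C)^{\wedge n}$ as functors $L\s \to L\s$, and a candidate retraction $r$ by
\[r(\psi) \colon \Sigma^\infty_C \xrightarrow{\Sigma^\infty_C \eta} \Sigma^\infty_C \Omega^\infty_C \Sigma^\infty_C \xrightarrow{\psi \Sigma^\infty_C} (\Sigma^\infty_C)^{\wedge n}.\]
Both maps are visibly $\Sigma_n$-equivariant, since they are built by composition with $\epsilon^{\wedge n}$ and with $\Sigma^\infty_C \eta$, neither of which permutes the $n$ factors.

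To finish, I would check $r \circ s = \mathrm{id}$ by a diagram chase. Expanding, $r(s(\phi))$ is the composite $(\epsilon \Sigma^\infty_C)^{\wedge n} \circ \phi_{\Omega^\infty_C \Sigma^\infty_C} \circ \Sigma^\infty_C \eta$; naturality of $\phi$ applied to $\eta$ rewrites $\phi_{\Omega^\infty_C \Sigma^\infty_C} \circ \Sigma^\infty_C \eta$ as $(\Sigma^\infty_C \eta)^{\wedge n} \circ \phi$, and then the triangle identity $\epsilon \Sigma^\infty_C \circ \Sigma^\infty_C \eta = \mathrm{Id}_{\Sigma^\infty_C}$ collapses all $n$ factors to give $\phi$. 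The ``in particular'' then follows: $\partial_n(\Sigma^\infty_C \Omega^\infty_C)^\vee$ is dualizable by differential dualizability, and dualizability passes to retracts. The main thing to be careful about is bookkeeping the two distinct natural transformation spectra (functors $C \to L\s$ versus functors $L\s \to L\s$) and making sure the pre- and post-whiskering operations produce maps of the correct shape; once the set-up is correct the argument reduces to a single application of naturality together with a triangle identity.
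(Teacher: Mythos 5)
Your argument is correct and is essentially the same retraction the paper constructs, but you have unwound the ``black box.'' The paper establishes the retraction by invoking an extra codegeneracy argument for the comonadic $\Sigma^\infty_C\Omega^\infty_C$-resolution (citing \cite[Proposition B.4]{HeutsGoodwillieApprox}), identifies the resulting comodule mapping spectrum $\mathrm{RComod}_{\Sigma^\infty_C\Omega^\infty_C}(\Sigma^\infty_C\Omega^\infty_C,(\Sigma^\infty_C)^{\wedge n}\Omega^\infty_C)$ with $\nat(\Sigma^\infty_C\Omega^\infty_C,\mathrm{Id}^{\wedge n}_{L\s})$ via cofreeness, and then applies Lemma \ref{lem:dualizable natural trans spec}. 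You instead construct the section $s$ and retraction $r$ directly from the unit and counit of the $(\Sigma^\infty_C,\Omega^\infty_C)$-adjunction and check $r\circ s=\mathrm{id}$ by naturality plus the triangle identity. These are genuinely the same mechanism: the ``extra codegeneracy'' for the cosimplicial resolution is precisely $\Sigma^\infty_C\eta$, and your single application of the triangle identity is the degenerate-simplex cancellation. What your version buys is self-containedness: it avoids the detour through right comodules and the external reference, and it makes the $\Sigma_n$-equivariance of both maps visible by inspection. What the paper's version buys is that the same comonadic-resolution framing recurs elsewhere (e.g.\ in the proof of Proposition \ref{prop: dual derivatives of homogeneous}), so citing it once keeps the later arguments uniform. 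One small point of care for your write-up: the ``diagram chase'' verifying $r\circ s=\mathrm{id}$ should be phrased as a homotopy-coherent identification rather than a pointwise equality of natural transformations --- but this is harmless, since $r\circ s$ is assembled from restriction functors and pre/post-composition by fixed natural transformations, and the identification with the identity follows from the coherent triangle identity supplied by the $\infty$-categorical adjunction data. Also, the finitariness of $\Omega^\infty_C$ comes from the differentiability of $C$ (filtered colimits commute with finite limits) rather than from compact generation of $L\s$; this is a minor misattribution that does not affect the argument.
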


\begin{proof}

    The functors $\Sigma^\infty_C$ and $(\Sigma^\infty_C)^{\wedge n}$ are homogeneous and finitary, and so by \cite[Proposition B.4]{HeutsGoodwillieApprox} an extra codegeneracy argument implies that taking the comonadic $\Sigma^\infty_C \Omega^\infty_C$-resolution of these functors provides a retraction:
    \[\mathrm{RComod}_{\Sigma_C^\infty \Omega^\infty_C}(\Sigma^\infty_C \circ \Omega_C^\infty, (\Sigma^\infty_C)^{\wedge n} \circ \Omega_C^\infty  ) \rightarrow \nat(\Sigma^\infty_C,(\Sigma^\infty_C)^{\wedge n}).\]

    The right $\Sigma^\infty_C \Omega^\infty_C$-comodule $(\Sigma^\infty_C)^{\wedge n} \circ \Omega_C^\infty$ can be equivalently written as  $\mathrm{Id}_{L\s}^{\wedge n} \circ \Sigma^\infty_C \Omega_C^\infty$ which is the cofree right comodule on $\mathrm{Id}_{L\s}^{\wedge n}$. Hence,
    \[\mathrm{RComod}_{\Sigma_C^\infty \Omega^\infty_C}(\Sigma^\infty_C \circ \Omega_C^\infty, (\Sigma^\infty_C)^{\wedge n} \circ \Omega^\infty_C ) \simeq \nat(\Sigma_C^\infty \Omega^\infty_C,\mathrm{Id}_{L\s}^{\wedge n}).\]
Applying Lemma \ref{lem:dualizable natural trans spec} and the levelwise dualizability of $\partial_\ast (\Sigma^\infty_C \Omega^\infty_C)$ yields:
    \[\nat(\Sigma_C^\infty \Omega^\infty_C,\mathrm{Id}_{L\s}^{\wedge n})\simeq (\partial_n \Sigma^\infty_C \Omega^\infty_C)^\vee.\]
This demonstrates the claimed retraction.
\end{proof}
\begin{remark}
Proposition \ref{prop:product rule implies coend is koszul dual to identity} shows these are actually equivalent:
\[\mathrm{CoEnd}(\Sigma^\infty_C) \simeq \partial_\ast (\Sigma^\infty_C \Omega^\infty_C)^\vee.\]
\end{remark}

\begin{definition}
    The Koszul dual derivatives of $F\in \mathrm{Fun}^\omega (C,L\s)$ are
    \[\partial^\ast F := \mathrm{Hom}^\Sigma(F,\Sigma^\infty_C).\]
    These define a contravariant functor
    \[\partial^\ast:\mathrm{Fun}^\omega(C,L\s) \rightarrow \mathrm{RMod}^\mathrm{op}_{\mathrm{CoEnd}(\Sigma^\infty_C)}.\]
\end{definition}

More generally, $\partial^\ast (-)$, somewhat tautologically, is defined for nonfinitary functors as long as the given natural transformation objects exist. For instance, if $F$ is left Kan extended from a small subcategory, the natural transformation objects exist because they can be computed by restricting to a subcategory where size issues are not a concern. In particular, this includes all representable functors.

There are a few immediately accessible facts about Koszul dual derivatives. The operad $\mathrm{CoEnd}(\Sigma^\infty_C)$ is reduced by elementary properties of homogeneous functors. The Yoneda lemma \cite{HinichYoneda} allows one to compute the operad action on Koszul dual derivatives of representables.

\begin{prop}\label{prop: koszul dual derivatives of rep}
    If $c \in C$, there is a natural equivalence of right modules \[\partial^\ast L\Sigma^\infty R_C \simeq (\Sigma^\infty_C c)^{\wedge \ast}\] where $\Sigma^\infty_C c^{\wedge \ast}$ is equipped with the tautological action of $\mathrm{CoEnd}(\Sigma^\infty_C)$.
\end{prop}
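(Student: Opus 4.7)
The plan is to unwind the definition and reduce the claim to an application of Hinich's enriched Yoneda lemma. By definition, $\partial^\ast L\Sigma^\infty R_c$ is the symmetric sequence whose $n$th level is
\[\mathrm{Hom}^\Sigma(L\Sigma^\infty R_c, \Sigma^\infty_C)(n) = \nat(L\Sigma^\infty R_c, (\Sigma^\infty_C)^{\wedge n}),\]
computed as the spectrum of enriched natural transformations between the $L\s$-enriched functors ${L}\Sigma^\infty R_c$ and $(\Sigma^\infty_C)^{\wedge n}$ out of ${L}\Sigma^\infty C$, as described just before the statement of Lemma \ref{lem:dualizable natural trans spec}. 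Note that when $c$ is compact, ${L}\Sigma^\infty R_c$ is precisely the image of $c$ under the enriched Yoneda embedding of $L\Sigma^\infty C$, followed by postcomposition with $L$, so it is a representable enriched functor.

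The second step is to invoke the enriched Yoneda lemma \cite{HinichYoneda}, which gives a natural equivalence
\[\nat(L\Sigma^\infty R_c, H) \simeq H(c)\]
for any $L\s$-enriched functor $H: L\Sigma^\infty C \to L\s$. Applying this with $H = (\Sigma^\infty_C)^{\wedge n}$ yields the levelwise identification
\[\partial^\ast L\Sigma^\infty R_c (n) \simeq (\Sigma^\infty_C c)^{\wedge n},\]
and these assemble into the symmetric sequence $(\Sigma^\infty_C c)^{\wedge \ast}$.

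The third step is to verify that this identification intertwines the two right $\mathrm{CoEnd}(\Sigma^\infty_C)$-module structures. On the left-hand side, the action of $\mathrm{CoEnd}(\Sigma^\infty_C)(k) = \nat(\Sigma^\infty_C,(\Sigma^\infty_C)^{\wedge k})$ is given by postcomposition of natural transformations, as this is the definition of the $\mathrm{SymSeq}(L\s,\circ)$-enrichment recalled in Section \ref{section: right module stuff}. Under the Yoneda equivalence, postcomposing a natural transformation $\alpha: L\Sigma^\infty R_c \to (\Sigma^\infty_C)^{\wedge n}$ with some $\beta:\Sigma^\infty_C \to (\Sigma^\infty_C)^{\wedge k}$ (applied to one of the smash factors) corresponds to evaluating $\beta$ at $c$, producing a map $\Sigma^\infty_C c \to (\Sigma^\infty_C c)^{\wedge k}$ applied to the corresponding factor of $(\Sigma^\infty_C c)^{\wedge n}$. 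This is exactly the tautological $\mathrm{CoEnd}(\Sigma^\infty_C)$-action on $(\Sigma^\infty_C c)^{\wedge \ast}$ defined through the $\mathrm{SymSeq}(L\s,\circ)$-enrichment of $L\s$.

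The only potential obstacle is the equivariance and operadic coherence of the Yoneda identification: one needs to know that Hinich's enriched Yoneda equivalence is not just an equivalence of spectra at each level, but is compatible with the $\mathrm{CoEnd}(\Sigma^\infty_C)$-action, equivalently that Yoneda is itself enriched over $\mathrm{SymSeq}(L\s,\circ)$. This follows from naturality of the Yoneda embedding together with \cite[8.4.3]{HinichColimits}, which gives that the enriched Yoneda embedding is symmetric monoidal and hence compatible with the coendomorphism operads of its values.
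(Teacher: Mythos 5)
Your proof is correct and follows the same route the paper indicates, namely unwinding the definition of $\partial^\ast$ and applying Hinich's enriched Yoneda lemma; the paper itself gives only a one-line pointer to the Yoneda lemma, and your proposal is a correct, more detailed version of that argument. One small caveat about your final paragraph: the appeal to \cite[8.4.3]{HinichColimits} (symmetric monoidality of the enriched Yoneda embedding) is not quite the right tool here, since that statement concerns the Yoneda embedding of a \emph{symmetric monoidal} enriched category, whereas $C^\omega$ carries no monoidal structure; what the compatibility with the module structure really requires is the $L\s$-naturality of the Yoneda equivalence $\nat(L\Sigma^\infty R_c,-)\simeq\mathrm{ev}_c$, which, once $\mathrm{RMod}_{\mathrm{CoEnd}(\Sigma^\infty_C)}$ is identified with $L\s$-enriched presheaves on the envelope subcategory $\langle(\Sigma^\infty_C)^{\wedge n}\rangle_{n}\subset\mathrm{Fun}(C^\omega,L\s)$, restricts directly to an equivalence of right modules.
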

Note this proposition holds even when $c\in C$ is not compact. In order to understand the Koszul dual derivatives of general finitary functors, we start by understanding them for homogeneous functors. 
\begin{prop}\label{prop: dual derivatives of homogeneous}
    Suppose $X \in L\s^{B\Sigma_n}$ is dualizable. There is an equivalence of right modules \[\partial^\ast (X \wedge_{h\Sigma_n}( \Sigma^\infty_C)^{\wedge n}) \simeq \mathrm{Free}^\mathrm{RMod}_{\mathrm{CoEnd}(\Sigma^\infty_C)}(X^\vee) . \]
\end{prop}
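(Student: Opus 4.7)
The plan is to compute $\partial^\ast(X \wedge_{h\Sigma_n}(\Sigma^\infty_C)^{\wedge n})$ explicitly level by level using dualizability of $X$ and the universal property of the envelope. From the defining formula $\partial^\ast F(m) = \nat(F, (\Sigma^\infty_C)^{\wedge m})$, pulling the $\Sigma_n$-orbits out of the first argument turns them into homotopy fixed points on the mapping side, and dualizability of $X$ factors out $X^\vee$:
\[
\partial^\ast(X \wedge_{h\Sigma_n}(\Sigma^\infty_C)^{\wedge n})(m) \simeq \bigl(X^\vee \wedge \nat((\Sigma^\infty_C)^{\wedge n}, (\Sigma^\infty_C)^{\wedge m})\bigr)^{h\Sigma_n}.
\]

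The heart of the proof is identifying the inner natural transformation spectrum with an envelope mapping object. Writing $O := \mathrm{CoEnd}(\Sigma^\infty_C)$, the claim is that
\[
\nat((\Sigma^\infty_C)^{\wedge n}, (\Sigma^\infty_C)^{\wedge m}) \simeq \mathrm{Env}(O)(m, n) \simeq \bigvee_{\phi: m \twoheadrightarrow n} \bigwedge_{i=1}^n O(|\phi^{-1}(i)|),
\]
as Borel $\Sigma_n$-spectra (with $\Sigma_n$ acting by permutation of the domain factors on the left, and by post-composition on the surjections $\phi$ on the right). The tautological $O$-coalgebra structure on $\Sigma^\infty_C \in \mathrm{Fun}(C, L\s)$ induces, by the universal property of the envelope, a symmetric monoidal enriched functor $\mathrm{Env}(O) \to \mathrm{Fun}(C, L\s)^{\mathrm{op}}$ sending $k \mapsto (\Sigma^\infty_C)^{\wedge k}$; on the atomic mapping objects $\mathrm{Env}(O)(k, 1) = O(k)$ this is the definitional equivalence $O(k) = \nat(\Sigma^\infty_C, (\Sigma^\infty_C)^{\wedge k})$, and for general $(m, n)$ the comparison assembles via the envelope formula. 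I would establish that this extension is an equivalence via multilinearization: $(\Sigma^\infty_C)^{\wedge m}$ is the diagonal of the multilinear functor $(c_1, \ldots, c_m) \mapsto \bigwedge_i \Sigma^\infty_C c_i$, its $n$-th cross effect decomposes as $\bigvee_{\phi: m \twoheadrightarrow n} \bigwedge_i (\Sigma^\infty_C c_i)^{\wedge |\phi^{-1}(i)|}$ by the combinatorics of distributing $m$ labels among $n$ non-empty groups, and multilinear natural transformations split slot-by-slot into the factors $O(|\phi^{-1}(i)|) = \nat(\Sigma^\infty_C, (\Sigma^\infty_C)^{\wedge |\phi^{-1}(i)|})$.

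Once this identification is in hand, I observe that the $\Sigma_n$-action on $\mathrm{Env}(O)(m, n)$ is free --- post-composition on surjections $\phi: m \twoheadrightarrow n$ is a free action, and reducedness of $O$ ensures only surjections contribute. Hence $X^\vee \wedge \mathrm{Env}(O)(m, n)$ is a free Borel $\Sigma_n$-spectrum, and the norm map is an equivalence
\[
\bigl(X^\vee \wedge \mathrm{Env}(O)(m, n)\bigr)^{h\Sigma_n} \simeq X^\vee \wedge_{h\Sigma_n} \mathrm{Env}(O)(m, n).
\]
By the composition product formula, the right side is exactly $\mathrm{Free}^{\mathrm{RMod}}_O(X^\vee)(m) = (X^\vee \circ O)(m) = X^\vee \wedge_{h\Sigma_n} O^{\circledast n}(m)$ for $X^\vee$ placed in degree $n$. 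Naturality and compatibility of the right $O$-module structures follow because both are induced from the same coalgebra structure on $\Sigma^\infty_C$: precomposition with operations $\Sigma^\infty_C \to (\Sigma^\infty_C)^{\wedge k}$ on the left, and the tautological free action on the right.

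The main obstacle is the identification of natural transformation spectra with envelope mapping spectra in the second step. The multilinear/cross-effect argument above is the natural approach, but verifying the $\Sigma_n$-equivariance precisely and ruling out non-multilinear natural transformations requires care; reducedness of $\Sigma^\infty_C$ and the finiteness of the cross-effect decomposition play key roles in ensuring the diagonal-to-cross-effect comparison is an equivalence.
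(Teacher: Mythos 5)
Your first step is correct and matches the paper: pulling out the homotopy orbits and the dualizable coefficient $X$ reduces things to $\bigl(X^\vee \wedge \nat((\Sigma^\infty_C)^{\wedge n},(\Sigma^\infty_C)^{\wedge m})\bigr)^{h\Sigma_n}$. Your observation that the $\Sigma_n$-action on $\bigvee_{\phi: m \twoheadrightarrow n}\bigwedge_i O(|\phi^{-1}(i)|)$ is free (post-composition on surjections; reducedness of $O$ kills non-surjections), so the norm is an equivalence and the fixed points collapse to the homotopy orbits $\mathrm{Free}^{\mathrm{RMod}}_O(X^\vee)(m)$, is also correct and explains structurally why a free module appears.

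The genuine gap is in the heart of the argument: the identification $\nat((\Sigma^\infty_C)^{\wedge n},(\Sigma^\infty_C)^{\wedge m}) \simeq \mathrm{Env}(O)(m,n)$. You flag the obstacle as $\Sigma_n$-equivariance and ``ruling out non-multilinear natural transformations,'' but those are not the bottleneck. Reducedness of $\Sigma^\infty_C$ in each slot does kill the contributions from lower cross effects (natural transformations out of a functor reduced in a variable into something constant in that variable vanish), and the $n$-th cross effect of $(\Sigma^\infty_C)^{\wedge m}$ does decompose over surjections exactly as you write. The problem is the final step: ``multilinear natural transformations split slot-by-slot'' asserts
\[
\nat\bigl(\Sigma^\infty_C \boxtimes \cdots \boxtimes \Sigma^\infty_C,\; H_1 \boxtimes \cdots \boxtimes H_n\bigr) \simeq \bigwedge_{i} \nat(\Sigma^\infty_C, H_i),
\]
and this is not formal. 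The iterated end defining the left side does not commute with $\wedge$ in general (smash commutes with finite limits, not arbitrary ends), and the termwise dualizability that would let you pull factors out is precisely what is at stake. You have moved the entire content of the proposition into this one unjustified splitting.

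The paper avoids this by a different mechanism. It constructs the comparison map from $\mathrm{Free}^{\mathrm{RMod}}_O(X^\vee)(m)$ directly via the symmetric monoidal structure, then resolves the codomain by the $\Sigma^\infty_C \Omega^\infty_C$-comonad. Since $(\Sigma^\infty_C)^{\wedge m} \circ \Omega^\infty_C \simeq \mathrm{Id}_{L\s}^{\wedge m}\circ (\Sigma^\infty_C\Omega^\infty_C)$ is cofree as a right $\Sigma^\infty_C\Omega^\infty_C$-comodule, the comparison map becomes a retract of the analogous map for functors $L\s \to L\s$, and that reduced map is an equivalence by Arone--Ching's weak dual stable chain rule \cite[Lemma 2.5.9]{ACOperads}. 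In effect, the chain rule \emph{is} the slot-by-slot splitting, established once and for all for spectrum-valued functors on $L\s$, and the comonad resolution pushes the question down to that setting. If you want to keep your cross-effect scaffolding, you would still need this chain-rule input (or an equivalent) to justify the multilinear splitting; without it the proof is incomplete.
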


\begin{proof}
    We will compute $ \nat(X \wedge_{h\Sigma_n} (\Sigma^\infty_C)^{\wedge n},(\Sigma^\infty_C)^{\wedge m})$ by decomposing the domain and codomain. Given a function $\phi:m \rightarrow n$, there is a preferred identification \[(\Sigma^\infty_C)^{\wedge m}\simeq \bigwedge^n_{i=1} (\Sigma^\infty_C)^{\wedge \phi^{-1} (i)}.\] 
    
The symmetric monoidal structure on $\mathrm{Fun}^\omega (C,L\s)$ induces a map
  \[X^\vee \wedge_{h\Sigma_n} \bigvee_{ \phi \in \mathrm{FinSet}(m,n)} \bigwedge_{i=1}^n \nat(\Sigma^\infty_C,(\Sigma^\infty_C)^{\wedge{ \phi^{-1}(i)}}) \rightarrow \nat(X \wedge_{h\Sigma_n} (\Sigma^\infty_C)^{\wedge n},(\Sigma^\infty_C)^{\wedge m}).\]
  The left hand side is $\mathrm{Free}^\mathrm{RMod}_{\mathrm{CoEnd}(\Sigma^\infty_C)}(X^\vee)(m)$. As before, we resolve by the $\Sigma^\infty_C \Omega^\infty_C$-comonad and use cofreeness to instead check the following is an equivalence
   \[X^\vee \wedge_{h\Sigma_n} \bigvee_{ \phi \in \mathrm{FinSet}(m,n)} \bigwedge_{i=1}^n \nat(\Sigma^\infty_C \Omega^\infty_C,\mathrm{Id}_{L\s}^{\wedge ^{\wedge{\phi^{-1}(i)}}}) \rightarrow \nat(X \wedge_{h\Sigma_n} (\Sigma^\infty_C \Omega^\infty_C)^{\wedge n},\mathrm{Id}_{L\s}^{\wedge m}),\]
which suffices because the original map is a retract of this new map. This new map is an equivalence by the $L$-local version of the weak dual stable chain rule \cite[Lemma 2.5.9]{ACOperads} applied to \[(X \wedge_{h\Sigma_n} (-)^{\wedge n} ) \circ \Sigma^\infty_C \Omega^\infty_C.\]
\end{proof}

In our study of the Goodwillie tower of $F(c)$, two different finiteness criteria will arise. The first is the levelwise dualizability of $\partial_\ast F$. The second is the compactness of $c \in C$. These two hypotheses play dual roles, and we will define two different approximations of $P_\infty(F)$ which agree when $\partial_\ast(F)$ is levelwise dualizable and $c$ is compact, but not in general.

\begin{definition}\label{definition: upper tower}
    Suppose $F \in \mathrm{Fun}^\omega(C,L\s)$ is pointed. We  define the upper fake Goodwillie tower of $F$ as a filtered object of $\mathrm{Fun}(C,L\s)$ given by
    \[P_n^\mathrm{fake}(F)(c):=\mathrm{RMod}_{\mathrm{CoEnd}(\Sigma^\infty_C)}(\partial^{\ast} F ,(\Sigma^\infty_C(c)^{\wedge \ast})^{\leq n}).\]
\end{definition}

 There is a map $F(c) \rightarrow P_n^\mathrm{fake}(F)(c)$ which is constructed by via the Yoneda lemma and Proposition \ref{prop: koszul dual derivatives of rep}:
 \[F(c) \simeq \nat(L\Sigma^\infty R_c,F) \rightarrow \mathrm{RMod}_{\mathrm{CoEnd}(\Sigma^\infty_C)}(\partial^\ast F,(\Sigma^\infty_C(c)^{\wedge \ast})^{\leq n}) ).\]
 
 When $c$ is compact, this map may be constructed as the unit of the adjunction
\[
\adjunction{\partial^{\leq n}}{\mathrm{Fun}_\ast(C^{\omega},L\s)}{\mathrm{RMod}^{\leq n}_{\mathrm{CoEnd}(\Sigma^\infty_C)}}{\Theta^{\leq n}_K}.
\]

We define $D^\mathrm{fake}_n(F)$ as $n$th fiber of this filtered object. The map $F \rightarrow P^\mathrm{fake}_n(F)$ factors through $P_n(F)$, and we get an induced maps 
\[D_n(F) \rightarrow D_n^\mathrm{fake}(F).\]
The upper fake tower is often more accessible than the Goodwillie tower. By identifying the maps between the layers, one can pull back calculations in the upper fake Goodwillie tower to the Goodwillie tower. 

It is no surprise that we will soon introduce a ``lower fake Goodwillie tower''. One studies the elementary properties of the upper and lower fake Goodwillie towers in a rather formal manner, following \cite[Sections 6.2, 6.4]{MTOrthogonal}:
\begin{enumerate}
    \item Construct the upper (lower) fake tower as the unit of an adjunction between $\mathrm{Fun}(C^\omega,L\s)$ and $\mathrm{RMod}^\mathrm{op}_O$ $(\mathrm{RMod}_{K(O)})$. 
    \item For $X \in L\s^{B\Sigma_n}$, compute that the right module associated to $X \wedge_{h\Sigma_n} (\Sigma^\infty_C)^{\wedge n}$ is free (trivial).
    \item  Demonstrate that $D^\mathrm{fake}_n(F)$ $(D_\mathrm{fake}^n(F))$ depends only on $\partial_n F$.
    \item Compute that the map between the layers of the Goodwillie tower and either fake Goodwillie tower for the homogeneous functor $X \wedge_{h\Sigma_n} (\Sigma^\infty_C)^{\wedge n}$ is the norm map.
\end{enumerate}

Koszul duality enters the picture in the analysis of the fibers $D^\mathrm{fake}_n(F)$, and informs the types of finiteness assumptions required on $\partial_\ast F$ for the fake tower to be well behaved.

For $P^\mathrm{fake}_n$ we have already demonstrated the first two elements as Definition \ref{definition: upper tower} and Proposition \ref{prop: dual derivatives of homogeneous}. The latter two items are rather formal and appear several times in various forms throughout \cite{ACOperads,MTOrthogonal}; we will not reproduce such arguments here and rather state the final result.

\begin{prop}\label{prop: fake tower koszul dual derivatives}
    For a pointed finitary functor $F$ with levelwise dualizable derivatives, there is a natural equivalence
    \[\partial_\ast (F) \simeq K(\partial^\ast (F)).\]
For $c\in C$, the layers of the filtered map \[P_\ast(F)(c) \rightarrow P_\ast^\mathrm{fake}(F)(c)\]
    \begin{center}
\begin{tikzcd}
(\partial_nF \wedge (\Sigma^\infty_C c)^{\wedge n})_{h\Sigma_n}  \arrow[r] & (K(\partial^\ast F)(n) \wedge (\Sigma^\infty_C c)^{\wedge n})^{h\Sigma_n} \arrow[d,"\simeq"] \\
D_n(F)(c) \arrow[u, "\simeq"]                            & D_n^\mathrm{fake}(F)(c)                                                                  
\end{tikzcd}
\end{center}
    are equivalent to the $\Sigma_n$-norms
   of $D_n(F)(c)$.
\end{prop}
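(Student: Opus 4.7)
The plan is to reduce both claims to computations on the $n$th layers of the two towers, exploiting the formal structure of the $(\mathrm{Indecom}, \mathrm{Triv}^{\mathrm{RMod}})$-adjunction and then using Koszul duality to rewrite the answer. I would start by identifying the layers $D^{\mathrm{fake}}_n(F)(c)$ of the fake tower explicitly. In the truncation filtration $(\Sigma^\infty_C c^{\wedge \ast})^{\leq n}$ of right $\mathrm{CoEnd}(\Sigma^\infty_C)$-modules, the fiber of the $n$th stage is the trivial right module on $(\Sigma^\infty_C c)^{\wedge n}$ concentrated in degree $n$. Mapping $\partial^\ast F$ into this trivial module via the $(\mathrm{Indecom}, \mathrm{Triv}^{\mathrm{RMod}})$-adjunction converts the problem to
\[
L\s^{B\Sigma_n}\bigl(\mathrm{Indecom}(\partial^\ast F)(n),\, (\Sigma^\infty_C c)^{\wedge n}\bigr).
\]
Under the hypothesis that $\partial_\ast F$ is levelwise dualizable, Proposition \ref{prop: koszul duality right modules} gives $\mathrm{Indecom}(\partial^\ast F)(n)^\vee \simeq K(\partial^\ast F)(n)$, also levelwise dualizable, so the layer is equivalent to $(K(\partial^\ast F)(n) \wedge (\Sigma^\infty_C c)^{\wedge n})^{h\Sigma_n}$.

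Next I would establish the natural equivalence $\partial_\ast F \simeq K(\partial^\ast F)$. For a homogeneous functor $X \wedge_{h\Sigma_n} (\Sigma^\infty_C)^{\wedge n}$ with $X$ dualizable, Proposition \ref{prop: dual derivatives of homogeneous} identifies $\partial^\ast$ with $\mathrm{Free}^{\mathrm{RMod}}_{\mathrm{CoEnd}(\Sigma^\infty_C)}(X^\vee)$, and Koszul duality of a free module on a dualizable generator in degree $n$ recovers $X$ concentrated in degree $n$, agreeing with $\partial_\ast$. For general finitary $F$ with levelwise dualizable derivatives, the claim propagates through the Goodwillie tower: the indecomposables in degree $n$ pick out only the $n$th layer, since lower layers contribute free modules whose indecomposables live in strictly smaller degree, and higher layers contribute free modules whose generators live in strictly larger degree, so neither interferes with $\mathrm{Indecom}(\partial^\ast F)(n)$. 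Convergence of the Goodwillie tower and compatibility of $K$ with these truncation calculations then upgrades this degreewise matching to a natural equivalence of right modules.

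With both layer formulas in hand, the comparison map between layers is
\[
(\partial_n F \wedge (\Sigma^\infty_C c)^{\wedge n})_{h\Sigma_n} \longrightarrow (K(\partial^\ast F)(n) \wedge (\Sigma^\infty_C c)^{\wedge n})^{h\Sigma_n},
\]
and after substituting $K(\partial^\ast F)(n) \simeq \partial_n F$, naturality pins it down as the $\Sigma_n$-norm: on homogeneous functors, both constructions start from the same Borel $\Sigma_n$-spectrum and present it via coinvariants on one side and invariants on the other, and the map $F \to P^{\mathrm{fake}}_n(F)$ produced by the Yoneda calculation is the canonical comparison. The main obstacle I foresee is bookkeeping the naturality: one must check that the construction of $\partial_\ast F \to K(\partial^\ast F)$ via indecomposables agrees with the comparison of layers produced by the map $P_n(F)(c) \to P^{\mathrm{fake}}_n(F)(c)$, so that the diagram in the statement genuinely commutes and the map on layers is identified with the $\Sigma_n$-norm by a single coherent chain of equivalences rather than a degreewise ad hoc one. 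Once the homogeneous calculation is stated carefully, this reduces to invoking items (3) and (4) of the formal template from the paragraph preceding the proposition, as in \cite{ACOperads,MTOrthogonal}.
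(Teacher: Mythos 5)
Your argument follows the same template the paper invokes: identify the fake layers via the $(\mathrm{Indecom},\mathrm{Triv}^{\mathrm{RMod}})$-adjunction, establish $K(\partial^\ast F)(n)\simeq\partial_n F$ by checking the homogeneous case with Proposition \ref{prop: dual derivatives of homogeneous} and filtering by the Goodwillie tower, and defer the identification of the layer map with the $\Sigma_n$-norm to the formal arguments of \cite{ACOperads,MTOrthogonal}. The paper gives no proof beyond that citation, so your sketch matches its intended argument and correctly supplies the omitted details --- in particular the observation that $K(\partial^\ast F)(n)$ depends only on $(\partial^\ast F)^{\leq n}=(\partial^\ast P_n F)^{\leq n}$, which makes the degreewise reduction to the layers legitimate.
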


\begin{cor}\label{cor: derivatives of compact representable}
    If $c \in C^\omega$, then there is an equivalence of symmetric sequences
    \[\partial_\ast L\Sigma^\infty R_c \simeq K(\Sigma^\infty_C c^{\wedge \ast}).\]
\end{cor}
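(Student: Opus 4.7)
The plan is to apply Proposition \ref{prop: fake tower koszul dual derivatives} to $F = L\Sigma^\infty R_c$, using Proposition \ref{prop: koszul dual derivatives of rep} to identify the Koszul dual derivatives. Since $c \in C^\omega$, the functor $L\Sigma^\infty R_c$ is finitary and reduced, and Proposition \ref{prop: koszul dual derivatives of rep} supplies the natural identification
\[\partial^\ast L\Sigma^\infty R_c \simeq (\Sigma^\infty_C c)^{\wedge \ast}\]
as right $\mathrm{CoEnd}(\Sigma^\infty_C)$-modules. If the levelwise dualizability hypothesis of Proposition \ref{prop: fake tower koszul dual derivatives} holds for this $F$, then that proposition immediately gives
\[\partial_\ast L\Sigma^\infty R_c \simeq K(\partial^\ast L\Sigma^\infty R_c) \simeq K((\Sigma^\infty_C c)^{\wedge \ast}),\]
which is the desired equivalence.

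To verify levelwise dualizability of $\partial_\ast L\Sigma^\infty R_c$, I would first observe that $\Sigma^\infty_C c$ is compact, hence dualizable, in $L\s$: since $C$ is differentiable and compactly generated, $\Omega^\infty_C$ preserves filtered colimits, and so its left adjoint preserves compact objects; then the implications $\mathrm{compact} \Rightarrow \mathrm{finite} \Rightarrow \mathrm{dualizable}$ in $L\s$ apply. The adjunction $\Sigma^\infty_C \dashv \Omega^\infty_C$ then yields a natural identification of finitary functors $L\s \to L\s$
\[L\Sigma^\infty R_c \circ \Omega^\infty_C \simeq L\Sigma^\infty R_{\Sigma^\infty_C c},\]
and Corollary \ref{cor:representables for spectra} computes the derivatives of the right-hand side as the levelwise dualizable symmetric sequence $((\Sigma^\infty_C c)^\vee)^{\wedge \ast}$. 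One then transfers this dualizability back to $\partial_\ast L\Sigma^\infty R_c$, either via the weak stable chain rule of \cite{chainruleforspec} (noting that $\partial_\ast \Omega^\infty_C$ is levelwise dualizable, using that $\partial_\ast(\Sigma^\infty_C \Omega^\infty_C)$ is levelwise dualizable by the differential dualizability hypothesis), or by mirroring the extra-codegeneracy retraction argument of Lemma \ref{lem: coend is retract of dual derivatives} applied to $L\Sigma^\infty R_c$ viewed as an $\Sigma^\infty_C \Omega^\infty_C$-comodule.

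The main obstacle is precisely this transfer step: the weak chain rule only tells us that $\partial_\ast L\Sigma^\infty R_c \circ \partial_\ast \Omega^\infty_C$ is levelwise dualizable, and isolating dualizability of the left factor is not completely automatic. The cleanest route is likely the retraction approach, which should identify each $\partial_n L\Sigma^\infty R_c$ as a retract of the known dualizable $\partial_n(L\Sigma^\infty R_c \circ \Omega^\infty_C)$; once this is in hand, Proposition \ref{prop: fake tower koszul dual derivatives} closes the proof.
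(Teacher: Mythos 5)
Your overall strategy coincides with the paper's: reduce to verifying that $\partial_\ast L\Sigma^\infty R_c$ is levelwise dualizable, so that Proposition \ref{prop: fake tower koszul dual derivatives} can be combined with Proposition \ref{prop: koszul dual derivatives of rep}, and use Corollary \ref{cor:representables for spectra} together with the identification $L\Sigma^\infty R_c \circ \Omega^\infty_C \simeq L\Sigma^\infty R_{\Sigma^\infty_C c}$. You also correctly flag the transfer of dualizability back from $\partial_\ast(L\Sigma^\infty R_c \circ \Omega^\infty_C)$ to $\partial_\ast L\Sigma^\infty R_c$ as the real issue.

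However, neither of the two routes you propose for closing this gap works as stated. The weak stable chain rule of \cite{chainruleforspec} applies only to composable pairs where both functors are $L\s \to L\s$ (or $\s \to L\s$); since $L\Sigma^\infty R_c$ has domain $C$ and $\Omega^\infty_C$ has codomain $C$, the composite $L\Sigma^\infty R_c \circ \Omega^\infty_C$ is not in the scope of that result, so you never obtain the factorization $\partial_\ast L\Sigma^\infty R_c \circ \partial_\ast \Omega^\infty_C$ you hope to invert. The retraction route also fails: the extra-codegeneracy argument in Lemma \ref{lem: coend is retract of dual derivatives} is only available because $(\Sigma^\infty_C)^{\wedge n}$ is \emph{homogeneous}, which supplies the extra codegeneracy on the comonadic resolution; $L\Sigma^\infty R_c$ is not homogeneous, there is no such codegeneracy, and the coaugmentation $\partial_n L\Sigma^\infty R_c \to \partial_n(L\Sigma^\infty R_c \circ \Omega^\infty_C)$ has no reason to be split.

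What the paper actually uses is the totalization formula \cite[Proposition B.4]{HeutsGoodwillieApprox}:
\[\partial_n F \simeq \mathrm{Tot}\bigl(\partial_n\bigl(F\Omega^\infty_C \circ (\Sigma^\infty_C\Omega^\infty_C)^\bullet\bigr)\bigr),\]
after dropping the outer $\Sigma^\infty_C$ (which does not change derivatives). Here each cosimplicial term is a composite of functors $L\s \to L\s$, so the weak stable chain rule \emph{does} apply termwise; combined with the levelwise dualizability of $\partial_\ast(\Sigma^\infty_C\Omega^\infty_C)$ (differential dualizability) and of $\partial_\ast(F\Omega^\infty_C)$ (your Corollary \ref{cor:representables for spectra} computation), each term of the cosimplicial object is levelwise dualizable. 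The crucial extra observation, absent from your argument, is that this totalization is \emph{finite} in each derivative degree, because the operad $\partial_\ast(\Sigma^\infty_C\Omega^\infty_C)$ is reduced; a finite totalization of dualizable spectra is dualizable, which closes the proof. Without the finiteness there is no reason for dualizability to survive the (a priori infinite) limit.
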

\begin{proof}
   The equivalence follows from Proposition \ref{prop: koszul dual derivatives of rep} as long as $\partial_\ast L\Sigma^\infty R_c$ is levelwise dualizable by Proposition \ref{prop: fake tower koszul dual derivatives}. Recall there is a totalization formula \cite[Proposition B.4]{HeutsGoodwillieApprox}
    \[\partial_n  F \simeq \mathrm{Tot}(\partial_n ( F \Omega_C^\infty \circ (\Sigma_C^\infty \Omega_C^\infty)^{\bullet} \circ \Sigma_C^\infty)). \]
    This implies an equivalence
    \[\partial_n(F) \simeq \mathrm{Tot}(\partial_n (F \Omega_C^\infty \circ (\Sigma_C^\infty \Omega_C^\infty)^\bullet))\]
    since precomposition of a functor $L\s \rightarrow L\s$ with $\Sigma^\infty_C$ does not change derivatives. After applying the weak stable chain rule, we see (1) this is a finite totalization and (2) if $\partial_\ast F \Omega^\infty_C$ is levelwise dualizable, then every term in the totalization is levelwise dualizable. The proof is finished by observing that $L\Sigma^\infty R_c \circ \Omega^\infty_C$ is represented by $\Sigma^\infty_C c \in L\s^\omega$, hence the derivatives of this are levelwise dualizable by Corollary \ref{cor:representables for spectra}.
\end{proof}

Our analysis up to this point has used the \textit{pointed} $L$-local spectral representables $L\Sigma^\infty R_c$. A posteori one is able to deduce the analogous results for the ``unpointed''
 $L$-local spectral representable $L\Sigma^\infty_+ R_c$ since these functors differ only by wedge sum with the constant functor $L\bb{S}$. The upper fake Goodwillie tower extends to not necessarily pointed functors in the expected way if one replaces $\Sigma^\infty_C X^{\wedge \ast }$ by $\Sigma^\infty_C X^{\wedge \ast } \vee \{L\bb{S}\}_{n =0}$.\footnote{It is elementary to show that every right module over a reduced operad admits its arity $0$ component as a direct summand with a trivial action.}

\begin{definition}\label{definition: right module derivatives}
    The derivatives of $F\in\mathrm{Fun}(C^\omega,L\s)$ are given by the coend
    \[\partial_\ast F:=\int^{X \in C^\omega} (K(\Sigma^\infty_C X^{\wedge \ast }) \vee \{L\bb{S}\}_{n =0}) \wedge F(X),\]
    and thus define a functor
    \[\partial_\ast :\mathrm{Fun}^\omega(C,L\s) \rightarrow \mathrm{RMod}_{K(\mathrm{CoEnd(\Sigma^\infty_C)})}.\]
    
\end{definition}

\begin{example}
    By the coYoneda Lemma $\partial_\ast L \Sigma^\infty_+ R_c \simeq K(\Sigma^\infty_C X^{\wedge \ast }) \vee \{L\bb{S}\}_{n =0}$.
\end{example}

\begin{prop}\label{prop: right module construction of derivatives}
    The above definition is a model of the Goodwillie derivatives.
\end{prop}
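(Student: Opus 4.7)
The plan is to reduce to the value of derivatives on stabilized representables supplied by Corollary \ref{cor: derivatives of compact representable}, using an enriched Yoneda coend decomposition of $F$ together with the fact that $\partial_\ast$ preserves colimits in the functor variable.

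First, since $C$ is compactly generated, restriction gives an equivalence $\mathrm{Fun}^\omega(C,L\s) \simeq \mathrm{Fun}(C^\omega,L\s)$, and the enriched Yoneda lemma for the $L\s$-enriched category $L\Sigma^\infty C^\omega$ expresses any finitary $F$ as a coend of stabilized representables:
\[F \simeq \int^{X \in C^\omega} L\Sigma^\infty R_X \wedge F(X).\]
Next, I would observe that $\partial_\ast: \mathrm{Fun}^\omega(C,L\s) \to \mathrm{SymSeq}(L\s)$ preserves colimits in $F$: each $\partial_n$ is the classifying $\Sigma_n$-spectrum of the $n$-homogeneous layer $D_n F$, and both $P_n$ (left adjoint to the inclusion of $n$-excisive functors) and the classification of homogeneous functors by their derivative are colimit-preserving in $F$. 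Applying $\partial_\ast$ to the coend and invoking Corollary \ref{cor: derivatives of compact representable} yields
\[\partial_\ast F \simeq \int^{X \in C^\omega} \partial_\ast (L\Sigma^\infty R_X) \wedge F(X) \simeq \int^{X \in C^\omega} K(\Sigma^\infty_C X^{\wedge \ast}) \wedge F(X),\]
which matches Definition \ref{definition: right module derivatives} at the level of underlying symmetric sequences.

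The delicate step is upgrading this to an equivalence of right $K(\mathrm{CoEnd}(\Sigma^\infty_C))$-modules. The module structure on the coend arises functorially in $X$: the tautological coaction of $\mathrm{CoEnd}(\Sigma^\infty_C)$ on $\Sigma^\infty_C X^{\wedge \ast}$ pushed through $K$ assembles into a right $K(\mathrm{CoEnd}(\Sigma^\infty_C))$-module structure on the coend, and by Proposition \ref{prop: koszul dual derivatives of rep} this is exactly the action carried by $\partial^\ast L\Sigma^\infty R_X$ prior to Koszul dualization. To conclude, I would argue that both the coend-defined functor and the intrinsic right-module-valued functor $\partial_\ast$ assembled earlier in the section are colimit-preserving functors from $\mathrm{Fun}^\omega(C,L\s)$ to $\mathrm{RMod}_{K(\mathrm{CoEnd}(\Sigma^\infty_C))}$ which coincide on stabilized representables; a density argument then forces agreement on all finitary $F$. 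The main obstacle is verifying that the colimit-preservation is genuine at the right-module level rather than only at the level of underlying symmetric sequences, which amounts to a naturality check against the comonadic presentation of right modules supplied by $\mathrm{RM}_{K(\mathrm{CoEnd}(\Sigma^\infty_C))}$.
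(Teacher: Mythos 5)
Your proposal is correct and follows essentially the same route as the paper: express $F$ as a coend of stabilized representables, use that both the coend formula and the classical $\partial_\ast$ preserve colimits in $F$, and invoke Corollary \ref{cor: derivatives of compact representable} for agreement on representables. One small misreading: the ``delicate step'' you flag (upgrading the equivalence to one of right modules) is not actually what the proposition asserts --- Definition \ref{definition: right module derivatives} already lands in $\mathrm{RMod}_{K(\mathrm{CoEnd}(\Sigma^\infty_C))}$ by construction, and the proposition only claims that the \emph{underlying symmetric sequence} of this coend recovers the classical Goodwillie derivatives, so no right-module-level comparison of two independently defined module structures is needed; the colimit-preservation argument at the level of symmetric sequences suffices.
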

\begin{proof}
    This follows from both sides preserving colimits and Corollary \ref{cor: derivatives of compact representable}.
\end{proof}

One has adjunctions
\[
\adjunction{\partial_{\leq n}}{\mathrm{Fun}(C^{\omega},L\s)}{\mathrm{RMod}^{\leq n}_{K(\mathrm{CoEnd}(\Sigma^\infty_C))}}{\Theta_{\leq n}^{\mathrm{RMod}}},
\]
where the right adjoint now has the formula
\[R \mapsto \mathrm{RMod}_{K(\mathrm{CoEnd}(\Sigma^\infty_C))}(\partial_\ast L\Sigma^\infty_+ R_{(-)},R).\]
Note that no truncations appear in the formula since $R$ is already $n$-truncated. Similarly, there is an adjunction $(\partial_\ast,\Theta^{\mathrm{RMod}})$ where the right adjoint is defined on the entire category of right modules. We will consider several variants of $\Theta$ throughout this paper. All of them are defined as right adjoints to (truncated) Goodwillie derivatives, but considered as functors into different categories. We differentiate these adjoints by the upper right index which we use to denote what category their respective left adjoints, the derivatives, take value in.

\begin{definition}\label{definition: lower fake goodwillie tower}

 The lower fake Goodwillie tower of $F \in \mathrm{Fun}(C^\omega,L\s)$
 \[ F \rightarrow P^n_\mathrm{fake}(F)\]
 is the filtered object defined by the adjunction units \[F \rightarrow \Theta^{\mathrm{RMod}}_{\leq n} \circ \partial_{\leq n}.\]
\end{definition}

As is the case for the upper fake Goodwillie tower, the universal property of $P_n$ guarantees a factorization
\[F \rightarrow P_n(F) \rightarrow P_\mathrm{fake}^n(F),\]
and defines a map from the Goodwillie tower to the lower fake Goodwillie tower.

Using Koszul duality, it is straightforward to extend our results about the the upper fake Goodwillie tower to the lower fake Goodwillie tower. Because $\partial_\ast$ preserves colimits, rather than sending colimits to limits as $\partial^\ast$ does, one can completely remove levelwise dualizability assumptions on $\partial_\ast F$, though this comes at the cost of evaluating only on compact objects. 

\begin{prop} \label{prop: fake no finiteness}
 For $F \in \mathrm{Fun}^\omega(C,L\s)$ and $c\in C^\omega$, the layers of \[P_\ast(F)(c) \rightarrow P^\ast_\mathrm{fake}(F)(c)\]
    \begin{center}
\begin{tikzcd}
(\partial_nF \wedge (\Sigma^\infty_C c)^{\wedge n})_{h\Sigma_n}  \arrow[r] & (K(K(\Sigma^\infty_C c^{\wedge \ast}))(n) \wedge \partial_n F )^{h\Sigma_n} \arrow[d,"\simeq"] \\
D_n(F)(c) \arrow[u, "\simeq"]                            & D_n^\mathrm{fake}(F)(c)                                                                  
\end{tikzcd}
\end{center}
    are equivalent to the $\Sigma_n$-norms
   of $D_n(F)(c)$.
\end{prop}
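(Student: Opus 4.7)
The plan is to mimic the four-step framework laid out after Proposition \ref{prop: fake tower koszul dual derivatives}, but now for the lower fake tower: (1) present the tower as the unit of an adjunction, (2) identify the right module associated to a homogeneous functor, (3) show the layer depends only on a single derivative, and (4) identify the map to the honest Goodwillie layer with the norm. The duality between $\partial_\ast$ and $\partial^\ast$ swaps the roles of the finiteness hypotheses: since $\partial_\ast$ preserves colimits, levelwise dualizability of $\partial_\ast F$ is no longer needed, but because the right adjoint $\Theta^\mathrm{RMod}$ maps out of $K(\Sigma^\infty_C(-)^{\wedge \ast})$, we need $c \in C^\omega$ so that $\Sigma^\infty_C c$ is dualizable in $L\s$ and Proposition \ref{prop: koszul duality right modules} applies to $\Sigma^\infty_C c^{\wedge \ast}$.

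Step (1) is already accomplished by Definition \ref{definition: lower fake goodwillie tower}. For step (2), fix a homogeneous functor $F = X \wedge_{h\Sigma_n}(\Sigma^\infty_C)^{\wedge n}$. Then $\partial_\ast F$ is concentrated in degree $n$, equal to $X$ there, and since $K(\mathrm{CoEnd}(\Sigma^\infty_C))$ is reduced, any symmetric sequence supported in a single arity carries a unique (trivial) right module structure. Hence $\partial_\ast F \simeq \mathrm{Triv}^{\mathrm{RMod}}_{K(\mathrm{CoEnd}(\Sigma^\infty_C))}(X)$ concentrated in degree $n$; this immediately implies step (3), since $D_n^\mathrm{fake}(F) = \mathrm{fib}(P^n_\mathrm{fake} F \to P^{n-1}_\mathrm{fake} F)$ only sees the $n$th level of $\partial_\ast F$.

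For step (4), apply the $(\mathrm{Indecom},\mathrm{Triv}^\mathrm{RMod})$-adjunction to unwind the fake layer of such a homogeneous $F$:
\[
D_n^\mathrm{fake}(F)(c) \simeq \mathrm{SymSeq}\bigl(\mathrm{Indecom}(K(\Sigma^\infty_C c^{\wedge\ast})), \partial_n F[n]\bigr) \simeq L\s^{B\Sigma_n}\bigl(\mathrm{Indecom}(K(\Sigma^\infty_C c^{\wedge\ast}))(n), \partial_n F\bigr).
\]
Compactness of $c$ forces $\Sigma^\infty_C c$ to be dualizable, so $\Sigma^\infty_C c^{\wedge\ast}$ is levelwise dualizable; Proposition \ref{prop: koszul duality right modules} then gives $K(K(\Sigma^\infty_C c^{\wedge\ast})) \simeq \Sigma^\infty_C c^{\wedge\ast}$, and $\mathrm{Indecom}(K(\Sigma^\infty_C c^{\wedge\ast}))(n) \simeq K(K(\Sigma^\infty_C c^{\wedge\ast}))(n)^\vee$, so the above becomes $(K(K(\Sigma^\infty_C c^{\wedge\ast}))(n) \wedge \partial_n F)^{h\Sigma_n}$, matching the claimed formula.

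The remaining, and main, obstacle is identifying the natural map from $D_n(F)(c) \simeq ( \partial_n F \wedge (\Sigma^\infty_C c)^{\wedge n} )_{h\Sigma_n}$ to this $h\Sigma_n$-fixed object as the $\Sigma_n$-norm rather than merely some map. I plan to handle this by naturality in $\partial_n F \in L\s^{B\Sigma_n}$: both the source and target are colimit-preserving functors of the coefficient spectrum, and for the universal case $\partial_n F = L\Sigma^\infty_+ \Sigma_n$, both reduce to $((\Sigma^\infty_C c)^{\wedge n})_{h\Sigma_n}$ and $((\Sigma^\infty_C c)^{\wedge n})^{h\Sigma_n}$ respectively, with the induced map factoring through $F(c) \simeq F(c)^{h\Sigma_n}$ via the unit of the $(\Sigma_n)_h \dashv (\Sigma_n)^h$ adjunction — which is precisely the norm. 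Because the norm generates the space of natural transformations from $(-)_{h\Sigma_n}$ to $(-)^{h\Sigma_n}$ up to homotopy, this universal verification propagates to all $F$, completing the proof exactly as in the four-step template used for Proposition \ref{prop: fake tower koszul dual derivatives}.
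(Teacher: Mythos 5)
Your Steps (1)--(3) are sound, and in particular the unwinding of $D_n^\mathrm{fake}(F)(c)$ via the $(\mathrm{Indecom},\mathrm{Triv}^\mathrm{RMod})$-adjunction together with the identity $\mathrm{Indecom}(K(\Sigma^\infty_C c^{\wedge\ast}))(n) \simeq K(K(\Sigma^\infty_C c^{\wedge\ast}))(n)^\vee$ from Proposition \ref{prop: koszul duality right modules} is exactly the computation needed, and the observation that the lower fake tower's Step (2) produces a trivial (rather than free) right module follows cleanly from reducedness of $K(\mathrm{CoEnd}(\Sigma^\infty_C))$. The paper itself elides Step (4), pointing to formal arguments in \cite{ACOperads,MTOrthogonal}, so you are reconstructing it from scratch, and this is where the gap sits.

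Your Step (4) argument has two concrete problems. First, the target $(K(K(\Sigma^\infty_C c^{\wedge\ast}))(n) \wedge \partial_n F)^{h\Sigma_n}$ does \emph{not} preserve colimits in $\partial_n F \in L\s^{B\Sigma_n}$: as a functor of $\partial_n F$ it is $L\s^{B\Sigma_n}((\Sigma^\infty_C c)^{\wedge n\,\vee}, -)$, and Borel homotopy fixed points do not commute with filtered colimits when the indexing spectrum has cells with non-free isotropy. (For instance with $C = \T$ and $c = S^0$ this is literally $(-)^{h\Sigma_n}$.) So the ``check on the compact generator $L\Sigma^\infty_+\Sigma_n$ and propagate by colimits'' strategy is not available. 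Second, the universal-case computation itself is off: for $\partial_n F = L\Sigma^\infty_+\Sigma_n$, both layers equal the underlying non-equivariant spectrum $(\Sigma^\infty_C c)^{\wedge n}$ (by untwisting, since the free factor trivializes both homotopy orbits and homotopy fixed points), not $((\Sigma^\infty_C c)^{\wedge n})_{h\Sigma_n}$ and $((\Sigma^\infty_C c)^{\wedge n})^{h\Sigma_n}$ as you wrote. Consequently the universal case only shows the map is \emph{an} equivalence there, which does not single out the norm among natural transformations. What does close the gap is the universal property underlying \cite{klein_2002}: $(-)_{h\Sigma_n}$ is the colimit-preserving (finitary) approximation to $(-)^{h\Sigma_n}$, so any natural transformation from the colimit-preserving layer $D_n(F)$ to $D_n^\mathrm{fake}(F)$ that extends the identity on free coefficients must factor through the norm; equivalently, compare both sides on derivatives, where each restricts to the identity on $\partial_n F$, and use the classification of maps from an $n$-homogeneous functor to an $n$-cohomogeneous one.
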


\begin{definition}
The category of finitary $n$-polynomial functors
    $\mathrm{Poly}^\omega(C,L\s)$
    is the full subcategory of $\mathrm{Fun}^\omega(C,L\s)$ for which
    \[F \rightarrow P_n(F)\]
    is an equivalence.
    The category of all finitary polynomial functors
    \[\mathrm{Poly}^\omega(C,L\s)\]
    is the full subcategory of $\mathrm{Fun}^\omega(C,L\s)$ for which there exists an $N$ such that 
    \[F \rightarrow P_N(F)\]
    is an equivalence. Equivalently, it is the union of $\mathrm{Poly}_n^\omega(C,L\s)$ over all $n \in \mathbb{N}$.
\end{definition}

\begin{prop}\label{prop: unit and counit}

Let $V_n$ denote the full subcategory of $\mathrm{Poly}_n^\omega(C,L\s)$ on the functors $F$ which satisfy for all $i$
    \[(\partial_i F \wedge (\Sigma^\infty_C)^{\wedge i})^{t\Sigma_i}\simeq \ast.\]

Let $W_n$ denote the full subcategory of $\mathrm{RMod}^{\leq n}_{K(\mathrm{CoEnd}(\Sigma^\infty_C))}$ on right modules $R$
which satisfy for all $i$
\[(R(i) \wedge (\Sigma^\infty_C)^{\wedge i})^{t\Sigma_i}\simeq \ast.\]
Then $(\partial_{\leq n},\Theta^\mathrm{RMod}_{\leq n})$ restricts to an equivalence of categories
\[
\adjunction{\partial_{\leq n}}{V_n}{W_n}{\Theta_{\leq n}^{\mathrm{RMod}}},
\]

Similarly, if $V=\bigcup_{n\in \mathbb{N}}V_n$ and $W=\bigcup_{n\in \mathbb{N}}W_n$, then $(\partial_\ast,\Theta^\mathrm{RMod})$ restricts to an equivalence
\[
\adjunction{\partial_\ast}{V}{W}{\Theta^{\mathrm{RMod}}}.
\]
\end{prop}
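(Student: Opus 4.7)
The plan is to show that the unit of $(\partial_{\leq n},\Theta^{\mathrm{RMod}}_{\leq n})$ is an equivalence on $V_n$ and the counit is an equivalence on $W_n$; together these give the restricted equivalence $V_n \simeq W_n$. The principal technical input is Proposition~\ref{prop: fake no finiteness}, which identifies the layers of the natural comparison $P_n(F) \to P^n_\mathrm{fake}(F) = \Theta^{\mathrm{RMod}}_{\leq n}\partial_{\leq n}(F)$, evaluated at $c \in C^\omega$, with the $\Sigma_i$-norm maps
\[(\partial_i F \wedge (\Sigma^\infty_C c)^{\wedge i})_{h\Sigma_i} \longrightarrow (\partial_i F \wedge (\Sigma^\infty_C c)^{\wedge i})^{h\Sigma_i},\]
after using Koszul self-duality (Proposition~\ref{prop: koszul duality right modules}) to identify $K(K(\Sigma^\infty_C c^{\wedge \ast}))(i)$ with $(\Sigma^\infty_C c)^{\wedge i}$; this relies on $\Sigma^\infty_C c$ being compact, hence dualizable.

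For the unit, I fix $F \in V_n$. Since $F \simeq P_n(F)$, the unit factors through the comparison $P_n(F) \to P^n_\mathrm{fake}(F)$, and each layer norm above is an equivalence by the Tate vanishing condition defining $V_n$. An induction up the finite tower then shows that the comparison is an equivalence.

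For the counit, I would take $R \in W_n$ and analyze $F := \Theta^{\mathrm{RMod}}_{\leq n}(R)$ via the finite truncation tower $R^{\leq 0} \leftarrow R^{\leq 1} \leftarrow \cdots \leftarrow R^{\leq n} = R$. Since $\Theta^{\mathrm{RMod}}_{\leq n}$ is a right adjoint, applying it produces a tower of functors whose $i$-th fiber is $\Theta^{\mathrm{RMod}}_{\leq n}(\tilde R_i)$, where $\tilde R_i$ denotes the trivial right $K(\mathrm{CoEnd}(\Sigma^\infty_C))$-module concentrated in degree $i$ with value $R(i)$. Combining the $(\mathrm{Indecom},\mathrm{Triv}^{\mathrm{RMod}})$-adjunction with Koszul self-duality for the levelwise dualizable right module $K(\Sigma^\infty_C c^{\wedge \ast})$, this fiber evaluates at $c \in C^\omega$ to
\[L\s^{B\Sigma_i}\bigl((\Sigma^\infty_C c)^{\wedge i,\vee},R(i)\bigr) \simeq \bigl(R(i) \wedge (\Sigma^\infty_C c)^{\wedge i}\bigr)^{h\Sigma_i}.\]
The Tate vanishing defining $W_n$ converts this fixed point expression into the $\Sigma_i$-orbits, identifying the fiber with the $i$-homogeneous functor classified by $R(i)$. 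Therefore $F$ is $n$-polynomial with $i$-th derivative $R(i)$, the counit $\partial_{\leq n}F \to R$ is an equivalence of right modules (since it is automatically a morphism of right modules that is an equivalence on the underlying symmetric sequences), and simultaneously $F \in V_n$.

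For the unbounded statement, the bounded equivalences assemble: the inclusions $V_n \subset V_{n+1}$ and $W_n \subset W_{n+1}$ are compatible with $\partial_\ast$ and $\Theta^{\mathrm{RMod}}$, since the extra Tate vanishing conditions in higher degrees are vacuous ($\partial_{n+1}F$ and $R(n+1)$ both vanish for $F \in V_n$ and $R \in W_n$). The hard part of the argument is the counit step: precisely identifying the $i$-th derivative of $\Theta^{\mathrm{RMod}}_{\leq n}(R)$ with $R(i)$ as a right module, which requires the careful interlock of Koszul self-duality for dualizable modules, Tate vanishing to convert fixed points into orbits, and the classification of homogeneous functors.
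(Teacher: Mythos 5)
Your proposal is correct and takes essentially the same approach as the paper: use Proposition~\ref{prop: fake no finiteness} together with the defining Tate vanishing to see the unit is an equivalence, and analyze the truncation tower of $\Theta^{\mathrm{RMod}}_{\leq n}(R)$, using Tate vanishing to convert fixed points to orbits and identify the layers as the homogeneous functors classified by $R(i)$, to see the counit is an equivalence. The only thing the paper flags explicitly that you leave implicit is that the adjunction restricts (i.e., $\partial_{\leq n}$ lands in $W_n$ on $V_n$, which is definitional, and $\Theta^{\mathrm{RMod}}_{\leq n}$ lands in $V_n$ on $W_n$, which follows from your counit analysis), but your argument supplies both facts along the way.
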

\begin{proof}
That any given left adjoint restricts to the given categories is immediate from definitions. That the corresponding right adjoint restricts to the given categories follows from the Tate vanishing combined with the description of the layers of the truncation filtration of $\mathrm{RMod}_{K(\mathrm{CoEnd}(\Sigma^\infty_C))}(K(\Sigma^\infty_C c^{\wedge \ast }),R)$ of Section \ref{section: right module primitive stuff}.

    That the unit of any given adjunction is an equivalence follows from the Tate vanishing and Proposition \ref{prop: fake no finiteness}. In general, the counit is trickier to study because it involves computing derivatives of the homotopy fixed points which appear in the layers of the truncation filtration. However, if $R \in W_n$ the Tate vanishing assumptions imply the Goodwillie tower of $\Theta^{\mathrm{RMod}}_{\leq n}(R)$ is given by the truncation filtration on the right module mapping spectra. This implies the derivatives are $R$. The analysis extends to the union of all the $W_n$ since for a given polynomial functor, all the towers which appear in the analysis are finite towers.
\end{proof}

\begin{prop}\label{prop: approximation is sym mon}
    The $n$th polynomial approximation
    \[P_n: \mathrm{Fun}^\omega(C,L\s) \rightarrow \mathrm{Poly}^\omega_n(C,L\s)\]
    is a symmetric monoidal Bousfield localization of $(\mathrm{Fun}^\omega(C,L\s),\wedge)$.
\end{prop}
\begin{proof}
    By definition, $P_n$ is the Bousfield (i.e. reflective) localization of $\mathrm{Fun}^\omega(C,L\s)$ at $\mathrm{Poly}^\omega_n(C,L\s)$, and so we must only check the symmetric monoidality. By  \cite[Proposition 2.2.1.9]{HA}, it suffices to verify that if $P_n(F \rightarrow G)$ is an equivalence, then for any finitary functor $H$ the natural transformation $P_n (F \wedge H \rightarrow G \wedge H)$ is an equivalence. By \cite[Lemma 3.1.33 (ii)]{blans2024chainrulegoodwilliecalculus}, we see that \[ P_n((-)^{\wedge 2 } \circ (F \vee H) \rightarrow (-)^{\wedge 2 } \circ (G \vee H))\] is an equivalence. The result follows because $F \wedge H \rightarrow G \wedge H $ is a retract of this natural transformation.

\end{proof}

\begin{thm}[Product rule]\label{thm: product rule}
    The functors \[\partial_\ast: (\mathrm{Fun}(C^\omega,L\s),\wedge) \rightarrow (\mathrm{RMod}_{K(\mathrm{CoEnd}(\Sigma^\infty_C))},\circledast),\]
    \[\partial^\ast: (\mathrm{Fun}(C^\omega,L\s),\wedge) \rightarrow (\mathrm{RMod}^{\mathrm{op}}_{\mathrm{CoEnd}(\Sigma^\infty_C)},\circledast)\]
    are symmetric monoidal.
\end{thm}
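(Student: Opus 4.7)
The plan is to prove the product rule first for $\partial_\ast$ by reducing to representable functors via colimit-preservation, and then to deduce the statement for $\partial^\ast$ through Koszul duality.

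First I observe that both sides of the proposed equivalence $\partial_\ast(F \wedge G) \simeq \partial_\ast F \circledast \partial_\ast G$ preserve colimits separately in each of $F$ and $G$: the functor $\partial_\ast$ is colimit-preserving by its coend description (Proposition \ref{prop: right module construction of derivatives}), pointwise smash preserves colimits in each variable, and Day convolution is defined as a coend so it also preserves colimits in each variable. Since $\mathrm{Fun}(C^\omega, L\s)$ is generated under colimits by the representables $L\Sigma^\infty R_c$ for $c \in C^\omega$, it suffices to verify the equivalence when $F = L\Sigma^\infty R_c$ and $G = L\Sigma^\infty R_d$.

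For representables, Corollary \ref{cor: derivatives of compact representable} identifies $\partial_\ast L\Sigma^\infty R_c \simeq K(\Sigma^\infty_C c^{\wedge \ast})$ as right $K(\mathrm{CoEnd}(\Sigma^\infty_C))$-modules. The Day convolution formula yields
\[(\partial_\ast L\Sigma^\infty R_c \circledast \partial_\ast L\Sigma^\infty R_d)(n) \simeq \bigvee_{i+j=n} \mathrm{ind}^{\Sigma_n}_{\Sigma_i \times \Sigma_j} K(\Sigma^\infty_C c^{\wedge i}) \wedge K(\Sigma^\infty_C d^{\wedge j}).\]
On the other side, the classical product rule for Goodwillie derivatives into stable targets gives, on underlying symmetric sequences, $\partial_n(F \wedge G) \simeq \bigvee_{i+j=n} \mathrm{ind}^{\Sigma_n}_{\Sigma_i \times \Sigma_j} \partial_i F \wedge \partial_j G$, and combined with Corollary \ref{cor: derivatives of compact representable} this matches the Day convolution formula. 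Promoting this to an equivalence of right $K(\mathrm{CoEnd}(\Sigma^\infty_C))$-modules uses naturality: both sides inherit their module actions from the tautological coaction of $\mathrm{CoEnd}(\Sigma^\infty_C)$ on smash powers of $\Sigma^\infty_C$, which is natural in the representable input and thus compatible with the Day convolution structure by construction.

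For $\partial^\ast$: The Koszul duality functor is symmetric monoidal by Proposition \ref{prop: koszul duality right modules}. When $\partial_\ast F$ is levelwise dualizable, Proposition \ref{prop: fake tower koszul dual derivatives} gives $\partial_\ast F \simeq K(\partial^\ast F)$, so the product rule for $\partial^\ast$ in this case follows by transport along $K$. For general $F$, the natural oplax structure on $\partial^\ast$ coming from smashing natural transformations is shown to be strong by the analogous reduction to representables (now using that $\partial^\ast$ is limit-preserving in $F$, since $\nat(-, H)$ converts colimits to limits), where Proposition \ref{prop: koszul dual derivatives of rep} identifies $\partial^\ast L\Sigma^\infty R_c$ with the tautological right $\mathrm{CoEnd}(\Sigma^\infty_C)$-module $(\Sigma^\infty_C c)^{\wedge \ast}$.

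The main obstacle is the coherent upgrade from an equivalence of underlying symmetric sequences to an equivalence of right modules. The cleanest way to articulate this identifies both $\partial_\ast$ and $\partial^\ast$ with colimit (respectively limit) extensions of the enriched Yoneda embedding of $\mathrm{Fun}^\omega(C, L\s)$ restricted to the symmetric monoidal subcategory $\langle (\Sigma^\infty_C)^{\wedge n} \rangle$, which by the envelope description preceding Definition \ref{definition: right module derivatives} is equivalent to $\mathrm{Env}(\mathrm{CoEnd}(\Sigma^\infty_C))^{\mathrm{op}}$. Hinich's enriched Yoneda embedding is strong symmetric monoidal, and the universal property of Day convolution then transports this monoidality through the Kan extension to yield the product rule in full generality.
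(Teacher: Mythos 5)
Your proposal contains the same essential ingredients as the paper's proof --- Hinich's symmetric monoidal enriched Yoneda embedding, Day convolution, Kan extension, and Koszul duality --- but you attempt to establish them in the reverse order, and this creates a genuine gap.

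The paper proves $\partial^\ast$ symmetric monoidal \emph{first}: it identifies $\partial^\ast$ with the restricted enriched Yoneda embedding into presheaves on $\langle (\Sigma^\infty_C)^{\wedge n}\rangle_{n} \simeq \mathrm{Env}(\mathrm{CoEnd}(\Sigma^\infty_C))^{\mathrm{op}}$, notes this is automatically \emph{lax} symmetric monoidal, and then reduces the check that the lax structure maps are equivalences to the generators $(\Sigma^\infty_C)^{\wedge n}$, where Hinich's result applies directly. Only then does it turn to $\partial_\ast$: the Kan-extension lemma \cite[Lemma 2.59]{pdoperads} gives $\partial_\ast$ a lax structure, and the lax maps are shown to be equivalences on smash products of representables by writing $\partial_\ast \simeq K \circ \partial^\ast$ there and using that both $K$ and (now) $\partial^\ast$ are strong symmetric monoidal. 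Your proposal starts with $\partial_\ast$ and tries to verify the equivalence $\partial_\ast(F \wedge G) \simeq \partial_\ast F \circledast \partial_\ast G$ on representables and then "promote it to an equivalence of right modules by naturality." This promotion step is exactly the hard part in the $\infty$-categorical setting: matching underlying symmetric sequences does not produce a coherent monoidal structure, and the cleanest available way to produce one is via $K \circ \partial^\ast$, which requires the $\partial^\ast$ statement already in hand. Your subsequent attempt to derive $\partial^\ast$ from $\partial_\ast$ via $K$ is then circular (and also cannot go through $K^{-1}$, which is only lax as a right adjoint). Your final paragraph does identify the correct resolution in spirit, but note that $\partial^\ast$ is \emph{literally} the restricted Yoneda embedding (not an extension of it), while $\partial_\ast$ is a Kan extension of $K \circ \partial^\ast$ restricted to representables; making these identifications precise is what fixes the order. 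If you restructure by proving $\partial^\ast$ first via the Yoneda/Day argument and then deduce $\partial_\ast$ from the Kan-extension lemma plus $\partial_\ast \simeq K \circ \partial^\ast$ on representables, your argument coincides with the paper's.
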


\begin{proof}
Let $D$ be an $L\s$-enriched symmetric monoidal category with a chosen subset of objects $d_\alpha$ closed under the symmetric monoidal product. In general,  the contravariant restricted Yoneda embedding 
\[d \mapsto D(d,d_\alpha) \in \mathrm{Fun}_{L\s}(\langle d_\alpha\rangle,L\s)^\mathrm{op}\]
is lax symmetric monoidal with respect to Day convolution \cite{envBMT}, so to check if it is symmetric monoidal it suffices to check the lax comparison map is an equivalence. 

We will apply this to the restricted Yoneda embedding for the functors $(\Sigma^\infty_C)^{\wedge n}$. Proposition \ref{prop: dual derivatives of homogeneous} implies 
\[\mathrm{Env}(\mathrm{CoEnd}(\Sigma^\infty_C))^\mathrm{op} \simeq \langle(\Sigma^\infty_C)^{\wedge n} \rangle_{n \in \mathbb{N}},\]
and so, by the correspondence between right $\mathrm{CoEnd}(\Sigma^\infty_C)$-modules and enriched presheaves on the envelope, the Koszul dual derivatives $\partial^\ast$ are identified with the restricted Yoneda embedding. 

To prove that the lax comparison map is an equivalence, it suffices to show the lax comparison map is an equivalence after $N$-truncation for all $N$. The universal property of Goodwillie approximations and Proposition \ref{prop: approximation is sym mon} reduce this to checking the lax comparison map when the functors $F,G$ are polynomials. By induction on the first variable, we are reduced to checking on homogeneous $F$ and general polynomial $G$. Induction on the second variable allows us to assume $G$ is homogeneous. Since a general homogeneous functor is a colimit of one of the form $(\Sigma^\infty_C)^{\wedge n}$ for some single $n$, and the Koszul dual derivatives $\partial^\ast$ send colimits to limits, it suffices to check the lax comparison map for $(\Sigma^\infty_C) ^{\wedge n} \wedge (\Sigma^\infty_C) ^{\wedge m}$.

The homogenous functors of this form are what we perform the restricted Yoneda embedding against to define $\partial^\ast$, and so that the lax comparison maps are equivalences follows from the symmetric monoidality of the Yoneda embedding \cite[8.4.3]{HinichColimits}.

To show that $\partial_\ast$ admits a lax symmetric monoidal structure, by \cite[Lemma 2.59]{pdoperads}, it suffices to show that it is lax symmetric monoidal when restricted to the full symmetric monoidal subcategory generated by $L\Sigma^\infty_+ R_{c_k}$ for $c_k \in C^\omega$. This is because the referenced lemma asserts the left Kan extension is automatically lax symmetric monoidal, and the Yoneda lemma in $C^\omega \times C^\omega$ will imply that $\partial_\ast$ is actually fully symmetric monoidal on all of $(\mathrm{Fun}(C^\omega,L\s),\wedge)$ since both $\partial_\ast(F \wedge G)$ and $\partial_\ast (F) \wedge \partial_\ast(G)$ commute with colimits in either variable.  A slight variation of the argument of Corollary \ref{cor: derivatives of compact representable} shows that the derivatives \[\partial_\ast (L\Sigma_+^\infty R_{c_1} \wedge \dots \wedge L\Sigma_+^\infty R_{c_{k'}})\]
are levelwise dualizable, and so when restricted to this subcategory there is an equivalence of functors
\[\partial_\ast \simeq K \circ \partial^\ast\]
by Proposition \ref{prop: fake tower koszul dual derivatives}. The righthand side is symmetric monoidal because $K$ is a symmetric monoidal functor by Proposition \ref{prop: koszul duality right modules}, and we have just demonstrated the same for $\partial^\ast$.
\end{proof}
\subsection{Comparison with the operad  $\partial_\ast\mathrm{Id}_C$}\label{section: comparison}
In \cite{blans2024chainrulegoodwilliecalculus}, it was shown that the Goodwillie derivatives were monoidal with respect to functor composition and the composition product. Let us call this monoidal functor $\partial_\ast^{BB}$. If $\s(C) \simeq L\s$, then a formal consequence of the chain rule is that
$\partial^{BB}_\ast \mathrm{Id}_C$ forms an operad in $L\s$, and there is a functor lifting the Goodwillie derivatives
\[\partial_\ast^{BB}:\mathrm{Fun}_\ast(C^\omega, L\s) \rightarrow \mathrm{RMod}_{\partial^{BB}_\ast \mathrm{Id}_C}.\]

In light of the product rule for the right $K(\mathrm{CoEnd}(\Sigma^\infty_C))$-module structures on derivatives, one conjectures:
\begin{conj} \label{conj: symmetric monoidal derivatives}
    The functor \[\partial^{BB}_\ast: (\mathrm{Fun}_\ast^\omega(C,L\s),\wedge) \rightarrow (\mathrm{RMod}_{\partial_\ast \mathrm{Id}},\circledast)\]
    can be made symmetric monoidal.
\end{conj}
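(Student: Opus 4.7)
The plan is to reduce Conjecture \ref{conj: symmetric monoidal derivatives} to Theorem \ref{thm: product rule} by exhibiting a symmetric monoidal equivalence of categories $(\mathrm{RMod}_{\partial^{BB}_\ast \mathrm{Id}_C}, \circledast) \simeq (\mathrm{RMod}_{K(\mathrm{CoEnd}(\Sigma^\infty_C))}, \circledast)$ under which the two liftings of the Goodwillie derivatives correspond. Granted such an equivalence, the canonical symmetric monoidal structure on $\partial_\ast$ transports to one on $\partial^{BB}_\ast$. Because both Day convolutions are computed by the same induction formula on underlying symmetric sequences, any operadic equivalence automatically induces a symmetric monoidal equivalence between the two categories of right modules.

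The first step identifies the two operads. Applying the Blans--Blom chain rule to the stabilization adjunction yields an equivalence of symmetric sequences $\partial_\ast(\Sigma^\infty_C \Omega^\infty_C) \simeq B(1, \partial^{BB}_\ast \mathrm{Id}_C, 1)$; combining this with Proposition \ref{prop:product rule implies coend is koszul dual to identity}, which identifies $\mathrm{CoEnd}(\Sigma^\infty_C)$ with $\partial_\ast(\Sigma^\infty_C \Omega^\infty_C)^\vee$, one obtains $\mathrm{CoEnd}(\Sigma^\infty_C) \simeq K(\partial^{BB}_\ast \mathrm{Id}_C)$. Under the standing differential dualizability hypothesis, Koszul self-duality (Proposition \ref{prop: koszul duality right modules}) then yields an operadic equivalence $K(\mathrm{CoEnd}(\Sigma^\infty_C)) \simeq \partial^{BB}_\ast \mathrm{Id}_C$. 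The second step checks that the two right-module enhancements of $\partial_\ast F$ agree under this identification. By Proposition \ref{prop: right module construction of derivatives}, the author's right-module structure is determined by its values on compact representables, where by Corollary \ref{cor: derivatives of compact representable} it reduces to the Koszul dual of the tautological $\mathrm{CoEnd}(\Sigma^\infty_C)$-coaction on $(\Sigma^\infty_C c)^{\wedge \ast}$. On the other hand, the chain rule applied to $L\Sigma^\infty R_c \circ \mathrm{Id}_C$ exhibits the Blans--Blom module structure on $\partial_\ast L\Sigma^\infty R_c$ as induced by the composition-product action of $\partial^{BB}_\ast \mathrm{Id}_C$. Both agree on representables after transport along the equivalence of the first step, and by the universal property of $\partial_\ast$ as a left Kan extension this extends to all finitary functors.

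The main obstacle is to promote the objectwise comparison of right-module structures to a coherent equivalence of $\infty$-categories of right modules intertwining the two derivative functors; genuinely checking compatibility at the level of the coendomorphism operad of $\Sigma^\infty_C$ in two distinct enriched frameworks involves tracking the same data through two rather different constructions. A more robust alternative, which sidesteps the comparison entirely, is to mimic the proof of Theorem \ref{thm: product rule} internally to the Blans--Blom framework: exhibit $\partial^{BB}_\ast$ as a left Kan extension from the subcategory of compact representables, so that it inherits a canonical lax symmetric monoidal structure by \cite[Lemma 2.59]{pdoperads}, and verify the lax comparison maps are equivalences by reducing to homogeneous functors via the classification of Goodwillie layers and invoking the $L$-local weak dual stable chain rule \cite[Lemma 2.5.9]{ACOperads} exactly as in the proof of Theorem \ref{thm: product rule}. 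This avoids all bookkeeping between the two module-theoretic frameworks at the cost of redoing the symmetric monoidality argument in the composition-product setting.
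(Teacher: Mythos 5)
The statement you are trying to prove is stated in the paper as a \emph{conjecture}, not a theorem: the paper does not prove it, but instead attributes an announced proof to Blom, whose strategy — lifting the classical derivation of the product rule $\,(fg)' = \tfrac{1}{2}\bigl(((f+g)^2)'-(f^2)'-(g^2)'\bigr)$ from the chain rule to the level of symmetric monoidal categories — is fundamentally different from either route you sketch. So there is no in-paper proof to compare against, and any valid argument here would be genuinely new content.

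Your primary approach is circular. You invoke Proposition \ref{prop:product rule implies coend is koszul dual to identity} to obtain an \emph{operadic} equivalence $\mathrm{CoEnd}(\Sigma^\infty_C) \simeq K(\partial^{BB}_\ast\mathrm{Id}_C)$, and then Koszul duality to get $\partial^{BB}_\ast\mathrm{Id}_C \simeq K(\mathrm{CoEnd}(\Sigma^\infty_C))$ as operads. But that proposition only produces an operadic equivalence \emph{assuming Conjecture \ref{conj: symmetric monoidal derivatives}} — unconditionally it delivers only a quasi-equivalence in the sense of Definition \ref{definition: quasiequivalence}, i.e.\ an equivalence of enriched envelopes without symmetric monoidality. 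The paper is explicit that a quasi-equivalence already gives an equivalence of right-module categories, and that ``the obstruction to upgrading a quasi-equivalence to an equivalence is whether or not the equivalence of right module categories can be made symmetric monoidal.'' Your transfer argument needs exactly that symmetric monoidality; Koszul duality (Proposition \ref{prop: koszul duality right modules}) does not create it from a mere equivalence of symmetric sequences. Your blanket claim that ``any operadic equivalence automatically induces a symmetric monoidal equivalence between the two categories of right modules'' is correct only for a genuine equivalence of operads, which is what you do not have without first proving the conjecture. So step one presupposes the result.

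Your fallback — re-running the proof of Theorem \ref{thm: product rule} internally to the Blans--Blom framework by viewing $\partial^{BB}_\ast$ as a left Kan extension from compact representables and checking the lax comparison maps on homogeneous functors — is not circular and is a priori plausible, but it is only a one-sentence sketch. The proof of Theorem \ref{thm: product rule} leans heavily on the duality between $\partial_\ast$ and the Koszul-dual derivatives $\partial^\ast = \mathrm{Hom}^\Sigma(-,\Sigma^\infty_C)$, which are defined directly in terms of $\mathrm{CoEnd}(\Sigma^\infty_C)$. To transport that argument you would need to produce the analogous Koszul-dual machinery inside the composition-product framework, including identifying $\langle \partial^{BB}_\ast L\Sigma^\infty R_c \rangle$ with the opposite envelope of $\partial^{BB}_\ast\mathrm{Id}_C$ as a \emph{symmetric monoidal} subcategory of $(\mathrm{RMod}_{\partial^{BB}_\ast\mathrm{Id}_C},\circledast)$ — and that last clause is again essentially the content of the conjecture. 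Until this is carried out in detail, the gap remains.
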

Recently, Blom has announced a proof of this conjecture which works by lifting the classical derivation of the product rule from the chain rule to the level of symmetric monoidal categories:
\[(fg)' = \frac{1}{2}(((f+g)^{2})'-(f^2)'-(g^2)')= \frac{1}{2}((2(f+g)(f'+g')-2ff'-2gg')=fg'+f'g.\]

If $C$ is compactly generated and differentially dualizable, any model of the Goodwillie derivatives
\[\partial_\ast': \mathrm{Fun}^\omega(C,L\s) \rightarrow \mathrm{RMod}_O\]
which satisfies a product rule and provides a reasonable right module structure on $\partial_\ast'L\Sigma^\infty R_c$ for all $c \in C^\omega$ necessarily satisfies \[K(O) \simeq  \mathrm{CoEnd}(\Sigma^\infty_C).\]
We sketch the proof of this for $\partial_\ast^{BB}$.

\begin{prop}\label{prop:product rule implies coend is koszul dual to identity}
    If $C$ is differentially dualizable, there is an equivalence of symmetric sequences
    \[K (\partial^{BB}_\ast \mathrm{Id}_C) \simeq \mathrm{CoEnd}(\Sigma^\infty_C).\]

    Assuming Conjecture \ref{conj: symmetric monoidal derivatives}, this can be made into an equivalence of operads.

\end{prop}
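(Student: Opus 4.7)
The plan is to split the argument into a symmetric sequence equivalence and, under Conjecture \ref{conj: symmetric monoidal derivatives}, an operadic upgrade. For the first part, I would begin by invoking the chain rule of Blans-Blom \cite{blans2024chainrulegoodwilliecalculus} to obtain
\[\partial_\ast(\Sigma^\infty_C \Omega^\infty_C) \simeq B(1, \partial^{BB}_\ast \mathrm{Id}_C, 1)\]
as symmetric sequences. Differential dualizability ensures $\partial^{BB}_\ast \mathrm{Id}_C$ is reduced and levelwise dualizable, and the two-sided bar construction is a finite colimit of dualizables, hence itself levelwise dualizable. Proposition \ref{prop: koszul duality right modules} then gives
\[K(\partial^{BB}_\ast \mathrm{Id}_C)(n) \simeq B(\partial^{BB}_\ast \mathrm{Id}_C)(n)^\vee \simeq \partial_n(\Sigma^\infty_C \Omega^\infty_C)^\vee,\]
reducing the claim to identifying $\partial_n(\Sigma^\infty_C \Omega^\infty_C)^\vee \simeq \mathrm{CoEnd}(\Sigma^\infty_C)(n)$.

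Lemma \ref{lem: coend is retract of dual derivatives} already exhibits $\mathrm{CoEnd}(\Sigma^\infty_C)$ as a retract of $\partial_\ast(\Sigma^\infty_C \Omega^\infty_C)^\vee$, so the remaining task is to promote this retraction to an equivalence. I would do so using the product rule of Theorem \ref{thm: product rule} applied to the stabilized representables $L\Sigma^\infty R_c$ for $c \in C^\omega$. By Corollary \ref{cor: derivatives of compact representable}, $\partial_\ast L\Sigma^\infty R_c \simeq K(\Sigma^\infty_C c^{\wedge \ast})$ carries a natural right $K(\mathrm{CoEnd}(\Sigma^\infty_C))$-action from the product rule, while Blans-Blom simultaneously endows the same symmetric sequence with a right $\partial^{BB}_\ast \mathrm{Id}_C$-action. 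Since both actions are extracted from the same underlying Goodwillie derivative functor, and since the family $\{L\Sigma^\infty R_c\}_{c \in C^\omega}$ generates $\mathrm{Fun}^\omega(C,L\s)$ under colimits, comparing the classifying data of the two actions forces the retraction of Lemma \ref{lem: coend is retract of dual derivatives} to be an equivalence at the level of underlying symmetric sequences. An alternative more direct route would sharpen the extra codegeneracy argument underlying Lemma \ref{lem: coend is retract of dual derivatives}: combining Lemma \ref{lem:dualizable natural trans spec} with differential dualizability, one would read off the totalization of the augmented cosimplicial resolution of $(\Sigma^\infty_C)^{\wedge n}$ directly, showing the map is an equivalence rather than merely a split surjection.

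For the operadic upgrade, assume Conjecture \ref{conj: symmetric monoidal derivatives}. The symmetric monoidal structure on $\partial^{BB}_\ast$ endows $K(\partial^{BB}_\ast \mathrm{Id}_C)$ with an operad structure governing its universal action on the family of $\partial_\ast L\Sigma^\infty R_c$. Comparing this with the $\mathrm{CoEnd}(\Sigma^\infty_C)$-action arising from the product rule, the universal property of the coendomorphism operad together with the symmetric sequence equivalence forces the two operad structures to coincide. The principal obstacle throughout lies in the retraction-to-equivalence step, which does not follow purely formally from Koszul duality and requires crucial input from the product rule on both sides (or a careful totalization argument). Once that is settled, the operadic upgrade is essentially formal given the conjecture.
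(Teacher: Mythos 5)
Your reduction step is genuinely different from and cleaner than the paper's set-up. The paper never explicitly writes down the chain $K(\partial^{BB}_\ast \mathrm{Id}_C)(n) \simeq B(\partial^{BB}_\ast\mathrm{Id}_C)(n)^\vee \simeq \partial_n(\Sigma^\infty_C\Omega^\infty_C)^\vee$; instead it works directly with the map of symmetric sequences $\mathrm{CoEnd}(\Sigma^\infty_C)\to K(\partial^{BB}_\ast\mathrm{Id}_C)$ and the envelopes of the two operads. Your observation that, under differential dualizability, the whole proposition is equivalent to the assertion $\mathrm{CoEnd}(\Sigma^\infty_C)\simeq\partial_\ast(\Sigma^\infty_C\Omega^\infty_C)^\vee$ is correct and is in fact recorded in the paper's remark after Lemma \ref{lem: coend is retract of dual derivatives}.

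The gap is in the step that promotes the retraction of Lemma \ref{lem: coend is retract of dual derivatives} to an equivalence. Neither of your two proposed routes supplies a mechanism. In the first, ``comparing the classifying data of the two actions forces the retraction to be an equivalence'' is not an argument: two different operads can act on the same underlying symmetric sequence without being equivalent, so you need to say in what structured sense the two actions are being compared. What the paper actually does is compare the \emph{envelopes}: it identifies $\mathrm{Env}(\mathrm{CoEnd}(\Sigma^\infty_C))^{\mathrm{op}}$ with $\langle(\Sigma^\infty_C)^{\wedge n}\rangle_{n\in\mathbb N}\subset\mathrm{Fun}^\omega(C,L\s)$ and $\mathrm{Env}(K(\partial^{BB}_\ast\mathrm{Id}_C))^{\mathrm{op}}$ with $\langle\mathrm{Triv}^{\mathrm{RMod}}_{\partial_\ast\mathrm{Id}_C}(L\Sigma^\infty_+\Sigma_n)\rangle_{n\in\mathbb N}$, and shows the unit and counit of the $(\partial_\ast^{BB},\Theta^{\mathrm{RMod}}_{BB})$-adjunction restrict to equivalences between these subcategories, following the Tate-vanishing analysis of Proposition \ref{prop: unit and counit}. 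The indispensable technical input that lets this analysis run is the identification of symmetric sequences $K(\partial_\ast^{BB}L\Sigma^\infty R_c)\simeq(\Sigma^\infty_C c)^{\wedge\ast}$ for $c\in C^\omega$, which the paper deduces from the cobar formula of Blans--Blom \cite[Corollary 4.4.4]{blans2024chainrulegoodwilliecalculus} for $\partial_\ast^{BB}$. Your proposal invokes the right family of objects (the representables) but never writes down this identification, and without it the ``comparison'' has no content.

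The second route fares no better: you assert that the extra-codegeneracy argument behind Lemma \ref{lem: coend is retract of dual derivatives} can be ``sharpened'' to an equivalence ``by reading off the totalization directly,'' but you do not say why. The extra codegeneracy produces a retraction for a purely formal reason; converting it to an equivalence requires an additional convergence or coherence argument that you would need to supply. As written, this is an assertion, not a proof. The operadic upgrade under Conjecture \ref{conj: symmetric monoidal derivatives} is fine in spirit and matches the paper's argument, which applies the (conjecturally) symmetric monoidal $\partial^{BB}_\ast$ to the tautological $\mathrm{CoEnd}(\Sigma^\infty_C)$-coaction on $\Sigma^\infty_C$ and uses $\partial^{BB}_\ast\Sigma^\infty_C\simeq 1$ to land in $\mathrm{CoEnd}(1)=K(\partial^{BB}_\ast\mathrm{Id}_C)$. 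But that upgrade is only valid once the symmetric sequence equivalence is actually in hand, which is exactly where the gap lies.
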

\begin{proof}
    For trivial reasons, there is an equivalence of right modules \[\partial_\ast^{BB}\Sigma^\infty_C \simeq 1 \in \mathrm{RMod}_{\partial_\ast^{BB}\mathrm{Id}_C}.\] The symmetric monoidality of the product rule in Conjecture \ref{conj: symmetric monoidal derivatives} implies that  taking $\partial_\ast^{BB}$ gives a map of operads
    \[\mathrm{CoEnd}(\Sigma^\infty_C) \rightarrow \mathrm{CoEnd}(\partial^{BB}_\ast \Sigma^\infty_C) \simeq \mathrm{CoEnd}(1)=:K(\partial^{BB}_\ast(\mathrm{Id}_C)).\]
    If one does not have symmetric monoidality of $\partial_\ast^{BB}$, the map still exists at the level of symmetric sequences and the following argument will show it is a quasi-equivalence of operads, as defined in Definition \ref{definition: quasiequivalence}.

    It suffices to demonstrate that the unit and counit of the adjunction $(\partial_\ast^{BB},\Theta^{\mathrm{RMod}}_{BB})$, defined similarly to before, are equivalences when restricted to the full subcategories $\langle (\Sigma^\infty_C) ^{\wedge n}\rangle_{n \in \mathbb{N}} $ and $\langle \mathrm{Triv}^\mathrm{RMod}_{\partial_\ast \mathrm{Id}_C} (L\Sigma^\infty_+\Sigma_n)\rangle_{n \in \mathbb{N}}$, respectively. This is because the former is the opposite of the envelope of $\mathrm{CoEnd}(\Sigma^\infty_C)$, while the latter is the opposite of the envelope of $K(\partial^{BB}_\ast \mathrm{Id}_C)$.

 The analysis of units and counits in Proposition \ref{prop: unit and counit} goes through almost entirely unchanged if one instead uses $\partial_\ast^{BB}$ provided one knows that for $c \in C^\omega$ there is an equivalence of symmetric sequences \[K(\partial_\ast^{BB} L\Sigma^\infty R_c) \simeq (\Sigma^\infty_C c) ^{\wedge \ast}.\] This is true as a consequence of the cobar formula for $\partial_\ast^{BB}$ \cite[Corollary 4.4.4]{blans2024chainrulegoodwilliecalculus} and Koszul duality for right modules \[\partial_\ast^{BB}(L\Sigma^\infty R_c) \simeq \Omega(\partial^{BB}_\ast (L\Sigma^\infty R_c \circ \Omega^\infty_{L\s}), \partial_\ast^{BB} \Sigma^\infty_C\Omega^\infty_C, 1) \simeq \Omega (\partial_\ast L\Sigma^\infty R_{\Sigma^\infty_C c} ,\partial_\ast^{BB} \Sigma^\infty_C\Omega^\infty_C, 1)\]  since $\partial_\ast L\Sigma^\infty R_{\Sigma^\infty_C c}= ((\Sigma^\infty_C c )^{\wedge \ast})^\vee$ by Corollary \ref{cor:representables for spectra}.
Thus the analogue of Proposition \ref{prop: unit and counit} applies to show the unit and counit on these subcategories are equivalences since the derivatives of $(\Sigma^\infty_C )^{\wedge n}$ are free, and so have Tate vanishing.

\end{proof}

\begin{remark}
    In order to compare right module structures on the derivatives of arbitrary finitary functors, one needs to show that under the equivalence
    \[K(\partial^{BB}_\ast \mathrm{Id}_C) \simeq \mathrm{CoEnd}(\Sigma^\infty_C c)\]
     the equivalence of symmetric sequences used in the above proof
    \[K (\partial^{BB}_\ast L\Sigma^\infty R_c) \simeq (\Sigma^\infty_C c)^{\wedge \ast}\] lifts to an equivalence of right modules.
\end{remark}

\subsection{Goodwillie calculus of spaces}\label{section: goodwillie of spaces}

\begin{prop}\label{prop:coend in spaces}
    The map of operads \[\mathrm{com} \rightarrow \mathrm{CoEnd}(\Sigma^\infty)\] 
    induced by the diagonal commutative coalgebra structure on $\Sigma^\infty$ is an equivalence.
\end{prop}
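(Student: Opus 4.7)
The plan is to recognize $\Sigma^\infty \colon \T \to \s$ as a stabilized representable and apply Proposition \ref{prop: koszul dual derivatives of rep}. The key observation is that $R_{S^0} = \mathrm{Map}(S^0,-)$ is the identity functor on $\T$, so $L\Sigma^\infty R_{S^0} \simeq \Sigma^\infty$, with $S^0 \in \T^\omega$ compact. Applying Proposition \ref{prop: koszul dual derivatives of rep} at $c = S^0$ yields a natural equivalence of right $\mathrm{CoEnd}(\Sigma^\infty)$-modules
\[\partial^\ast \Sigma^\infty \simeq (\Sigma^\infty S^0)^{\wedge \ast} = \bb{S}^{\wedge \ast}.\]
The left-hand side is tautologically $\mathrm{CoEnd}(\Sigma^\infty)$ viewed as its own unit right module, and the underlying symmetric sequence on the right (with $\bb{S}$ in every arity and trivial $\Sigma_n$-action, since $\bb{S}$ is the unit for $\wedge$) is that of $\mathrm{com}$. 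This gives the levelwise equivalence $\mathrm{CoEnd}(\Sigma^\infty)(n) \simeq \bb{S}$.

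Next, I would verify that under this identification the diagonal-induced map $\mathrm{com}(n) \to \mathrm{CoEnd}(\Sigma^\infty)(n)$ is the identity of $\bb{S}$. By the enriched Yoneda lemma, a natural transformation $\Sigma^\infty \to (\Sigma^\infty)^{\wedge n}$ is determined by its component at $S^0$. The diagonal has component $\Sigma^\infty S^0 \to (\Sigma^\infty S^0)^{\wedge n}$ obtained by applying $\Sigma^\infty$ to the pointed-space diagonal $S^0 \to (S^0)^{\wedge n}$; since $S^0$ is the unit for $\wedge$ in $\T$, the latter map is literally the identity of $S^0$. Hence the diagonal-induced map is the identity on underlying symmetric sequences.

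Finally, since $\mathrm{com} \to \mathrm{CoEnd}(\Sigma^\infty)$ is \emph{a priori} a map of operads (it arises from a commutative coalgebra structure on $\Sigma^\infty$), and since we have shown it induces an equivalence of underlying symmetric sequences, it is an equivalence of operads. The main subtle point I anticipate is verifying that Proposition \ref{prop: koszul dual derivatives of rep} genuinely respects the operadic and coalgebraic structures and not just the symmetric sequence data; this is sidestepped by the strategy above, because the diagonal already furnishes a bona fide map of operads and only the underlying symmetric sequence equivalence is needed to conclude.
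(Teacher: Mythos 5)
Your proof is correct, and it takes a genuinely different route from the paper's. The paper first invokes Lemma \ref{lem: coend is retract of dual derivatives} to exhibit $\mathrm{CoEnd}(\Sigma^\infty)(n)$ as a retract of $\mathrm{com}(n) \simeq \bb{S}$, and then verifies the map $\mathrm{com}\to\mathrm{CoEnd}(\Sigma^\infty)$ is an equivalence by checking injectivity in homology after evaluating on the test space $\mathbb{C}P^\infty$, where the cup product witnesses nontriviality of the diagonal map. You instead exploit the observation that $\Sigma^\infty = L\Sigma^\infty R_{S^0}$ with $S^0 \in \T^\omega$ compact, so that $\mathrm{CoEnd}(\Sigma^\infty)(n) = \nat(\Sigma^\infty,(\Sigma^\infty)^{\wedge n})$ is computed directly by the enriched Yoneda lemma (equivalently, by Proposition \ref{prop: koszul dual derivatives of rep} at $c = S^0$) to be $(\Sigma^\infty S^0)^{\wedge n} \simeq \bb{S}$, and then a second application of Yoneda identifies the diagonal natural transformation with $1 \in \pi_0\bb{S}$. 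Your argument is arguably cleaner: it stays entirely at the level of formal properties of representables, avoids the retraction lemma (and hence avoids needing the differential dualizability of $\T$ at this step), and needs no homology calculation or choice of auxiliary test space. What the paper's proof buys is a more concrete sanity check that the map is nontrivially detected on explicit spaces, and it illustrates a technique (evaluating on a rich enough object and using homology to detect degree) that is useful when the source functor is not representable. Your observation at the end --- that one does not need the operadic compatibility of the right-module equivalence in Proposition \ref{prop: koszul dual derivatives of rep}, because the diagonal already provides the map of operads and one only needs to verify an equivalence of underlying symmetric sequences --- is exactly the right way to sidestep that potential subtlety.
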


\begin{proof}
    Since Lemma \ref{lem: coend is retract of dual derivatives} shows $\mathrm{CoEnd}(\Sigma^\infty)$ is a retract of $(\partial_\ast (\Sigma^\infty \Omega^\infty)) ^\vee \simeq \mathrm{com}$ it suffices to show the map is injective in homology. For any space $X$ there is an evaluation map \[\mathrm{CoEnd}(\Sigma^\infty)(n) \rightarrow \s(\Sigma^\infty X,\Sigma^\infty X^{\wedge n}),\] and so by taking homology it suffices to show that there is some space $X$ such that any multiple of the map $H_\ast(X) \rightarrow H_\ast(X^ {\wedge n} )$ is nonzero. This is of course true, take $X = \mathbb{C}P^\infty$ where this fact is witnessed by the cup product.

\end{proof}

We call $K(\mathrm{com})$ the spectral Lie operad and denote it $\mathrm{lie}$. For finitary functors $F:\T \rightarrow \s$, Section \ref{section: right module construction} combined with the above computation establishes that $\partial_\ast F$ has the structure of a right $\mathrm{lie}$-module. Such structure on the derivatives was first constructed in \cite{ACOperads} using a chain rule.

\begin{ex} \label{ex: derivatives of representable spaces}
    By Corollary \ref{cor: derivatives of compact representable}, if $X$ is compact, i.e. a retract of a finite CW-complex, there is an equivalence of right $\mathrm{lie}$-modules  \[\partial_\ast \Sigma^\infty R_X \simeq K(\Sigma^\infty X^{\wedge \ast}),\] where the commutative operad action on $\Sigma^\infty X^{\wedge \ast}$ is by the diagonal. This exactly agrees with the right module structure computed in \cite[Example 17.28]{ACOperads}. The homotopy type of $K(\Sigma^\infty X^{\wedge \ast})$ is rather understandable. It can be computed as $(X^{\wedge \ast}/\Delta^{\mathrm{fat}})^\vee$, where $\Delta^{\mathrm{fat}}$ denotes the subspace of elements where some coincide, recovering the original calculation in \cite[Section 7]{GoodCalcIII}. Since $(\Sigma^\infty X^{\wedge \ast}/\Delta^{\mathrm{fat}})^\vee$ is a $\Sigma$-finite symmetric sequence, by which we mean it is levelwise equivariantly a finite spectrum, the Goodwillie tower agrees with the upper fake Goodwillie tower. This model of the Goodwillie tower of representables was originally constructed in Arone's thesis \cite{Arone1999}. It also agrees with the lower fake Goodwillie tower when evaluated at a compact space, as noted in \cite[Example 6.28]{ACClassification}.
\end{ex}

\begin{cor}
    The functor
    \[\partial_\ast: \mathrm{Fun}^\omega(\T,\s) \rightarrow \mathrm{RMod}_{\mathrm{lie}}\]
    agrees with the version constructed in \cite{ACOperads}.
\end{cor}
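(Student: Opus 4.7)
The plan is to bootstrap the agreement on representables, already established in Example \ref{ex: derivatives of representable spaces}, to an agreement on all finitary functors via the universal property of colimits. Both $\partial_\ast$ as defined here and the Arone--Ching version $\partial_\ast^{AC}$ preserve colimits in the functor variable: ours does by Definition \ref{definition: right module derivatives}, which is literally a coend over $\T^\omega$, and the Arone--Ching construction does by the fact that Goodwillie derivatives of finitary functors commute with arbitrary colimits of functors, together with the naturality of their chain-rule-induced right $\mathrm{lie}$-module structure. Since $\mathrm{Fun}^\omega(\T, \s) \simeq \mathrm{Fun}(\T^\omega, \s)$ and every such functor $F$ is a canonical colimit of representables via the enriched coend $F \simeq \int^{c \in \T^\omega} F(c) \wedge L\Sigma^\infty R_c$, both functors are determined by their restriction to the full subcategory spanned by $\{L\Sigma^\infty R_X : X \in \T^\omega\}$.

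The second ingredient is the comparison on this subcategory. By Corollary \ref{cor: derivatives of compact representable} and Proposition \ref{prop:coend in spaces}, one has an equivalence of symmetric sequences
\[\partial_\ast L\Sigma^\infty R_X \simeq K(\Sigma^\infty X^{\wedge \ast}),\]
with the operadic right action induced by the diagonal $\mathrm{com}$-coalgebra structure on $\Sigma^\infty X$. Example \ref{ex: derivatives of representable spaces} identifies this right $\mathrm{lie}$-module precisely with the one appearing in \cite[Example 17.28]{ACOperads}, so the two constructions agree naturally on compact representables.

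Putting these together, the agreement extends uniquely along colimits to all of $\mathrm{Fun}^\omega(\T, \s)$, yielding the desired natural equivalence of functors into $\mathrm{RMod}_{\mathrm{lie}}$. The main subtle point will be ensuring the match on representables is genuinely natural in the representing space $X \in \T^\omega$, so that it descends through the Yoneda coend formula; this is where the identification of the operad map $\mathrm{com} \to \mathrm{CoEnd}(\Sigma^\infty)$ (Proposition \ref{prop:coend in spaces}) as the one induced by diagonals is essential, since the Arone--Ching module structure also ultimately arises from the same diagonal coalgebra structure on suspension spectra. Once that compatibility is in hand, the extension by colimits is automatic.
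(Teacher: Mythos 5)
Your approach matches the paper's: both reduce to representables by observing that each version of $\partial_\ast$ preserves colimits, then appeal to Example \ref{ex: derivatives of representable spaces} for agreement on $L\Sigma^\infty R_X$. However, you omit what the paper treats as the leading point of its proof: one must first know that the theory of Koszul duality for operads and right modules used in \cite{ACOperads} agrees with the $K(-)$ used in this paper, which is a substantive comparison established in \cite{envBMT}. Without this, the claim "both constructions give $K(\Sigma^\infty X^{\wedge \ast})$ with the right module structure of \cite[Example 17.28]{ACOperads}" is not a priori meaningful, since the two $K(-)$'s live in different formal frameworks and $\mathrm{lie}$ itself is defined via $K(\mathrm{com})$ in each. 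The subtlety you flag (naturality in $X$) is a real concern, but it is downstream of this framework comparison rather than a substitute for it; the naturality is precisely what the coend description in Definition \ref{definition: right module derivatives} packages up once the model comparison is in place.
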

\begin{proof}
    In \cite{envBMT}, it is demonstrated that the theory of Koszul duality of operads and right modules used in \cite{ACOperads} agrees with the Koszul duality of operads and right modules we use in this paper. The above example, together with the commutativity of derivatives with colimits, implies the result since we may express any functor naturally as a coend of representables.
\end{proof}

In unpublished work, recounted in \cite[Pg. 4,5]{AroneChingManifolds} and again in \cite[Proposition 3.15]{crosseffectsclassification}, Dwyer-Rezk demonstrate that there is an equivalence between $\mathrm{Poly}^\omega(\T,\s)$ and the category of bounded commutative comodules\footnote{Unless otherwise noted, all our algebraic objects are non(co)unital.}, defined as the full subcategory of functors \[\mathrm{RComod}^{<\infty}_{\mathrm{com}} \subset \mathrm{Fun}(\mathrm{FinSet}^{\mathrm{sur}}, \s)\] which are trivial on sets with cardinality greater than some arbitrary $N$. The equivalence is given by
\[\adjunction{\int_{(-)} (-)}{\mathrm{RComod}^{<\infty}_{\mathrm{com}}}{\mathrm{Poly}^\omega(\T,\s)}{N[-]}.
\]
where for $X \in \T$, the spectrum $\int_X \bar{C}$ is the coend computing the factorization homology of a commutative comodule $\bar{C}$ over $X$, which is by definition
\[\int_X \bar{C} := \int^{I \in \mathrm{FinSet}^\mathrm{sur}}\bar{C}(I) \wedge \Sigma^\infty X^{\wedge I}, \]
and the right adjoint is given by
\[N[F]:=\nat((\Sigma^\infty)^{\wedge \ast },F).\]

Let $\mathrm{com}^\mathrm{un}$ denote the unital commutative operad. There is a Pirashvili-type equivalence \cite[Theorem 3.78]{crosseffectsclassification} between commutative comodules and augmented unital commutative comodules defined as functors \[\mathrm{Comod}_\mathrm{com^{un}}^\mathrm{aug}:=\mathrm{Fun}(\mathrm{FinSet}_\ast, \s).\] 
We denote this Pirashvili equivalence by
\[\adjunction{\epsilon}{\mathrm{Comod}_\mathrm{com^{un}}^\mathrm{aug}}{\mathrm{RComod}_{\mathrm{com}}}{\epsilon^{-1},}
\]
and remark that the functor $\epsilon$ lifts the cross-effects. 
The composition of the Dwyer-Rezk equivalence and the Pirashvili equivalence is simply \cite[Theorem 6.1.5.1]{HA}
\[\adjunction{\mathrm{LKan}}{\mathrm{Comod}_\mathrm{com^{un}}^\mathrm{aug}}{\mathrm{Poly}^\omega(\T,\s)}{\mathrm{res}}.
\]
Recall from Section \ref{section: right module primitive stuff} that given a comodule $C$ over reduced operad $O$, one can define the indecomposables $B(C)$ which have the structure of a divided power right $K(O)$-module. Dually, for a right module $R$ over $K(O)$, we may define the primitives $\Omega(R)$ which have the structure of a right comodule over $O$.

\begin{prop}\label{prop: derivatives of polynomial}
    If $F\in \mathrm{Poly}^\omega(\T,\s)$, then there is an equivalence of right $\mathrm{lie}$-modules
    \[\partial_\ast F \simeq B(\epsilon (F|_{\mathrm{FinSet}_\ast})).\]
\end{prop}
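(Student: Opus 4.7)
The plan is to reduce the equivalence to a check on stabilized representables $\Sigma^\infty R_X$ for compact $X \in \T^\omega$ and then propagate by cocontinuity. Both sides of the claimed equivalence define cocontinuous functors $\mathrm{Poly}^\omega(\T,\s) \to \mathrm{RMod}_{\mathrm{lie}}$: the functor $\partial_\ast$ is cocontinuous by Definition \ref{definition: right module derivatives} (it is a coend in the functor variable), while $B \circ \epsilon \circ (-)|_{\mathrm{FinSet}_\ast}$ is a composition of a left adjoint, an equivalence, and a pointwise restriction. Via the Dwyer-Rezk equivalence $\mathrm{Poly}^\omega(\T,\s) \simeq \mathrm{RComod}^{<\infty}_{\mathrm{com}}$, the source is generated under colimits by (polynomial approximations of) the representables $\Sigma^\infty R_X$ for $X \in \T^\omega$; it therefore suffices to verify the equivalence for such $F$.

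For $F = \Sigma^\infty R_X$ with $X$ compact, Example \ref{ex: derivatives of representable spaces} yields $\partial_\ast \Sigma^\infty R_X \simeq K(\Sigma^\infty X^{\wedge \ast})$ as right $\mathrm{lie}$-modules, where $\Sigma^\infty X^{\wedge \ast}$ carries its tautological right $\mathrm{CoEnd}(\Sigma^\infty) \simeq \mathrm{com}$-module structure induced by the diagonal coalgebra on $\Sigma^\infty X$. Dually, the commutative comodule $\bar{C} := \epsilon(\Sigma^\infty R_X|_{\mathrm{FinSet}_\ast})$ is characterized by the Dwyer-Rezk formula $\Sigma^\infty R_X(Y) \simeq \int^I \bar{C}(I) \wedge \Sigma^\infty Y^{\wedge I}$, and direct evaluation at pointed finite sets identifies $\bar{C}(n) \simeq \Sigma^\infty X^{\wedge n}$ with the diagonal commutative comodule structure. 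Compactness of $X$ ensures $\Sigma^\infty X^{\wedge \ast}$ is levelwise dualizable, so Proposition \ref{prop: koszul duality right modules} identifies the operadic indecomposables $B(\bar{C})$ with $K(\Sigma^\infty X^{\wedge \ast})$, matching $\partial_\ast \Sigma^\infty R_X$ at the level of underlying symmetric sequences.

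The main difficulty is to verify that the two right $\mathrm{lie}$-module structures coincide, not merely the underlying symmetric sequences. The structure on $\partial_\ast F$ is produced by the product rule (Theorem \ref{thm: product rule}) via postcomposition with $\mathrm{CoEnd}(\Sigma^\infty)$, whereas the structure on $B(\bar{C})$ is produced by the operadic indecomposables construction applied to a commutative comodule. On the generators $\Sigma^\infty R_X$ both structures are tautologically induced by the diagonal coalgebra on $\Sigma^\infty X$, so compatibility follows from the universal property of the Dwyer-Rezk equivalence and of Koszul duality for right modules; the match then extends to all of $\mathrm{Poly}^\omega(\T,\s)$ by cocontinuity of both functors. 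A small amount of additional care is required to identify the symmetric sequence $\bar{C}$ for a representable precisely with the diagonal $\mathrm{com}$-comodule, which is the technical heart of the argument and is the place where the Pirashvili equivalence $\epsilon$ plays its essential role.
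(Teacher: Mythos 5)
Your reduction to representables by cocontinuity is the right starting move and matches the paper, but the central identification in your second paragraph is wrong in a way that unravels the rest of the argument.

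You claim that $\bar{C} := \epsilon(\Sigma^\infty R_X|_{\mathrm{FinSet}_\ast})$ is the diagonal commutative comodule $\Sigma^\infty X^{\wedge \ast}$, and then that $B(\bar{C}) \simeq K(\Sigma^\infty X^{\wedge \ast})$ ``by Koszul duality for right modules.'' This cannot be correct even for variance reasons: a right $\mathrm{com}$-comodule is a \emph{covariant} functor out of $\mathrm{Env}(\mathrm{com}) \simeq \mathrm{FinSet}^{\mathrm{sur}}$, whereas the diagonal on $\Sigma^\infty X$ makes $\Sigma^\infty X^{\wedge \ast}$ a \emph{contravariant} functor, i.e.\ a right $\mathrm{com}$-module (this is exactly the object on the other side of the desired equivalence). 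You are also invoking the Dwyer-Rezk characterization for $F = \Sigma^\infty R_X$, which is not polynomial; the coend formula $\Sigma^\infty R_X(Y) \simeq \int^I \bar{C}(I) \wedge \Sigma^\infty Y^{\wedge I}$ therefore does not hold, and the ``direct evaluation'' breaks down concretely --- for $X$ a connected finite complex, $\Sigma^\infty R_X([k]_+) = \Sigma^\infty \mathrm{Map}_\ast(X,[k]_+) \simeq \Sigma^\infty[k]_+$, which has no memory of $X$ at all, while $P_n(\Sigma^\infty R_X)([k]_+)$ does.

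The paper's proof of this step is substantively different and cannot be shortcut the way you attempt. It works with $P_n \Sigma^\infty R_X$, uses the $\Sigma$-finiteness of $K(\Sigma^\infty X^{\wedge \ast})$ to write $P_n(\Sigma^\infty R_X)(Y)$ as a right-module mapping spectrum, decomposes $\Sigma^\infty[k]_+^{\wedge \ast}$ as a wedge of \emph{free} right $\mathrm{com}$-modules indexed by subsets of $[k]$, takes Koszul duals to get a wedge of \emph{trivial} right $\mathrm{lie}$-modules, and then recognizes the resulting maps-out-of-trivials as primitives. The upshot is that the correct identification is $\bar{C} \simeq \Omega(\partial_\ast P_n\Sigma^\infty R_X)$ --- the \emph{cobar} (primitives) of the derivatives, not the diagonal comodule. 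One then needs the $\Sigma$-finiteness hypothesis a second time, through Lemma \ref{lem: counit of bar cobar is equiv on sigma finite}, to conclude that $B\Omega \to \mathrm{id}$ is an equivalence and hence $B(\bar{C}) \simeq \partial_\ast P_n\Sigma^\infty R_X$. Your final paragraph, which asserts that the two right module structures ``are tautologically induced by the diagonal,'' is standing in for exactly this argument and does not deliver it. To fix the proof you would need to compute $P_n(\Sigma^\infty R_X)$ on finite pointed sets honestly and identify the result with a primitives construction, which is the technical heart of the paper's proof.
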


\begin{proof}
We prove the result by establishing it for $P_n\Sigma^\infty R_X$, for all $n$ and $X \in \T^\omega$. This suffices since both sides of the equivalence commute with colimits, and so we may Kan extend from representables to get the result for a general pointed polynomial functor.

Recall from Example \ref{ex: derivatives of representable spaces}, that for $Y\in \T^\omega$, the $\Sigma$-finiteness of $\partial_\ast \Sigma^\infty R_X \simeq K(\Sigma^\infty X ^{\wedge \ast})$ implies there is an equivalence \[P_n(\Sigma^\infty R_X)(Y) \simeq \mathrm{RMod}_{\mathrm{lie}}(K(\Sigma^\infty Y ^{\wedge \ast}),K(\Sigma^\infty X ^{\wedge \ast})^{\leq n}).\]

We now compute $K(\Sigma^\infty [k]_+ ^{\wedge \ast})$. The set of tuples of points in $[k]_+$ have a standard decomposition in terms of their images which extends to an equivalence of right $\mathrm{com}$-modules
\[\Sigma^\infty [k]_+ ^{\wedge \ast} \simeq \bigvee_{U\in 2^{[k]}} \mathrm{Free}^{\mathrm{RMod}}_\mathrm{com} \Sigma^\infty_+ \Sigma_{U}.\]
After taking Koszul duals there is an equivalence
\[K(\Sigma^\infty [k]_+ ^{\wedge \ast}) \simeq \bigvee_{U\in 2^{[k]}} \mathrm{Triv}^{\mathrm{RMod}}_\mathrm{lie} \Sigma^\infty_+ \Sigma_{U}. \]
Mapping out of a trivial right module is expressed in terms of the primitives, via adjunction, and so the Goodwillie tower is
\[P_n(\Sigma^\infty R_X)([k]_+) \simeq \prod_{U \subset 2^{[k]}} \Omega K(\Sigma^\infty X^{\wedge \ast})^{\leq n}(U). \]

This formula is the inverse of the cross-effects equivalence from augmented commutative comodules to nonunital commutative comodules, and so we have
\[
P_n(\Sigma^\infty R_X)([k]_+) \simeq \epsilon^{-1} (\Omega K(\Sigma^\infty X^{\wedge \ast})^{\leq n}).\]
We invert the outermost functor $\epsilon^{-1}$, and then use that $ K(\Sigma^\infty X^{\wedge \ast})$ is $\Sigma$-finite to conclude by Lemma \ref{lem: counit of bar cobar is equiv on sigma finite} that we may invert $\Omega$:
\[\partial_\ast P_n(\Sigma^\infty R_X )\simeq B(\epsilon(P_n(\Sigma^\infty R_X))|_{\mathrm{FinSet}_\ast})\]
which concludes the proof.
\end{proof}

\begin{cor}
    There is a natural factorization of the derivatives for $ \mathrm{Fun}^\omega(\T,\s)$:
\begin{center}
\[\begin{tikzcd}
	& {\mathrm{RMod}^\mathrm{dp}_{\mathrm{lie}}} \\
	{\mathrm{Fun}^\omega(\T,\s)} & {\mathrm{RMod}_\mathrm{lie}}
	\arrow["{\mathrm{Forget}^\mathrm{RMod}_O}", from=1-2, to=2-2]
	\arrow[from=2-1, to=1-2]
	\arrow["{\partial_\ast}"', from=2-1, to=2-2]
\end{tikzcd}\]
\end{center}
    
\end{cor}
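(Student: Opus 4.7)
The plan is to equip $\partial_\ast F$ with a divided power structure by assembling the structures on its truncations. For each $n \geq 0$, the polynomial approximation $P_n F$ belongs to $\mathrm{Poly}^\omega(\T,\s)$, so Proposition \ref{prop: derivatives of polynomial} gives a natural equivalence of right $\mathrm{lie}$-modules
\[\partial_\ast P_n F \simeq B(\epsilon((P_n F)|_{\mathrm{FinSet}_\ast})),\]
and the right-hand side carries a canonical divided power right $\mathrm{lie}$-module structure since $B$ lands in $\mathrm{RMod}^\mathrm{dp}_\mathrm{lie}$. Because $\partial_i F = \partial_i P_n F$ for $i \leq n$, the truncation $(\partial_\ast F)^{\leq n}$ is identified with $\partial_\ast P_n F$, and so inherits a divided power structure in $\mathrm{RMod}^{\leq n,\mathrm{dp}}_\mathrm{lie}$.

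Next I would show these truncated divided power structures are compatible as $n$ varies. The tower maps $P_{n+1}F \to P_n F$ induce maps in $\mathrm{RMod}^\mathrm{dp}_\mathrm{lie}$ by functoriality of the composite $B \circ \epsilon \circ (-)|_{\mathrm{FinSet}_\ast}$, and after $n$-truncation these realize the canonical truncation map $(\partial_\ast F)^{\leq n+1} \to (\partial_\ast F)^{\leq n}$. Taking the limit over this tower in $\mathrm{RMod}^\mathrm{dp}_\mathrm{lie}$ produces a divided power right $\mathrm{lie}$-module, and the forgetful functor to $\mathrm{RMod}_\mathrm{lie}$ should carry this limit to $\partial_\ast F$ with its original right module structure, since both the cofiltered limit of truncations and the forgetful functor are computed levelwise on the underlying symmetric sequence.

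The main obstacle is making precise the assembly of divided power structures across truncation levels. Specifically, one needs (i) to check that the inverse limit in $\mathrm{RMod}^\mathrm{dp}_\mathrm{lie}$ of the truncated structures exists and has underlying symmetric sequence equal to $\partial_\ast F$ rather than some modified version, and (ii) that the underlying right $\mathrm{lie}$-module structure on this limit agrees with the one from Definition \ref{definition: right module derivatives}. Both reduce to checks in $\mathrm{SymSeq}(\s)$, where the truncation filtration of any symmetric sequence converges tautologically since level $k$ of $R^{\leq n}$ stabilizes at $R(k)$ for $n \geq k$. Once these compatibilities are verified, naturality of every ingredient in the construction produces the desired factorization through $\mathrm{RMod}^\mathrm{dp}_\mathrm{lie}$.
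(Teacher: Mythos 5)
Your strategy of assembling the divided power structure from the truncation tower runs into a real issue in the final step, and the justification you offer for it does not actually resolve it. The category $\mathrm{RMod}^\mathrm{dp}_\mathrm{lie}$ is by definition (Definition \ref{definition: dp rmod}) a category of coalgebras over the comonad $\mathrm{RM}^\mathrm{dp}_\mathrm{lie}$ on $\mathrm{SymSeq}(\s)$. A comonadic forgetful functor preserves and creates \emph{colimits} on the underlying category; it does \emph{not} preserve limits in general. So the assertion that the inverse limit of $\{(\partial_\ast F)^{\leq n}\}_n$ taken in $\mathrm{RMod}^\mathrm{dp}_\mathrm{lie}$ has underlying symmetric sequence $\partial_\ast F$ is not automatic. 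One would need to check that the comonad $\mathrm{RM}^\mathrm{dp}_\mathrm{lie}$ (and its iterates, to handle the $\infty$-coherences) preserves this particular tower limit. That is in fact true here, because of the product formula of Proposition \ref{prop: cofree divided power right modules}, but it is a substantive claim requiring an argument, and what you say --- that truncation filtrations converge tautologically in $\mathrm{SymSeq}$ --- addresses convergence of the underlying tower, not commutation of the comonad with the limit, which is the actual issue.

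The cleaner route, and the one the paper implicitly relies on (it is spelled out later in the proof of Proposition \ref{prop: lifts of derivatives to dp}, where it also yields contractibility of the space of lifts), goes through colimits rather than limits. Because colimits in a coalgebra category over $\mathrm{SymSeq}$ are computed on underlying symmetric sequences, a colimit-preserving lift of $\partial_\ast$ to $\mathrm{RMod}^\mathrm{dp}_\mathrm{lie}$ is determined by left Kan extension from the representables $L\Sigma^\infty R_X$, $X\in\T^\omega$. By Example \ref{ex: derivatives of representable spaces}, $\partial_\ast L\Sigma^\infty R_X \simeq K(\Sigma^\infty X^{\wedge \ast})$ is $\Sigma$-finite; Lemma \ref{lem: ideal implies o ideal} and Proposition \ref{prop: cofree divided power right modules} then give the canonical divided power refinement there. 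Left Kan extending and observing $\partial_\ast$ preserves colimits produces the factorization. This is structurally dual to your argument, assembling the structure from a colimit-dense family of good objects rather than from a limit of truncations, and it sidesteps the delicate point about limits in coalgebra categories entirely.
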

The functor $\partial_\ast:\mathrm{Fun}(\T^\omega,\s) \rightarrow \mathrm{RMod}_\mathrm{lie}^\mathrm{dp}$ has a right adjoint $\Theta^\mathrm{dp}$, and in fact it restricts to an equivalence
\[\adjunction{\partial_\ast}{\mathrm{Poly}^\omega(\T,\s)}{\mathrm{RMod}_\mathrm{lie}^{\mathrm{dp},<\infty}}{\Theta^\mathrm{dp}}.
\]
In light of the Dwyer-Rezk classification of polynomial functors in terms of commutative comodules, the most obvious way to prove this equivalence given Proposition \ref{prop: derivatives of polynomial} is to establish that bar-cobar duality provides an equivalence
\[\adjunction{B}{\mathrm{Comod}^{<\infty}_O}{\mathrm{RMod}_\mathrm{K(O)}^{\mathrm{dp},<\infty}}{\mathrm{\Omega^\mathrm{dp}}}.
\]
While we expect such an equivalence holds generalizing \cite[Proposition 2.7]{AroneChingManifolds}, this approach has little chance of applying to a wider class of categories. An obvious category it fails to address is $\T^{\geq 1}$, the category of simply-connected pointed spaces. This category has the same theory of polynomial functors, but does not contain the category $\mathrm{FinSet}_\ast$. We expect that for a general category $C$, one should instead consider the right $\mathrm{CoEnd}(\Sigma^\infty_C)$-comodule associated to a polynomial $F$ built out of its cross-effects \[\{\nat((\Sigma^\infty_C)^{\wedge n},F)\}_{n \in \mathbb{N}}.\] One can ask whether these cross-effects are bar-cobar dual to $\partial_\ast F$ in the sense of Proposition \ref{prop: derivatives of polynomial}. We expect this is true precisely  when $\mathrm{Poly}^\omega(C,L\s)$ admits a divided power right module classification. However, we will use a more direct approach in Section \ref{section: categories with divided powers}, completely classifying categories that admit divided power classifications.

The appearance of divided powers in the Goodwillie calculus of $\T$ can be understood as a limiting case of certain self duality phenomena which appears in manifold calculus, which we will make precise in \cite{dualityinmfld}. For $R\in \mathrm{RMod}_\mathrm{lie}^{\mathrm{dp}}$, we suggestively define $\int^{X^{\neg}}_\mathrm{dp} R $, the divided power factorization cohomology of $R$ over the formal negation of $X$, as the right adjoint $\Theta^{\mathrm{dp}}$ applied to $R$ and evaluated at $X$
\[\int^{X^{\neg}}_\mathrm{dp} R :=\mathrm{RMod}^\mathrm{dp}_{\mathrm{lie}}(K(\Sigma^\infty X^{\wedge \ast}),R). \]

In light of the divided power classification of $\mathrm{Poly}^\omega(\T,\s)$ that we prove in Section \ref{section: categories with divided powers}, we find that Goodwillie approximation of $F \in \mathrm{Fun}^\omega(\T,\s)$ has the formula
\[P_\infty(F)(X) \xrightarrow{\simeq} \int_\mathrm{dp}^{X^{\neg}} \partial_\ast F.\]

In the case $F(X)= B(\Sigma^\infty X^{\wedge \ast}, \mathrm{com},A)$, the factorization homology of a commutative algebra $A$, this agrees with Amabel's formula \cite[Main Theorem]{generalizedPoincare} which is constructed via Goodwillie calculus in $\mathrm{Alg}_\mathrm{com}$, which we now recall.
\subsection{Goodwillie calculus of algebras over an operad}\label{section: goodwillie calculus algebras}

We fix a reduced operad $O$ in $L\s$ a Bousfield localization of $\s$. Since $O$ is reduced it has an augmentation $O \rightarrow 1$ which has an associated adjunction
\[\adjunction{\mathrm{Indecom}_O}{\mathrm{Alg}_O}{L\s}{\mathrm{Triv}^\mathrm{Alg}_O}.\]

 \begin{definition}
    The category of coalgebras over an operad $O$ in $L\s$ is 
\[ \mathrm{Coalg}_O:=\mathrm{Fun}^\otimes_{L\s}(\mathrm{Env}(O)^\mathrm{op},L\s).\]
 \end{definition}
 We will refer to $C \in \mathrm{Coalg}_O$ as a coalgebra structure on $C(1)$. From this definition and the identification $\mathrm{RMod}_O \simeq \mathrm{Fun}_{L\s}(\mathrm{Env}(O)^\mathrm{op},L\s),$ one is able to produce a forgetful functor
 \[\mathrm{Forget}^\mathrm{RMod}_O: \mathrm{Coalg}_O \rightarrow \mathrm{RMod}_O\]
 \[C \mapsto \{C(1)^{\wedge n}\}_{n \in \mathbb{N}}.\]

\begin{prop}
    If $O$ is an augmented operad, then there is a functor $B$ which lifts $\mathrm{Indecom}_O$ to $K(O)$-coalgebras:
    \begin{center}
\[\begin{tikzcd}
	& {\mathrm{Coalg}_{K(O)}} \\
	{\mathrm{Alg}_O} & {L\s}
	\arrow["{\mathrm{Forget}_O}", from=1-2, to=2-2]
	\arrow["{B}", from=2-1, to=1-2]
	\arrow["{\mathrm{Indecom}_O}", from=2-1, to=2-2]
\end{tikzcd}\]
    \end{center}
\end{prop}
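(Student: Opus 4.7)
My plan is to exploit the description
\[\mathrm{Env}(K(O))^{\mathrm{op}} \simeq \langle 1^{\circledast n}\rangle_{n \in \mathbb{N}} \subset (\mathrm{RMod}_O, \circledast)\]
from Section \ref{section: right module stuff}, under which a $K(O)$-coalgebra with underlying spectrum $V$ is the same as an enriched symmetric monoidal functor out of $\langle 1^{\circledast n}\rangle$ with value $V$ at $1$. Thus it suffices, functorially in $A \in \mathrm{Alg}_O$, to produce a symmetric monoidal enriched functor $E_A : (\mathrm{RMod}_O, \circledast) \to (L\s, \wedge)$ with $E_A(1) \simeq \mathrm{Indecom}_O(A)$; restriction along the inclusion will then yield $B(A) \in \mathrm{Coalg}_{K(O)}$ and the required identification $\mathrm{Forget}_O \circ B \simeq \mathrm{Indecom}_O$.

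The natural candidate is the relative composition product with $A$,
\[E_A(R) := R \circ_O A \simeq B(R, O, A).\]
On a free right $O$-module $\mathrm{Free}^{\mathrm{RMod}}_O(X) = X \circ O$ this reduces to ordinary symmetric sequence evaluation $X \circ A = \bigvee_n X(n) \wedge_{\Sigma_n} A^{\wedge n}$, and on the trivial right $O$-module $1$ it computes $B(1, O, A) \simeq \mathrm{Indecom}_O(A)$, giving the desired value at the generator.

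Symmetric monoidality of $E_A$ reduces, via left Kan extension from free right $O$-modules (which generate $\mathrm{RMod}_O$ under colimits, with both $\circledast$ and $- \circ_O A$ preserving colimits in each variable), to the symmetric monoidality of ordinary evaluation $(- \circ A) : (\mathrm{SymSeq}(L\s), \circledast) \to (L\s, \wedge)$. The latter is immediate from the Day convolution formula, since
\[(X \circledast Y) \circ A \simeq \bigvee_{i, j} (X(i) \wedge_{\Sigma_i} A^{\wedge i}) \wedge (Y(j) \wedge_{\Sigma_j} A^{\wedge j}) \simeq (X \circ A) \wedge (Y \circ A).\]
The $O$-algebra structure on $A$ is used precisely to ensure that $R \mapsto R \circ_O A$ descends coherently from $\mathrm{SymSeq}(L\s)$ to $\mathrm{RMod}_O$.

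The main obstacle is arranging that $A \mapsto E_A$ be a coherent functor at the level of symmetric monoidal $\infty$-categories. I would accomplish this by viewing $A$ as a symmetric monoidal enriched functor $A_\bullet : \mathrm{Env}(O) \to L\s$ with $A_\bullet(n) \simeq A^{\wedge n}$, and constructing $E_A$ as the enriched coend
\[E_A(R) \simeq \int^{n \in \mathrm{Env}(O)} R(n) \wedge A_\bullet(n),\]
which is a homotopy coend model of $B(R, O, A)$. The universal property of the coend automatically transports the symmetric monoidality of $A_\bullet$ to that of $E_A$ and makes the construction manifestly functorial in $A$. Restricting to $\langle 1^{\circledast n}\rangle \simeq \mathrm{Env}(K(O))^{\mathrm{op}}$ then yields the desired $B : \mathrm{Alg}_O \to \mathrm{Coalg}_{K(O)}$ together with its lifting property.
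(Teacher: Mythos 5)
Your overall skeleton coincides with the paper's: identify $\mathrm{Env}(K(O))^{\mathrm{op}}$ with $\langle 1^{\circledast n}\rangle \subset (\mathrm{RMod}_O,\circledast)$, produce a symmetric monoidal functor $(\mathrm{RMod}_O,\circledast)\to(L\s,\wedge)$ given by $R\mapsto B(R,O,A)$, and restrict. The difference is that the paper simply cites \cite{brantnerHeutsUniversal} for the symmetric monoidality of $R\mapsto B(R,O,A)$, whereas you attempt to prove it.

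Of your two arguments for symmetric monoidality, the first (left Kan extension from free right $O$-modules) has a genuine gap. Even granting that free modules are closed under $\circledast$ and that the restriction of $E_A$ to free modules is symmetric monoidal (both of which are true), a colimit-preserving extension of a symmetric monoidal functor off a generating full subcategory is not automatically symmetric monoidal: the lax/oplax comparison maps need to be checked to be equivalences, and ``free modules generate under colimits'' is not the right universal property for transporting monoidal structure. Your second argument -- the coend $E_A(R)\simeq\int^{n\in\mathrm{Env}(O)} R(n)\wedge A_\bullet(n)$ -- is the correct route (and is, in substance, the Brantner--Heuts argument), but the justification ``the universal property of the coend automatically transports the symmetric monoidality of $A_\bullet$'' is too thin. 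The coend by itself has no such property. What makes this work is that $\mathrm{RMod}_O\simeq\mathrm{Fun}_{L\s}(\mathrm{Env}(O)^{\mathrm{op}},L\s)$ is the enriched presheaf category on $\mathrm{Env}(O)$ with the Day convolution monoidal structure, and by the symmetric monoidal universal property of Day convolution (the content of the cited \cite[8.4.3]{HinichColimits} together with its standard corollary), the Yoneda embedding exhibits this presheaf category as the free symmetric monoidal cocompletion; hence the symmetric monoidal enriched functor $A_\bullet:\mathrm{Env}(O)\to L\s$ encoding the $O$-algebra structure extends canonically to a colimit-preserving symmetric monoidal functor $\mathrm{RMod}_O\to L\s$, which is $E_A$. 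Once you name this universal property explicitly, the proof is complete and matches the paper's.
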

\begin{proof}
    In work in progress \cite{brantnerHeutsUniversal}, Branter-Heuts demonstrate that the functor 
    \[(\mathrm{RMod}_O,\circledast) \rightarrow (L\s,\wedge)\]
    \[R \mapsto B(R,O,A)\] is symmetric monoidal. This implies that
    \[B:\mathrm{Env}(K(O))^\mathrm{op} \rightarrow L\s\]
    \[\mathrm{Triv}_O^\mathrm{RMod}(L\Sigma_+^\infty \Sigma_n) \mapsto B(\mathrm{Triv}_O^\mathrm{RMod}(L\Sigma_+^\infty \Sigma_n),O,A)\]
    defines a $K(O)$-coalgebra structure on \[B(1,O,A) \simeq \mathrm{Indecom}_O(A).\]
\end{proof}

When $O$ is levelwise dualizable, $K(O)$ supplies the universal natural coaction on $\mathrm{Indecom}_O(A)$ by an operad, in the following sense.

\begin{thm}\label{thm: coend in algebras}
    If $O\in \mathrm{Operad}(L\s)$ is a reduced levelwise dualizable operad, then the map of operads \[K(O) \rightarrow \mathrm{CoEnd}(\mathrm{Indecom}_O),\] induced by the natural coaction of $K(O)$ on $\mathrm{Indecom}_O(A)$, is an equivalence.
\end{thm}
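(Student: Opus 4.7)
The plan is to combine the abstract identification from Proposition \ref{prop:product rule implies coend is koszul dual to identity} with a direct analysis of the coaction map in the language of envelopes. Since $\mathrm{Alg}_O$ is differentially dualizable with $\partial_\ast \mathrm{Id}_{\mathrm{Alg}_O} \simeq O$ (both recorded in the differentially dualizable example above), that proposition already supplies a levelwise equivalence of underlying symmetric sequences $\mathrm{CoEnd}(\mathrm{Indecom}_O) \simeq K(O)$. Since an operad map is an equivalence iff its underlying symmetric sequence map is, the remaining content is to check that the coaction map of the statement realizes this abstract equivalence.

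Next I would describe the coaction via the Brantner-Heuts symmetric monoidal bar construction. Under the identifications $\mathrm{Env}(K(O))^{\mathrm{op}} \simeq \langle 1^{\circledast n} \rangle \subset \mathrm{RMod}_O$ recorded in Section \ref{section: right module stuff} and $\mathrm{Env}(\mathrm{CoEnd}(\mathrm{Indecom}_O))^{\mathrm{op}} \simeq \langle \mathrm{Indecom}_O^{\wedge n} \rangle \subset \mathrm{Fun}^\omega(\mathrm{Alg}_O, L\s)$ used in the proof of Theorem \ref{thm: product rule}, the coaction map corresponds to the functor $\tilde{B}: \langle 1^{\circledast n} \rangle \to \langle \mathrm{Indecom}_O^{\wedge n} \rangle$ sending $R \mapsto B(R, O, -)$, since $B(1^{\circledast n}, O, -) \simeq \mathrm{Indecom}_O^{\wedge n}$ by symmetric monoidality. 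The theorem thus reduces to showing $\tilde{B}$ is fully faithful on $\langle 1^{\circledast n} \rangle$.

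For fully faithfulness I would post-compose with the contravariant functor $\partial^\ast$. By Proposition \ref{prop: dual derivatives of homogeneous} applied with $X = L\Sigma^\infty_+ \Sigma_n$, we have $\partial^\ast(\mathrm{Indecom}_O^{\wedge n}) \simeq \mathrm{Free}^{\mathrm{RMod}}_{\mathrm{CoEnd}(\mathrm{Indecom}_O)}(L\Sigma^\infty_+ \Sigma_n)$, which under the equivalence $\mathrm{CoEnd}(\mathrm{Indecom}_O) \simeq K(O)$ becomes $\mathrm{Free}^{\mathrm{RMod}}_{K(O)}(L\Sigma^\infty_+ \Sigma_n) \simeq K(O)^{\circledast n} \simeq K(1^{\circledast n})$ by symmetric monoidality of Koszul duality (Proposition \ref{prop: koszul duality right modules}). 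Hence $\partial^\ast \circ \tilde{B}$ agrees on objects with the Koszul duality functor $K$. Since the proof of Proposition \ref{prop:product rule implies coend is koszul dual to identity} shows $\partial^\ast$ is fully faithful on $\langle \mathrm{Indecom}_O^{\wedge n} \rangle$ and $K$ is an equivalence on levelwise dualizable right modules (Proposition \ref{prop: koszul duality right modules}), this would force $\tilde{B}$ to be fully faithful.

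The main obstacle is the last step: showing that $\partial^\ast \circ \tilde{B}$ agrees with $K$ as a symmetric monoidal functor, not merely on objects. Matching the two on morphism spectra requires tracing through the definitions of the bar construction $B(-, O, -)$, the $\Sigma$-enriched Koszul pairing $\mathrm{RMod}^{\Sigma}_O(-, 1)$, and the description of $\partial^\ast$ as a natural-transformation functor. I expect this to follow by the enriched Yoneda lemma combined with the fact that both $K$ and $\partial^\ast \circ \tilde{B}$ are symmetric monoidal and therefore determined by their value on the generator $1 \in \langle 1^{\circledast n} \rangle$, together with the compatibility of $\tilde{B}$ with the Koszul pairing $\mathrm{RMod}_O(1, 1^{\circledast n})$ built into the construction of the coaction itself.
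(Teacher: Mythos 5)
Your proposal follows the same reduction as the paper's proof: invoke Proposition \ref{prop:product rule implies coend is koszul dual to identity} (via Lemma \ref{lem: coend is retract of dual derivatives}) to obtain an abstract levelwise equivalence $\mathrm{CoEnd}(\mathrm{Indecom}_O)\simeq K(O)$, and then verify that the coaction-induced map realizes that equivalence. Recasting this as full faithfulness of $\tilde B:\langle 1^{\circledast n}\rangle\to\langle\mathrm{Indecom}_O^{\wedge n}\rangle$ and post-composing with $\partial^\ast$ is a reasonable reformulation in envelope language, and the object-level identification $\partial^\ast(\mathrm{Indecom}_O^{\wedge n})\simeq K(O)^{\circledast n}$ via Proposition \ref{prop: dual derivatives of homogeneous} is correct.

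However, there is a genuine gap at the step you flag as your main obstacle, and the reasoning offered there does not close it. Two symmetric monoidal functors $\mathrm{Env}(K(O))^{\mathrm{op}}\to(\mathrm{RMod}_{K(O)},\circledast)$ that agree on the generator $1$ need not agree as functors: such a functor is equivalent data to a map of operads $K(O)\to\mathrm{CoEnd}(F(1))$, so agreeing on $1$ only fixes the target operad, not the map. Saying the compatibility of the Brantner--Heuts bar construction with the Koszul pairing $\mathrm{RMod}^\Sigma_O(-,1)$ is ``built into the construction of the coaction itself'' is precisely the assertion the theorem requires, not something that comes for free from symmetric monoidality and Yoneda. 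As written, the proof identifies where the content lies but does not supply it.

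The paper closes this gap by a concrete computation: it shows that agreement of the two maps $K(O)(n)\to\nat(\mathrm{Indecom}_O\circ\mathrm{Triv}^{\mathrm{Alg}}_O,\mathrm{Id}_{L\s}^{\wedge n})$ can be tested after post-composing with evaluation at the trivial algebra on $L\bb{S}^{\vee n}$ (since this evaluation map admits a splitting, using the weight decomposition of $\mathrm{Indecom}_O\circ\mathrm{Triv}^{\mathrm{Alg}}_O$), and then identifies the resulting $\Sigma_n$-equivariant map $K(O)(n)\wedge B(O)(n)\to L\Sigma^\infty_+\Sigma_n$ with the tautological Koszul pairing by exploiting the fact that $B(\mathrm{Triv}^{\mathrm{Alg}}_O(L\bb{S}^{\vee n}))$ is the cofree conilpotent $K(O)$-coalgebra. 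You need some argument of this kind --- an explicit probe of the coaction at a free object --- before your envelope argument is complete.
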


\begin{proof}
From Lemma \ref{lem: coend is retract of dual derivatives} and Proposition \ref{prop:product rule implies coend is koszul dual to identity}, we know that the comparison map \[\mathrm{CoEnd}(\mathrm{Indecom}_O)(n) \rightarrow \nat(\mathrm{Indecom}_O \circ \mathrm{Triv}_O^\mathrm{Alg},\mathrm{Id}_{L\s}^{\wedge n}) \] is an equivalence, and so it suffices to show that the composite \[K(O)(n) \rightarrow \mathrm{CoEnd}(\mathrm{Indecom}_O)(n) \rightarrow \nat(\mathrm{Indecom}_O \circ \mathrm{Triv}_O^\mathrm{Alg},\mathrm{Id}_{L\s}^{\wedge n})\] is an equivalence.

We already know that there is an equivalence
\[\nat(\mathrm{Indecom}_O \circ \mathrm{Triv}_O^\mathrm{Alg},\mathrm{Id}_{L\s}^{\wedge n}) \simeq K(O)(n)\]
because of Lemma \ref{lem:dualizable natural trans spec} and the splitting
\[\mathrm{Indecom}_O \circ \mathrm{Triv}_O^\mathrm{Alg} \simeq \bigvee_{i \in \mathbb{N}} B(O)(i) \wedge_{h\Sigma_i} \mathrm{Id}_{L\s}^{\wedge i}. \]
It suffices to show that map induced by the coaction of $K(O)$ on $\mathrm{Indecom}_O(-)$ and the equivalence constructed from computing natural transformations agree.

We claim that this can be checked after composition with the evaluation map 
\[\nat(\mathrm{Indecom}_O \circ \mathrm{Triv}_O^\mathrm{Alg},\mathrm{Id}_{L\s}^{\wedge n}) \rightarrow L\s^{B\Sigma_n}(\mathrm{Indecom}_O \circ \mathrm{Triv}_O^\mathrm{Alg}(L\bb{S}^{\vee n}), (L\bb{S}^{\vee n})^{\wedge n}). \]

This is because the evaluation map includes $\nat(\mathrm{Indecom}_O \circ \mathrm{Triv}_O^\mathrm{Alg},\mathrm{Id}_{L\s}^{\wedge n})$ as a wedge summand. This fact follows from $B(O)(n)$ equivariantly splitting off $B(O)(n) \wedge_{h \Sigma_n} (L\bb{S}^{\vee n})^{\wedge n}$, the weight $n$ part of \[B ( \mathrm{Triv}_O^\mathrm{Alg} (L\bb{S}^{\vee n}))\simeq \mathrm{Indecom}_O \circ \mathrm{Triv}_O^\mathrm{Alg}(L\bb{S}^{\vee n}),\]  and $L\Sigma^\infty_+ \Sigma_n$ equivariantly splitting off $(L\bb{S}^{\vee n})^{\wedge n}$. Since $O(1)= L\bb{S}$, the latter can be identified as the weight $1$ part of  $B(\mathrm{Triv}^{\mathrm{Alg}}_O(L\bb{S}^{\vee n}))$.
The $K(O)$-comultiplication then yields a commutative square
\begin{center}
\[\begin{tikzcd}
	{K(O)(n) \wedge B(\mathrm{Triv}^{\mathrm{Alg}}_O(L\bb{S}^{\vee n}))} & {B(\mathrm{Triv}^{\mathrm{Alg}}_O(L\bb{S}^{\vee n}))^{\wedge n}} \\
	{K(O)(n) \wedge B(O)(n)} & {L\Sigma^\infty_+ \Sigma_n}
	\arrow[from=1-1, to=1-2]
	\arrow[hook', from=2-1, to=1-1]
	\arrow[from=2-1, to=2-2]
	\arrow[hook, from=2-2, to=1-2]
\end{tikzcd}\]
\end{center}
The assertion that our two maps agree is thus equivalent to the assertion that the $\Sigma_n$-equivariant map
\[K(O)(n) \wedge B(O)(n) \rightarrow L \Sigma^\infty_+ \Sigma_n\]
is adjoint to the standard identification 
\[K(O)(n) \simeq  L\s^{B\Sigma_n}(B(O)(n), L\Sigma^\infty_+ \Sigma_n).\]
This can be checked directly since
$B(\mathrm{Triv}^{\mathrm{Alg}}_O(L\bb{S}^{\vee n}))$
is the cofree, conilpotent $K(O)$-coalgebra \cite[3.3.6]{FrancisGaitsgory2012}.
\end{proof}

    If $O\in \mathrm{Operad}(L\s)$ is a reduced levelwise dualizable operad, then our construction of the Goodwillie derivatives is equivalent to a functor
    \[\partial_\ast : \mathrm{Fun}^\omega(\mathrm{Alg}_O,L\s) \rightarrow \mathrm{RMod}_O\]
since we have  \[K(\mathrm{CoEnd}(\Sigma^\infty_C))\simeq K(K(O)) \simeq O.\] 
We implicitly make such identifications throughout the remainder of this section.

\begin{lem}\label{lem: representables in alg}
    Let $O\in \mathrm{Operad}(L\s)$ be a reduced levelwise dualizable operad. If $A\in \mathrm{Alg}_O$ then there is an equivalence of right modules
    \[\partial^\ast L\Sigma^\infty R_A \simeq \mathrm{Forget}_{K(O)}^\mathrm{RMod}(B(A)). \]
    If $A$ is additionally compact, then there is an equivalence of right modules
    \[\partial_\ast L \Sigma^\infty R_A \simeq K^{-1}(\mathrm{Forget}_{K(O)}^\mathrm{RMod}(B(A))).\]
\end{lem}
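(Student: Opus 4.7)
The first equivalence is essentially Proposition \ref{prop: koszul dual derivatives of rep} followed by Theorem \ref{thm: coend in algebras}. Specializing Proposition \ref{prop: koszul dual derivatives of rep} to $c = A$ in $C = \mathrm{Alg}_O$ gives
\[\partial^\ast L\Sigma^\infty R_A \simeq (\Sigma^\infty_C A)^{\wedge \ast} = \mathrm{Indecom}_O(A)^{\wedge \ast}\]
as a right $\mathrm{CoEnd}(\Sigma^\infty_C) = \mathrm{CoEnd}(\mathrm{Indecom}_O)$-module with the tautological action. Theorem \ref{thm: coend in algebras} identifies $\mathrm{CoEnd}(\mathrm{Indecom}_O)$ with $K(O)$ precisely via the map induced by the natural $K(O)$-coaction on $\mathrm{Indecom}_O(A)$ coming from the bar functor $B$. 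By construction, the tautological action on $\mathrm{Indecom}_O(A)^{\wedge \ast}$ pulls back along this equivalence to the right $K(O)$-module structure underlying $B(A)$, which is by definition $\mathrm{Forget}_{K(O)}^{\mathrm{RMod}}(B(A))$.

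For the second equivalence, observe first that compactness of $A$ forces $\mathrm{Indecom}_O(A) = \Sigma^\infty_C A \in L\s^\omega$ (as $\mathrm{Indecom}_O$ is a left adjoint that preserves compactness), so its smash powers are dualizable and $\mathrm{Forget}_{K(O)}^{\mathrm{RMod}}(B(A))$ is levelwise dualizable. By Proposition \ref{prop: koszul duality right modules}, Koszul duality restricts to a mutually inverse equivalence between levelwise dualizable right $O$-modules and levelwise dualizable right $K(O)$-modules, and we write the inverse functor as $K^{-1}$. By Corollary \ref{cor: derivatives of compact representable}, $\partial_\ast L\Sigma^\infty R_A \simeq K(\Sigma^\infty_C A^{\wedge \ast})$ and this is levelwise dualizable; in particular $\partial_\ast L\Sigma^\infty R_A \simeq K^{-1}(\partial^\ast L\Sigma^\infty R_A)$ via Proposition \ref{prop: fake tower koszul dual derivatives} and double Koszul duality. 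Substituting the first equivalence yields
\[\partial_\ast L\Sigma^\infty R_A \simeq K^{-1}(\mathrm{Forget}_{K(O)}^{\mathrm{RMod}}(B(A))).\]

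The only substantive step is the compatibility assertion at the end of the first paragraph: one must check that the tautological coendomorphism action extracted from Proposition \ref{prop: koszul dual derivatives of rep} matches, under the equivalence $K(O) \simeq \mathrm{CoEnd}(\mathrm{Indecom}_O)$, the $K(O)$-coaction produced by the bar functor. This is the content of Theorem \ref{thm: coend in algebras}, so the main work already lies there; everything else is bookkeeping with the Koszul duality adjunction and the finiteness hypothesis on $A$.
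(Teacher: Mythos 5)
Your proof is correct and follows essentially the same route as the paper, which cites only Theorem \ref{thm: coend in algebras} and Corollary \ref{cor: derivatives of compact representable} and leaves the rest implicit. You unpack those citations: Proposition \ref{prop: koszul dual derivatives of rep} for the tautological right $\mathrm{CoEnd}(\Sigma^\infty_C)$-module structure on $\partial^\ast$, the identification of $\mathrm{CoEnd}(\mathrm{Indecom}_O)$ with $K(O)$ via the bar coaction, and Proposition \ref{prop: fake tower koszul dual derivatives} together with Proposition \ref{prop: koszul duality right modules} to pass from $\partial^\ast$ to $\partial_\ast$ under the compactness hypothesis; this matches the intended argument.
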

\begin{proof}
    The follows from Theorem \ref{thm: coend in algebras} and Corollary \ref{cor: derivatives of compact representable}.

\end{proof}

\begin{prop}\label{prop: derivative of bar}
    If $O\in \mathrm{Operad}(L\s)$ is a reduced levelwise dualizable operad and $R$ is a right $O$-module, then  Goodwillie tower of $B(R,O,-)$ is equivalent to the truncation tower
    \[B(R,O,-) \rightarrow B(R^{\leq \ast},O,-).\]
There is an equivalence of right $O$-modules
    \[\partial_\ast B(R,O,-) \simeq R,\]
    and if $R$ is levelwise dualizable, then there is an equivalence of right $K(O)$-modules
    \[\partial^\ast B(R,O,-) \simeq K(R).\]
    
\end{prop}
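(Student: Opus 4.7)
The plan is to analyze the truncation filtration of $R$ and use exactness of the bar construction in the first variable, combined with the classification of homogeneous functors on $\mathrm{Alg}_O$. First I would establish the key computation that, for a right $O$-module $S$ concentrated in arity $n$ with value $V \in L\s^{B\Sigma_n}$, one has
\[
B(S, O, A) \simeq V \wedge_{h\Sigma_n} \mathrm{Indecom}_O(A)^{\wedge n}.
\]
This follows from observing that such $S$ has necessarily trivial right $O$-action (operations into higher arities land in $\ast$), so $S \simeq \mathrm{Triv}^{\mathrm{RMod}}_O(V \cdot \delta_n) \simeq V \wedge_{h\Sigma_n} 1^{\circledast n}$, combined with the symmetric monoidality of $B(-, O, A)$ from Brantner--Heuts (cited in the proof of the preceding bar-functor proposition) and $B(1,O,A) \simeq \mathrm{Indecom}_O(A) = \Sigma^\infty_C A$.

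Applying $B(-,O,-)$ to the truncation tower $\cdots \to R^{\leq n} \to R^{\leq n-1} \to \cdots$ yields a tower of functors whose $n$th fiber is $B(R^{=n}, O, -)$, by the key computation the $n$-homogeneous functor with $n$th derivative $R(n)$. Each $B(R^{\leq n}, O, -)$ is therefore $n$-excisive, being a finite extension of $k$-homogeneous functors for $k \leq n$, and the natural map $B(R, O, -) \to B(R^{\leq n}, O, -)$ induces equivalences on $\partial_k$ for $k \leq n$. By the universal property of $P_n$ the map factors through $P_n B(R, O, -) \to B(R^{\leq n}, O, -)$, and a map of $n$-excisive functors which induces equivalences on all derivatives is itself an equivalence (inductively match homogeneous layers). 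This identifies the Goodwillie tower with the truncation tower and shows $\partial_k B(R,O,-) \simeq R(k)$ as $\Sigma_k$-spectra.

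To upgrade to an equivalence of right $O$-modules I would invoke the symmetric monoidality of $R \mapsto B(R, O, -)$ on $(\mathrm{RMod}_O, \circledast)$ together with the product rule (Theorem \ref{thm: product rule}) making $\partial_\ast$ symmetric monoidal. The composite $R \mapsto \partial_\ast B(R,O,-)$ is then a symmetric monoidal endofunctor of $(\mathrm{RMod}_O,\circledast)$ which sends $1$ to $1 = \partial_\ast \Sigma^\infty_C$ and whose underlying map on symmetric sequences is the identity by the analysis above. I expect matching the right $O$-action recovered from Goodwillie extensions to the given one on $R$ to be the main obstacle; the cleanest route is to check the claim on a generating family of representables via Lemma \ref{lem: representables in alg}, namely on free algebras $\mathrm{Triv}^{\mathrm{Alg}}_O(X)$ for compact $X$ where $B(R,O,\mathrm{Triv}^{\mathrm{Alg}}_O X)$ is computable, and then extend by colimit-preservation of both $\partial_\ast$ and $B(R,O,-)$ in the algebra variable.

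Finally, the identification $\partial^\ast B(R,O,-) \simeq K(R)$ under levelwise dualizability is immediate from Proposition \ref{prop: fake tower koszul dual derivatives}, which gives $\partial_\ast F \simeq K(\partial^\ast F)$ whenever $\partial_\ast F$ is levelwise dualizable, combined with Koszul self-duality $R \simeq K(K(R))$ from Proposition \ref{prop: koszul duality right modules}.
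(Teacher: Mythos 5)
Your truncation-tower analysis for the first two claims is a reasonable alternative to the paper's citation of Pereira: you build the $n$-excisivity of $B(R^{\leq n},O,-)$ from scratch via the key computation that $B(V \wedge_{h\Sigma_n} 1^{\circledast n},O,A) \simeq V \wedge_{h\Sigma_n} \mathrm{Indecom}_O(A)^{\wedge n}$, together with exactness of $B(-,O,A)$ in the first variable and colimit-preservation of $P_n$. That part works and is essentially equivalent in spirit to what the paper cites as ``classical.'' The last claim ($\partial^\ast B(R,O,-) \simeq K(R)$ for $R$ levelwise dualizable) is also fine, modulo the unstated but easy observation that $\partial^\ast B(R,O,-)$, having underlying symmetric sequence $R^\vee$, is itself levelwise dualizable, so $K(K(-))$ returns it.

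The genuine gap is in your upgrade from symmetric sequences to right $O$-modules, which you yourself flag as ``the main obstacle.'' Knowing that $R \mapsto \partial_\ast B(R,O,-)$ is a colimit-preserving symmetric monoidal endofunctor of $(\mathrm{RMod}_O,\circledast)$ sending $1$ to $1$ and fixing underlying symmetric sequences does not yet pin it down as the identity: one must still identify it on the object $O$ itself (or on the $O^{\circledast n}$) as a \emph{right module}, and that is exactly the content being proved. Your proposed patch is off target: you speak of checking on ``free algebras $\mathrm{Triv}^{\mathrm{Alg}}_O(X)$'' (which are trivial, not free) and of colimit-preservation ``in the algebra variable.'' But the right $O$-module structure on $\partial_\ast$ is not detected by evaluating $B(R,O,-)$ on a generating family of algebras $A$; it is a structure on the derivatives of the functor as a whole. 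What you actually need is to vary the \emph{right module} $R$ over a generating family, which is what the paper does: verify the claim for $R = \mathrm{Free}_O^{\mathrm{RMod}}(L\Sigma^\infty_+\Sigma_n)$ (where $B(R,O,-) \simeq \mathrm{Forget}_O^{\wedge n}$ has trivial Koszul dual derivatives, so the right module structure is forced by Koszul duality), and then extend by colimit-preservation of $R \mapsto \partial_\ast B(R,O,-)$ since the free modules generate $\mathrm{RMod}_O$ under colimits. Lemma~\ref{lem: representables in alg} is not the right tool here; the relevant free generators are in $\mathrm{RMod}_O$, not $\mathrm{Alg}_O$.
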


\begin{proof}
The statement about Goodwillie towers is classical and follows from the same arguments as \cite[Theorem 11.3]{Pereira_2013}. This implies the claim about derivatives as symmetric sequences.

In the case $R = \mathrm{Free}^{\mathrm{RMod}}_O L \Sigma^\infty_+ \Sigma_n$, this is easy to verify either claim on the level of right modules by computing the Koszul dual derivatives to be trivial and exploiting the Koszul duality between derivatives and Koszul derivatives when the latter is levelwise finite.

This implies the assignment
\[R \rightarrow \partial_\ast B(R,O,-) \in \mathrm{RMod}_O\]
preserves colimits and agrees with the identity on the full subcategory generated by the $\mathrm{Free}^{\mathrm{RMod}}_O(L \Sigma^\infty_+ \Sigma_n)$, and hence the result because these free modules generate $\mathrm{RMod}_O$ under colimits. The claim about Koszul dual derivatives for general levelwise dualizable $R$ follows from Koszul duality.

\end{proof}
Consider the forgetful functor
\[\mathrm{Forget}_O:\mathrm{Alg}_O \rightarrow L\s\]
Since the underlying spectrum of an $O$-algebra tautologically has a natural $O$-algebra structure, there is an induced map of operads
 \[O \rightarrow \mathrm{End}(\mathrm{Forget}_O).\] 
Unlike the map
\[K(O) \xrightarrow{\simeq} \mathrm{CoEnd}(\mathrm{Indecom}_O)\]
There are obstructions to this map being an equivalence, see the proof of Proposition \ref{prop: operad is endomorphism of forget locally}. However, as a consequence of the identification 
\[\mathrm{Forget}_O \simeq B(O,O,-)\]
it is the case that this map is an equivalence after differentiating, since as a right $O$-module $\mathrm{End}(O)\simeq O$ \cite{envBMT}.
\begin{cor}\label{cor: derivatives of forgetful recover o}
    For $O \in \mathrm{Operad}(L\s)$, the composite \[O \rightarrow \mathrm{End}(\mathrm{Forget}_O) \xrightarrow{\partial_\ast} \mathrm{End}(\partial_\ast \mathrm{Forget}_O)\]  is an equivalence of operads.
\end{cor}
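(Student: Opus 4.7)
The plan is to use the identification $\mathrm{Forget}_O \simeq B(O,O,-)$ as the key bridge between algebras and right modules, and then apply Proposition \ref{prop: derivative of bar}.

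First I would observe that under the differentiation functor $\partial_\ast : \mathrm{Fun}^\omega(\mathrm{Alg}_O,L\s) \to \mathrm{RMod}_O$, Proposition \ref{prop: derivative of bar} applied to $R = O$ yields an equivalence of right $O$-modules
\[\partial_\ast \mathrm{Forget}_O \simeq \partial_\ast B(O,O,-) \simeq O.\]
Next, by Theorem \ref{thm: product rule} (the product rule), the functor $\partial_\ast$ is symmetric monoidal with respect to the pointwise smash product and Day convolution, so passing to endomorphism operads gives a map of operads
\[\mathrm{End}(\mathrm{Forget}_O) \xrightarrow{\partial_\ast} \mathrm{End}(\partial_\ast \mathrm{Forget}_O) \simeq \mathrm{End}_{\mathrm{RMod}_O}(O),\]
where the final operad is the endomorphism operad of $O$ regarded as a right module over itself. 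By the result cited from \cite{envBMT}, there is a tautological equivalence of operads $\mathrm{End}_{\mathrm{RMod}_O}(O) \simeq O$, whose underlying identification of right $O$-modules is the canonical one.

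The crucial step is then to identify the resulting composite $O \to \mathrm{End}(\mathrm{Forget}_O) \to \mathrm{End}_{\mathrm{RMod}_O}(O) \simeq O$ with the identity map of operads. This reduces to a naturality check: the first arrow is defined by the canonical $O$-algebra structure on $\mathrm{Forget}_O(A) = A$, and after differentiation this structure is transported to the tautological right $O$-module structure on $O \simeq \partial_\ast B(O,O,-)$ arising from the bar construction. Since both the construction of Proposition \ref{prop: derivative of bar} and the identification $\mathrm{End}_{\mathrm{RMod}_O}(O) \simeq O$ are set up to be compatible with the free/trivial adjunction and the unit map $1 \to O$, the composite preserves the unit and hence agrees with the identity of operads.

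The main obstacle I anticipate is not the equivalence on underlying symmetric sequences, which is immediate, but verifying that the composite really is the \emph{identity} as a map of operads rather than some automorphism. This is essentially a bookkeeping issue about the symmetric monoidal structures: one must check that the symmetric monoidal structure on $\partial_\ast$ provided by Theorem \ref{thm: product rule} sends the composition $A \wedge A \to A$ witnessing $A$ as an $O$-algebra to the right $O$-action on $\partial_\ast \mathrm{Forget}_O$ that corresponds, under Proposition \ref{prop: derivative of bar}, to the canonical self-action of $O$. Granting this compatibility (which is built into the construction of the bar resolution and the product rule), the corollary follows.
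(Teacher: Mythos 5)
Your proof takes the same route as the paper, which itself devotes only a terse sentence to this corollary: identify $\mathrm{Forget}_O \simeq B(O,O,-)$, apply Proposition \ref{prop: derivative of bar} to see $\partial_\ast \mathrm{Forget}_O \simeq O$ as right $O$-modules, use the product rule to get the induced map of operads, and conclude via $\mathrm{End}_{\mathrm{RMod}_O}(O)\simeq O$. You are right to flag the subtlety that both endpoints being abstractly equivalent to $O$ does not automatically make the composite an equivalence, and the naturality observation you sketch — that the $n$-ary algebra operation on $\mathrm{Forget}_O(A)$ differentiates to the $n$-ary structure map of $O$ as a right module over itself — is the correct resolution. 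The one misstep is the final inference ``the composite preserves the unit and hence agrees with the identity of operads'': every operad map preserves the unit, so this does not by itself pin down the map. What actually finishes the argument is the preceding compatibility: under $\partial_\ast B(R,O,-)\simeq R$ and the product rule, the operation $\mathrm{Forget}_O^{\wedge n}\to\mathrm{Forget}_O$ attached to $\mu\in O(n)$ passes to the right-module structure map $O^{\circledast n}\to O$ corresponding to $\mu$ under $\mathrm{End}(O)\simeq O$, so the composite is the identity levelwise.
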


The bar construction $B(R,O,A)$ is sometimes referred to as \textit{generalized factorization homology} \cite{generalizedPoincare} and denoted $\int_R A$. Dually, given an $O$-coalgebra $D$ and a right $O$-module $R$, one can define the \textit{generalized factorization cohomology},
\[\int^R D:=\mathrm{RMod}_O(R,\mathrm{Forget}_O^{\mathrm{RMod}}(D)).\]
In the special case $R=1$, the factorization cohomology is denoted $\mathrm{Prim}_O(D)$, the primitives of $D$. Factorization cohomology obtains a filtration which arises from the truncation filtration of the right module mapping spectrum. The following is a special case of \cite[Main Theorem]{generalizedPoincare}.

\begin{prop}\label{prop: generalized factorization cohomology}
    Suppose $O \in \mathrm{Operad}$ is reduced. If $O$ is $\Sigma$-finite, then given  $A \in \mathrm{Alg}_O$ there is an equivalence of filtered spectra
    \[P_\infty \mathrm{Id}_{\mathrm{Alg}_O} (A) \simeq \mathrm{Prim}_{K(O)}(B(A)).\]

    More generally, if $O$ is levelwise dualizable and $R$ is $\Sigma$-finite, then there is an equivalence of filtered spectra
    \[P_\infty \int_R (-)(A) \simeq \int^{K(R)} B(A).\]

In ultimate generality, if $O$ and $R$ are levelwise dualizable then such an equivalence holds whenever the Tate construction
    $(R(n) \wedge B(A)^{\wedge n})^{t\Sigma_n} $ vanishes.
\end{prop}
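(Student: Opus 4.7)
The plan is to compare, for $F = \int_R(-) = B(R, O, -) \colon \mathrm{Alg}_O \to L\s$, the Goodwillie tower of $F$ with the upper fake Goodwillie tower of Definition \ref{definition: upper tower}, and to identify the latter as the filtered spectrum $\int^{K(R)} B(A)$. By Proposition \ref{prop: derivative of bar}, $F$ is finitary with derivatives $\partial_\ast F \simeq R$ and Koszul dual derivatives $\partial^\ast F \simeq K(R)$ as right modules over $\mathrm{CoEnd}(\Sigma^\infty_{\mathrm{Alg}_O}) \simeq K(O)$, the latter identification being Theorem \ref{thm: coend in algebras}. In particular the hypotheses of Proposition \ref{prop: fake tower koszul dual derivatives} are met: levelwise dualizability of $\partial_\ast F$ is either an explicit assumption or a direct consequence of $\Sigma$-finiteness.

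Unwinding Definition \ref{definition: upper tower} and using Corollary \ref{cor: derivatives of compact representable}, the upper fake tower takes the form
\[P_n^{\mathrm{fake}}(F)(A) \simeq \mathrm{RMod}_{K(O)}\bigl(K(R),\, \mathrm{Forget}^{\mathrm{RMod}}_{K(O)}(B(A))^{\leq n}\bigr),\]
whose inverse limit in $n$ is exactly $\int^{K(R)} B(A)$ equipped with the truncation filtration on $B(A)$. The delicate point here is that the right $K(O)$-structure on $\{\mathrm{Indecom}_O(A)^{\wedge n}\}_{n \in \mathbb{N}}$ coming tautologically from $\mathrm{CoEnd}(\Sigma^\infty_{\mathrm{Alg}_O})$ must be matched, under Theorem \ref{thm: coend in algebras}, with the structure induced by forgetting the $K(O)$-coalgebra structure on $B(A)$. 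This should be essentially automatic from the way the equivalence $K(O) \simeq \mathrm{CoEnd}(\mathrm{Indecom}_O)$ is constructed, as it is built from the universal coaction of $K(O)$ on $\mathrm{Indecom}_O(A)$.

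Next, Proposition \ref{prop: fake tower koszul dual derivatives} identifies the $n$th layer of the comparison $P_\ast(F)(A) \to P_\ast^{\mathrm{fake}}(F)(A)$ as the $\Sigma_n$-norm
\[(R(n) \wedge B(A)^{\wedge n})_{h\Sigma_n} \longrightarrow (R(n) \wedge B(A)^{\wedge n})^{h\Sigma_n},\]
whose cofiber is $(R(n) \wedge B(A)^{\wedge n})^{t\Sigma_n}$. The Tate vanishing hypothesis in the ultimate generality case thus makes every layer an equivalence, so the two filtered spectra are equivalent. If instead $R$ is $\Sigma$-finite, each $R(n)$ is a finite Borel $\Sigma_n$-spectrum and satisfies ideal Tate vanishing (by Lemma \ref{lem: ideal implies o ideal} applied in the appropriate form, or directly from the theory of finite $G$-CW spectra), so the required Tate terms vanish and the middle generality case follows. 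The specialization to $R = 1$, in which $K(R) = 1$, $\int_1(-) \simeq \mathrm{Indecom}_O$, and $\int^1 B(A) = \mathrm{Prim}_{K(O)}(B(A))$, recovers the initial statement.

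The main obstacle I expect is the identification carried out in the second paragraph: matching the two right $K(O)$-module structures on $\{B(A)^{\wedge n}\}_{n \in \mathbb{N}}$. While conceptually transparent once Theorem \ref{thm: coend in algebras} is in hand, carefully tracing the equivalences between the coendomorphism operad of the stabilization, its Koszul self-duality with $O$, and the natural coalgebra structure on $B(A)$ requires nontrivial bookkeeping through the definitions of Sections \ref{section: right modules in goodwillie calculus} and \ref{section: goodwillie calculus algebras}. Once this identification is established, the remaining steps are formal applications of the results of Section \ref{section: right module construction}.
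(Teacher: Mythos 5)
Your proof is correct and takes essentially the same route as the paper: identify $P_\infty^{\mathrm{fake}}(\int_R)(A)$ with $\int^{K(R)} B(A)$ and then use Proposition~\ref{prop: fake tower koszul dual derivatives} and Tate vanishing of $(R(n)\wedge B(A)^{\wedge n})^{t\Sigma_n}$ to promote this to an equivalence with the genuine Goodwillie tower. The ``delicate point'' you flag---matching the tautological right $K(O)$-structure on $\{\mathrm{Indecom}_O(A)^{\wedge n}\}$ coming from $\mathrm{CoEnd}(\Sigma^\infty_{\mathrm{Alg}_O})$ with the one obtained by forgetting the coalgebra structure on $B(A)$---is exactly what the paper's Lemma~\ref{lem: representables in alg} records, so instead of gesturing that it ``should be essentially automatic'' you should simply cite that lemma; it closes the gap cleanly. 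Two smaller remarks: for the $\Sigma$-finite case the appropriate justification is simply that finite Borel $\Sigma_n$-spectra have ideal Tate vanishing (the appeal to Lemma~\ref{lem: ideal implies o ideal} is a detour), and for the deduction of the first statement from $R=1$ you should additionally note, as the paper does, that the underlying filtered spectrum of $P_\infty \mathrm{Id}_{\mathrm{Alg}_O}(A)$ agrees with $P_\infty\mathrm{Forget}_O(A)$, since the statement is phrased in terms of $\mathrm{Id}_{\mathrm{Alg}_O}$ rather than the functor $\int_1(-)=\mathrm{Indecom}_O$ to which the general formula directly applies.
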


\begin{proof}
    We prove the last statement; the first is implied by the last when $R=1$ because the underlying spectrum of $P_n \mathrm{Id}_{\mathrm{Alg}_O}(A)$ agrees with $P_n \mathrm{Forget}_O(A)$.
    
    By Lemma \ref{lem: representables in alg} and Proposition \ref{prop: derivative of bar}, we have the equivalence
    \[P_\infty^\mathrm{fake}(\int_R)(A) \simeq \int^{K(R)} B(A).\]
    By Proposition \ref{prop: fake tower koszul dual derivatives}, the Tate vanishing hypothesis then guarantees the filtered map
    \[P_\infty (\int_R) (A) \rightarrow \int^{K(R)}B(A)\]
    is an equivalence.
\end{proof}

\subsection{Poincaré/Koszul duality of $E_d$-algebras}\label{section: poincare}

In this brief section, we specify our study of $O$-algebras to $O=\Sigma^\infty_+E_d$, the reduced little $d$-disks operad, in order to give a short proof of the Poincaré/Koszul duality formula of \cite{ayalafrancispoincare} in the setting of framed manifolds and $E_d$-algebras in spectra. The statements extend in an obvious way to $E_d$-algebras in $L\s$. 

 Associated to a framed $d$-manifold $M$ is the right $E_d$-module of disks in $M$ called $E_M$ which has the homotopy type of the symmetric sequence of the ordered configuration spaces of points in $M$. We will assume $M$ is tame meaning that $M$ is abstractly homeomorphic to the interior of a compact topological manifold with, possibly empty, boundary. Given $A \in \mathrm{Alg}_{\Sigma^\infty_+ E_d}$ one can construct the \textit{factorization homology} of $A$ over $M$ as the bar construction
\[\int_M A:= B(\Sigma^\infty_+ E_M, \Sigma^\infty_+ E_d, A).\]

Dually, if $D \in \mathrm{Coalg}_{\Sigma^\infty_+ E_d}$, we can define the factorization cohomology of $D$ over $M$ as
\[\int^M D := \mathrm{RMod}_{\Sigma^\infty_+ E_d}(\Sigma^\infty_+ E_M,\mathrm{Forget}_{\Sigma^\infty_+ E_d}^\mathrm{RMod}(D)).\]
The Weiss filtration \cite[Section 2.1.1]{ayalafrancispoincare} of factorization cohomology is the (always convergent) truncation filtration of the right module mapping spectrum.

It was established in \cite[Theorem 1.2]{malinonepoint} that under the Koszul self duality of $E_d$ \cite[Theorem 1.1]{chingsalvatore}
\[\Sigma^\infty_+ E_d \simeq s_{d} K(\Sigma^\infty_+ E_d),\]
$E_M$ enjoys a version of Koszul self duality with compact support
\[\Sigma^\infty_+ E_M \simeq s_{(d,d)} K(\Sigma^\infty E_{M^+}).\]
Here $s_d$ denotes the $d$-fold operadic suspension of $O$ which can be defined as the pointwise smash product $O \wedge \mathrm{CoEnd}(\bb{S}^n)$. Similarly, $s_{(d,d)}$ denotes the external $d$-fold operadic suspension of a right module followed by the levelwise, categorical $d$-fold suspension. The right module $E_{M^+}$ is the $E_d$-module of disks in the one-point compactification of $M$, see \cite[Definition 8.5]{malinonepoint}. One can define factorization (co)homology over $M^+$ using the same formulas as before. The following was proven by Ayala-Francis \cite{ayalafrancispoincare}.

\begin{thm}[Poincaré/Koszul duality for $E_d$-algebras]\label{thm:poincare duality}
Let $M$ be a framed, tame $d$-manifold and $A\in \mathrm{Alg}_{\Sigma^\infty_+ E_d}$. The Goodwillie filtration and the Weiss filtration are equivalent under bar-cobar duality, and so yield an equivalence of filtered objects
\[P_\infty(\int_M) (A) \xrightarrow{\simeq} \int^{M^{+}} \Sigma^d B(A). \]
\end{thm}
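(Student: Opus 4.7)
The plan is to apply the general Poincaré/Koszul duality for $O$-algebras (Proposition \ref{prop: generalized factorization cohomology}) to the case $O = \Sigma^\infty_+ E_d$, $R = \Sigma^\infty_+ E_M$, and then use the compactly-supported Koszul self-duality of $E_M$ from \cite{malinonepoint} to identify the output with $\int^{M^+} \Sigma^d B(A)$.

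First I would verify the Tate-vanishing hypothesis of Proposition \ref{prop: generalized factorization cohomology} for $R=\Sigma^\infty_+ E_M$. For every $n$, the ordered configuration space $E_M(n)$ carries a free $\Sigma_n$-action, so $\Sigma^\infty_+ E_M(n)$ is a free Borel $\Sigma_n$-spectrum and hence has ideal Tate vanishing; in particular $(\Sigma^\infty_+ E_M(n) \wedge B(A)^{\wedge n})^{t\Sigma_n}\simeq \ast$ for every $A$. Since $\Sigma^\infty_+ E_d$ and $\Sigma^\infty_+ E_M$ are levelwise dualizable (the spaces $E_d(n)$ and $E_M(n)$ are homotopy-equivalent to finite CW complexes because $M$ is tame), Proposition \ref{prop: generalized factorization cohomology} produces a natural equivalence of filtered spectra
\[
P_\infty(\textstyle\int_M)(A) \;\simeq\; \int^{K(\Sigma^\infty_+ E_M)} B(A),
\]
where by Proposition \ref{prop: fake tower koszul dual derivatives} the filtration on the right is exactly the truncation filtration of the right-module mapping spectrum, i.e. the Weiss filtration.

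Next I would apply Koszul duality to the formula $\Sigma^\infty_+ E_M \simeq s_{(d,d)} K(\Sigma^\infty E_{M^+})$ of \cite{malinonepoint}. Using that $K\circ K\simeq \mathrm{Id}$ on levelwise dualizable right modules (Proposition \ref{prop: koszul duality right modules}) and the compatibility of Koszul duality with operadic/levelwise suspension (which, on the operad side, yields $K(\Sigma^\infty_+ E_d)\simeq s_{-d}\,\Sigma^\infty_+ E_d$), I obtain an equivalence of right $K(\Sigma^\infty_+ E_d)$-modules
\[
K(\Sigma^\infty_+ E_M) \;\simeq\; s_{(-d,-d)}\,\Sigma^\infty E_{M^+}.
\]
The $\circledast$-adjoint of operadic suspension on the source is operadic suspension on the operad, so the mapping spectrum $\mathrm{RMod}_{K(\Sigma^\infty_+ E_d)}(s_{(-d,-d)}\Sigma^\infty E_{M^+},\mathrm{Forget}(B(A)))$ translates to $\mathrm{RMod}_{\Sigma^\infty_+ E_d}(\Sigma^\infty_+ E_{M^+},\mathrm{Forget}(\Sigma^d B(A)))$, i.e. $\int^{M^+}\Sigma^d B(A)$. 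This identification is manifestly compatible with the truncation/Weiss filtration, which completes the chain of filtered equivalences.

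The main obstacle will be Step 3: carefully tracking the bidegree $(d,d)$ suspension across Koszul duality and the right-module Hom-pairing, so that the output is the filtered spectrum $\int^{M^+}\Sigma^d B(A)$ and not just equivalent to it on underlying objects or on associated graded pieces. Once the equivalence $K(\Sigma^\infty_+ E_M)\simeq s_{(-d,-d)}\Sigma^\infty E_{M^+}$ is established at the level of right modules (rather than merely symmetric sequences), filtration compatibility is automatic since both sides reduce to the intrinsic truncation filtration, and Proposition \ref{prop: filtration when domain tate vanish} controls the layers via the free $\Sigma_n$-action on $E_{M^+}(n)$.
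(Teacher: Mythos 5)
Your proof follows essentially the same path as the paper's: first verify the relevant Tate-vanishing so Proposition \ref{prop: generalized factorization cohomology} applies, then convert the resulting formula $\int^{K(\Sigma^\infty_+ E_M)} B(A)$ into $\int^{M^+}\Sigma^d B(A)$ via the Koszul self-duality $\Sigma^\infty_+ E_M \simeq s_{(d,d)}K(\Sigma^\infty E_{M^+})$ from \cite{malinonepoint}. The only cosmetic difference is that you justify Tate vanishing by the freeness of the $\Sigma_n$-action on ordered configuration spaces, whereas the paper appeals to $\Sigma$-finiteness coming from tameness; both yield ideal Tate vanishing, and you still need tameness for levelwise dualizability as you note, so the inputs are the same.
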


\begin{proof}
Observe that $\Sigma^\infty_+ E_M$ is $\Sigma$-finite since $M$ is the interior of a compact manifold, and so its configuration spaces are $\Sigma$-finite. We deduce
  \begin{equation*} 
\begin{split}
P_\infty(\int_M)(A) &\xrightarrow{\simeq} P_\infty^\mathrm{fake}(\int_M)(A) \quad (\ref{prop: generalized factorization cohomology})\\
 & \simeq \mathrm{RMod}_{K(\Sigma^\infty_+ E_d)}(K(\Sigma^\infty_+ E_M),B(A))\quad (\ref{prop: fake no finiteness}, \ref{prop: derivative of bar})\\
 & \simeq \mathrm{RMod}_{\Sigma^\infty_+ E_d}(\Sigma^\infty E_{M^+},\Sigma^d B(A))\\
 & := \int^{M^+} \Sigma^d B(A).
\end{split}
\end{equation*}
\end{proof}

The statement of Theorem \ref{thm:poincare duality} departs slightly from Ayala-Francis because we use nonunital rather than augmented algebras and our models of bar-cobar duality only agree after $n$-fold (de)suspension.

Ayala-Francis observed \cite[pg. 844]{ayalafrancispoincare} that the right hand side can be identified as the embedding calculus filtration of the Weiss sheaf associated to the $E_d$-coalgebra $\Sigma^d B(A)$, and as such, demonstrated the first instance of \textit{Goodwillie-Weiss} duality.

 The category of framed $d$-manifolds with a single singularity $\mathrm{ZMfld}^\mathrm{fr}_d$ supports an involution $\neg$ which when restricted to the category of disks with a disjoint singularity $\mathrm{ZDisk}^\mathrm{fr}_d$ witnesses the Koszul self duality of $E_d$. This involution is at the heart of Poincaré/Koszul duality. Of course, $\T$ or more generally $C$ does not admit such an involution. However, the study of Goodwillie calculus via the fake Goodwillie tower can be understood as an abstract form of Goodwillie-Weiss duality. Since a general category $C$ does not admit such an involution, one ``embeds'' the objects of $C$ into the category of right $\mathrm{CoEnd}(\Sigma^\infty_C)$-modules and studies their duality with right $K(\mathrm{CoEnd}(\Sigma^\infty_C))$-modules. In forthcoming work \cite{dualityinmfld}, we use this idea to extend Ayala-Francis' Poincaré/Koszul duality from factorization homology of singular manifolds to arbitrary pointed functors $\mathrm{Fun}_\ast(\mathrm{ZMfld}_n^\mathrm{fr},\s)$.

\subsection{Lie algebra models of $v_h$-periodic spaces}\label{section:lie algebra models}
In this section, we describe a construction of the equivalence between $v_h$-periodic spaces $\T_{v_h}$ and $T(h)$-local spectral Lie algebras $\mathrm{Alg}_{\mathrm{lie}_{T(h)}}$. The insights which appear in our construction should be attributed to \cite{heutsannals} where they first appeared in a slightly different form. We extend the construction to include Goodwillie towers of functors $\T_{v_h} \rightarrow \s_{T(h)}$, a result which we expect experts are aware of and which was initially predicted by Arone--Ching. 
 
We recall the strategy of \cite{heutsannals}, which depends crucially on the existence of the Bousfield-Kuhn functor $\Phi:\T_{v_h} \rightarrow \s_{T(h)}$ and its left adjoint $\Theta$\footnote{This $\Theta$ is unrelated to the right adjoint of Goodwillie derivatives.}, constructed in \cite{telescopichomotopybous}: 
\begin{enumerate}
    \item Demonstrate that the $(\Theta, \Phi)$-adjunction is monadic \cite{Eldred_Heuts_Mathew_Meier_2019}, and therefore $\T_{v_h}$ is the category of algebras over some monad.
    \item Demonstrate this monad arises as the cobar construction of the comonad $\Sigma^\infty_{T(h)} \Omega^\infty_{T(h)}$ \cite[Theorem 4.21]{heutsannals}.
    \item Demonstrate that $\Sigma^\infty_{T(h)} \Omega^\infty_{T(h)}$ is coanalytic, in the sense that it converges in dual calculus \cite[Theorem 4.16]{heutsannals}. 
    
    \item Use the coanalyticity of $\Sigma^\infty_{T(h)} \Omega^\infty_{T(h)}$  to show this comonad is equivalent to the comonad associated to conilpotent commutative coalgebras.\footnote{This assertion is implicit in \cite{heutsannals}; we expect it should be a relatively straightforward consequence of the fact that $\Sigma^\infty_{T(n)}$ takes values in commutative coalgebras in combination with the coanalyticity of  $\Sigma^\infty_{T(h)} \Omega^\infty_{T(h)}$.}

    \item Conclude by bar-cobar duality that $(\Theta, \Phi)$ is the free-forgetful adjunction for $\mathrm{lie}_{T(h)}$-algebras.

\end{enumerate} 

Inseparable from this story is the fact that  $\s_{T(h)}$ is $1$-semiadditive, and so for any $X\in \s_{T(h)}^{B\Sigma_n}$ the Tate construction vanishes \cite{kuhn_2004}
\[X^{t\Sigma_n} \simeq \ast.\]

Our description of the equivalence $\T_{v_h} \simeq \mathrm{Alg}_{\mathrm{lie}_{T(h)}}$ will be made by directly constructing an operad action on $\Phi(X)$. Recall from Corollary \ref{cor: derivatives of forgetful recover o} and Theorem \ref{thm: product rule} that for a reduced operad $O$ in $L\s$
\[\mathrm{Forget}_O: \mathrm{Alg}_O \rightarrow L\s\]
can be used to recover $O$ by considering $\partial_\ast(\mathrm{End}(\mathrm{Forget}_O))$. When $L\s= \s_{T(h)}$ it turns out that the passage to derivatives is unnecessary.

 \begin{prop} \label{prop: operad is endomorphism of forget locally}
     Let $O \in \mathrm{Operad}(\s_{T(h)})$ be a reduced operad, not necessarily levelwise dualizable. The map of operads \[O \rightarrow \mathrm{End}(\mathrm{Forget}_O)\]
     induced by the action of $O$ on the underlying spectrum of $A \in \mathrm{Alg}_O$ is an equivalence.
 \end{prop}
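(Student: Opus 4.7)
The plan is to apply Corollary \ref{cor: derivatives of forgetful recover o}, which already establishes that the composite $O \to \mathrm{End}(\mathrm{Forget}_O) \xrightarrow{\partial_\ast} \mathrm{End}(\partial_\ast \mathrm{Forget}_O)$ is an equivalence of operads for any reduced $O \in \mathrm{Operad}(L\s)$. It therefore suffices to show that the second map $\partial_\ast \colon \mathrm{End}(\mathrm{Forget}_O) \to \mathrm{End}(\partial_\ast \mathrm{Forget}_O)$ is itself an equivalence. Using Proposition \ref{prop: derivative of bar} with $R=O$ to identify $\partial_\ast \mathrm{Forget}_O \simeq O$, and the product rule (Theorem \ref{thm: product rule}) to identify $\partial_\ast(\mathrm{Forget}_O^{\wedge n}) \simeq O^{\circledast n}$, this reduces, arity by arity, to the claim that the derivative comparison
\[\nat(\mathrm{Forget}_O^{\wedge n},\mathrm{Forget}_O) \longrightarrow \mathrm{RMod}_O(O^{\circledast n},O)\]
is an equivalence; note that by freeness the right-hand side unwinds to $O(n)$.

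First I would dispose of the polynomial case. For finitary polynomial functors $F,G\colon \mathrm{Alg}_O \to \s_{T(h)}$, Proposition \ref{prop: unit and counit} identifies $\nat(F,G)$ with the right module mapping spectrum of their derivatives conditional on Tate vanishings of the form $(\partial_i F \wedge (\mathrm{Indecom}_O)^{\wedge i})^{t\Sigma_i} \simeq \ast$. Because $\s_{T(h)}$ is $1$-semiadditive by \cite{kuhn_2004}, every Tate construction in $\s_{T(h)}^{B\Sigma_i}$ vanishes, so these hypotheses are automatic. Hence the $(\partial_\ast, \Theta^{\mathrm{RMod}})$-adjunction restricts to an equivalence on the entire subcategory of finitary polynomial functors $\mathrm{Alg}_O \to \s_{T(h)}$, and derivatives compute natural transformation spectra in this range.

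To extend to the non-polynomial functors at issue, I would use the Goodwillie tower. From $\mathrm{Forget}_O \simeq B(O,O,-)$ and Proposition \ref{prop: derivative of bar}, the Goodwillie tower coincides with the truncation tower $P_k \mathrm{Forget}_O \simeq B(O^{\leq k},O,-)$, and analogously for $\mathrm{Forget}_O^{\wedge n}$ via the product rule. Assuming the convergence $\mathrm{Forget}_O \simeq \lim_k P_k \mathrm{Forget}_O$ holds $T(h)$-locally, the spectrum $\nat(\mathrm{Forget}_O^{\wedge n}, \mathrm{Forget}_O)$ is a cofiltered limit over the polynomial stages of source and target; at each stage the previous paragraph applies, and the resulting limit matches the truncation-filtration description of $\mathrm{RMod}_O(O^{\circledast n}, O)$ from Section \ref{section: right module primitive stuff}.

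The main obstacle is this convergence step: showing that $\mathrm{Forget}_O$ and its smash powers are recovered as inverse limits of their Goodwillie towers in $\s_{T(h)}$. The layer fibers of the tower have the form $O(k) \wedge_{h\Sigma_k} A^{\wedge k}$, and $1$-semiadditivity identifies orbits with fixed points, which is the source of the desired convergence $T(h)$-locally and which promotes the evident colimit presentation $B(O,O,A) \simeq \mathrm{colim}_k B(O^{\leq k}, O, A)$ to an inverse limit. Note that nothing in the argument uses that $O$ is levelwise dualizable: both Corollary \ref{cor: derivatives of forgetful recover o} and Proposition \ref{prop: unit and counit} apply without that hypothesis, and any appeal to Spanier-Whitehead duality is replaced by the universal vanishing of Tate constructions in $\s_{T(h)}$.
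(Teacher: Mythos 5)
There are two genuine gaps here, and the paper's own proof is structured precisely to avoid both of them.

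First, the levelwise dualizability issue. You assert that ``nothing in the argument uses that $O$ is levelwise dualizable,'' but the inputs you lean on do use it, at least as the paper proves them. Proposition \ref{prop: derivative of bar} is stated with the hypothesis that $O$ is reduced \emph{and levelwise dualizable}, and Corollary \ref{cor: derivatives of forgetful recover o} (as well as the right-module-valued $\partial_\ast$ on $\mathrm{Fun}^\omega(\mathrm{Alg}_O,L\s)$ it refers to) inherits that hypothesis from the paragraph preceding it and from the entire Section \ref{section: right module construction} framework: the coend definition of $\partial_\ast$ rests on Corollary \ref{cor: derivatives of compact representable}, which rests on Proposition \ref{prop: fake tower koszul dual derivatives}, which requires levelwise dualizable derivatives, and for $C=\mathrm{Alg}_O$ this is exactly equivalent to dualizability of $O$. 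The statement you are proving is specifically advertised as \emph{not} requiring that hypothesis, so any route passing through the $\mathrm{RMod}_O$-valued $\partial_\ast$ machinery needs a separate argument that these constructions survive the loss of dualizability in the $T(h)$-local setting; you do not supply one.

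Second, the convergence gap, which you flag yourself. Your reduction from $\mathrm{Forget}_O$ to its polynomial truncations requires $\mathrm{Forget}_O \simeq \lim_k P_k\mathrm{Forget}_O$ in $\s_{T(h)}$. The heuristic that ``$1$-semiadditivity identifies orbits with fixed points and hence promotes the colimit presentation to a limit'' is not an argument: the norm being an equivalence on each layer $O(k)\wedge_{h\Sigma_k}A^{\wedge k}$ does not by itself convert the colimit over $k$ of the bar filtration into an inverse limit, and the maps in the two systems do not even point the same way. What the paper actually uses instead is the weaker statement of \emph{coanalyticity}: $T(h)$-locally, a natural transformation out of the cohomogeneous functor $\mathrm{Id}^{\wedge n}$ into a wedge of increasingly cohomogeneous functors factors through a finite wedge (\cite[Lemma B.1]{heutsannals}). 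This controls $\nat$ without ever asserting tower convergence. More to the point, the paper sidesteps the whole non-polynomial-source-and-target problem by resolving $\mathrm{Forget}_O$ not by its Goodwillie tower but by the monadic $(\mathrm{Free}_O,\mathrm{Forget}_O)$ cosimplicial resolution, where an extra degeneracy exhibits $\nat(\mathrm{Forget}_O^{\wedge n},\mathrm{Forget}_O)$ as a retract of $\nat(\mathrm{Id}_{\s_{T(h)}}^{\wedge n},\mathrm{Forget}_O\circ\mathrm{Free}_O)$; the latter is then computed directly by coanalyticity, Tate vanishing, and the classification of homogeneous functors to be $O(n)$. So the two approaches are genuinely different, and yours as written has a conditional convergence step that the paper's proof deliberately avoids.
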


\begin{proof}
    Using the resolutions associated to the free-forgetful adjunction of $O$-algebras, an extra degeneracy argument implies we have retraction
    \[\mathrm{RMod}_{\mathrm{Forget}_O \circ \mathrm{Free}_O}((\mathrm{Forget}_O)^{\wedge n} \circ \mathrm{Free}_O,\mathrm{Forget}_O \circ \mathrm{Free}_O) \rightarrow \nat(\mathrm{Forget}_O^{\wedge n}, \mathrm{Forget}_O)\]
The lefthand side is equivalently
\[\mathrm{RMod}_{\mathrm{Forget}_O \circ \mathrm{Free}_O}(\mathrm{Free}^{\mathrm{RMod}}_{\mathrm{Forget}_O \circ \mathrm{Free}_O}(\mathrm{Id}_{\s_{T(h)}}^{\wedge n}),\mathrm{Forget}_O \circ \mathrm{Free}_O), \]
and so this simplifies to $\nat(\mathrm{Id}_{\s_{T(h)}}^{\wedge n}, \mathrm{Forget}_O \circ \mathrm{Free}_O)$

Since $\mathrm{Forget}_O \circ \mathrm{Free}_O$ is a coanalytic functor \cite[Definition 4.3]{heutsannals}, we may compute this natural transformation spectrum using the fact that $T(h)$-locally a natural transformation from a cohomogeneous functor to a wedge of increasingly cohomogeneous functors factors through a finite wedge \cite[Lemma B.1]{heutsannals}; we pick some large $N>n$ and compute \[\nat(\mathrm{Id}_{\s_{T(h)}}^{\wedge n}, \bigvee_{i \leq N} O(i) \wedge_{h\Sigma_i }\mathrm{Id}_{\s_{T(h)}}^{\wedge i}).\]
By the finiteness of the wedge, this is equivalent to
\[\prod_{i \leq N} \nat(\mathrm{Id}_{\s_{T(h)}}^{\wedge n}, O(i) \wedge_{h\Sigma_i} \mathrm{Id}_{\s_{T(h)}}^{\wedge i}). \]
This product is concentrated in a single degree because $T(h)$-locally natural transformations between homogeneous functors of different degrees are contractible, by using dual calculus combined with Tate vanishing. In the single degree it is nonvanishing, it is $O(n)$ by the classification of homogeneous functors.

The conclusion of this argument is that $O(n)$ retracts onto $\nat(\mathrm{Forget}_O^{\wedge n}, \mathrm{Forget}_O)$, but chasing through involved maps one finds that composition in the other order is also the identity.
\end{proof}

\begin{cor}\label{cor: endomorphism of forget is endomorphism of derivatives locally}
    For a reduced, levelwise dualizable operad $O \in \mathrm{Operad}(\s_{T(h)})$, the differentiation map \[\mathrm{End}( \mathrm{Forget}_O) \rightarrow \mathrm{End}(\partial_\ast \mathrm{Forget}_O)\] is an equivalence.
\end{cor}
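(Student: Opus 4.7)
The plan is to deduce this from two results already in hand: Proposition \ref{prop: operad is endomorphism of forget locally} and Corollary \ref{cor: derivatives of forgetful recover o}. Both of these describe natural maps of operads fitting into the commutative triangle
\[
\begin{tikzcd}
& O \arrow[dl, "\simeq"'] \arrow[dr] & \\
\mathrm{End}(\mathrm{Forget}_O) \arrow[rr, "\partial_\ast"] & & \mathrm{End}(\partial_\ast \mathrm{Forget}_O)
\end{tikzcd}
\]
where the left arrow is the operad action on underlying spectra of algebras and the right arrow is the operad action on the derivatives of the forgetful functor. First I would observe that since we are working in $\s_{T(h)}$ and $O$ is reduced, Proposition \ref{prop: operad is endomorphism of forget locally} applies to make the left diagonal an equivalence. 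Then I would invoke Corollary \ref{cor: derivatives of forgetful recover o}, which applies in any Bousfield localization $L\s$ (in particular $\s_{T(h)}$) for any reduced $O$, to conclude that the diagonal composite from $O$ to $\mathrm{End}(\partial_\ast \mathrm{Forget}_O)$ is an equivalence.

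Finally I would appeal to two-out-of-three for equivalences of operads to conclude that the bottom arrow, which is precisely the differentiation map in question, is an equivalence.

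There is essentially no obstacle since the two inputs do all the work; the only thing I would want to double-check is that the triangle commutes, i.e.\ that the composite map of operads $O \to \mathrm{End}(\mathrm{Forget}_O) \to \mathrm{End}(\partial_\ast \mathrm{Forget}_O)$ agrees with the composite appearing in Corollary \ref{cor: derivatives of forgetful recover o}. But this is tautological: both are induced by applying $\partial_\ast$ (which is symmetric monoidal by Theorem \ref{thm: product rule}, and hence takes endomorphism operads to endomorphism operads of derivatives) to the canonical operad map $O \to \mathrm{End}(\mathrm{Forget}_O)$. Hence the corollary follows immediately.
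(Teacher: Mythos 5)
Your argument is exactly the paper's: the paper's proof reads ``This follows from Corollary \ref{cor: derivatives of forgetful recover o} and the above result,'' which is precisely the two-out-of-three deduction from Proposition \ref{prop: operad is endomorphism of forget locally} and Corollary \ref{cor: derivatives of forgetful recover o} that you lay out. Your extra observation that the triangle commutes by construction (the composite in Corollary \ref{cor: derivatives of forgetful recover o} is defined as $O \to \mathrm{End}(\mathrm{Forget}_O) \xrightarrow{\partial_\ast} \mathrm{End}(\partial_\ast\mathrm{Forget}_O)$) is correct and is the only implicit step the paper leaves unstated.
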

\begin{proof}
    This follows from Corollary \ref{cor: derivatives of forgetful recover o} and the above result.
\end{proof}

In the $v_h$-periodic setting, $\Sigma^\infty_{T(h)}$ still has a commutative coalgebra structure \cite[Remark 5.11]{heutsannals}, and the analog of Proposition \ref{prop:coend in spaces} asserts that the classifying map witnesses $\mathrm{CoEnd}(\Sigma^\infty_{T(h)})$ as the $T(h)$-local commutative operad $\mathrm{com}_{T(h)}$. Hence, the Goodwillie derivatives assemble into a functor
\[\partial_\ast:\mathrm{Fun}(\T_{v_h},\s_{T(h)}) \rightarrow \mathrm{RMod}_{\mathrm{lie}_{T(h)}}.\]

\begin{thm}\label{thm: classification for vh periodic spaces}
There is an equivalence of operads in $\s_{T(h)}$
\[\mathrm{lie}_{T(h)} \simeq \mathrm{End}(\Phi)\]
under which Heuts's spectral Lie algebra models are given by the tautological action
\begin{center}
   \[\begin{tikzcd}
	{\T_{v_h}} & {\mathrm{Alg}_{\mathrm{End}(\Phi)}} \\
	X & {\Phi(X)}
	\arrow[from=1-1, to=1-2]
	\arrow[maps to, from=2-1, to=2-2]
	\arrow["{\mathrm{End}(\Phi)}"{pos=0.4}, shift left=4, from=2-2, to=2-2, loop, in=325, out=35, distance=10mm]
\end{tikzcd}\]
\end{center} 
The assignment
\[R \mapsto B(R,\mathrm{lie},\Phi(-))\]
induces an equivalence of categories 
    \[\mathrm{RMod}_{\mathrm{lie}_{T(h)}}^{<\infty}\simeq \mathrm{Poly}^\omega(\T_{v_h},\s_{T(h)}) .\]
    In general, for $F \in \mathrm{Fun}^\omega(\T_{v_h},\s_{T(h)})$ the Goodwillie tower takes the form
   \[F(A) \rightarrow P_n(F)=B(\partial_{\leq n} F ,\mathrm{lie}_{T(h)},\Phi(A)).\] 
\end{thm}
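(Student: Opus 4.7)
The plan is to deduce this theorem from Heuts' established equivalence $\T_{v_h} \simeq \mathrm{Alg}_{\mathrm{lie}_{T(h)}}$ combined with the operadic machinery built up earlier in the paper, most centrally Proposition \ref{prop: operad is endomorphism of forget locally} (which identifies the endomorphism operad of a forgetful functor in the $T(h)$-local setting) together with the classification theorem for infinitesimally $1$-semiadditive categories.

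First I would identify $\mathrm{End}(\Phi) \simeq \mathrm{lie}_{T(h)}$. Applying Proposition \ref{prop:coend in spaces} $T(h)$-locally (the diagonal argument via $\mathbb{C}P^\infty$ survives localization) yields $\mathrm{CoEnd}(\Sigma^\infty_{T(h)}) \simeq \mathrm{com}_{T(h)}$, and Proposition \ref{prop:product rule implies coend is koszul dual to identity} then gives $\partial_\ast \mathrm{Id}_{\T_{v_h}} \simeq \mathrm{lie}_{T(h)}$. Under Heuts' equivalence, $\Phi$ corresponds to the forgetful functor $\mathrm{Forget}_{\mathrm{lie}_{T(h)}}:\mathrm{Alg}_{\mathrm{lie}_{T(h)}} \to \s_{T(h)}$, so Proposition \ref{prop: operad is endomorphism of forget locally} produces an equivalence $\mathrm{lie}_{T(h)} \simeq \mathrm{End}(\mathrm{Forget}_{\mathrm{lie}_{T(h)}}) \simeq \mathrm{End}(\Phi)$. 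The tautological action $\T_{v_h} \to \mathrm{Alg}_{\mathrm{End}(\Phi)}$ sending $X \mapsto \Phi(X)$ is then matched with Heuts' identification by construction.

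For the polynomial equivalence, I would invoke the classification theorem for infinitesimally $1$-semiadditive categories established earlier. Since $\s_{T(h)}$ is $1$-semiadditive, $\T_{v_h}$ is infinitesimally $1$-semiadditive, so this theorem gives a symmetric monoidal equivalence $\partial_\ast: \mathrm{Poly}^\omega(\T_{v_h}, \s_{T(h)}) \simeq \mathrm{RMod}^{<\infty}_{K(\mathrm{CoEnd}(\Sigma^\infty_{T(h)}))}$, and by Step 1 the right hand side is $\mathrm{RMod}^{<\infty}_{\mathrm{lie}_{T(h)}}$. To identify the inverse with $R \mapsto B(R,\mathrm{lie}_{T(h)},\Phi(-))$, I would transport along Heuts' equivalence and apply Proposition \ref{prop: generalized factorization cohomology}: the Tate vanishing hypothesis on $R(n)\wedge B(A)^{\wedge n}$ is automatic from $1$-semiadditivity of $\s_{T(h)}$, identifying this with the desired bar construction. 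For the general Goodwillie tower claim, I would observe that the same Tate vanishing plus Proposition \ref{prop: unit and counit} identifies the lower fake Goodwillie tower of any $F \in \mathrm{Fun}^\omega(\T_{v_h},\s_{T(h)})$ with its true Goodwillie tower, and unravelling the formula for $\Theta^{\mathrm{RMod}}_{\leq n}$ yields $B(\partial_{\leq n}F,\mathrm{lie}_{T(h)},\Phi(A))$.

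The main obstacle is the first step: rigorously identifying $\mathrm{End}(\Phi)$ with $\mathrm{lie}_{T(h)}$ as \emph{operads}, not merely as symmetric sequences, in a way compatible with Heuts' equivalence. Using Heuts' theorem up front so that $\Phi$ literally becomes a forgetful functor is the cleanest route, since Proposition \ref{prop: operad is endomorphism of forget locally} then applies verbatim; alternatively, one could construct the map $\mathrm{lie}_{T(h)} \to \mathrm{End}(\Phi)$ directly using the retraction $\Phi \Omega^\infty_{T(h)} \simeq \mathrm{Id}_{\s_{T(h)}}$ and the $T(h)$-local weak chain rule to compute $\partial_\ast \Phi$, then verify this is an equivalence via Corollary \ref{cor: endomorphism of forget is endomorphism of derivatives locally} combined with Proposition \ref{prop: operad is endomorphism of forget locally}.
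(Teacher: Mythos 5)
Your proposal is correct and lands on essentially the same structure as the paper's proof for the polynomial classification and the Goodwillie tower formula: both arguments apply Proposition~\ref{prop: unit and counit} with $T(h)$-local Tate vanishing to equate polynomials with bounded right $\mathrm{lie}_{T(h)}$-modules, and both use Proposition~\ref{prop: generalized factorization cohomology} plus Tate vanishing to convert the generalized factorization cohomology $\int^{K(R)}B(\Phi(-))$ into the bar construction $B(R,\mathrm{lie}_{T(h)},\Phi(-))$. (The paper adds one small step you elide: the factorization-cohomology description is only a priori valid for levelwise dualizable $R$, and one then extends to arbitrary $R$ by noting the formula and $P_n$ both commute with colimits.)

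The real divergence is in how $\mathrm{End}(\Phi)\simeq\mathrm{lie}_{T(h)}$ gets established, and this is worth spelling out because it clarifies what each argument buys. You propose to use Heuts' equivalence $\T_{v_h}\simeq\mathrm{Alg}_{\mathrm{lie}_{T(h)}}$ as a black box, identify $\Phi$ with $\mathrm{Forget}_{\mathrm{lie}_{T(h)}}$, and then apply Proposition~\ref{prop: operad is endomorphism of forget locally} verbatim. This is clean and valid, and it also handles the ``tautological action'' clause automatically, since that proposition is stated precisely as the assertion that the tautological action of $O$ on $\mathrm{Forget}_O$ induces the equivalence. The paper instead reassembles the operad structure from the raw ingredients behind Heuts' theorem (monadicity of $(\Theta,\Phi)$, coanalyticity of $\Phi\Theta$, and the operad/coanalytic-monad correspondence), uses Corollary~\ref{cor: endomorphism of forget is endomorphism of derivatives locally} to pass to $\mathrm{End}(\partial_\ast\Phi)$ computed in $\mathrm{RMod}_{\mathrm{lie}_{T(h)}}$, and then invokes the right-module equivalence $\partial_\ast\Phi\simeq\mathrm{lie}_{T(h)}$ from Behrens--Malin. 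The paper's route is more elaborate but keeps explicit hold of the right $\mathrm{lie}_{T(h)}$-module structure on $\partial_\ast\Phi$, which is the native datum in the rest of the paper's framework; your route bypasses this by outsourcing the identification to Heuts, at the cost of tying the argument more tightly to the exact form of Heuts' statement. Two small remarks: your auxiliary appeal to Proposition~\ref{prop:product rule implies coend is koszul dual to identity} to get $\partial_\ast\mathrm{Id}_{\T_{v_h}}\simeq\mathrm{lie}_{T(h)}$ is orientation, not load-bearing, in Route~1; and in your sketched Route~2, the relevant retraction for computing $\partial_\ast\Phi$ is $\Omega^\infty_{T(h)}\circ\Phi\simeq\mathrm{Id}_{\T_{v_h}}$ (so that precomposition-insensitivity of derivatives gives $\partial_\ast\Phi\simeq\partial_\ast\mathrm{Id}_{\T_{v_h}}$), rather than $\Phi\circ\Omega^\infty_{T(h)}\simeq\mathrm{Id}_{\s_{T(h)}}$ as you wrote.
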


\begin{proof}
By the equivalence of $T(h)$-local operads and coanalytic monads \cite[Proposition 4.8]{heutsannals}, the monadicity \cite{Eldred_Heuts_Mathew_Meier_2019} and coanalyticity \cite[Theorem 4.15]{heutsannals} of of $\Phi\circ \Theta$  imply that there is an abstract operad structure on $\partial_\ast \Phi$ such that $\Phi$ is the forgetful functor for algebras over the operad $\partial_\ast \Phi$. Proposition \ref{prop: operad is endomorphism of forget locally} and Corollary \ref{cor: endomorphism of forget is endomorphism of derivatives locally}, assert we can recover this operad structure by computing the endomorphism operad in the category $(\mathrm{RMod}_{\mathrm{lie}_{T(h)}},\circledast)$ given by $\mathrm{End}(\partial_\ast \Phi)$. By \cite[Theorem 6.6]{behrensmalin} \footnote{This is proven using Blans-Blom's right module structure, but it isn't difficult to supply an additional ad hoc argument which adjusts to the models we use here.}, there is an equivalence of right modules $\partial_\ast \Phi \simeq \mathrm{lie}_{T(h)},$
and so this endomorphism operad recovers $\mathrm{lie}_{T(h)}$, finishing the characterization of the equivalence.

The act of taking derivatives induces an equivalence
\[\partial_\ast: \mathrm{Poly}(\T_{v_h}^\omega,\s_{T(h)}) \rightarrow \mathrm{RMod}^{<\infty}_{\mathrm{lie}_{T(h)}},\]
by Proposition \ref{prop: unit and counit} combined with $T(h)$-local Tate vanishing.

By Section \ref{section: goodwillie calculus algebras} the inverse of $\partial_\ast$ when restricted to levelwise dualizable right modules is given by the generalized factorization cohomology $\int^{K(R)}B(\Phi(-)).$
If we apply Tate vanishing and Proposition \ref{prop: generalized factorization cohomology}, this is equivalent to $B(R,O,\Phi(-))$ when restricted to compact objects of $\T_{v_h}$. This formula commutes with colimits in $R$, and so must be true for a general right module $R$ since $P_n$ commutes with colimits.

\end{proof}

\section{Classification of Goodwillie towers} \label{section: classifications}

\subsection{Infinitesimally $1$-semiadditive categories and right module classifications}
Under what circumstances can operadic structures on $\partial_\ast F$ be used to reconstruct the Goodwillie tower of $F$? We first consider what categories have a classification of Goodwillie towers in terms of right modules over some operad $O$. We assume our category $C$ is compactly generated and has an identification $\s(C) \simeq L\s$ for some Bousfield localization of $\s$. We also assume $C$ is differentially dualizable.

Suppose we are provided with a lift of the derivatives to right modules over some operad $O$
\begin{center}
\[\begin{tikzcd}
	& {\mathrm{RMod}_{O}} \\
	{\mathrm{Fun}(C^\omega,L\s)} & {\mathrm{SymSeq}(L\s)}
	\arrow["{\mathrm{Forget}_O}", from=1-2, to=2-2]
	\arrow[from=2-1, to=1-2]
	\arrow["{\partial_\ast}"', from=2-1, to=2-2]
\end{tikzcd}\]
\end{center}
for some $O$ in $\mathrm{Operad}(L\s)$. 

\begin{definition}
    We say that for a differentially dualizable $C$
    \[\partial_\ast: \mathrm{Fun}(C^\omega,L\s) \rightarrow \mathrm{RMod}_O\]
classifies Goodwillie towers if the adjunction
\[
\adjunction{\partial_{\leq n}}{\mathrm{Fun}(C^{\omega},L\s)}{\mathrm{RMod}^{\leq n}_{O}}{\Theta^{\mathrm{RMod}}_{\leq n}}
\]
restricts to an equivalence of categories for all $n<\infty$
\[
\adjunction{\partial_{\leq n}}{\mathrm{Poly}_n(C^{\omega},L\s)}{\mathrm{RMod}^{\leq n}_{O}}{\Theta^{\mathrm{RMod}}_{\leq n}}.
\]
This is equivalent to asking that it restricts to an equivalence
\[
\adjunction{\partial_{\ast}}{\mathrm{Poly}(C^{\omega},L\s)}{\mathrm{RMod}^{<\infty}_{O}}{\Theta^{\mathrm{RMod}}}.
\]
\end{definition}
One might notice that we have abusively referred to both $\partial_\ast$ and its adjoint $\Theta^{\mathrm{RMod}}$ using the same names as in Section \ref{section: right module construction}. This is because, under mild conditions, right module structures on derivatives are essentially unique.

\begin{definition}\label{definition: quasiequivalence}
    A quasi-equivalence of $O,P \in \mathrm{Operad}(L\s)$ is an equivalence of enriched categories
    \[\mathrm{Env}(O)\simeq \mathrm{Env}(P). \]
\end{definition}

If $O$ is quasi-equivalent to $P$, their categories of right modules are canonically equivalent, and the obstruction to upgrading a quasi-equivalence to an equivalence is whether or not the equivalence of right module categories can be made symmetric monoidal.

\begin{prop}\label{prop: right module classification quasi}
    If 
    \[\partial_\ast:\mathrm{Fun}(C^\omega,L\s) \rightarrow \mathrm{RMod}_O\]
    classifies Goodwillie towers, then there is a quasi-equivalence of operads \[O \simeq K(\mathrm{CoEnd}(\Sigma^\infty_C))\] under which the right module structure on $\partial_\ast$ agrees with Definition \ref{definition: right module derivatives}. If $\partial_\ast$ can be made symmetric monoidal, the quasi-equivalence may be lifted to an equivalence of operads.
\end{prop}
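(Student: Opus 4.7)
The plan is to exhibit a quasi-equivalence $K(O) \simeq \mathrm{CoEnd}(\Sigma^\infty_C)$ by identifying both $\mathrm{Env}(K(O))$ and $\mathrm{Env}(\mathrm{CoEnd}(\Sigma^\infty_C))$ with a common $L\s$-enriched full subcategory of $\mathrm{Fun}^\omega(C, L\s)$, namely the opposite of $\langle (\Sigma^\infty_C)^{\wedge n}\rangle_{n \in \mathbb{N}}$, and then upgrade this via Koszul self-duality to the desired quasi-equivalence $O \simeq K(\mathrm{CoEnd}(\Sigma^\infty_C))$. This mirrors the strategy of Proposition \ref{prop:product rule implies coend is koszul dual to identity}.

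The key observation is that any functor $\partial_\ast$ lifting Goodwillie derivatives must send the $n$-homogeneous functor $(\Sigma^\infty_C)^{\wedge n}$ to a right module concentrated in level $n$ with value $L\Sigma^\infty_+\Sigma_n$, which, over a reduced operad, admits only the trivial module structure. By the description in Section \ref{section: right module stuff}, the trivial $O$-modules $\mathrm{Triv}^\mathrm{RMod}_O(L\Sigma^\infty_+\Sigma_n)$ form $\mathrm{Env}(K(O))^\mathrm{op}$ inside $\mathrm{RMod}_O$. On the canonical side, Proposition \ref{prop: dual derivatives of homogeneous} combined with Theorem \ref{thm: product rule}, together with the levelwise dualizability of $\mathrm{CoEnd}(\Sigma^\infty_C)$ from Lemma \ref{lem: coend is retract of dual derivatives} (so that Koszul self-duality from Proposition \ref{prop: koszul duality right modules} applies), identifies the image of $\langle (\Sigma^\infty_C)^{\wedge n}\rangle$ under the canonical $\partial_\ast$ with $\mathrm{Env}(\mathrm{CoEnd}(\Sigma^\infty_C))^\mathrm{op}$ inside $\mathrm{RMod}_{K(\mathrm{CoEnd}(\Sigma^\infty_C))}$.

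Next I would verify that both embeddings are fully faithful as $L\s$-enriched functors: the hypothetical one by the classification hypothesis (since homogeneous functors are polynomial), and the canonical one by Theorem \ref{thm: product rule}. Composing yields the quasi-equivalence $K(O) \simeq \mathrm{CoEnd}(\Sigma^\infty_C)$, which levelwise identifies $K(O)(n) \simeq \mathrm{CoEnd}(\Sigma^\infty_C)(n)$ and in particular renders $K(O)$ levelwise dualizable. Applying Proposition \ref{prop: koszul duality right modules} to the reduced, levelwise dualizable operad $K(O)$ produces $K(K(O)) \simeq K(\mathrm{CoEnd}(\Sigma^\infty_C))$; combined with the natural map $O \to K(K(O))$ (an equivalence under the reduced-and-levelwise-dualizable hypothesis on $O$, which is extracted a posteriori from the quasi-equivalence with the dualizable operad $K(\mathrm{CoEnd}(\Sigma^\infty_C))$), this gives the desired quasi-equivalence $O \simeq K(\mathrm{CoEnd}(\Sigma^\infty_C))$.

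For the remaining assertions: the right module structure on $\partial_\ast F$ is determined by enriched Yoneda via its restriction to the envelope subcategory; since the hypothetical and canonical structures agree on $\langle (\Sigma^\infty_C)^{\wedge n}\rangle$ by construction (both send a homogeneous functor to the trivial module with the correct underlying symmetric sequence), they are identified under the quasi-equivalence. Under the additional hypothesis that $\partial_\ast$ is symmetric monoidal with respect to $\wedge$ and $\circledast$, the subcategory $\langle (\Sigma^\infty_C)^{\wedge n}\rangle$ is a symmetric monoidal subcategory of $\mathrm{Fun}^\omega(C, L\s)$ generated by $\Sigma^\infty_C$, and the chain of enriched identifications becomes a chain of symmetric monoidal enriched equivalences, upgrading the quasi-equivalence to a genuine equivalence of operads. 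The principal technical obstacle I foresee is the bookkeeping of the $L\s$-enrichment throughout — verifying that the classification hypothesis yields an $L\s$-enriched (rather than merely categorical) equivalence and that this enrichment propagates coherently through the identification of envelopes — alongside extracting levelwise dualizability of $O$ so as to legitimately invoke double Koszul self-duality.
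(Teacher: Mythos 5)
Your overall strategy matches the paper's: identify $\mathrm{Env}(\mathrm{CoEnd}(\Sigma^\infty_C))$ with $\langle (\Sigma^\infty_C)^{\wedge n}\rangle^\mathrm{op}$ via Proposition \ref{prop: dual derivatives of homogeneous}, observe that any classifying $\partial_\ast$ must send each $(\Sigma^\infty_C)^{\wedge n}$ to the unique right $O$-module concentrated in degree $n$ (the trivial one), so that the classification hypothesis identifies $\langle (\Sigma^\infty_C)^{\wedge n}\rangle$ fully faithfully with $\langle \mathrm{Triv}^\mathrm{RMod}_O(L\Sigma^\infty_+\Sigma_n)\rangle = \mathrm{Env}(K(O))^\mathrm{op}$, and then upgrade to an equivalence of operads when $\partial_\ast$ is symmetric monoidal. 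That is exactly the paper's first two paragraphs.

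The gap is in your conversion of $K(O) \simeq \mathrm{CoEnd}(\Sigma^\infty_C)$ into $O \simeq K(\mathrm{CoEnd}(\Sigma^\infty_C))$. You invoke $O \simeq K(K(O))$ under a levelwise dualizability hypothesis on $O$ which you say is ``extracted a posteriori from the quasi-equivalence with the dualizable operad $K(\mathrm{CoEnd}(\Sigma^\infty_C))$'' --- but that is the very quasi-equivalence the double Koszul step is supposed to produce, so the argument as written is circular. The envelope comparison you legitimately control only gives $K(O)(n) \simeq \mathrm{CoEnd}(\Sigma^\infty_C)(n)$, i.e., levelwise dualizability of $K(O)$, and $K(K(O)) \to O$ is not an equivalence without dualizability of $O$ itself (Proposition \ref{prop: koszul duality right modules}). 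The paper avoids this by working one Koszul stage lower: it constructs a compatible comparison $\partial^\ast F \to K(\partial_\ast F)$, shows it is an equivalence by the same envelope reasoning, and then transfers the module structure using $\partial_\ast F \simeq K(\partial^\ast F)$ first for representables $L\Sigma^\infty R_c$ with $c \in C^\omega$, where Corollary \ref{cor: derivatives of compact representable} supplies the needed levelwise dualizability, and then Kan extends. Relatedly, your enriched-Yoneda identification of the module structure is not quite set up correctly: the subcategory you control is $\mathrm{Env}(K(O))^\mathrm{op} = \langle \mathrm{Triv}^\mathrm{RMod}_O(L\Sigma^\infty_+\Sigma_n)\rangle$, and restricted Yoneda over it lands in $\mathrm{RComod}_{K(O)} = \mathrm{Fun}_{L\s}(\mathrm{Env}(K(O)),L\s)$, not in $\mathrm{RMod}_O = \mathrm{Fun}_{L\s}(\mathrm{Env}(O)^\mathrm{op},L\s)$; the free modules making up $\mathrm{Env}(O)$ are unbounded and so are inaccessible from the polynomial classification, which is precisely why the paper routes the comparison through representables of compacts rather than through the envelope itself.
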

\begin{proof}
    From Proposition \ref{prop: dual derivatives of homogeneous}, one may identify $\mathrm{Env}(\mathrm{CoEnd}(\Sigma^\infty_C) )$ with the opposite of the full subcategory of polynomial functors on $(\langle \Sigma^\infty_C )^{\wedge n}\rangle_{n \in \mathbb{N}}$.
     
There is then a quasi-equivalence of operads
    \[\mathrm{CoEnd}(\Sigma^\infty_C) \rightarrow K(O)\]
constructed by applying $\partial_\ast$ to this subcategory and using the assumption that the right $O$-module structure classifies Goodwillie towers. If $\partial_\ast$ satisfies a product rule, this is symmetric monoidal and so induces an equivalence of operads.

In the same way, there is a compatible map
    \[\partial^\ast F \rightarrow K(\partial_\ast F)\]
which is an equivalence by the same reasoning. The Koszul duality between derivatives and Koszul dual derivatives completes the result, with the usual caveat that one should first restrict to representables of compacts, which satisfy the necessary levelwise dualizability, and then Kan extend.
\end{proof}

\begin{definition}
 A stable category $C$ is $1$-semiadditive if for all finite groups $G$ the Tate construction
\[X^{tG}:= \mathrm{cofiber}(X_{G} \rightarrow X^{G})\]
is contractible.
\end{definition}

\begin{lem} \label{lem: semiadditive sigma norm}
   The category $C$ is $1$-semiadditive, if and only if, all Tate constructions vanish for all symmetric groups $\Sigma_n$.

\end{lem}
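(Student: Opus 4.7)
The plan is to reduce Tate vanishing for an arbitrary finite group to Tate vanishing for a symmetric group by invoking Cayley's theorem, which embeds every finite group $H$ into $\Sigma_{|H|}$ as a subgroup.

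The forward direction is trivial, since symmetric groups are finite groups. So the work is in the converse. Given a finite group $H$ and an object $X \in C^{BH}$, I would embed $H \hookrightarrow \Sigma_n$ with $n = |H|$ via the left regular representation, form the induced object $\mathrm{Ind}^{\Sigma_n}_H X \in C^{B\Sigma_n}$, and relate its Tate construction back to $X^{tH}$.

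The crucial input is the standard fact that for a subgroup $H$ of finite index in a (here finite) group $G$, induction and coinduction along the inclusion coincide (the ``Wirthmüller isomorphism''). As a formal consequence of this Frobenius-type reciprocity one obtains natural equivalences
\[(\mathrm{Ind}^G_H X)_{hG} \simeq X_{hH}, \qquad (\mathrm{Ind}^G_H X)^{hG} \simeq X^{hH},\]
compatible with the norm maps, which together produce an equivalence
\[(\mathrm{Ind}^G_H X)^{tG} \simeq X^{tH}.\]
Applying this with $G = \Sigma_n$ and using the hypothesis that all Tate constructions in $C^{B\Sigma_n}$ vanish immediately gives $X^{tH} \simeq \ast$, establishing $1$-semiadditivity.

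The only real subtlety is confirming the compatibility of the two induction equivalences with the norm transformation, so that they assemble into an equivalence of Tate constructions rather than merely of orbits and fixed points separately. This is standard in the $\infty$-categorical setting: it follows from the Beck--Chevalley compatibility of the $2$-functor $C^{B(-)}$ with respect to the restriction/induction adjunction, combined with the naturality of the norm map of \cite{klein_2001}. I do not expect this step to present any genuine obstacle; it is the routine verification that the formal manipulation survives passage from homotopy (co)orbits to the Tate cofiber.
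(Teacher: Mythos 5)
Your proposal is correct and matches the paper's argument: both fix an embedding $G \hookrightarrow \Sigma_n$, invoke the agreement of induction and coinduction for finite groups (the paper cites \cite{Fausk2003}), and conclude an equivalence of norm cofiber sequences giving $(\mathrm{Ind}^{\Sigma_n}_G X)^{t\Sigma_n} \simeq X^{tG}$. The only cosmetic difference is that you specify the embedding via Cayley's theorem, which is unnecessary but harmless.
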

\begin{proof}
The forward implication is immediate from the definition of $1$-semiadditivity. 

For the latter implication, fix an embedding $G \hookrightarrow \Sigma_n$. By the equivalence between induction and coinduction for finite groups \cite{Fausk2003}, we have an equivalence of cofiber sequences
\begin{center}
\[\begin{tikzcd}
	{(\mathrm{Ind}^{\Sigma_n}_GX)_{h\Sigma_n}} & {(\mathrm{Coind}^{\Sigma_n}_GX)^{h\Sigma_n}} & {(\mathrm{Ind}^{\Sigma_n}_GX)^{t\Sigma_n}} \\
	{X_{hG}} & {X^{hG}} & {X^{tG}}
	\arrow[from=1-1, to=1-2]
	\arrow[from=1-2, to=1-3]
	\arrow["\simeq", from=2-1, to=1-1]
	\arrow[from=2-1, to=2-2]
	\arrow["\simeq", from=2-2, to=1-2]
	\arrow[from=2-2, to=2-3]
	\arrow["\simeq", from=2-3, to=1-3]
\end{tikzcd}\]
\end{center}
which establishes the result.
\end{proof}

\begin{definition}
    A category $C$ is infinitesimally $1$-semiadditive if $\s(C)$ is $1$-semiadditive.
\end{definition}

\begin{lem}\label{lem: stable tate}
The following are equivalent for $L\s$ a compactly generated Bousfield localization of $\s$:
    \begin{itemize}
    \item The category $L\s$ is $1$-semiadditive.
    
        \item  For all $n$ and $X \in L\s^{B\Sigma_n}$, the Tate construction \[(X \wedge \mathrm{Id}_{L\s}^{\wedge n})^{t\Sigma_n}: L\s^\omega \rightarrow L\s\] is contractible. 
        
    \end{itemize}
\end{lem}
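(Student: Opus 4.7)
The forward implication is immediate from the definition: if $L\s$ is $1$-semiadditive, then every Tate construction on a Borel $\Sigma_n$-spectrum vanishes, and in particular the Borel spectrum $X \wedge Z^{\wedge n}$ has vanishing Tate for every $X \in L\s^{B\Sigma_n}$ and every $Z \in L\s^\omega$, so the functor in question is objectwise contractible.

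For the reverse direction, I would invoke Lemma \ref{lem: semiadditive sigma norm} to reduce $1$-semiadditivity to the statement that $Y^{t\Sigma_n} \simeq \ast$ for every $n$ and every $Y \in L\s^{B\Sigma_n}$. The key observation is that, by the coherence of the symmetric monoidal structure, the $\Sigma_n$-permutation action on the $n$-fold smash $L\bb{S}^{\wedge n}$ is canonically trivial, so $L\bb{S} \simeq L\bb{S}^{\wedge n}$ in $L\s^{B\Sigma_n}$. Since $L\s$ is compactly generated, one can write $L\bb{S} \simeq \colim_i Z_i$ as a filtered colimit with $Z_i \in L\s^\omega$, and cofinality of the diagonal $I \to I^n$ for filtered $I$ combined with the preservation of filtered colimits by the $n$-fold smash upgrades this to an equivalence $L\bb{S} \simeq \colim_i Z_i^{\wedge n}$ in $L\s^{B\Sigma_n}$. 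Smashing with $Y$ presents $Y$ as a filtered colimit $\colim_i (Y \wedge Z_i^{\wedge n})$ whose terms each have vanishing Tate by the hypothesis.

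The main obstacle is commuting $(-)^{t\Sigma_n}$ past this filtered colimit, since the Tate construction does not preserve filtered colimits in general (the trivial-action embedding $L\s \hookrightarrow L\s^{B\Sigma_n}$ fails to preserve compactness when $L\bb{S}$ itself is not compact). To close this gap, I would argue inside the thick $\otimes$-ideal $\mathcal{I} \subset L\s^{B\Sigma_n}$ consisting of objects $W$ for which $(X \wedge W)^{t\Sigma_n} \simeq \ast$ for every $X$: by hypothesis $\mathcal{I}$ contains every $Z^{\wedge n}$ with $Z$ compact, and the free compact generators $Z_i \wedge \Sigma_{n+}$ of $L\s^{B\Sigma_n}$ lie in $\mathcal{I}$ automatically by the Wirthm\"uller equivalence between induction and coinduction. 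A detection argument against these free generators, combined with the filtered colimit presentation above, would place $L\bb{S}$ with trivial action in $\mathcal{I}$, whence $Y \simeq L\bb{S} \wedge Y \in \mathcal{I}$, and specializing $X = L\bb{S}$ yields $Y^{t\Sigma_n} \simeq \ast$.
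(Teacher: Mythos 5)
Your reduction steps are correct: the forward direction is immediate, the trivialization of the permutation action on $L\bb{S}^{\wedge n}$ and the cofinality of the diagonal in $I^n$ are fine, and you rightly pinpoint the obstacle as the failure of $(-)^{t\Sigma_n}$ to commute with the filtered colimit $L\bb{S} \simeq \colim_i Z_i^{\wedge n}$ when $L\bb{S}$ is not compact. The gap is in the proposed fix. The class $\mathcal{I}$ you introduce is indeed a thick $\otimes$-ideal, but thick ideals are closed only under finite colimits, retracts, and shifts, not under filtered colimits, which is precisely the closure you would need. Knowing that $\mathcal{I}$ contains every $Z^{\wedge n}$ with $Z$ compact and every compact object of $L\s^{B\Sigma_n}$ does not place the non-compact object $L\bb{S}$ with trivial action in $\mathcal{I}$. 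No ``detection argument'' is available: the free compacts detect equivalences of objects, but membership in a thick ideal is not a condition detected by mapping into or out of generators, and placing $L\bb{S}$ with trivial action in $\mathcal{I}$ is literally equivalent to the assertion $X^{t\Sigma_n}\simeq\ast$ for all $X$, i.e.\ the thing you are trying to prove.

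The paper closes the loop by a genuinely different route, regaining compactness rather than evading it. Condition (2) is used, via Proposition~\ref{prop: unit and counit}, to show that derivatives classify $\mathrm{Poly}^\omega(L\s,L\s)$; the classification is then transported along precomposition with $L$ to $\mathrm{Poly}^\omega(\s,L\s)$, where the explicit inverse of $\partial_\ast$ exhibits the norm $\partial_n G \wedge_{h\Sigma_n} L(-)^{\wedge n} \to \partial_n G \wedge^{h\Sigma_n} L(-)^{\wedge n}$ as an equivalence of functors on $\s^\omega$. Since $\bb{S}$ \emph{is} compact in $\s$, evaluating at $\bb{S}$ shows every norm in $L\s^{B\Sigma_n}$ is an equivalence, and Lemma~\ref{lem: semiadditive sigma norm} finishes. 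The non-compactness of $L\bb{S}$ is thus paid for by passing through the full Goodwillie-theoretic classification over $\s$; your direct evaluation works only in the case the paper treats separately, namely when $L\bb{S}$ is already compact.
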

\begin{proof}
The forward implication is immediate by definition. In the case $L\bb{S} \in L\s$ is compact, the backwards direction is also immediate since for any $X$ we may evaluate the Tate construction on $L\bb{S}$ to see Tate vanishing for all symmetric group actions. If $L\bb{S}$ is not compact, we instead approach this using Goodwillie calculus.

We first observe that by Proposition \ref{prop: unit and counit} the symmetric sequence of derivatives classifies Goodwillie towers
    \[\partial_\ast: \mathrm{Poly}^\omega(L\s,L\s) \xrightarrow{\simeq} \mathrm{SymSeq}^{< \infty}(L\s).\]
There is always a commuting triangle
\begin{center}
\[\begin{tikzcd}
	& {\mathrm{SymSeq}(L\s)} \\
	{\mathrm{Fun}^\omega(L\s,L\s)} & {\mathrm{Fun}^\omega(\s,L\s)}
	\arrow["{\partial_\ast}", from=2-1, to=1-2]
	\arrow["{(-)\circ L}", from=2-1, to=2-2]
	\arrow["{\partial_\ast}", from=2-2, to=1-2]
\end{tikzcd}\]
\end{center}
where the horizontal map is a full and faithful embedding. As a consequence of the classification of finitary homogeneous functors, the essential image of $\mathrm{Poly}^\omega(L\s,L\s) \subset \mathrm{Fun}^\omega(\s,L\s)$ is $\mathrm{Poly}^\omega(\s,L\s)$. Therefore, we have a commuting triangle
\begin{center}
\[\begin{tikzcd}
	& {\mathrm{SymSeq}^{<\infty}(L\s)} \\
	{\mathrm{Poly}^\omega(L\s,L\s)} & {\mathrm{Poly}^\omega(\s,L\s)}
	\arrow["\simeq", from=2-1, to=1-2]
	\arrow["\simeq", from=2-1, to=2-2]
	\arrow["{\partial_\ast}", from=2-2, to=1-2]
\end{tikzcd}\]
\end{center}
We now describe the inverse of 
\[\partial_\ast:\mathrm{Poly}^\omega(\s,L\s) \xrightarrow{\simeq} \mathrm{SymSeq}^{< \infty}(L\s).\]
We first enlarge to the category of all finitary functors and see the right adjoint of
\[\partial_\ast:\mathrm{Fun}(\s^\omega,L\s) \rightarrow \mathrm{SymSeq}(L\s)\]
is given by the formula
\[R \mapsto (c \in \s^\omega \mapsto \mathrm{SymSeq}(L\s)(\partial_\ast L\Sigma^\infty R_c,R)).\]
An application of the (weak) stable chain rule shows
\[\partial_\ast L\Sigma^\infty R_c \simeq (L c^\vee)^{\wedge \ast}.\]
This adjunction restricts to an adjunction between $\mathrm{Poly}(\s^\omega,L\s)$ and $\mathrm{SymSeq}^{<\infty}(L\s)$, and so the inverse of the Goodwillie derivatives on $\mathrm{Poly}^\omega(\s,L\s)$ is given by
\[R \mapsto (c \in \s^\omega \mapsto \mathrm{SymSeq}(L\s)((Lc^\vee)^{\wedge \ast},R)).\]
This restricts to an equivalence
\[\partial_{\leq n}:\mathrm{Poly}(\s^\omega,L\s) \xrightarrow{\simeq} \mathrm{SymSeq}^{\leq n}(L\s),\]
with inverse given by the same formula
\[R \mapsto (c \in \s^\omega \mapsto \mathrm{SymSeq}^{\leq n}(L\s)((Lc^\vee)^{\wedge \ast},R)).\]
As in the analysis of the lower fake Goodwillie tower, we may pass to the fibers of the filtered unit to see that for any $G$ in $\mathrm{Poly}(\s^\omega,L\s)$ there is an equivalence in $\mathrm{Poly}(\s^\omega,L\s)$
\[\partial_n G \wedge_{h\Sigma_n} L(-)^{\wedge n} \simeq \partial_n G \wedge^{h\Sigma_n} L(-)^{\wedge n}. \]
 Since $\bb{S} \in \s$ is compact, we may evaluate the above formula on $\bb{S}$ to deduce that all norms in $L\s^{B \Sigma_n}$ are equivalences. By Lemma \ref{lem: semiadditive sigma norm}, this implies $L\s$ is $1$-semiadditive.

\end{proof}

\begin{thm}\label{thm: inf semiadd classification}
    The following are equivalent:  
    \begin{enumerate}
        \item $C$ is infinitesimally $1$-semiadditive. 
        \item For all $n$, $X \in L\s^{B\Sigma_n}$, and $c \in C^\omega$
        \[(X \wedge \Sigma^\infty_C c ^{\wedge n})^{t\Sigma_n} \simeq \ast.\]
        \item There is a symmetric monoidal equivalence $(\mathrm{Poly}^\omega(C,L\s),\wedge) \simeq (\mathrm{RMod}^{< \infty}_O,\circledast)$ for some operad $O$.  
        \item The derivatives $\partial_\ast:\mathrm{Fun}^\omega(C,L\s) \rightarrow \mathrm{RMod}_{K(\mathrm{CoEnd}(\Sigma^\infty_C))}$ classify Goodwillie towers.

    \end{enumerate}
\end{thm}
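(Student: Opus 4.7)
The plan is to prove the cycle $(1) \Rightarrow (2) \Rightarrow (4) \Rightarrow (3) \Rightarrow (2)$ together with the separate implication $(2) \Rightarrow (1)$. The arrow $(1) \Rightarrow (2)$ is immediate, since $1$-semiadditivity of $L\s$ forces every Tate construction in $L\s$ to vanish. For $(2) \Rightarrow (4)$, condition (2) exactly makes the full subcategories $V$ and $W$ of Proposition~\ref{prop: unit and counit} coincide with $\mathrm{Poly}^\omega(C,L\s)$ and $\mathrm{RMod}^{<\infty}_{K(\mathrm{CoEnd}(\Sigma^\infty_C))}$ respectively, so the restricted equivalence of that proposition becomes the full classification asserted in (4). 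The implication $(4) \Rightarrow (3)$ follows by equipping the classifying equivalence with the symmetric monoidal structure of Theorem~\ref{thm: product rule}.

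For $(3) \Rightarrow (2)$, I invoke Proposition~\ref{prop: right module classification quasi} to identify $O$ with $K(\mathrm{CoEnd}(\Sigma^\infty_C))$ and the classifying equivalence with $\partial_\ast$; call its inverse $\Theta^{\mathrm{RMod}}$. Given $X \in L\s^{B\Sigma_n}$, the $n$-homogeneous functor $F(c) = X \wedge_{h\Sigma_n} (\Sigma^\infty_C c)^{\wedge n}$ has $\partial_\ast F$ concentrated in degree $n$ with coefficient $X$, while a direct computation using Proposition~\ref{prop: koszul duality right modules} yields $\Theta^{\mathrm{RMod}}(X[n])(c) \simeq X \wedge^{h\Sigma_n} (\Sigma^\infty_C c)^{\wedge n}$. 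The equivalence in (3) forces $F \simeq \Theta^{\mathrm{RMod}}(X[n])$, realizing the norm map as an equivalence and hence delivering the Tate vanishing of (2).

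The delicate step is $(2) \Rightarrow (1)$. By Lemma~\ref{lem: stable tate} it suffices to verify $(X \wedge Y^{\wedge n})^{t\Sigma_n} \simeq \ast$ for every $X \in L\s^{B\Sigma_n}$ and every $Y \in L\s^\omega$. Since $L\s = \s(C)$ is compactly generated by $\Sigma^\infty_C(C^\omega)$, every such $Y$ lies in the thick subcategory generated by this image, built up through shifts, retracts, and cofibers. I will proceed by a double induction: outer on $n$ and inner on this construction of $Y$. Shifts and retracts are routine, and the generating case $Y = \Sigma^\infty_C c$ is precisely (2); the essential case is a cofiber $Y''' = \mathrm{cofib}(Y' \to Y'')$. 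I analyze the Eilenberg--Moore filtration of $Y''^{\wedge n}$ whose $k$-th associated graded is $\mathrm{Ind}^{\Sigma_n}_{\Sigma_{n-k}\times\Sigma_k}(Y'^{\wedge(n-k)} \wedge Y'''^{\wedge k})$, use Frobenius reciprocity to convert each graded-piece Tate into a Tate over the parabolic $\Sigma_{n-k} \times \Sigma_k$, and exploit the iterated formula $Z^{t(G \times H)} \simeq (Z^{hG})^{tH}$ when $Z^{tG} \simeq \ast$, combined with dualizability of the compact factors $Y'$ and $Y'''$ to commute Tate past inert smash factors, reducing each cross term to a Tate over a strictly smaller symmetric group. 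These vanishings are supplied by the outer induction, and combined with the inner inductive vanishing on $F_n = Y''^{\wedge n}$ they yield vanishing on the top fibre $F_n/F_{n-1} = Y'''^{\wedge n}$. The principal obstacle will be orchestrating these iterated reductions cleanly, keeping careful track of the residual $\Sigma_k$-action inherited from the restriction of $X$ to the parabolic subgroup.
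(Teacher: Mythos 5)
Your cycle $(1)\Rightarrow(2)\Rightarrow(4)\Rightarrow(3)\Rightarrow(2)$ plus $(2)\Rightarrow(1)$ covers the theorem, and the first three arrows are essentially what the paper does (modulo the paper going $(1)\Rightarrow(4)$ rather than $(2)\Rightarrow(4)$; the Tate-vanishing hypotheses of Proposition~\ref{prop: unit and counit} are literally condition (2), so your version is fine). Two substantive comments.

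For $(3)\Rightarrow(2)$ you invoke Proposition~\ref{prop: right module classification quasi}, but that proposition's hypothesis is that the functor $\partial_\ast$ itself classifies Goodwillie towers; condition (3) only hands you an \emph{abstract} symmetric monoidal equivalence $f:(\mathrm{Poly}^\omega(C,L\s),\wedge)\simeq(\mathrm{RMod}^{<\infty}_O,\circledast)$ with no a priori relationship to $\partial_\ast$. The tool designed for exactly this situation is Proposition~\ref{prop: abstract rmod equiv}, which shows that any such $f$ is forced to agree with $\partial_\ast$ up to smashing with an invertible spectrum $X^\vee$ and a corresponding twist of $O$ by $\mathrm{CoEnd}(X^\vee)$. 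Since that twist is degreewise smashing with an invertible object, it does not affect Tate vanishing, so your norm-comparison conclusion still follows — but you should route through Proposition~\ref{prop: abstract rmod equiv} rather than Proposition~\ref{prop: right module classification quasi}. The paper in fact factors this as $(3)\Rightarrow(4)$ via Proposition~\ref{prop: abstract rmod equiv} and then $(4)\Rightarrow(2)$ separately (by reading the norm off the associated graded of the comparison between the Goodwillie tower and the lower fake tower, Proposition~\ref{prop: fake no finiteness}); your direct $(3)\Rightarrow(2)$ is the same content compressed.

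The real divergence is $(2)\Rightarrow(1)$. Your double induction over the thick subcategory generated by $\Sigma^\infty_C(C^\omega)$, using the binomial-type filtration of $Y''^{\wedge n}$, Wirthmüller/Frobenius reciprocity to descend Tate constructions along $\mathrm{Ind}^{\Sigma_n}_{\Sigma_{n-k}\times\Sigma_k}$, the factorization $Z^{t(G\times H)}\simeq(Z^{hG})^{tH}$ when $Z^{tG}\simeq\ast$, and dualizability to slide inert smash factors through Tate, is a workable plan — I checked that the cross terms $1\le k\le n-1$ land in the scope of the outer inductive hypothesis and that the top graded piece is exactly $Y'''^{\wedge n}$, so the triangle closes. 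But the paper's argument is a one-liner by comparison: the functor $T(Z):=(X\wedge Z^{\wedge n})^{t\Sigma_n}$ is a cofiber of $n$-excisive functors of $Z\in L\s^\omega$, hence $n$-excisive; precomposition with $\Sigma^\infty_C$ does not change Goodwillie derivatives (as $\partial_\ast\Sigma^\infty_C\simeq 1$), yet $T\circ\Sigma^\infty_C$ is identically zero by (2); so $\partial_\ast T=0$, and a reduced polynomial functor with vanishing derivatives is zero. Your route buys nothing except effort — it reproves the claim by hand in the thick subcategory where the paper observes that ``polynomial with zero derivatives implies zero'' already does the bookkeeping for you. I would encourage you to replace the induction with the excisivity argument; the acknowledged ``principal obstacle'' of orchestrating the iterated reductions is exactly the kind of combinatorial overhead that the Goodwillie-calculus reformulation is designed to eliminate.
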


\begin{proof}\noindent
\begin{itemize}
    \item $(1) \implies (4)$: This follows from Proposition \ref{prop: unit and counit} and Tate vanishing for $L\s$.
    \item $(1) \implies (2)$: This is immediate.
    \item $(2) \implies (1)$: By Lemma \ref{lem: stable tate}, it suffices to show that for $Z \in L\s^\omega$ 
    \[(X \wedge Z^{\wedge n})^{t\Sigma_n} \simeq \ast.\]
    This can be checked on derivatives since the Tate construction is the cofiber of $n$-excisive functors. However, precomposition with $\Sigma^\infty_C$ does not change derivatives, and so its derivatives must be $0$.
    \item $(4) \implies (3)$: This follows from the product rule of Theorem \ref{thm: product rule}.
    \item $(3)\implies(4)$: We defer this to Proposition \ref{prop: abstract rmod equiv}.
    \item $(4) \implies (2)$: We prove equivalently that the norm
 \[X \wedge_{h\Sigma_n} (\Sigma^\infty_C(-))^{\wedge n} \rightarrow X \wedge^{h\Sigma_n} (\Sigma^\infty_C(-))^{\wedge n}\]
 is an equivalence when restricted to $C^\omega$.
By assumption, the $n$-truncated right module structure on $\partial_{\leq n}$ classifies $n$-polynomial functors, so we deduce that the comparison map 
\[P_n (F) \rightarrow P^n_\mathrm{fake}(F)\]
is an equivalence of filtered objects. By Proposition \ref{prop: fake no finiteness}, the map on associated graded is the norm in question for $X=\partial_n F$. This implies the norm is an equivalence for $X=\partial_n F$. Since any $X$ occurs as $\partial_n F$, we conclude the norm \[X \wedge_{h\Sigma_n} \mathrm{Id}_{L\s^\omega}^{\wedge n} \rightarrow X \wedge^{h\Sigma_n} \mathrm{Id}_{L\s^\omega}^{\wedge n} \] 
is an equivalence. 
\end{itemize}
\end{proof}

\subsection{Differentially $1$-semiadditive categories and divided power right module classifications}\label{section: categories with divided powers}
The appearance of Tate vanishing hypotheses in operadic results often suggests that a more refined algebraic structure encoding the Tate data through ``divided powers'' exists and is the ``correct'' object to study, in the sense that the result might generally be true for this new class of objects with no hypothesis on Tate vanishing.

We demonstrate that for right modules in Goodwillie calculus this is partially correct, but also somewhat misleading. There is a nontrivial class of categories for which divided power structures on the derivatives do classify Goodwillie towers, but such categories seem rare in practice. In particular, the category still must satisfy some amount of Tate vanishing for this classification to occur. 

We assume our category $C$ is compactly generated, has an identification $\s(C) \simeq L\s$ to some Bousfield localization of $\s$, and is differentially dualizable. Suppose we are provided with a lift of the Goodwillie derivatives to divided power right modules (Definition \ref{definition: dp rmod}).

\begin{center}
\[\begin{tikzcd}
	& {\mathrm{RMod}^\mathrm{dp}_{O}} \\
	{\mathrm{Fun}(C^\omega,L\s)} & {\mathrm{SymSeq}(L\s)}
	\arrow["{\mathrm{Forget}^\mathrm{RMod}_O}", from=1-2, to=2-2]
	\arrow[from=2-1, to=1-2]
	\arrow["{\partial_\ast}"', from=2-1, to=2-2]
\end{tikzcd}\]
\end{center}

Any such assignment automatically admits a right adjoint $\Theta^\mathrm{dp}$ which for $c\in C^\omega$ is given by
\[\Theta^\mathrm{dp}(R)(c)= \mathrm{RMod}_O^\mathrm{dp}(\partial_\ast L \Sigma^\infty R_c , R),\]
and an identical formula for $\Theta^\mathrm{dp}_{\leq n}$, the adjoint of $\partial_{\leq n}$.

\begin{definition}
    We say that
    \[\partial_\ast: \mathrm{Fun}(C^\omega,L\s)\ \rightarrow \mathrm{RMod}^\mathrm{dp}_O\]
classifies Goodwillie towers if the adjunction
\[
\adjunction{\partial_{\leq n}}{\mathrm{Fun}(C^{\omega},L\s)}{\mathrm{RMod}^\mathrm{\leq n,dp}_{O}}{\Theta^\mathrm{dp}_{\leq n}}
\]
restricts to an equivalence of categories for all $n<\infty$
\[
\adjunction{\partial_{\leq n}}{\mathrm{Poly}_n(C^{\omega},L\s)}{\mathrm{RMod}^\mathrm{\leq n,dp}_{O}}{\Theta^\mathrm{dp}_{\leq n}}.
\]
This is equivalent to asking that it restricts to an equivalence
\[
\adjunction{\partial_{\ast}}{\mathrm{Poly}(C^{\omega},L\s)}{\mathrm{RMod}^\mathrm{<\infty,dp}_{O}}{\Theta^\mathrm{dp}}.
\]
\end{definition}

If such a structure divided power structure exists, it must lift the right $K(\mathrm{CoEnd}(\Sigma^\infty_C))$-module structure constructed earlier in the following sense.

\begin{prop}
    If \[\partial_\ast: \mathrm{Fun}(C^\omega,L\s) \rightarrow \mathrm{RMod}^\mathrm{dp}_O\]
    classifies Goodwillie towers, then $O$ is quasi-equivalent to $K(\mathrm{CoEnd}(\Sigma^\infty_C))$, and the underlying right module is that of Definition \ref{definition: right module derivatives}. If 
    \[   \partial_\ast: (\mathrm{Fun}(C^\omega,L\s),\wedge) \rightarrow (\mathrm{RMod}^\mathrm{dp}_O,\circledast)\]
    can be made symmetric monoidal, then the quasi-equivalence lifts to an equivalence of operads.
\end{prop}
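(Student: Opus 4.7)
The strategy adapts the argument of Proposition \ref{prop: right module classification quasi} to the divided power setting. First, by Proposition \ref{prop: dual derivatives of homogeneous}, I identify $\mathrm{Env}(\mathrm{CoEnd}(\Sigma^\infty_C))^{\mathrm{op}}$ with the full subcategory $\langle (\Sigma^\infty_C)^{\wedge n}\rangle_{n} \subset \mathrm{Poly}^\omega(C,L\s)$. The classification hypothesis gives an equivalence $\partial_{\leq N}: \mathrm{Poly}_N \xrightarrow{\simeq} \mathrm{RMod}^{\leq N, \mathrm{dp}}_O$, and restricting to this subcategory produces a fully faithful embedding with image $\langle \partial_\ast(\Sigma^\infty_C)^{\wedge n}\rangle_{n \leq N} \subset \mathrm{RMod}^{\leq N, \mathrm{dp}}_O$. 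The goal is to identify this image with $\mathrm{Env}(K(O))^{\mathrm{op}}$ sitting inside $\mathrm{RMod}_O$.

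Since $(\Sigma^\infty_C)^{\wedge n}$ is $n$-homogeneous with coefficient $L\Sigma^\infty_+\Sigma_n$, the symmetric sequence $\partial_\ast(\Sigma^\infty_C)^{\wedge n}$ is concentrated in degree $n$ with value $L\Sigma^\infty_+\Sigma_n$. Because $O$ is reduced, its underlying right $O$-module is forced to be the trivial module $1^{\circledast n} = \mathrm{Triv}^{\mathrm{RMod}}_O(L\Sigma^\infty_+\Sigma_n)$ (as the only place the right action can land is degree $n$, and there the composition with $O(1) \simeq L\bb{S}$ forces triviality). To compare hom spectra, I would observe that $L\Sigma^\infty_+\Sigma_n$ carries a free $\Sigma_n$-action and is self-dual, so smashing with any Borel $\Sigma_n$-spectrum retains a free action and the Tate construction vanishes; hence the level-dual satisfies ideal Tate vanishing. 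Lemma \ref{lem: ideal implies o ideal} then promotes this to $O$-ideal Tate vanishing for each $\partial_\ast(\Sigma^\infty_C)^{\wedge n}$, and Proposition \ref{prop: divided power mapping spectrum codomain tate vanish} produces equivalences $\mathrm{RMod}^{\mathrm{dp}}_O(R, \partial_\ast(\Sigma^\infty_C)^{\wedge n}) \xrightarrow{\simeq} \mathrm{RMod}_O(R, \partial_\ast(\Sigma^\infty_C)^{\wedge n})$.

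This identifies the image subcategory $\langle \partial_\ast(\Sigma^\infty_C)^{\wedge n}\rangle \subset \mathrm{RMod}^{\mathrm{dp}}_O$, as an enriched category, with $\langle 1^{\circledast n}\rangle \subset \mathrm{RMod}_O$, which is precisely $\mathrm{Env}(K(O))^{\mathrm{op}}$. The resulting equivalence yields a quasi-equivalence $\mathrm{CoEnd}(\Sigma^\infty_C) \simeq K(O)$, and applying Koszul duality (Proposition \ref{prop: koszul duality right modules}) produces the claimed $O \simeq K(\mathrm{CoEnd}(\Sigma^\infty_C))$. To verify that the underlying right module recovers Definition \ref{definition: right module derivatives}, I would check agreement on representables of compacts: Corollary \ref{cor: derivatives of compact representable} identifies the canonical structure as $\partial_\ast L\Sigma^\infty R_c \simeq K(\Sigma^\infty_C c^{\wedge \ast})$, and the right $O$-module structure arising from the classification composed with the forgetful matches this by naturality on this colimit-dense subcategory.

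The main obstacle will be ensuring that the codomain Tate-vanishing comparison of Proposition \ref{prop: divided power mapping spectrum codomain tate vanish} extends compatibly with compositions so as to yield a genuine equivalence of enriched categories and not merely equivalences of individual hom spectra; one should verify that the cosimplicial resolutions used to compute the hom spectra match coherently under the comparison map of comonads $\mathrm{RM}_{K(O)}^{\mathrm{dp}} \to \mathrm{RM}_{K(O)}$ on these bounded objects. If $\partial_\ast$ can additionally be made symmetric monoidal, the argument simplifies substantially: the subcategory $\langle \partial_\ast (\Sigma^\infty_C)^{\wedge n}\rangle$ is identified as the symmetric monoidal subcategory generated by the unit $\partial_\ast \Sigma^\infty_C \simeq 1$, and the quasi-equivalence of envelopes upgrades to an equivalence of operads, verbatim as in the concluding sentence of the proof of Proposition \ref{prop: right module classification quasi}.
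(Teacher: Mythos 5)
Your proposal follows the same route as the paper: the paper simply notes that $\partial_\ast((\Sigma^\infty_C)^{\wedge n})$ is $\Sigma$-finite, invokes Proposition \ref{prop: divided power mapping spectrum codomain tate vanish} to see the forgetful map from divided power hom spectra to ordinary hom spectra with this codomain is an equivalence, and then repeats the argument of Proposition \ref{prop: right module classification quasi}. You unpack the $\Sigma$-finiteness explicitly (concentration in degree $n$ with free value $L\Sigma^\infty_+\Sigma_n$, hence ideal Tate vanishing of the level-dual, hence $O$-ideal Tate vanishing by Lemma \ref{lem: ideal implies o ideal}), but this is exactly the reason $\Sigma$-finiteness suffices. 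The coherence worry in your final paragraph is already handled by the fact that the forgetful map is induced by the map of comonads $\mathrm{RM}^{\mathrm{dp}}_{K(O)} \to \mathrm{RM}_{K(O)}$ constructed in Section \ref{section: right module primitive stuff}, so the hom-spectrum comparisons are automatically compatible with composition.
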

\begin{proof}
    Since $\partial_\ast ((\Sigma^\infty_C)^{\wedge n})$ is $\Sigma$-finite, Proposition \ref{prop: divided power mapping spectrum codomain tate vanish} shows that the forgetful map
    \[ \mathrm{RMod}^\mathrm{dp}_O(\partial_\ast F, \partial_\ast ((\Sigma^\infty_C)^{\wedge n}))\rightarrow \mathrm{RMod}_O(\partial_\ast F, \partial_\ast ((\Sigma^\infty_C)^{\wedge n}))  \]
    is an equivalence, and so the same argument as Proposition \ref{prop: right module classification quasi} applies.
\end{proof}

\begin{ex}
    The category $\s$ admits a lift of $\partial_\ast$ to divided power right modules, as divided powers over $O=1$ are just symmetric sequences. Of course, these divided powers do not classify the Goodwillie tower, as many Goodwillie towers do not split.
\end{ex}
\begin{ex}
    Any infinitesimally $1$-semiadditive category $C$  has a lift of $\partial_\ast$ to divided powers, and this lift classifies the Goodwillie tower. This is because because the $1$-semiadditivity of $L\s$ implies right modules agree with divided power right modules, and so one can apply Theorem \ref{thm: inf semiadd classification}.
\end{ex}
\begin{ex}
    The category $\T$ has canonical divided power structures on its derivatives by Proposition \ref{prop: derivatives of polynomial}. The $n$-truncation of this divided power structure is given by the formula
    \[F \mapsto B(\epsilon(P_n(F)|_{\mathrm{FinSet}_\ast})).\]
    A consequence of Theorem \ref{thm: diff semiadditive classification} is that these classify the Goodwillie towers.
\end{ex}

These three examples exhibit drastically different behaviors and show that there is a trichotomoy in Goodwillie calculus:
\begin{enumerate}
    \item Divided powers do not classify the Goodwillie towers.
    \item Divided powers classify the Goodwillie tower for a trivial reason.
    \item Divided powers classify the Goodwillie tower for an interesting reason.
\end{enumerate}

In order to address the problem of when $C$ admits a divided power right module classification, it is useful to compare to a more general classification theorem of Arone-Ching \cite[Theorem 0.2]{ACClassification}. Though stated only for $C=\T,\s$, the proof works for differentiable $C$. The classification is implemented by studying the adjunction
\[
\adjunction{\partial_\ast}{\mathrm{Fun}(C^\omega,L\s)}{\mathrm{SymSeq}(L\s)}{\Theta^\Sigma}
\]
where the right adjoint has the explicit formula
\[\Theta^\Sigma(S)(c):= \prod_{i \in \mathbb{N}} \partial_i (L\Sigma^\infty R_c) ^\vee  \wedge^{h\Sigma_i} S(i)).\]

\begin{thm}[Arone-Ching]
    Suppose $C$ is compactly generated, differentiable, and equipped with an identification $\s(C)\simeq L\s$. The following restricted adjunctions are comonadic:
    \[
\adjunction{\partial_{\leq n}}{\mathrm{Poly}_n(C^{\omega},L\s)}{\mathrm{SymSeq}^{\leq n}}{\Theta^{\Sigma}_{\leq n}},
\]
\[
\adjunction{\partial_{\ast}}{\mathrm{Poly}(C^{\omega},L\s)}{\mathrm{SymSeq}^{<\infty}}{\Theta^{\Sigma}}.
\]
    
\end{thm}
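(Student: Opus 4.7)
The plan is to apply the $\infty$-categorical comonadic Beck's theorem (Lurie, Higher Algebra 4.7.3.5) to the adjunction $(\partial_{\leq n}, \Theta^\Sigma_{\leq n})$. This requires verifying two conditions: that $\partial_{\leq n}$ is conservative on $\mathrm{Poly}_n(C^\omega, L\s)$, and that $\mathrm{Poly}_n(C^\omega, L\s)$ admits limits of $\partial_{\leq n}$-split cosimplicial objects which are preserved and reflected by $\partial_{\leq n}$. Once the bounded case is established for each $n$, the second adjunction follows by passing to the colimit along the inclusions $\mathrm{Poly}_n \hookrightarrow \mathrm{Poly}_{n+1}$ and $\mathrm{SymSeq}^{\leq n} \hookrightarrow \mathrm{SymSeq}^{\leq n+1}$, since a functor is polynomial if and only if it is $n$-polynomial for some $n$, and a bounded symmetric sequence is $n$-truncated for some $n$.

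Conservativity is proved by descending induction on the Goodwillie filtration. If $F \in \mathrm{Poly}_n$ satisfies $\partial_i F \simeq \ast$ for all $i \leq n$, then since $F \simeq P_n F$, it suffices to show each fiber sequence $D_i F \to P_i F \to P_{i-1} F$ has trivial first term. But the homogeneous classification identifies $D_i F$ with $\partial_i F \wedge_{h\Sigma_i} (\Sigma^\infty_C)^{\wedge i} \simeq \ast$, so $P_i F \simeq P_{i-1} F$, and collapsing downward gives $F \simeq P_0 F \simeq \ast$ (in the reduced setting implicitly in force, as the operads considered are reduced).

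The genuinely difficult step is the second Beck condition, which amounts to showing that for every $F \in \mathrm{Poly}_n$ the cosimplicial comonadic resolution $(\Theta^\Sigma_{\leq n} \partial_{\leq n})^{\bullet+1} F$ totalizes to $F$, and that this totalization is preserved by $\partial_{\leq n}$. My proposed approach is induction on $n$: the base case $n=0$ is the trivial equivalence between the point categories, while the inductive step exploits the functorial fiber sequence $D_n F \to F \to P_{n-1} F$. Applying $\partial_{\leq n}$ splits this into the degree-$n$ concentrated symmetric sequence $\partial_n F$ and the truncation $\partial_{\leq n-1} F$, and one can compare the cosimplicial resolutions layer by layer; the inductive hypothesis handles $P_{n-1} F$, while for the homogeneous layer the classification $\mathrm{Homog}_n(C^\omega, L\s) \simeq L\s^{B\Sigma_n}$ via $\partial_n$ combined with the explicit formula for $\Theta^\Sigma$ on a symmetric sequence concentrated in degree $n$ identifies the coalgebra structure on $\partial_n F$ as the witness to reconstructing $D_n F$ from its derivative (the essential nontriviality being captured by the difference between $\partial_n F \wedge_{h\Sigma_n} (\Sigma^\infty_C)^{\wedge n}$ and the fixed-point formula $\partial_n F \wedge^{h\Sigma_n} (\Sigma^\infty_C)^{\wedge n}$ appearing in $\Theta^\Sigma$). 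The main obstacle I anticipate is bookkeeping the coherence of the cosimplicial totalization across the Goodwillie layers, particularly ensuring that preservation by $\partial_{\leq n}$ survives the passage from $n-1$ to $n$; this likely requires a careful analysis of how totalizations interact with the truncation filtration, using that $\partial_{\leq n}$ is both a left and (when restricted to homogeneous layers) a right adjoint, so that the relevant spectral sequence degenerates after finitely many stages on polynomial functors.
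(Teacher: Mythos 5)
Your high-level framework — apply the dual Barr--Beck criterion, verify conservativity, then verify the split-totalization condition — is the right shape, and the conservativity step is sound: $\mathrm{Poly}_n(C^\omega, L\s)$ is stable, so conservativity reduces to detecting zero objects, and your descending collapse of the tower when all $\partial_i F$ vanish does exactly that. But the split-totalization step, which you correctly identify as the crux, is not actually carried out, and the inductive scheme you propose for it has a concrete problem that is worth naming.

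The issue with the induction on $n$ via the fiber sequence $D_n F \to F \to P_{n-1}F$: the comonad $\Theta^\Sigma_{\leq n} \partial_{\leq n}$ does \emph{not} preserve the $n$-homogeneous layer. Even when $F$ is $n$-homogeneous, so that $\partial_{\leq n}F$ is concentrated in degree $n$, the functor $\Theta^\Sigma_{\leq n}(\partial_{\leq n} F)(c) \simeq \partial_n (L\Sigma^\infty R_c)^\vee \wedge^{h\Sigma_n} \partial_n F$ is a genuinely $n$-polynomial functor of $c$ that is not $n$-homogeneous, because $c \mapsto \partial_n(L\Sigma^\infty R_c)^\vee$ carries derivatives in all degrees $\leq n$. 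So the comonadic resolution of $D_n F$ immediately leaks into lower degrees, and the layer-by-layer comparison you envision (``the inductive hypothesis handles $P_{n-1}F$, while for the homogeneous layer \dots'') does not decouple. Any complete argument has to control this mixing; this is precisely the content of the convergence analysis that makes the theorem nontrivial, and it is what Arone--Ching's original proof (which the paper cites without reproducing) is organized around, via a careful filtration of the cobar construction rather than a direct splitting by Goodwillie layers. A further, smaller, point: the condition you state (``the comonadic resolution of $F$ totalizes to $F$ and $\partial_{\leq n}$ preserves the totalization'') is only the special case of the Barr--Beck hypothesis for the canonical resolutions, and establishes full faithfulness of the comparison functor; you still need the hypothesis for arbitrary $\partial_{\leq n}$-split cosimplicial objects, or else a separate essential-surjectivity argument.

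So the framing is correct and the conservativity step is fine, but the heart of the proof is a genuine gap that you have flagged rather than filled, and the inductive plan as written would not close it because the Goodwillie layers do not split the comonad.
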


In particular, there is some abstract coalgebraic structure on symmetric sequences that is able to encode precisely the data of a Goodwillie tower.  For the rest of the section we assume our category $C$ is compactly generated, has an identification $\s(C) \simeq L\s$ for some Bousfield localization of $\s$, and is differentially dualizable.

\begin{lem}\label{lem: computation of comonad for diff sa}
 Suppose $S \in \mathrm{SymSeq}^{\leq n}(L\s)$. There is an equivalence of symmetric sequences
 \[\partial_\ast \Theta^{\Sigma} (S)\simeq \mathrm{Cofree}^{\mathrm{RMod^{dp}}}_{K(\mathrm{CoEnd})(\Sigma^\infty_C)}(S),\]
 if and only if $(\partial_{\ast} L\Sigma^\infty R_c)^\vee \wedge S$ has levelwise Tate vanishing.
\end{lem}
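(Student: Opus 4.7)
The plan is to compute $\partial_r\Theta^{\Sigma}(S)$ explicitly and match the result with Proposition \ref{prop: cofree divided power right modules}, setting $O:=K(\mathrm{CoEnd}(\Sigma^\infty_C))$. Since $S$ is $n$-truncated, $\Theta^{\Sigma}(S)(c)=\prod_{i\leq n}\partial_i(L\Sigma^\infty R_c)^\vee\wedge^{h\Sigma_i}S(i)$ is a finite product of $i$-excisive functors of $c$, so $\partial_r$ distributes over the product. For each fixed $i$, the combination of Corollary \ref{cor: derivatives of compact representable} and Proposition \ref{prop: koszul duality right modules} identifies $\partial_i(L\Sigma^\infty R_c)^\vee$, for compact $c$, with $\mathrm{Indecom}_{\mathrm{CoEnd}(\Sigma^\infty_C)}(\Sigma^\infty_C c^{\wedge\ast})(i)$. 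The bar model of this functor of $c$ is a geometric realisation whose simplicial levels decompose, via the standard composition-product formula, into summands indexed by ordered partitions $(n_1,\dots,n_r)$ of $i$ with operadic factors $\mathrm{CoEnd}(\Sigma^\infty_C)(n_1)\wedge\dots\wedge\mathrm{CoEnd}(\Sigma^\infty_C)(n_r)$ and a residual $\Sigma_i$-action.

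For the ``if'' direction, the cofibre sequence
\[(\partial_i(L\Sigma^\infty R_c)^\vee\wedge S(i))_{h\Sigma_i}\longrightarrow\partial_i(L\Sigma^\infty R_c)^\vee\wedge^{h\Sigma_i}S(i)\longrightarrow(\partial_i(L\Sigma^\infty R_c)^\vee\wedge S(i))^{t\Sigma_i}\]
shows that under the hypothesised Tate vanishing, the middle term is equivalent to the homotopy-orbit version. Applying $\partial_r$ term by term in the simplicial realisation above (the reducedness of $\mathrm{CoEnd}(\Sigma^\infty_C)$ ensures only finitely many partitions into $r$ nonzero parts appear), and then dualising to pass to the Koszul-dual operad $O$, yields
\[\partial_r\Theta^{\Sigma}(S)\simeq\prod_i\left[\prod_{[i]\twoheadrightarrow[r]}L\s(O(n_1)\wedge\dots\wedge O(n_r),\,S(i))\right]_{h\Sigma_i},\]
which is exactly $\mathrm{Cofree}^{\mathrm{RMod}^{\mathrm{dp}}}_O(S)(r)$ by Proposition \ref{prop: cofree divided power right modules}.

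For the converse, the same computation without the Tate vanishing hypothesis produces, in each summand indexed by $i$, the same homotopy-orbit term together with an additional contribution from $\partial_r$ applied to $(\partial_i(L\Sigma^\infty R_c)^\vee\wedge S(i))^{t\Sigma_i}$. Since the finite product over $i$ is a direct sum in the stable category, the assumed equivalence forces this extra Tate contribution to vanish in every derivative; as each Tate functor is $\leq i$-polynomial in $c$, the vanishing of all its derivatives forces it to vanish on $c\in C^\omega$, which is exactly levelwise Tate vanishing of $(\partial_\ast L\Sigma^\infty R_c)^\vee\wedge S$. The main obstacle is the detailed cross-effect computation of $\mathrm{Indecom}_{\mathrm{CoEnd}(\Sigma^\infty_C)}(\Sigma^\infty_C c^{\wedge\ast})(i)$ as a functor of $c$: one must verify that $\partial_r$ commutes with the relevant geometric realisation (which follows from the reducedness of $\mathrm{CoEnd}(\Sigma^\infty_C)$ and the boundedness of $S$), and that the resulting $\Sigma_i$-equivariance matches the partition-indexed formula of Proposition \ref{prop: cofree divided power right modules}.
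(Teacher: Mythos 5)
Your proof of the ``if'' direction follows the paper's approach closely: identify $\partial_i(L\Sigma^\infty R_c)^\vee$ with the bar construction $B((\Sigma^\infty_C c)^{\wedge\ast},\mathrm{CoEnd}(\Sigma^\infty_C),1)(i)$ via Corollary~\ref{cor: derivatives of compact representable}, use the boundedness of $S$ to pass $\partial_r$ through the finite product, exploit the norm cofiber sequence to replace homotopy fixed points by homotopy orbits under the Tate vanishing hypothesis, and then commute $\partial_r$ with the orbit/realization to match the explicit formula of Proposition~\ref{prop: cofree divided power right modules}. That is the same computation the paper performs.

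The converse has a genuine gap. You argue that because $\prod_{i\leq n}\partial_r(\text{fixed points})$ is abstractly equivalent to $\prod_{i\leq n}\partial_r(\text{orbits})$, the derivative of each Tate term must vanish. But an abstract equivalence of finite direct sums in a stable category does not force the termwise norm maps to be equivalences, nor does it force the cofibers of those norm maps to have vanishing derivatives. You would need to know that the equivalence in question is (or can be identified with) the canonical comparison map induced by the norm, and nothing in your argument supplies this identification; the lemma statement asserts only the existence of \emph{some} equivalence. The rest of your converse --- that the Tate functor is reduced and polynomial of bounded degree, so vanishing of all its derivatives forces the functor itself to vanish --- is sound, but it only kicks in once the vanishing of derivatives is established.

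The paper routes around this by a different mechanism: it observes that $\mathrm{Cofree}^{\mathrm{RMod}^{\mathrm{dp}}}_{O}(-)$, being assembled from homotopy orbits and finite products, commutes with filtered colimits of $N$-truncated symmetric sequences, whereas $\partial_\ast\circ\Theta^\Sigma$ does so exactly when the relevant Tate constructions vanish. If Tate vanishing failed for some term, the two functors could not agree, irrespective of whether any particular comparison map is an equivalence. This filtered-colimit argument is what your proof is missing; if you want to keep your cofiber-sequence strategy, you would instead need to argue at the level of the natural comparison map rather than an unspecified abstract equivalence.
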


\begin{proof}
   We prove this by explicit computation of  $\partial_{\ast}\Theta^{\Sigma} (S)$. The boundedness of $S$ allows us to commute the finite (co)product and $\partial_{\ast}$ to see it is equivalent to
   \[\prod_{i \leq n} \partial_{\ast}( \partial_i (L\Sigma^\infty R_c) ^\vee  \wedge^{h\Sigma_i} S(i))),\]
   where we differentiate with respect to $c\in C^\omega$.  This expression admits a norm map from
   \[\prod_{i \leq n} \partial_{\ast}( \partial_i (L\Sigma^\infty R_c) ^\vee  \wedge_{h\Sigma_i} S(i))),\]
   and this is an equivalence if each $\partial_i (L\Sigma^\infty R_c) ^\vee \wedge S(i)$ has Tate vanishing.
   
By Proposition \ref{cor: derivatives of compact representable} we know
\[\partial_i (L\Sigma^\infty R_c) ^\vee \simeq B((\Sigma^\infty_C c)^{\wedge \ast},\mathrm{CoEnd}(\Sigma^\infty_C),1)(i).\]
The terms of $(\Sigma^\infty_C c)^{\wedge \ast}$ is are homogeneous functors, and so by commuting $\partial_a$ with the homotopy orbits and geometric realization one can easily compute
\[\partial_a(   B((\Sigma^\infty_C c)^{\wedge \ast},\mathrm{CoEnd}(\Sigma^\infty_C),1)(i) )\wedge_{h\Sigma_i} S(i)) \simeq (L \Sigma^\infty_+ \Sigma_a \circ B(\mathrm{CoEnd}(\Sigma^\infty_C))) (i)  \wedge_{h\Sigma_i} S(i).
 \]
 Expanding out the composition product and letting $a$ vary produces the formula found in Proposition \ref{prop: cofree divided power right modules} for the cofree divided power right $K(\mathrm{CoEnd}(\Sigma^\infty_C))$-module on $S$.

Now suppose we do not have Tate vanishing. Observe that the cofree divided power right $O$-module functor
 \[\mathrm{Cofree}^{\mathrm{RMod}^\mathrm{dp}}_{O}(S)(r) \simeq \prod_n \left[ \prod_{n \twoheadrightarrow r} L\s(O(n_1) \wedge \dots \wedge O(n_r), S(n)) \right]_{h\Sigma_n},\]
commutes with filtered colimits in $N$-truncated symmetric sequences. However, $\Theta^\Sigma$ commutes with filtered colimits in $N$-truncated symmetric sequences exactly when \[\partial_i (L\Sigma^\infty R_c) ^\vee \wedge S(i),\] has Tate vanishing. Since $\partial_\ast$ commutes with filtered colimits and detects the $0$-polynomial, $\partial_\ast \circ \Theta^\Sigma$ will commute with the same filtered colimits as the product of all 
\[\partial_i (L\Sigma^\infty R_c) ^\vee \wedge_{h\Sigma_i} S(i).\]
Hence, if any of the above terms does not have Tate vanishing $\partial_\ast \circ \Theta^\Sigma$ will not commute with such filtered colimits, and so cannot be equivalent to \[\mathrm{Cofree}^{\mathrm{RMod}^\mathrm{dp}}_{K(\mathrm{CoEnd}(\Sigma^\infty_C))}(-).\]
\end{proof}

 We say $X \in L\s^{BG}$ has \textit{ideal Tate vanishing} if $X \wedge Z$ has Tate vanishing for any $Z \in  L\s^{BG}$.

\begin{definition}
   A category $C$ is differentially $1$-semiadditive if for all $c \in C^\omega$ and $n \in \mathbb{N}$, the $\Sigma_n$-spectrum \[\partial_n(L\Sigma^\infty R_c)^\vee\] has ideal Tate vanishing.
\end{definition}



    
    

\begin{ex}
    The category $\s$ is not differentially $1$-semiadditive. There is an equivalence \[\partial_\ast \Sigma^\infty R_{\bb{S}} \simeq \mathrm{com},\] and these Tate constructions are the ``extended powers'' and are generally nontrivial.
\end{ex}

\begin{ex}
    If $C$ is infinitesimally $1$-semiadditive, it is differentially $1$-semiadditive because all Tate constructions in $L\s^{B\Sigma_n}$ vanish. 
\end{ex}

\begin{ex}
    The category $\T$ is differentially $1$-semiadditive by Example \ref{ex: derivatives of representable spaces}.
\end{ex}

\begin{remark}\label{remark: dual norms}
If $C$ is differentially $1$-semiadditive, then for $X,Y \in C^\omega$ the Goodwillie tower for $\Sigma^\infty R_X(Y)$ is computed by either of the fake Goodwillie towers. This is because if $A,B \in L\s^{BG}$ are dualizable, then Tate vanishing for $A \wedge B$ occurs, if and only if Tate vanishing for $A^\vee \wedge B^\vee$ occurs. Hence, we can apply Proposition \ref{prop: fake tower koszul dual derivatives} or Proposition \ref{prop: fake no finiteness} since all the spectra involved in the layers are dualizable.
\end{remark}

\begin{prop}\label{prop: lifts of derivatives to dp}
    If $C$ is a differentially $1$-semiadditive category, then the space of lifts
\begin{center}
\[\begin{tikzcd}
	& {\mathrm{RMod}^\mathrm{dp}_{K(\mathrm{CoEnd}(\Sigma^\infty_C))}} \\
	{\mathrm{Fun}(C^\omega,\s)} & {{\mathrm{RMod}_{K(\mathrm{CoEnd}(\Sigma^\infty_C))}}}
	\arrow["{\mathrm{Forget}^\mathrm{RMod}_O}", from=1-2, to=2-2]
	\arrow[from=2-1, to=1-2]
	\arrow["{\partial_\ast}"', from=2-1, to=2-2]
\end{tikzcd}\]
\end{center}
is contractible.
\end{prop}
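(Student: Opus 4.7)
The plan is to establish the lift on representables $L\Sigma^\infty R_c$ for $c \in C^\omega$ and extend to all of $\mathrm{Fun}(C^\omega,L\s)$ by density. By the hypothesis of differential $1$-semiadditivity, each $\partial_n(L\Sigma^\infty R_c)^\vee$ has ideal Tate vanishing. Since $\partial_\ast L\Sigma^\infty R_c \simeq K(\Sigma^\infty_C c^{\wedge \ast})$ is levelwise dualizable by Corollary \ref{cor: derivatives of compact representable}, Lemma \ref{lem: ideal implies o ideal} applies to show that this right module has $K(\mathrm{CoEnd}(\Sigma^\infty_C))$-ideal Tate vanishing. By definition, the natural comonad map $\mathrm{RM}^{\mathrm{dp}}_{K(\mathrm{CoEnd}(\Sigma^\infty_C))} \to \mathrm{RM}_{K(\mathrm{CoEnd}(\Sigma^\infty_C))}$ evaluated at $\partial_\ast L\Sigma^\infty R_c$ is then an equivalence.

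This equivalence means the forgetful $\mathrm{RMod}^{\mathrm{dp}}_O \to \mathrm{RMod}_O$ has contractible fiber over $\partial_\ast L\Sigma^\infty R_c$; consequently the right $K(\mathrm{CoEnd}(\Sigma^\infty_C))$-module structure on $\partial_\ast L\Sigma^\infty R_c$ lifts uniquely to a divided power right module structure, namely the composition of the right module coaction with the inverse of the comonad-map equivalence. That this pointwise lift is natural in $c \in C^\omega$ follows from the canonical nature of the construction. This establishes a canonical lift on the full subcategory of representables, contractibly so.

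To extend to all of $\mathrm{Fun}(C^\omega,L\s)$, I would use left Kan extension: representables generate under colimits, $\partial_\ast$ preserves colimits by Proposition \ref{prop: right module construction of derivatives}, and the natural candidate lift obtained by taking colimits of divided power structures on representables is likewise colimit preserving, using the explicit formulas from Proposition \ref{prop: cofree divided power right modules}. The space of colimit-preserving extensions of a functor defined on the dense subcategory of representables is contractible, giving the desired contractibility on the whole category.

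The principal obstacle is checking that the unique pre-image of the right module coaction under the forgetful comonad map is automatically coassociative as a divided power coaction, rather than merely a lift of the underlying structure map. This amounts to verifying that the equivalence $\mathrm{RM}^{\mathrm{dp}}_O \simeq \mathrm{RM}_O$ at $\partial_\ast L\Sigma^\infty R_c$ is compatible with their comultiplications, which requires showing that $O$-ideal Tate vanishing is preserved under application of the cofree right module functor. I expect this to follow from an explicit analysis using Proposition \ref{prop: cofree divided power right modules} together with the stability of ideal Tate vanishing under levelwise smash products with dualizable spectra.
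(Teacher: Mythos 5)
Your approach is essentially the paper's: reduce to representables, then use ideal Tate vanishing via Lemma \ref{lem: ideal implies o ideal} and Proposition \ref{prop: cofree divided power right modules} to identify divided power structures with right module structures. There are two points worth noting.

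First, your reduction to representables is more roundabout than the paper's. You propose to construct the lift on representables and then left Kan extend, worrying about whether the result is colimit-preserving. The paper instead makes the clean observation that colimits in $\mathrm{RMod}^\mathrm{dp}_O$ are computed on underlying symmetric sequences, so \emph{any} lift of $\partial_\ast$ automatically preserves colimits; since $\mathrm{Fun}(C^\omega,L\s)$ is generated under colimits by representables, restriction induces an equivalence on the spaces of lifts. This sidesteps your Kan extension step entirely.

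Second, the concern you flag in your last paragraph — that the identification $\mathrm{RM}^{\mathrm{dp}}_O(\partial_\ast L\Sigma^\infty R_c) \simeq \mathrm{RM}_O(\partial_\ast L\Sigma^\infty R_c)$ is only the first stage, and one must check that the full tower of iterated comonads (and hence the space of coherent coalgebra structures) is identified — is a legitimate one, and you leave it as ``I expect this to follow.'' The paper treats this as immediate from the combination of Lemma \ref{lem: ideal implies o ideal} and Proposition \ref{prop: cofree divided power right modules}, i.e., it asserts that the space of divided power structures ``naturally agrees'' with the space of right module structures, without spelling out the argument that ideal Tate vanishing of $(\partial_\ast L\Sigma^\infty R_c)^\vee$ persists through the iterated application of the cofree divided power comonad. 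So your proposal and the paper have the same degree of rigor on this point; you have simply been more candid about what remains to be checked. If you want to close the gap yourself, the observation you need is the one you gesture at: ideal Tate vanishing of $X^\vee$ is preserved under levelwise smash with dualizable spectra and under the finite products and homotopy orbits that appear in Proposition \ref{prop: cofree divided power right modules}, so that the dual of $\mathrm{RM}^\mathrm{dp}_O(\partial_\ast L\Sigma^\infty R_c)$ again satisfies levelwise ideal Tate vanishing and Lemma \ref{lem: ideal implies o ideal} can be iterated.
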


\begin{proof}
     The category $\mathrm{RMod}_O^\mathrm{dp}$ is defined as a category of coalgebras in symmetric sequences, and so colimits in $\mathrm{RMod}_O^\mathrm{dp}$ are computed on the underlying symmetric sequences. This implies that a lift of $\partial_\ast$ to a divided power right module structure preserves colimits, and so it suffices to study the space of lifts when restricted to the full subcategory of representable functors.
     
    By Lemma \ref{lem: ideal implies o ideal} and Proposition \ref{prop: cofree divided power right modules}, the space of divided power right module structures on $\partial_\ast L\Sigma^\infty R_c$ naturally agrees with the space of right module structures by the ideal Tate vanishing hypothesis. 
\end{proof}

 \begin{thm}\label{thm: diff semiadditive classification}
        
    The following are equivalent:  
    \begin{enumerate}
        \item $C$ is differentially $1$-semiadditive.  
        \item There is a symmetric monoidal equivalence \[(\mathrm{Poly}^\omega(C,L\s),\wedge) \simeq (\mathrm{RMod}^\mathrm{< \infty,dp}_O,\circledast).\]  
        \item The derivatives have a contractible space of lifts \[\partial_\ast:\mathrm{Fun}^\omega(C,L\s) \rightarrow \mathrm{RMod}^\mathrm{dp}_{K(\mathrm{CoEnd}(\Sigma^\infty_C))},\]
        and these classify Goodwillie towers. 
        
    \end{enumerate}

    \end{thm}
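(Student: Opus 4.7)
The plan is to establish the cyclic implication $(1) \Rightarrow (3) \Rightarrow (2) \Rightarrow (1)$.

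For $(1) \Rightarrow (3)$, the contractibility of the space of lifts is exactly Proposition~\ref{prop: lifts of derivatives to dp}. To see that these lifts classify Goodwillie towers, I would invoke Arone--Ching's comonadic classification, which asserts that the adjunction $(\partial_{\leq n},\Theta^\Sigma_{\leq n})$ is comonadic on $n$-polynomial functors with values in bounded symmetric sequences. Under differential $1$-semiadditivity, the ideal Tate vanishing hypothesis guarantees that $\partial_i(L\Sigma^\infty R_c)^\vee \wedge S(i)$ has Tate vanishing for all bounded $S$ and compact $c$, and so Lemma~\ref{lem: computation of comonad for diff sa} identifies the Arone--Ching comonad $\partial_\ast \Theta^\Sigma$ on $\mathrm{SymSeq}^{\leq n}(L\s)$ with the cofree divided power right $K(\mathrm{CoEnd}(\Sigma^\infty_C))$-module comonad. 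The coalgebra categories therefore agree, giving the desired equivalence on each truncation level and hence on all of $\mathrm{Poly}^\omega(C,L\s)$.

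For $(3) \Rightarrow (2)$, the equivalence of underlying categories is immediate from the definition of classification. To upgrade it to a symmetric monoidal equivalence, I would use that the forgetful functor $\mathrm{RMod}^\mathrm{dp}_O \to \mathrm{RMod}_O$ is symmetric monoidal since both Day convolutions are computed on underlying symmetric sequences (as noted in the remark after Definition~\ref{definition: dp rmod}). The product rule for the underlying right module structure is Theorem~\ref{thm: product rule}, and uniqueness of divided power lifts against the contractibility in $(3)$ promotes the structure maps to equivalences of divided power right modules.

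For $(2) \Rightarrow (1)$, the first step is to identify the operad: the underlying right module structure of the symmetric monoidal equivalence in $(2)$, combined with Proposition~\ref{prop: right module classification quasi}, forces an operad equivalence $O \simeq K(\mathrm{CoEnd}(\Sigma^\infty_C))$. Under this classification, the Goodwillie tower of $F$ at $c\in C^\omega$ is identified with the tower obtained by mapping $\partial_\ast L\Sigma^\infty R_c$ into the truncation tower of $\partial_\ast F$ in $\mathrm{RMod}^\mathrm{dp}_O$. The layers of the Goodwillie side are the standard homotopy orbits $\partial_n F \wedge_{h\Sigma_n} (\Sigma^\infty_C c)^{\wedge n}$, while the layers on the divided power side can be computed from the cofree formula of Proposition~\ref{prop: cofree divided power right modules}, which is built out of homotopy orbits paired against $\partial_n(L\Sigma^\infty R_c)^\vee$. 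Matching the two layer formulas for all $c$ and letting $\partial_n F$ range over all of $L\s^{B\Sigma_n}$ (which it can, by choosing $F$ to be an appropriate homogeneous functor) forces the Tate vanishing \[(\partial_n F \wedge \partial_n (L\Sigma^\infty R_c)^\vee)^{t\Sigma_n} \simeq \ast,\] which is precisely differential $1$-semiadditivity.

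The main obstacle I anticipate is in $(2) \Rightarrow (1)$, namely the careful identification of the layers of the truncation tower of $(\partial_\ast F)^{\leq n}$ in $\mathrm{RMod}^\mathrm{dp}_O$ with the Goodwillie layers. In general the mapping spectrum in divided power right modules does not resolve as conveniently as in plain right modules, and the finite totalization/convergence arguments analogous to those in Proposition~\ref{prop: filtration when domain tate vanish} must be carried out without assuming the very Tate vanishing we are trying to extract; circumventing this circularity is the key technical point.
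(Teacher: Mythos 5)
Your plan and the paper's proof diverge in a way that opens a real gap, mainly in $(1)\Rightarrow(3)$. You argue that under differential $1$-semiadditivity, Lemma~\ref{lem: computation of comonad for diff sa} ``identifies the Arone--Ching comonad $\partial_\ast\Theta^\Sigma$ with the cofree divided power comonad,'' and then conclude that the coalgebra categories agree. But that lemma only identifies the \emph{underlying endofunctors} of the two comonads; it says nothing about the comultiplication or counit. The paper flags exactly this point in the remark following the theorem: the map $\mathrm{RM}^{\mathrm{dp}}_{K(\mathrm{CoEnd}(\Sigma^\infty_C))}\to\partial_\ast\Theta^\Sigma$ is constructed at the level of objects, and the assertion that it is a map of \emph{comonads} is described there as ``nontrivial to prove.'' An equivalence of underlying functors does not give an equivalence of coalgebra categories, so the inference you draw is not justified. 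The paper sidesteps this by a direct unit/counit analysis of the adjunction $(\partial_\ast,\Theta^{\mathrm{dp}})$: Proposition~\ref{prop: filtration when domain tate vanish} (domain Tate vanishing) shows the unit commutes with colimits, so the unit need only be checked on polynomial approximations of representables, where Remark~\ref{remark: dual norms} applies; the counit reduces to one-degree free modules where it is obvious. You should replace the comonad-matching step with this kind of unit/counit check.

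For $(2)\Rightarrow(1)$ you also cite the wrong ingredient: Proposition~\ref{prop: right module classification quasi} presupposes a lift of the Goodwillie derivatives that classifies Goodwillie towers, but in $(2)$ you are given only an abstract symmetric monoidal equivalence $(\mathrm{Poly}^\omega(C,L\s),\wedge)\simeq(\mathrm{RMod}^{<\infty,\mathrm{dp}}_O,\circledast)$, which need not a priori lift $\partial_\ast$. The correct tool is Proposition~\ref{prop: abstract dp rmod equiv}, which handles abstract equivalences and produces an identification of $O$ with $K(\mathrm{CoEnd}(\Sigma^\infty_C))$ only up to a twist by $\mathrm{CoEnd}(X^\vee)$ for some invertible $X$; you also drop this Picard twist. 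Beyond that, you yourself identify the circularity in computing the truncation-filtration layers on the divided-power side, since Proposition~\ref{prop: filtration when domain tate vanish} requires the very Tate vanishing you are trying to extract; the paper resolves this by first reducing via Proposition~\ref{prop: abstract dp rmod equiv} to the situation where the equivalence lifts the derivatives and then reusing the $(3)\Rightarrow(1)$ argument (identifying $\partial_\ast\Theta^{\mathrm{dp}}$ with $\mathrm{RM}^{\mathrm{dp}}$ and invoking Lemma~\ref{lem: computation of comonad for diff sa}), rather than your direct layer-matching. Finally, for $(3)\Rightarrow(2)$ the phrase ``uniqueness of lifts promotes structure maps to equivalences'' is too vague to carry the argument; the paper instead observes via Proposition~\ref{prop: divided power mapping spectrum codomain tate vanish} that divided-power and plain right-module mapping spectra agree on the relevant subcategory of representables, then reruns the Kan-extension argument of Theorem~\ref{thm: product rule}.
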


\begin{proof}\noindent
\begin{itemize}
\item\noindent $(1)\implies(3)$: In Proposition \ref{prop: lifts of derivatives to dp}, we demonstrated the space of such lifts was contractible, and so it remains to show that the lift induces an equivalence between polynomial functors and bounded divided power right modules.
By Proposition \ref{prop: filtration when domain tate vanish}, the unit of
\[
\adjunction{\partial_{\ast}}{\mathrm{Poly}(C^{\omega},L\s)}{\mathrm{RMod}^\mathrm{<\infty,dp}_{O}}{\Theta^\mathrm{dp}}
\]
will preserve colimits, since it is a finite sequence of extensions of functors which preserve colimits. Therefore, it suffices to check the unit is an equivalence on the polynomial approximations of representables. But this holds by Remark \ref{remark: dual norms}, since the spectrum of right module maps computing the fake Goodwillie tower agrees with the divided power right module maps by Tate vanishing. The counit has a similar analysis and can be reduced to divided power right modules which are $L \Sigma^\infty_+ \Sigma_n$ concentrated in degree $n$ for which it is obviously an equivalence.

    \item $(3)\implies(1)$: If a divided power right module structure on $\partial_\ast$ classifies Goodwillie towers, then because it lifts the underlying symmetric sequence of derivatives (by definition) the comonads on $\mathrm{SymSeq}^{< \infty}$  given by $\mathrm{RM}_{K(\mathrm{CoEnd}(\Sigma^\infty_C))}^{\mathrm{dp}}$ and $\partial_\ast \circ \Theta^\mathrm{dp}$ must agree. By Lemma \ref{lem: computation of comonad for diff sa} this implies $C$ is differentially $1$-semiadditive.
  \item $(2)\implies(1)$: We defer the bulk of this to Proposition \ref{prop: abstract dp rmod equiv} where we show that such an assignment must preserve both the operad and right module structure on derivatives up to smashing with an invertible symmetric sequence. The same argument for $(3) \implies (1)$ then applies with minor changes.
    \item $(3)\implies(2)$: Since $(3) \implies (1)$, Proposition \ref{prop: divided power mapping spectrum codomain tate vanish} shows the symmetric monoidal categories generated by the $\partial_\ast L \Sigma^\infty R_c$ for $c \in C^\omega$ agree regardless of whether one considers divided powers or not. Hence, a symmetric monoidal lift of $\partial_\ast$ to divided power right modules exists when restricted to smash products of representables. The same argument of Theorem \ref{thm: product rule} applies to show the entire lift is canonically symmetric monoidal.
\end{itemize}

\end{proof}

Regardless of whether or not $C$ is differentially $1$-semiadditive, the proof of Lemma \ref{lem: computation of comonad for diff sa} constructs a map \[\mathrm{RM}_{K(\mathrm{CoEnd}(\Sigma^\infty_C))}^\mathrm{dp} \rightarrow \partial_\ast \Theta^\Sigma.\] Although nontrivial to prove, we expect this map is a map of comonads. If this is the case, our progress up until the point can be summarized as lifting coalgebraic data on $\partial_\ast F$ up the tower of comonads

\begin{center}
\[\begin{tikzcd}
	{\mathrm{RM}^\mathrm{dp}_{K(\mathrm{CoEnd}(\Sigma^\infty_C))}} \\
	{\partial_\ast \Theta^\Sigma} \\
	{\mathrm{RM}_{K(\mathrm{CoEnd}(\Sigma^\infty_C))}}
	\arrow["{(a)}", from=1-1, to=2-1]
	\arrow["{(c)}", curve={height=-24pt}, from=1-1, to=3-1]
	\arrow["{(b)}", from=2-1, to=3-1]
\end{tikzcd}\]
\end{center}
By Arone-Ching's classification, such coalgebraic data classifies the Goodwillie tower precisely when the relevant map to the middle term is an equivalence.  Thus $(a)$ is an equivalence, if and only $C$ is differentially $1$-semiadditive, and $(b)$ is an equivalence, if and only if $C$ is infinitesimally $1$-semiadditive. These conditions are quite mysterious, particularly differential $1$-semiadditivity. 

 A consequence of Theorem \ref{thm: diff semiadditive classification} is that differential $1$-semiadditivity is detected by the symmetric monoidal category \[(\mathrm{Poly}(C^\omega,L\s),\wedge).\] This is surprising because the definition of differential $1$-semiadditivity is quantified over the objects of $C$ which cannot be determined from the abstract symmetric monoidal category $(\mathrm{Poly}(C^\omega,L\s),\wedge)$.  Infinitesimal $1$-semiadditivity is easier to understand. It is a purely local condition depending only on arbitrarily small neighborhoods of $\ast \in C$. 
 Nevertheless, there is a connection between differential and infinitesimal $1$-semiadditivity which we understand by way of the composite $(c)$ which is simply a product of norm maps. This connection depends not on the category $C$, but only on the symmetric sequence  \[\partial_\ast \mathrm{Id}_C \simeq K(\mathrm{CoEnd}(\Sigma^\infty_C)).\]

\begin{definition}
    An operad is ideal if the map of comonads
    \[\mathrm{RM}^\mathrm{dp}_O \rightarrow \mathrm{RM}_O\]
    is an equivalence.
\end{definition}

\begin{ex}
    The little $d$-disks operad $\Sigma^\infty_+ E_d$ is ideal for $d<\infty$ because its $\Sigma$-finiteness implies the norms which show up in Proposition \ref{prop: cofree divided power right modules} are equivalences.
\end{ex}

\begin{prop}\label{prop: local to global}
The following are equivalent:
\begin{itemize}
    \item  $C$ is infinitesimally $1$-semiadditive
    \item $C$ is differentially $1$-semiadditive and $\partial_\ast(\mathrm{Id}_C)$ is ideal.
\end{itemize}

\end{prop}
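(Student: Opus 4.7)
The plan is to reduce the equivalence to a formal manipulation using the two main classification results (Theorem \ref{thm: inf semiadd classification} and Theorem \ref{thm: diff semiadditive classification}), avoiding any appeal to the (conjectural) comonad structure on $\partial_\ast \Theta^\Sigma$ or the tower-of-comonads diagram discussed just above the statement.

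For the forward direction, I would argue as follows. If $C$ is infinitesimally $1$-semiadditive then $L\s$ is $1$-semiadditive, so by Lemma \ref{lem: semiadditive sigma norm} every $\Sigma_n$-norm in $L\s^{B\Sigma_n}$ is an equivalence; in particular every Borel $\Sigma_n$-spectrum has ideal Tate vanishing. This trivially gives differential $1$-semiadditivity, since $\partial_n(L\Sigma^\infty R_c)^\vee$ automatically has ideal Tate vanishing. The same vanishing, applied to the explicit cofree formulas of Proposition \ref{prop: cofree right module} and Proposition \ref{prop: cofree divided power right modules}, shows that the comonad map $\mathrm{RM}^\mathrm{dp}_O \to \mathrm{RM}_O$ is a levelwise product of $\Sigma_n$-norms and hence an equivalence for every operad $O$. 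Taking $O = \partial_\ast \mathrm{Id}_C$ gives ideal-ness.

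For the backward direction, assume $C$ is differentially $1$-semiadditive and that $\partial_\ast \mathrm{Id}_C \simeq K(\mathrm{CoEnd}(\Sigma^\infty_C))$ is ideal. First I would invoke Theorem \ref{thm: diff semiadditive classification}(2) to produce a symmetric monoidal equivalence
\[(\mathrm{Poly}^\omega(C, L\s), \wedge) \simeq (\mathrm{RMod}^{<\infty, \mathrm{dp}}_{\partial_\ast \mathrm{Id}_C}, \circledast).\]
Next, ideal-ness of $\partial_\ast \mathrm{Id}_C$ makes the comonad map $\mathrm{RM}^\mathrm{dp}_{\partial_\ast \mathrm{Id}_C} \to \mathrm{RM}_{\partial_\ast \mathrm{Id}_C}$ an equivalence, and hence the forgetful functor $\mathrm{RMod}^\mathrm{dp}_{\partial_\ast \mathrm{Id}_C} \to \mathrm{RMod}_{\partial_\ast \mathrm{Id}_C}$ is an equivalence; it evidently restricts to the bounded subcategories. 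Composing gives a symmetric monoidal equivalence
\[(\mathrm{Poly}^\omega(C, L\s), \wedge) \simeq (\mathrm{RMod}^{<\infty}_{\partial_\ast \mathrm{Id}_C}, \circledast),\]
which is condition (3) of Theorem \ref{thm: inf semiadd classification}, yielding infinitesimal $1$-semiadditivity.

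The only subtle point is confirming that the divided-power-to-underlying equivalence is symmetric monoidal. This is not really an obstacle: both Day convolutions are defined by the same formula on underlying symmetric sequences, and the forgetful equivalence preserves the forgetful functor to symmetric sequences, so the symmetric monoidal structures are automatically identified. Modulo this observation, the proof is essentially a two-line combination of the classification theorems.
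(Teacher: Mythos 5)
Your proof is correct and takes essentially the same approach as the paper, which simply asserts that the forward direction is immediate and that the backward direction follows from Theorem \ref{thm: inf semiadd classification} combined with Theorem \ref{thm: diff semiadditive classification}; you have spelled out exactly that combination, routing through conditions (2) of the former and (3) of the latter via the ideal-ness hypothesis. Your observation that the forgetful equivalence $\mathrm{RMod}^{\mathrm{dp}}_{\partial_\ast \mathrm{Id}_C} \to \mathrm{RMod}_{\partial_\ast \mathrm{Id}_C}$ is automatically symmetric monoidal is a detail the paper leaves implicit, and your justification (both Day convolutions and the comparison map are computed on underlying symmetric sequences, where the identification is manifest) is the right way to dispatch it.
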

\begin{proof}
    The forwards implication is immediate. The backwards implications follows from Theorem \ref{thm: inf semiadd classification} combined with Theorem \ref{thm: diff semiadditive classification}.
\end{proof}

\begin{cor}\label{cor: diff sa algebra cat}
    If $O$ is a reduced levelwise dualizable operad in $L\s$ such that $\mathrm{Alg}_O$ is differentially $1$-semiadditive, then $L\s$ must be $1$-semiadditive.
\end{cor}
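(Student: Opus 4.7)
The plan is to invoke Lemma~\ref{lem: semiadditive sigma norm} by proving $Y^{t\Sigma_n} \simeq \ast$ for every $n \geq 1$ and every $Y \in L\s^{B\Sigma_n}$. The leverage comes from applying the differential $1$-semiadditivity hypothesis to the compact free algebra $A := \mathrm{Free}_O(c)$ for $c \in L\s^\omega$. Via the free-forgetful adjunction, $L\Sigma^\infty R_A \simeq L\Sigma^\infty R_c \circ \mathrm{Forget}_O$, and combining Corollary~\ref{cor:representables for spectra}, the identification $\partial_\ast \mathrm{Forget}_O \simeq O$ (from $\mathrm{Forget}_O \simeq B(O,O,-)$ and Proposition~\ref{prop: derivative of bar}), and the chain rule yields
\[
\partial_\ast L\Sigma^\infty R_A \simeq (c^\vee)^{\wedge \ast} \circ O.
\]

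In arity $n$, this composition product decomposes (using reducedness of $O$) as a finite wedge indexed by compositions $n = n_1 + \cdots + n_k$ with $n_i \geq 1$. Using $O(1) \simeq L\bb{S}$, the summand with $k = n$ and each $n_i = 1$ takes the form $(c^\vee)^{\wedge n} \wedge_{h\Sigma_n} \mathrm{Ind}^{\Sigma_n}_e L\bb{S}$. A $\Sigma_n$-equivariant bookkeeping calculation identifies this with $(c^\vee)^{\wedge n}$ carrying the standard permutation action: taking $\Sigma_n^{\mathrm{int}}$-orbits of the free bimodule $(\Sigma_n)_+$ absorbs the internal $\Sigma_n$-action and converts the external left-multiplication $\Sigma_n$-action into the permutation action on $(c^\vee)^{\wedge n}$. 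Since differential $1$-semiadditivity forces ideal Tate vanishing of $(\partial_n L\Sigma^\infty R_A)^\vee$, a property inherited by every direct summand, dualizing gives that for every compact $c \in L\s^\omega$ and every $Y \in L\s^{B\Sigma_n}$,
\[
(c^{\wedge n} \wedge Y)^{t\Sigma_n} \simeq \ast,
\]
with $c^{\wedge n}$ equipped with the permutation $\Sigma_n$-action.

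To upgrade this to the unit, write $L\bb{S} \simeq \colim_i c_i$ as a filtered colimit of compacts, possible since $L\s$ is compactly generated. The iterated symmetry of the symmetric monoidal unit identifies $L\bb{S}^{\wedge n}$ with $L\bb{S}$ carrying the trivial $\Sigma_n$-action, and cofinality of the diagonal $I \to I^n$ for filtered $I$ yields an equivalence of Borel $\Sigma_n$-spectra $L\bb{S} \simeq \colim_i c_i^{\wedge n}$. Since the Tate construction commutes with filtered colimits, for any $Y \in L\s^{B\Sigma_n}$ we compute
\[
Y^{t\Sigma_n} \simeq (L\bb{S} \wedge Y)^{t\Sigma_n} \simeq \colim_i (c_i^{\wedge n} \wedge Y)^{t\Sigma_n} \simeq \ast,
\]
and Lemma~\ref{lem: semiadditive sigma norm} completes the proof. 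The main obstacle is the $\Sigma_n$-equivariant identification of the top-weight summand of $\partial_n L\Sigma^\infty R_A$; once that is in hand, the transfer to the unit via a filtered colimit of compacts is essentially formal.
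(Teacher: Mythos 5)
Your proof is correct, but it takes a genuinely different route from the paper's. The paper's argument is purely structural: it invokes Theorem~\ref{thm: diff semiadditive classification} to conclude that the derivatives admit a contractible space of lifts to divided power right $O$-modules classifying Goodwillie towers, combines this with Proposition~\ref{prop: derivative of bar} (which realizes every bounded right $O$-module as $\partial_\ast B(R, O, -)$) to deduce that $O$ is ideal, and then applies the local-to-global Proposition~\ref{prop: local to global} using $\partial_\ast \mathrm{Id}_{\mathrm{Alg}_O} \simeq O$. Your proof instead unwinds the definition of differential $1$-semiadditivity directly on the free-algebra representables $L\Sigma^\infty R_{\mathrm{Free}_O(c)}$, uses the chain rule together with $\partial_\ast \mathrm{Forget}_O \simeq O$ and Corollary~\ref{cor:representables for spectra} to identify $\partial_\ast L\Sigma^\infty R_{\mathrm{Free}_O(c)} \simeq (c^\vee)^{\wedge\ast} \circ O$, splits off the top-weight summand $(c^\vee)^{\wedge n}$ carrying the permutation action, and bootstraps from compact $c$ to the unit $L\bb{S}$ via compact generation, cofinality of the diagonal, and commutation of the Tate construction with filtered colimits (a standard fact for finite groups). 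Both are valid; yours is more self-contained, relying only on the chain rule, Proposition~\ref{prop: derivative of bar}, and elementary equivariant bookkeeping rather than the full classification machinery of Section~\ref{section: categories with divided powers}. One small simplification: the retraction you produce by inspecting the $k = n$ summand of the composition product is exactly the composite
\[
(c^\vee)^{\wedge\ast} \simeq (c^\vee)^{\wedge\ast} \circ 1 \longrightarrow (c^\vee)^{\wedge\ast} \circ O \longrightarrow (c^\vee)^{\wedge\ast} \circ 1 \simeq (c^\vee)^{\wedge\ast}
\]
induced by the unit and augmentation of the reduced operad $O$; observing this directly spares you the explicit equivariant identification and makes clear that $(c^\vee)^{\wedge n}$ with permutation action is a retract in each arity.
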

\begin{proof}
    Suppose $\mathrm{Alg}_O$ is differentially $1$-semiadditive, then by Theorem \ref{thm: diff semiadditive classification} and Proposition \ref{prop: derivative of bar} there is a contractible space of natural divided power structures on the category of right $O$-modules. Thus $O$ is ideal, and so Proposition \ref{prop: local to global} applies.
\end{proof}

\begin{question}
    What nonideal operads occur as $K(\mathrm{CoEnd}(\Sigma^\infty_C))$ for differentially $1$-semiadditive $C$?
\end{question}

 \subsection{Morita theory and polynomial equivalences} \label{section: polynomial geometry}

In this section we study some elementary properties of differentiable categories which are preserved under symmetric monoidal equivalences of their polynomial categories. Throughout we assume that $C,D$ are differentiable and have identifications $\s(C) \simeq L\s$ and $\s(D) \simeq L'\s$ with some Bousfield localizations of $\s$.

\begin{definition}
    We say $C$ is polynomially equivalent to $D$
    \[C \simeq_P D\]   
    if there exists a symmetric monoidal equivalence \[(\mathrm{Poly}(C^\omega,L\s),\wedge)) \simeq (\mathrm{Poly}(D^\omega,L'\s),\wedge).\]
\end{definition}
 
\begin{lem}\label{lem: preserves homogeneous}
    If $f:C\simeq_P D$ is a polynomial equivalence, then there is a symmetric monoidal equivalence
    \[(L\s,\wedge) \simeq (L'\s,\wedge),\]
    and $f$ preserves the subcategories of $n$-homogeneous and $n$-polynomial functors.
\end{lem}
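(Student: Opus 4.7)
Proof proposal. The plan has two parts: first identify $(L\s,\wedge)$ intrinsically inside $(\mathrm{Poly}^\omega(C,L\s),\wedge)$, then propagate the identification up the polynomial filtration.

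For part one, I would show that the full subcategory of dualizable objects of $(\mathrm{Poly}^\omega(C,L\s),\wedge)$ coincides with the image of $L\s^{\mathrm{dual}}$ under the constant-functor inclusion $L\s\hookrightarrow\mathrm{Poly}_0^\omega$. One direction is immediate since $c_{X^\vee}$ is dual to $c_X$ whenever $X$ is dualizable. For the converse, the pointedness of $C$ supplies a functorial splitting $F\simeq F_{\mathrm{red}}\oplus c_{F(\ast)}$ for every finitary $F$: the composite $\ast\to c\to\ast$ is necessarily the identity of the zero object $\ast\in C$, so $F(\ast)\to F(c)\to F(\ast)$ is a retraction, splitting off a constant summand with $F_{\mathrm{red}}(c):=\mathrm{fib}(F(c)\to F(\ast))$. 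If $F\wedge G\simeq c_{L\bb{S}}$ exhibits $F$ as dualizable with dual $G$, expanding both sides via these splittings produces four summands. Matching constant parts yields $F(\ast)\wedge G(\ast)\simeq L\bb{S}$, making $G(\ast)$ invertible in $L\s$; the reduced summands must vanish, and in particular $F_{\mathrm{red}}\wedge c_{G(\ast)}\simeq 0$ forces $F_{\mathrm{red}}\simeq 0$ by invertibility of $G(\ast)$. Hence dualizable implies constant.

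Since $f$ preserves dualizable objects, it restricts to a symmetric monoidal equivalence $L\s^{\mathrm{dual}}\simeq L'\s^{\mathrm{dual}}$. In the compactly generated Bousfield localizations under consideration, compact objects are retracts of $L$-localizations of finite spectra and hence dualizable, and generate $L\s$ under colimits. Ind-completion thus yields the desired symmetric monoidal equivalence $(L\s,\wedge)\simeq(L'\s,\wedge)$.

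For preservation of the polynomial and homogeneous subcategories, I would induct on $n$, using the identification $L\s\simeq L'\s$ above to regard $f$ as $L\s$-linear. The base case $n=0$ is handled already. The inductive step uses that the filtration $\mathrm{Poly}_0^\omega\subset\mathrm{Poly}_1^\omega\subset\cdots$ is multiplicative with respect to the smash product ($\mathrm{Poly}_n^\omega\wedge\mathrm{Poly}_m^\omega\subseteq\mathrm{Poly}_{n+m}^\omega$) and that $\mathrm{Poly}_n^\omega$ is the reflective subcategory generated over $\mathrm{Poly}_{n-1}^\omega$ by the $n$-homogeneous functors, whose subquotient $\mathrm{Poly}_n^\omega/\mathrm{Poly}_{n-1}^\omega\simeq L\s^{B\Sigma_n}$ becomes intrinsically identifiable once $\mathrm{Poly}_{n-1}^\omega$ is known. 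I expect the main obstacle to be formalizing this intrinsic monoidal characterization of the filtration steps beyond degree zero; the additive compatibility of polynomial degree with the smash product is the essential tool, but care is needed to verify that the minimal reflective subcategory closed under the relevant monoidal operations and containing $\mathrm{Poly}_{n-1}^\omega$ together with the $n$-homogeneous layer is exactly $\mathrm{Poly}_n^\omega$.
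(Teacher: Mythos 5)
Your Part~1 takes a genuinely different route from the paper. The paper identifies the constant functors as the colimit-closure of the unit $c_{L\bb{S}}$ inside $\mathrm{Poly}^\omega(C,L\s)$, whereas you intrinsically characterize them via dualizability, showing that the dualizable objects of $(\mathrm{Poly}^\omega(C,L\s),\wedge)$ are exactly the constants at dualizable spectra. The constant-plus-reduced splitting coming from the zero object $\ast\in C$ is the right tool here, and the characterization is correct, but note that ``$F\wedge G\simeq c_{L\bb{S}}$'' is invertibility, not mere dualizability, so your written argument only treats the invertible case. The general dualizable case does go through: the coevaluation $c_{L\bb{S}}\to F\wedge F^\vee$ must factor through the constant summand $c_{F(\ast)\wedge F^\vee(\ast)}$ (any natural transformation from a constant functor into a reduced one is zero, since the component at $\ast$ determines everything), and feeding that into the triangle identity forces the inclusion $F_{\mathrm{red}}\hookrightarrow F$ to factor through the summand $c_{F(\ast)}$, hence $F_{\mathrm{red}}=0$.

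The genuine gap is the ``Ind-completion'' step. Passing from $L\s^{\mathrm{dual}}\simeq L'\s^{\mathrm{dual}}$ to $L\s\simeq L'\s$ by ind-completion fails in exactly the setting the paper allows: if the unit $L\bb{S}$ is dualizable but not compact (e.g.\ $T(h)$-locally), then $\mathrm{Ind}(L\s^{\mathrm{dual}})\neq L\s$, and a symmetric monoidal equivalence of dualizable subcategories has no reason to restrict to an equivalence of compacts, which is what ind-completion would need. You already have a cleaner fix at hand: $c_{L\bb{S}}$ is among your dualizable objects, $L\s$ is generated under colimits by $L\bb{S}$, and colimits of constant functors exist in $\mathrm{Poly}^\omega$ (computed pointwise, and they remain constant). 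So the constant functors are the closure of the dualizable constants under colimits taken \emph{inside} $\mathrm{Poly}^\omega$, and an equivalence preserves and reflects those. This is essentially the route the paper takes (colimit-closure of the unit), and once you know the dualizables are constant it subsumes and replaces the ind-completion step.

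For Part~2 your approach and the paper's coincide in spirit: both exploit the additivity $\mathrm{Poly}^\omega_n\wedge\mathrm{Poly}^\omega_m\subseteq\mathrm{Poly}^\omega_{n+m}$ to characterize the polynomial filtration monoidally and then induct. You organize the induction through the subquotients $\mathrm{Poly}^\omega_n/\mathrm{Poly}^\omega_{n-1}\simeq L\s^{B\Sigma_n}$; the paper instead phrases $n$-excisivity in terms of (not) appearing as iterated extensions of functors $(Z\wedge G^{\wedge k})_{h\Sigma_k}$ with $k>n$, and observes that $Z\wedge(-)$, $(-)_{h\Sigma_k}$, and $(-)^{\wedge k}$ are all intrinsic to the symmetric monoidal structure. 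The step you flag as the main obstacle (pinning down the intrinsic characterization beyond degree zero) is precisely where the paper's proof is also terse, so your caution is warranted; the paper's formulation via building blocks has the advantage of not requiring a reflective-subquotient description, but neither account is fully spelled out.
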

\begin{proof}
    It is clear that $f$ preserves the property of being $0$-homogeneous, i.e. being constant, because the unit of $\wedge$ is constant. The constant functors recover the original symmetric monoidal categories, and so the first claim follows.

    We demonstrate the second claim inductively. We first show that $1$-excisive functors are preserved. The claim that a polynomial $F$ is $1$-excisive can be expressed as saying that $H$ cannot be written as a series of extensions of functors of the form
    \[(Z \wedge G^{\wedge n})_{h\Sigma_n}\]
    where $n>1$, $Z \in L\s^{B\Sigma_n}$, and $G$ is an arbitrary polynomial. This proposition is invariant under symmetric monoidal equivalences because $Z \wedge (-)$ is the  tensoring of a stable category by a spectrum, $(-)_{h\Sigma_n}$ is a colimit, and $(-)^{\wedge n}$ is the iterated symmetric monoidal product. Hence, $1$-excisivity is preserved from which $1$-homogeneity can be deduced. The argument obviously extends to higher order polynomial functors.
\end{proof}

Recall that the Picard group $\mathrm{Pic}(\mathscr{C},\otimes)$ of a symmetric monoidal category $(\mathscr{C},\otimes)$ is the group of isomorphism classes of objects which are invertible with respect to $\otimes$. The levelwise smash product of operads
\[(O \wedge P )(n) := O(n) \wedge P(n)\]
makes $(\mathrm{Operad}(L\s),\wedge)$ into a symmetric monoidal category \cite[
Proposition 3.9]{pdoperads}. There is a homomorphism\footnote{It is an open question if this homomorphism is an isomorphism, even in the case of $\s$.}
\[\mathrm{Pic}(L\s,\wedge) \rightarrow \mathrm{Pic}(\mathrm{Operad}(L\s),\wedge)\]
\[X \mapsto \mathrm{CoEnd}(X).\]

The image of this homomorphism controls much of the \textit{Morita theory} we are interested in. By Morita theory, we mean the study of equivalence relations of algebraic objects defined via equivalences of categories. Most classically, an associative algebra $A$ is Morita equivalent to an associative algebra $B$, if and only if there is an equivalence of module categories \[\mathrm{LMod}_A \simeq \mathrm{LMod}_B.\]
The category of associative algebras embeds into the category of operads by setting $O(1)=A$ and $O(n)=\ast$ otherwise. Under this embedding the category $\mathrm{LMod}_A$ is identified with $\mathrm{Alg}_O$. Thus, a \textit{Morita equivalence of operads} from $O$ to $P$
\[\mathrm{Alg}_O \simeq \mathrm{Alg}_P\]
is a generalization of a Morita equivalence of associative algebras. We will study Morita equivalences of reduced operads using Goodwillie calculus.

\begin{prop}\label{prop: polynomial equivalence and coend}
    Suppose $f:C \xrightarrow{\simeq_P} D$ is a polynomial equivalence of two compactly generated, differentially dualizable categories. In that case, there is $X \in \mathrm{Pic}(L\s,\wedge)$ such that there is an equivalence of operads
    \[ K(\mathrm{CoEnd}(\Sigma^\infty_D))\simeq K(\mathrm{CoEnd}(\Sigma^\infty_C) )\wedge \mathrm{CoEnd}(X) .\]
 If $F: C^\omega \rightarrow L\s$ is a polynomial, there is an equivalence of right $K(\mathrm{CoEnd}(\Sigma^\infty_D))$-modules \[\partial_\ast(f(F))\simeq \partial_\ast(F) \wedge X \wedge \mathrm{CoEnd}(X) .\]
 In particular, the derivatives are preserved up to smashing with an invertible spectrum:
 \[\partial_\ast (f(F)) \simeq \partial_\ast (F) \wedge X^{\wedge \ast}.\]
\end{prop}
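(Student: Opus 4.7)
The plan is to proceed in two steps: first identify the invertible spectrum $X$ using the $1$-homogeneous layer of $f$, then propagate through the symmetric monoidality of $f$ to the coendomorphism operad and onto the Goodwillie derivatives.

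First I would apply Lemma \ref{lem: preserves homogeneous}, which identifies $L\s \simeq L'\s$ symmetric monoidally and shows that $f$ respects the $n$-homogeneous filtration. The finitary $1$-homogeneous functors $C^\omega \to L\s$ form a category equivalent to $L\s$ via $F \mapsto \partial_1 F$, and pointwise smash with constant ($0$-homogeneous) functors makes them into a free $L\s$-module of rank one, generated by $\Sigma^\infty_C$. Since $f$ preserves this action (being both symmetric monoidal and homogeneity-preserving), $f(\Sigma^\infty_C)$ must be an $L\s$-module generator of the corresponding $L\s$-module for $D^\omega$, so is of the form $X \wedge \Sigma^\infty_D$ for some $X \in L\s$. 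Invertibility of $X$ follows by running the same argument for $f^{-1}$: if $f^{-1}(\Sigma^\infty_D) \simeq Y \wedge \Sigma^\infty_C$, then $Y \wedge X \simeq L\bb{S}$.

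Next, symmetric monoidality of $f$ induces an equivalence of coendomorphism operads $\mathrm{CoEnd}(\Sigma^\infty_C) \simeq \mathrm{CoEnd}(X \wedge \Sigma^\infty_D)$. Invertibility of $X$ permits a direct computation
\[\mathrm{CoEnd}(X \wedge \Sigma^\infty_D)(n) \simeq X^{\wedge(n-1)} \wedge \mathrm{CoEnd}(\Sigma^\infty_D)(n),\]
with the operadic structure amounting to $\mathrm{CoEnd}(\Sigma^\infty_D) \wedge \mathrm{CoEnd}(X)$. Applying Koszul duality, I would use its compatibility with the Picard action of invertibles on reduced operads, namely $K(O \wedge \mathrm{CoEnd}(Y)) \simeq K(O) \wedge \mathrm{CoEnd}(Y^{-1})$ for $Y$ invertible (this being the operadic analogue of the Morita twist appearing in the preceding Morita theorem for algebra categories). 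Inverting the resulting identity yields the claimed operad equivalence $K(\mathrm{CoEnd}(\Sigma^\infty_D)) \simeq K(\mathrm{CoEnd}(\Sigma^\infty_C)) \wedge \mathrm{CoEnd}(X)$.

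For the module statement, since $f$ is an equivalence of stable categories it preserves colimits, and combined with symmetric monoidality we get $f((\Sigma^\infty_C)^{\wedge n}) \simeq X^{\wedge n} \wedge (\Sigma^\infty_D)^{\wedge n}$. Applying $f$ layerwise to the Goodwillie tower of a polynomial $F$, the $n$-homogeneous layer $\partial_n F \wedge_{h\Sigma_n} (\Sigma^\infty_C)^{\wedge n}$ is sent to $\partial_n F \wedge X^{\wedge n} \wedge_{h\Sigma_n} (\Sigma^\infty_D)^{\wedge n}$, so $\partial_n(f(F)) \simeq \partial_n F \wedge X^{\wedge n}$ as symmetric sequences, and recalling $\mathrm{CoEnd}(X)(n) = X^{\wedge(n-1)}$ this identifies with $(\partial_\ast F \wedge X \wedge \mathrm{CoEnd}(X))(n)$. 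To upgrade to an equivalence of right modules I would invoke Theorem \ref{thm: product rule}: both $\partial_\ast$ and $f$ are symmetric monoidal, hence their composite is symmetric monoidal, which upgrades the symmetric sequence identification to one of right modules over the twisted operad.

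The hard part will be making precise the interaction of Koszul duality with the Picard action on operads, i.e. the formula $K(O \wedge \mathrm{CoEnd}(Y)) \simeq K(O) \wedge \mathrm{CoEnd}(Y^{-1})$, and carefully tracking that the right $K(\mathrm{CoEnd}(\Sigma^\infty_D))$-module structure on $\partial_\ast(f(F))$ matches the tensor of the $K(\mathrm{CoEnd}(\Sigma^\infty_C))$-action on $\partial_\ast F$ with the $\mathrm{CoEnd}(X)$-action on $X^{\wedge \ast}$; both boil down to Day convolution bookkeeping through the enveloping categories of Section \ref{section: right module stuff}, but demand care when operads and modules are twisted simultaneously.
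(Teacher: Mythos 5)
Your proposal is essentially the paper's argument. The first part matches exactly: identify $X$ as the $1$-homogeneous coefficient $f(\Sigma^\infty_C)\simeq X\wedge\Sigma^\infty_D$ via Lemma \ref{lem: preserves homogeneous}, establish invertibility by symmetry, decompose $\mathrm{CoEnd}(X\wedge\Sigma^\infty_D)\simeq\mathrm{CoEnd}(X)\wedge\mathrm{CoEnd}(\Sigma^\infty_D)$, and pass through Koszul duality. The identity $K(O\wedge\mathrm{CoEnd}(Y))\simeq K(O)\wedge\mathrm{CoEnd}(Y^{-1})$ that you flag as the "hard part" is precisely what the paper compresses into the phrase ``smashing with $\mathrm{CoEnd}(X^\vee)$ and applying Koszul duality,'' so the paper also leaves this to the reader; you are not missing anything the paper supplies.

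For the module statement your route differs mildly. You compute $\partial_n(f(F))\simeq\partial_n F\wedge X^{\wedge n}$ directly from the layers of the Goodwillie tower and then try to upgrade via Theorem \ref{thm: product rule}; the paper instead works with Koszul dual derivatives, observing that the Yoneda-style formula $\partial^\ast F = \nat(F,(\Sigma^\infty_C)^{\wedge\ast})$ is manifestly transported by $f$ to $\nat(f(F),(X\wedge\Sigma^\infty_D)^{\wedge\ast})$ \emph{as a right module}, and then applies Koszul duality to recover $\partial_\ast$. The paper's route has the advantage that the right module structure on $\partial^\ast$ is carried along for free, whereas your final step (``both are symmetric monoidal, hence the symmetric-sequence identification upgrades to right modules'') is a bit quick: having two symmetric monoidal functors agree levelwise as symmetric sequences does not by itself identify them as symmetric monoidal functors. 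To close that, you would want to restrict to the generating subcategory of representables $L\Sigma^\infty R_c$ (where everything is explicit and levelwise dualizable by Corollary \ref{cor: derivatives of compact representable}) and Kan extend, or alternatively just switch to the $\partial^\ast$ picture as the paper does. Either way the structure of the argument is sound and matches the paper's.
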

\begin{proof}
    The $X$ in question is the coefficient of $f(\Sigma^\infty_C) \simeq X \wedge \Sigma^\infty_D$ (Lemma \ref{lem: preserves homogeneous}). Since $f$ is a symmetric monoidal equivalence, we have
    \[\mathrm{CoEnd}(\Sigma^\infty_C) \simeq \mathrm{CoEnd}(X \wedge \Sigma^\infty_D). \]
    The right hand side is readily seen to be the operad
    \[\mathrm{CoEnd}(X) \wedge \mathrm{CoEnd}( \Sigma^\infty_D),\]
    and so smashing with $\mathrm{CoEnd}(X^\vee)$ and applying Koszul duality yields the claimed equivalence of operads. Similarly, there is an equivalence of right modules
\[\partial^\ast F \simeq \nat(f(F), (X \wedge \Sigma^\infty_D )^{\wedge \ast})\]
from which one can deduce the result using the relation between Koszul dual derivatives and derivatives.
\end{proof}

\begin{thm}\label{thm:poly morita equiv}
   If $O,P$ are reduced and levelwise dualizable operads in $L\s$, then the following are equivalent
   \begin{enumerate}
       \item $\mathrm{Alg}_O$ is polynomially equivalent to $\mathrm{Alg}_P$.
       \item $\mathrm{Alg}_O$ is equivalent to $\mathrm{Alg}_P$.
       \item There is $X \in \mathrm{Pic}(L\s,\wedge)$ and an equivalence of operads
       \[O \wedge \mathrm{CoEnd}(X) \simeq P.\]
   \end{enumerate}
\end{thm}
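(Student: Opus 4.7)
The plan is to establish the cycle $(3) \Rightarrow (2) \Rightarrow (1) \Rightarrow (3)$. The implication $(2) \Rightarrow (1)$ is immediate: any equivalence of $\infty$-categories $\mathrm{Alg}_O \simeq \mathrm{Alg}_P$ restricts to the full subcategories of finitary polynomial functors into $L\s$, and the pointwise smash product of functors is preserved under such an equivalence, so the restricted equivalence is automatically symmetric monoidal. For $(3) \Rightarrow (2)$, given an equivalence $P \simeq O \wedge \mathrm{CoEnd}(X)$ with $X \in \mathrm{Pic}(L\s,\wedge)$, I would define a functor $\mathrm{Alg}_O \to \mathrm{Alg}_P$ by $A \mapsto X^{-1} \wedge A$. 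The $P$-algebra structure on $X^{-1}\wedge A$ is assembled from the $O$-action $O(n) \wedge A^{\wedge n} \to A$ together with the tautological evaluation maps $\mathrm{CoEnd}(X)(n) \wedge X^{\wedge n} \to X$; the inverse functor is $B \mapsto X \wedge B$, and the invertibility of $X$ ensures this produces an equivalence of categories.

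The substantive implication is $(1) \Rightarrow (3)$. Since $O$ and $P$ are reduced and levelwise dualizable, both $\mathrm{Alg}_O$ and $\mathrm{Alg}_P$ are compactly generated and differentially dualizable, so I can apply Proposition \ref{prop: polynomial equivalence and coend}: a polynomial equivalence $f : \mathrm{Alg}_O \simeq_P \mathrm{Alg}_P$ produces some $X \in \mathrm{Pic}(L\s,\wedge)$ together with an equivalence of operads
\[ K(\mathrm{CoEnd}(\mathrm{Indecom}_P)) \simeq K(\mathrm{CoEnd}(\mathrm{Indecom}_O)) \wedge \mathrm{CoEnd}(X). \]
Theorem \ref{thm: coend in algebras} identifies $\mathrm{CoEnd}(\mathrm{Indecom}_O) \simeq K(O)$ and $\mathrm{CoEnd}(\mathrm{Indecom}_P) \simeq K(P)$, and the Koszul biduality of Proposition \ref{prop: koszul duality right modules}, valid for reduced levelwise dualizable operads, gives $K(K(O)) \simeq O$ and $K(K(P)) \simeq P$. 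Substituting these into the displayed equivalence yields the required $P \simeq O \wedge \mathrm{CoEnd}(X)$.

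The main obstacle is essentially already dispatched by the machinery developed in the preceding sections: the theorem is a corollary of Proposition \ref{prop: polynomial equivalence and coend}, Theorem \ref{thm: coend in algebras}, and Proposition \ref{prop: koszul duality right modules}, strung together using Koszul self-duality in the algebraic case. The one subtle point worth verifying in writing is that the $X \in \mathrm{Pic}(L\s,\wedge)$ produced in $(1) \Rightarrow (3)$ is compatible, up to passing to its inverse, with the one used to reconstruct the equivalence of algebra categories in $(3) \Rightarrow (2)$; this should follow by tracking the image of the stabilization functor $\mathrm{Indecom}_O$ through the polynomial equivalence, as in Lemma \ref{lem: preserves homogeneous}.
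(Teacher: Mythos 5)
Your proof follows essentially the same route as the paper's: $(2)\Rightarrow(1)$ by restriction, $(3)\Rightarrow(2)$ by smashing with an invertible spectrum, and $(1)\Rightarrow(3)$ by combining Proposition~\ref{prop: polynomial equivalence and coend} with Theorem~\ref{thm: coend in algebras} and Koszul biduality. The argument is correct in structure.

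One local slip in the $(3)\Rightarrow(2)$ direction: you write that the $\mathrm{CoEnd}(X)$-part of the $P$-algebra structure on $X^{-1}\wedge A$ comes from ``tautological evaluation maps $\mathrm{CoEnd}(X)(n)\wedge X^{\wedge n}\to X$,'' but no such tautological map exists — $\mathrm{CoEnd}(X)(n)=L\s(X,X^{\wedge n})$ pairs against a single copy of $X$ to land in $X^{\wedge n}$, not the other way around. What you actually need is the tautological $\mathrm{End}(X^\vee)$-action $\mathrm{End}(X^\vee)(n)\wedge (X^\vee)^{\wedge n}\to X^\vee$, transported across the equivalence $\mathrm{CoEnd}(X)\simeq\mathrm{End}(X^\vee)$ (which holds since $X$ is invertible, hence dualizable). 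This is precisely how the paper phrases it, and it makes the map $A\mapsto A\wedge X^\vee$ into a functor $\mathrm{Alg}_O\to\mathrm{Alg}_{O\wedge\mathrm{CoEnd}(X)}$ for the right reason. Your chosen formula $A\mapsto X^{-1}\wedge A$ is the correct one; only the justification of the structure maps needs the $X\leftrightarrow X^\vee$ dualization made explicit.
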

\begin{proof}\noindent
   \begin{itemize}
       \item $(1 \implies 3)$: This follows from Theorem \ref{thm: coend in algebras} and Proposition \ref{prop: polynomial equivalence and coend}.
       \item $(3 \implies 2)$: Invertibility of $X$ implies \[\mathrm{CoEnd}(X) \simeq \mathrm{End}(X^\vee),\] so smashing with the $\mathrm{End}(X^\vee)$ algebra $X^\vee$ gives a functor
       \[\mathrm{Alg}_O \rightarrow \mathrm{Alg}_{O \wedge \mathrm{CoEnd}(X)}\]
       \[A \mapsto A \wedge X^\vee.\]
       This is an equivalence because it has an inverse given by smashing with $X$.
       \item $(2 \implies 1)$ Equivalences of categories induce symmetric monoidal equivalences of polynomials by precomposition.
   \end{itemize}
\end{proof}

The reader might wonder why invertible bimodules have not appeared in the Morita theory of operads, as they are essential to classical Morita theory. This absence is because we have restricted to studying reduced operads. It would be interesting to know whether invertible bimodules and coendomorphism operads of invertible spectra are specializations of some third object which completely controls the Morita theory of an arbitrary operad.

To end the section, we revisit our earlier classifications of polynomial functors without the assumption that our classifications lift the Goodwillie derivatives.

\begin{prop}\label{prop: abstract rmod equiv}
    Suppose $C$ is a differentially dualizable category  and $O$ is a reduced levelwise dualizable operad with \[f:(\mathrm{Poly}(C^\omega,L\s),\wedge) \simeq (\mathrm{RMod}_O^{< \infty},\circledast),\] then for some $X \in \mathrm{Pic}(L\s,\wedge)$ there is an equivalence \[O \simeq K(\mathrm{CoEnd}(\Sigma^\infty_C)) \wedge \mathrm{CoEnd}(X^\vee).\]
    If $F: C^\omega \rightarrow L\s$ is a polynomial, then there is an equivalence of right modules 
    \[f(F) \simeq \partial_\ast F \wedge X^\vee \wedge \mathrm{CoEnd}(X^\vee). \]
     In particular, $f$ lifts the Goodwillie derivatives up to smashing with an invertible spectrum:
 \[f(f) \simeq \partial_\ast (F) \wedge (X^\vee)^{\wedge \ast}.\]
\end{prop}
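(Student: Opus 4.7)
The plan is to adapt the argument of Proposition~\ref{prop: polynomial equivalence and coend} to the present setting, where one side of the equivalence is the category of bounded right $O$-modules rather than another polynomial category. First I would prove the analogue of Lemma~\ref{lem: preserves homogeneous}: the property of being $n$-homogeneous can be expressed purely in terms of symmetric monoidal powers, constant tensorings with spectra, homotopy orbits, and extensions, and is thus preserved by any symmetric monoidal equivalence. On the right-module side, $n$-homogeneous objects are right modules concentrated in degree $n$ (equivalently, $\Sigma_n$-spectra with the unique induced action), and $0$-homogeneous objects are constants. Consequently $f$ restricts to a symmetric monoidal self-equivalence of $L\s$ on constants.

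Since $\Sigma^\infty_C$ is $1$-homogeneous, $f(\Sigma^\infty_C)$ is a right module concentrated in degree $1$; let $X^\vee$ denote its underlying spectrum. By inverting $f$, the canonical degree-$1$ generator $1 \in \mathrm{RMod}_O$ pulls back to a $1$-homogeneous polynomial of the form $X \wedge \Sigma^\infty_C$, and symmetric monoidality forces $X \wedge X^\vee \simeq L\bb{S}$, so $X \in \mathrm{Pic}(L\s, \wedge)$. Because $f$ preserves the $\mathrm{SymSeq}$-enrichment, we have $\mathrm{CoEnd}(\Sigma^\infty_C) \simeq \mathrm{CoEnd}_{(\mathrm{RMod}_O, \circledast)}(X^\vee)$. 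Writing $X^\vee \simeq X^\vee \wedge 1$ as a right $O$-module and using that $(V \wedge R)^{\circledast n} \simeq V^{\wedge n} \wedge R^{\circledast n}$ for a spectrum $V$, I compute
\[
\mathrm{CoEnd}_{(\mathrm{RMod}_O, \circledast)}(X^\vee)(n) \simeq X \wedge (X^\vee)^{\wedge n} \wedge K(O)(n) \simeq \mathrm{CoEnd}(X^\vee)(n) \wedge K(O)(n),
\]
giving $\mathrm{CoEnd}(\Sigma^\infty_C) \simeq \mathrm{CoEnd}(X^\vee) \wedge K(O)$. Smashing with $\mathrm{CoEnd}(X)$ and applying Koszul duality, together with the formula $K(A \wedge \mathrm{CoEnd}(V)) \simeq K(A) \wedge \mathrm{CoEnd}(V^\vee)$ which encodes the interaction of Koszul duality with operadic suspension, then yields the desired equivalence $O \simeq K(\mathrm{CoEnd}(\Sigma^\infty_C)) \wedge \mathrm{CoEnd}(X^\vee)$.

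For the identification of $f(F)$, both sides are functorial in $F$ and commute with colimits, so it suffices to check the equivalence on finite smash products of representables of compact objects, where all symmetric sequences in sight are levelwise dualizable. Under $f^{-1}$ the symmetric monoidal generators $1^{\circledast n}$ pull back to $(X \wedge \Sigma^\infty_C)^{\wedge n} \simeq X^{\wedge n} \wedge (\Sigma^\infty_C)^{\wedge n}$, so mapping $f(F)$ into $1^{\circledast n}$ computes $K(f(F))(n) \simeq X^{\wedge n} \wedge \partial^\ast F(n)$; dualizing via Proposition~\ref{prop: koszul duality right modules} recovers the formula on underlying symmetric sequences. The main obstacle will be promoting this equivalence from symmetric sequences to right $O$-modules compatibly with the identification of $O$ above; this should follow from the fact that the symmetric monoidal equivalence $f$ automatically upgrades to an enrichment in $(\mathrm{SymSeq}(L\s), \circ)$, so the identification of coendomorphism operads extends to an identification of the entire presheaf structure on $\mathrm{Env}(\mathrm{CoEnd}(\Sigma^\infty_C))^{\mathrm{op}}$.
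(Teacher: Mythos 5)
Your proof follows the same outline as the paper's: observe that symmetric monoidality preserves homogeneity, identify $f(\Sigma^\infty_C)$ as a trivial right $O$-module concentrated in degree~$1$ with invertible underlying spectrum, compute $\mathrm{CoEnd}(\Sigma^\infty_C)$ through $f$, apply Koszul duality to extract the operad equivalence, and then identify $\partial^\ast F$ via $f$ and the Yoneda lemma before dualizing to obtain the formula for $f(F)$. You track the $X$ versus $X^\vee$ bookkeeping under Koszul duality more explicitly than the paper's sketch, and your spelled-out use of $K(A \wedge \mathrm{CoEnd}(V)) \simeq K(A) \wedge \mathrm{CoEnd}(V^\vee)$ makes the argument easier to verify.
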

\begin{proof}
    As in Lemma \ref{lem: preserves homogeneous}, the symmetric monoidality forces homogeneous functors to be sent to right modules concentrated in a single degree. In particular, there is some $X \in L\s$ such that \[f(\Sigma^\infty_C) \simeq \mathrm{Triv}^\mathrm{RMod}_O(X),\]
    where we treat $X$ as a symmetric sequence concentrated in degree $1$. This implies there is an equivalence
    \[\mathrm{CoEnd}(\Sigma^\infty_C) \simeq K(O) \wedge \mathrm{CoEnd}(X)\]
    which implies the result.

    Similarly, one sees
    \[\partial^\ast F \simeq \mathrm{RMod}_O(f(F), X^{\circledast \ast})\]
    and the latter is computed to be \[K(f(F)) \wedge X \wedge \mathrm{CoEnd}(X).\]
    Koszul duality and the relation between $\partial_\ast$ and $\partial^\ast$ implies the claim about $f(F)$.
\end{proof}

\begin{prop}\label{prop: abstract dp rmod equiv}
    Suppose $C$ is a differentially dualizable category  and $O$ is a reduced levelwise dualizable operad with \[f:(\mathrm{Poly}(C^\omega,L\s),\wedge) \simeq (\mathrm{RMod}_O^\mathrm{dp,< \infty},\circledast),\] then for some $X \in \mathrm{Pic}(L\s,\wedge)$ there is an equivalence \[O \simeq K(\mathrm{CoEnd}(\Sigma^\infty_C)) \wedge \mathrm{CoEnd}(X^\vee).\]
    If $F: C^\omega \rightarrow L\s$ is a polynomial, then there is an equivalence of right modules 
    \[\mathrm{Forget}^\mathrm{RMod}_O(f(F)) \simeq \partial_\ast F \wedge X^\vee \wedge \mathrm{CoEnd}(X^\vee) \]
     In particular, on underlying symmetric sequences
 \[f(f) \simeq \partial_\ast (F) \wedge (X^\vee)^{\wedge \ast}.\]
\end{prop}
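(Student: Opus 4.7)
The plan is to adapt the proof of Proposition \ref{prop: abstract rmod equiv}, the additional input being that mapping spectra into trivial divided power modules on free $\Sigma_n$-spectra coincide with their non-divided-power counterparts.

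By Lemma \ref{lem: preserves homogeneous}, whose argument is formal in the symmetric monoidal categories involved, $f$ sends $1$-homogeneous functors to divided power right $O$-modules concentrated in degree $1$. Since $\Sigma_1$ is trivial, such divided power structures reduce to their underlying symmetric sequences, so we may write $f(\Sigma^\infty_C)\simeq \mathrm{Triv}^{\mathrm{RMod},\mathrm{dp}}_O(X)$ for some $X\in L\s$. Running the analogous argument for $f^{-1}$ applied to $1\in \mathrm{RMod}^{\mathrm{dp},\leq 1}_O$ yields $Y\in L\s$ with $Y\wedge X\simeq L\mathbb{S}$, so $X$ is invertible.

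The core calculation is $\mathrm{CoEnd}^{\mathrm{dp}}(\mathrm{Triv}^{\mathrm{dp}}_O(X))$ inside $(\mathrm{RMod}^{\mathrm{dp}}_O,\circledast)$. Symmetric monoidality of $\mathrm{Triv}^{\mathrm{dp}}_O$ gives $\mathrm{Triv}^{\mathrm{dp}}_O(X)^{\circledast n}\simeq \mathrm{Triv}^{\mathrm{dp}}_O(X^{\circledast n})$, whose underlying degree-$n$ component $L\Sigma^\infty_+\Sigma_n\wedge X^{\wedge n}$ is free as a $\Sigma_n$-spectrum (freeness of $L\Sigma^\infty_+\Sigma_n$ is preserved under smashing with any spectrum). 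By Lemma \ref{lem: ideal implies o ideal}, this target has $O$-ideal Tate vanishing, and Proposition \ref{prop: divided power mapping spectrum codomain tate vanish} identifies the mapping spectra in divided power and ordinary right module categories. Hence $\mathrm{CoEnd}^{\mathrm{dp}}(\mathrm{Triv}^{\mathrm{dp}}_O(X))\simeq \mathrm{CoEnd}^{\mathrm{RMod}}(\mathrm{Triv}_O(X))\simeq K(O)\wedge \mathrm{CoEnd}(X)$ as computed in the proof of Proposition \ref{prop: abstract rmod equiv}. The formula for $O$ then follows by combining with the symmetric monoidality of $f$ and applying Koszul duality, exactly as in Proposition \ref{prop: abstract rmod equiv}.

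For $f(F)$, compute $\partial^\ast F\simeq \mathrm{RMod}^{\mathrm{dp}}_O(f(F), \mathrm{Triv}^{\mathrm{dp}}_O(X)^{\circledast \ast})$ and apply the same Tate vanishing reduction to pass to ordinary right module mapping spectra, where the computation of Proposition \ref{prop: abstract rmod equiv} concludes the argument, giving the formula for $\mathrm{Forget}^{\mathrm{RMod}}_O(f(F))$ and in particular $\partial_\ast F\wedge (X^\vee)^{\wedge \ast}$ on underlying symmetric sequences. The main technical obstacle is verifying the Tate vanishing hypothesis in the computation, which ultimately rests on the freeness of Day convolution powers of the unit symmetric sequence smashed with an invertible $L\s$-spectrum; once this is in hand, the argument reduces mechanically to Proposition \ref{prop: abstract rmod equiv}.
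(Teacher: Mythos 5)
Your proof is correct and follows essentially the same route as the paper: reduce to Proposition \ref{prop: abstract rmod equiv} by observing that the codomains $\mathrm{Triv}^{\mathrm{dp}}_O(X)^{\circledast n}$ (freely induced $\Sigma_n$-spectra smashed with the invertible $X^{\wedge n}$) have $O$-ideal Tate vanishing, so Proposition \ref{prop: divided power mapping spectrum codomain tate vanish} collapses the divided power mapping spectra to ordinary right module mapping spectra. You spell out two details the paper leaves implicit — the invertibility of $X$ via $f^{-1}$, and the passage through Lemma \ref{lem: ideal implies o ideal} to verify the Tate vanishing hypothesis — but the argument is the same.
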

\begin{proof}
    The proof of this result is practically identical to Proposition \ref{prop: abstract rmod equiv}. In particular, homogeneous functors get sent to divided power right modules concentrated in a single degree. There is an equivalence of mapping spectra 
    \[\mathrm{RMod}_O^\mathrm{dp}(-,R^{\circledast n}) \simeq \mathrm{RMod}_O(-,\mathrm{Forget}^\mathrm{RMod}_O (R^{\circledast n}))\]
when $R$ is $L(\Sigma^\infty_+ \Sigma_n) \wedge X^{\wedge n} $ concentrated in degree $n$ by Proposition \ref{prop: divided power mapping spectrum codomain tate vanish}, and so the same argument applies.
\end{proof}

\subsection{Differentially algebraic categories of spaces}

Throughout this section we assume $C$ is compactly generated, differentiable, has a fixed identification $\s(C)\simeq L\s$ to a Bousfield localization of $\s$, and is differentially dualizable. Throughout this section we will also refer to a Bousfield localization $L'$ of $L\s$.

\begin{definition}
    A category $C$ is differentially algebraic if there is a reduced operad $O$ in $L\s$ together with a polynomial equivalence
    \[\mathrm{Poly}(C^\omega, L\s) \simeq \mathrm{Poly}(\mathrm{Alg}_O,L\s).\]
\end{definition}

The purpose of this section is to study differentially algebraic localizations of spaces. More precisely, we introduce the notion of a differential sublocalization of a category $C$, and we characterize those differential sublocalizations of $\T$ which are differentially algebraic.

A \textit{differential subcategory} $D$ of $C$ is a full reflective subcategory $D\subset C$, compactly generated in its own right, such that the localization induces equivalences \[\s(C)\simeq \s(D),\] \[\mathrm{Poly}^\omega(C,\s(C))\simeq \mathrm{Poly}^\omega(D,\s(D)).\]

A \textit{quasireflective} localization $C \rightarrow C[W^{-1}]$ is a localization equipped with a fully faithful embedding $i:C[W^{-1}] \hookrightarrow C$.

Differential subcategories are quite common because Goodwillie calculus only describes the behavior of functors in an infinitesimal neighborhood of $\ast \in C$. If $C$ has some notion of connectivity, we can produce a collection of neighborhoods of $\ast$ by taking the subcategories of $n$-connective objects. Under completeness hypotheses, these subcategories will be differential subcategories.

\begin{example}
    The category $\T^{\geq n}$ of $n$-connective pointed spaces is a differential subcategory of $\T$ \cite[Proposition 4.1]{glasman2018goodwilliecalculusmackeyfunctors}.
\end{example}

\begin{definition}\label{definition: sublocalization}
A differential sublocalization $D[W^{-1}]$ of $C$ is 
\begin{enumerate}
    \item A differential subcategory $D \subset C$;
    \item A quasireflective localization $f:D \rightarrow D[W^{-1}], i:D \rightarrow D[W^{-1}]$ such that $D[W^{-1}]$ is compactly generated, differentiable, and $f$ preserves filtered colimits;
    \item A Bousfield localization $L':L\s \rightarrow L'(L\s)$ and an identification $\s(D[W^{-1}])\simeq L'L\s$
 such that there is a commuting square
    \begin{center}
\[\begin{tikzcd}
	{{D[W^{-1}]}} & {L\s} \\
	{D[W^{-1}]} & {L'L\s}
	\arrow["{\Sigma_D^\infty \circ i}"', from=1-1, to=1-2]
	\arrow["=", from=1-1, to=2-1]
	\arrow["{L'}"', from=1-2, to=2-2]
	\arrow["{\Sigma^\infty_{D[W^{-1}]}}", from=2-1, to=2-2]
\end{tikzcd}\]
    \end{center}
    \item The induced map of operads
    \[ \mathrm{CoEnd}(\Sigma^\infty_D) \rightarrow \mathrm{CoEnd}(L'(\Sigma^\infty_D)) \]
    is the unit map for $L'$-localization.
\end{enumerate}
\end{definition}

\begin{ex}
    The category of simply-connected rational pointed spaces $(\T^{\geq 2})_\mathbb{Q}$ can be obtained as a differential sublocalization of $\T$ by taking simply-connected spaces and inverting rational homotopy equivalences. This localization is differentially algebraic by classical rational homotopy theory \cite{quillenrational,sullivanrational}.
\end{ex}

\begin{ex}
    The category of $v_h$-periodic spaces is a differential sublocalization of $\T$. The quasireflective localization of $\T$ is constructed in \cite[Definition 3.11]{heutsannals}. By \cite[Proposition 3.18]{heutsannals}, $(3)$ is satisfied is and item $(4)$ follows from Sections \ref{section: goodwillie of spaces} and \ref{section:lie algebra models}.
\end{ex}

\begin{prop}
    If $D[W^{-1}]$ is a differential sublocalization of $C$, then there is an equivalence of operads \[\mathrm{CoEnd}(\Sigma^\infty_{D[W^{-1}]}) \simeq L' \mathrm{CoEnd}(\Sigma^\infty_{C}).  \]
    As a consequence, there is an equivalence of symmetric sequences \[\partial_\ast \mathrm{Id}_{D[W^{-1}]} \simeq L' \partial_\ast \mathrm{Id}_C.\]
\end{prop}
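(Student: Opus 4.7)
The plan is to factor the comparison of coendomorphism operads through the three layers of data built into a differential sublocalization: the inclusion $D \subset C$, the localization $f \colon D \to D[W^{-1}]$, and the $L'$-locality assertion in condition (4).

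First I would identify $\mathrm{CoEnd}(\Sigma^\infty_C) \simeq \mathrm{CoEnd}(\Sigma^\infty_D)$. Restriction along $D \hookrightarrow C$ is pointwise and hence symmetric monoidal on $\mathrm{Fun}^\omega(-, L\s)$, and the differential subcategory hypothesis asserts that the induced map on polynomial functors is an equivalence. Since $\Sigma^\infty_C|_D \simeq \Sigma^\infty_D$, the full symmetric monoidal subcategories $\langle (\Sigma^\infty_C)^{\wedge n}\rangle$ and $\langle (\Sigma^\infty_D)^{\wedge n}\rangle$ are identified; these are opposites of the envelopes of the respective coendomorphism operads, so passing to enriched endomorphism objects yields the equivalence of operads.

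Next I would use that $f$ is a localization, so precomposition with $f$ is fully faithful, to rewrite
\[\mathrm{CoEnd}(\Sigma^\infty_{D[W^{-1}]})(n) \simeq \nat(\Sigma^\infty_{D[W^{-1}]} \circ f, (\Sigma^\infty_{D[W^{-1}]})^{\wedge n} \circ f).\]
By the commuting square in condition (3), $\Sigma^\infty_{D[W^{-1}]} \circ f \simeq L'\Sigma^\infty_D$, so this recovers $\mathrm{CoEnd}(L'\Sigma^\infty_D)$ as operads (the operad composition is transported through $f^*$ since smash products restrict pointwise). Finally, condition (4) states exactly that $\mathrm{CoEnd}(\Sigma^\infty_D) \to \mathrm{CoEnd}(L'\Sigma^\infty_D)$ is the unit of $L'$-localization, and so $\mathrm{CoEnd}(L'\Sigma^\infty_D) \simeq L'\mathrm{CoEnd}(\Sigma^\infty_D)$. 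Chaining the three steps produces the claimed $\mathrm{CoEnd}(\Sigma^\infty_{D[W^{-1}]}) \simeq L'\mathrm{CoEnd}(\Sigma^\infty_C)$.

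For the consequence on symmetric sequences I would apply the identification $\partial_\ast \mathrm{Id}_C \simeq K(\mathrm{CoEnd}(\Sigma^\infty_C))$ from Proposition \ref{prop:product rule implies coend is koszul dual to identity} to both $C$ and $D[W^{-1}]$, and then commute $K$ past $L'$ using Proposition \ref{prop: localizing koszul dual}. The input of levelwise dualizability needed for that proposition is supplied by differential dualizability of $C$ combined with Lemma \ref{lem: coend is retract of dual derivatives}.

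The main obstacle I anticipate is checking that the identifications in the first two steps upgrade from equivalences of symmetric sequences to equivalences of operads. Both rely on the fact that smash products in $\mathrm{Fun}^\omega$ are computed pointwise and are therefore preserved by the restriction functors along $D \hookrightarrow C$ and along $f$; granting this, the equivalences of enriched endomorphism operads follow from the symmetric monoidality of the Yoneda embedding in the style used in the proof of Theorem \ref{thm: product rule}.
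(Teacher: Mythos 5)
Your proposal is correct and takes essentially the same approach as the paper's terse proof: the operad equivalence is obtained by chaining the identification $\mathrm{CoEnd}(\Sigma^\infty_C)\simeq\mathrm{CoEnd}(\Sigma^\infty_D)$, the fully faithfulness of $f^*$ together with the commuting square, and condition (4), while the identity--derivative statement follows from Koszul duality and Proposition \ref{prop: localizing koszul dual}. Your three-step decomposition is a careful unpacking of what the paper compresses into ``the first claim follows essentially by definition,'' and the second paragraph matches the paper's argument exactly.
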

\begin{proof}
   The first claim follows essentially by definition. The second claim follows from the first, the Koszul duality between $\mathrm{CoEnd}(\Sigma^\infty_{D[W^{-1}]})$ and $\partial_\ast \mathrm{Id}_{D[W^{-1}]}$, and the interaction of Koszul duality and localizations of Proposition \ref{prop: localizing koszul dual}.
\end{proof}

\begin{lem}\label{lem: diff sa under localization}
   If $D[W^{-1}]$ is a differential sublocalization of a differentially $1$-semiadditive category $C$, then $D[W^{-1}]$ is differentially $1$-semiadditive.
\end{lem}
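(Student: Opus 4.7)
The plan is to transfer the ideal Tate vanishing condition from $C$ to $D[W^{-1}]$ by pulling it back through $f : D \to D[W^{-1}]$, using the compatibility of $L'$-localization with Koszul duality and the identification of the derivatives of representables. First I would observe that, by compact generation of $D[W^{-1}]$, every $d \in D[W^{-1}]^\omega$ is a retract of $f(c)$ for some $c \in D^\omega$, and such a $c$ is also compact in $C$ since $D \subseteq C$ is a differential subcategory. Ideal Tate vanishing is stable under retracts, so it suffices to verify the condition for $d = f(c)$.

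For such $d$, combining Corollary \ref{cor: derivatives of compact representable} applied in $D[W^{-1}]$, the commuting square in the definition of a differential sublocalization, and Proposition \ref{prop: localizing koszul dual}, I obtain
\[\partial_n L'L\Sigma^\infty_{D[W^{-1}]} R_d \simeq K(\Sigma^\infty_{D[W^{-1}]} d^{\wedge \ast})(n) \simeq K(L'\Sigma^\infty_C c^{\wedge \ast})(n) \simeq L'(K(\Sigma^\infty_C c^{\wedge \ast}))(n) \simeq L'(\partial_n L\Sigma^\infty R_c).\]
Passing to $L$-local Spanier--Whitehead duals, which $L'$-localization preserves on levelwise dualizable spectra, gives
\[(\partial_n L'L\Sigma^\infty_{D[W^{-1}]} R_d)^\vee \simeq L'((\partial_n L\Sigma^\infty R_c)^\vee).\]

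To finish I would verify ideal Tate vanishing in $L'L\s$. For any $X \in L'L\s^{B\Sigma_n}$, the smash in $L'L\s$ satisfies
\[X \wedge L'((\partial_n L\Sigma^\infty R_c)^\vee) \simeq L'(X \wedge_{L\s} (\partial_n L\Sigma^\infty R_c)^\vee).\]
Since $C$ is differentially $1$-semiadditive and $X$ may also be viewed in $L\s^{B\Sigma_n}$, the Tate construction of $X \wedge_{L\s} (\partial_n L\Sigma^\infty R_c)^\vee$ vanishes in $L\s$. The Tate construction interacts well with the reflection $L'L\s \hookrightarrow L\s$: homotopy fixed points of $L'$-local spectra agree in the two categories because the inclusion preserves limits, while homotopy orbits in $L'L\s$ are the $L'$-localization of those in $L\s$ because $L'$ preserves colimits, so for $L'$-local $Z$ there is a natural identification $Z^{t\Sigma_n, L'L\s} \simeq L'(Z^{t\Sigma_n, L\s})$. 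Combined with exactness of the Tate functor on the cofiber sequence presenting the $L'$-localization map and the $L'$-acyclicity of its cofiber, this propagates the vanishing from $L\s$ to $L'L\s$.

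The main obstacle will be the last step: one must justify carefully that the $L'$-acyclic cofiber of the localization map $X \wedge_{L\s} (\partial_n L\Sigma^\infty R_c)^\vee \to L'(X \wedge_{L\s} (\partial_n L\Sigma^\infty R_c)^\vee)$ contributes only $L'$-acyclic terms to the Tate construction in $L\s$, so that applying $L'$ yields the desired vanishing. This uses that $L'$-acyclic spectra are closed under homotopy orbits (which are colimits preserved by $L'$), together with the cofiber sequence relating orbits, fixed points, and the Tate construction in $L\s$.
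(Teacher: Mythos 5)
Your overall strategy --- reduce to compact representables via retraction, compute derivatives through Koszul duality and the localization compatibility, and argue that ideal Tate vanishing persists after $L'$-localization --- matches the paper's route, but there are two gaps. The first is your assertion that $c \in D^\omega$ is also compact in $C$. This is not part of the definition of a differential subcategory: the colimit-preserving inclusion $D \hookrightarrow C$ preserves compacts precisely when its right adjoint preserves filtered colimits, which is not among the hypotheses. The paper circumvents this by instead invoking Theorem \ref{thm: diff semiadditive classification} to transfer differential $1$-semiadditivity from $C$ to the subcategory $D$ directly (the property is detected by the symmetric monoidal category $(\mathrm{Poly}^\omega(-,-),\wedge)$, which is preserved by the equivalence built into the definition of a differential subcategory), and then works with derivatives of representables computed in $D$.

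The second gap is the Tate-vanishing step you yourself flag. Setting $Y = X \wedge_{L\s}(\partial_n L\Sigma^\infty R_c)^\vee$ and writing the cofiber sequence $Y \to L'Y \to C$ with $C$ being $L'$-acyclic, you need $L'(C^{t\Sigma_n}) \simeq \ast$. From $C_{h\Sigma_n} \to C^{h\Sigma_n} \to C^{t\Sigma_n}$ you only get $L'(C_{h\Sigma_n}) \simeq \ast$; homotopy fixed points are limits and do not preserve $L'$-acyclicity, so $L'(C^{t\Sigma_n}) \simeq L'(C^{h\Sigma_n})$ has no reason to vanish, and the argument stalls exactly where you say. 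The observation you are missing is that the cofiber sequence collapses: because $\partial_n L\Sigma^\infty R_c$ is dualizable, $Y \simeq L\s(\partial_n L\Sigma^\infty R_c, X)$ is a mapping spectrum into the $L'$-local spectrum $X$, hence already $L'$-local, so $C \simeq \ast$ and there is nothing to prove. The paper avoids even this detour by recasting the $L'L\s$-norm of $(L'X)^\vee \wedge Z$ through the chain $(L'X)^\vee \wedge Z \simeq L'\s(L'X,Z) \simeq L\s(X,Z) \simeq X^\vee \wedge Z$, staying entirely within homotopy fixed points of $L'$-local mapping spectra and applying ideal Tate vanishing of $X^\vee$ in $L\s$ --- a route that never produces a Tate construction of a possibly non-local object.
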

\begin{proof}
 Any compact object $f(e)$ of $D[W^{-1}]$ is a retract of $f(d)$ for some $d\in D^\omega$. This is because we may write $e$ as a filtered colimit of compact objects of $D$, apply the localization, and then use compactness of $f(e)$ to find such a retraction.

 Hence, $\Sigma^\infty_{D[W^{-1}]} f(e)$ is a retract of $\Sigma^\infty_{D[W^{-1}]} f(d)$. Since this is a differential sublocalization, the latter is equivalent to $L' \Sigma^\infty_D d$. Taking Koszul duals and applying Proposition \ref{prop: localizing koszul dual}, we see that $K(\Sigma^\infty_{D[W^{-1}]} e)$ is a retract of $L'K(\Sigma^\infty_D d ^{\wedge \ast})$.
 
  The passage from $C$ to its differential subcategory $D$ preserves differential $1$-semiadditivity by Theorem \ref{thm: diff semiadditive classification}. Hence, the spectrum $K(\Sigma^\infty_{D[W^{-1}]} d) ^\vee\simeq (\partial_n L \Sigma^\infty R_d)^\vee$ has ideal Tate vanishing. We argue that this implies $L'(K(\Sigma^\infty_D d ^{\wedge}))^\vee$ has ideal Tate vanishing.
  
Suppose we have dualizable $X \in L\s^{BG}$ and $X^\vee$ satisfies ideal Tate vanishing. This implies $L'(X)^\vee$ satisfies ideal Tate vanishing because for $Z \in L'L\s$:
\[((L'X)^\vee \wedge Z )_{hG} \simeq L'\s(L'X,Z)_{hG} \simeq L\s(X,Z)_{hG} \simeq L\s(X,Z)^{hG}\]\[\simeq L\s^{BG}(X,Z) \simeq L'\s^{BG}(L'X,Z) \simeq L'\s(L'X,Z)^{hG}\simeq 
(L'(X)^\vee \wedge Z)^{hG}.\]

Finally, note that the retract of a spectrum with ideal Tate vanishing satisfies ideal Tate vanishing since the Tate construction preserves retractions.
\end{proof}

\begin{prop}
    Suppose $D[W^{-1}]$ is a differential sublocalization of a differentially $1$-semiadditive category $C$, then $D[W^{-1}]$ is differentially algebraic via \[D[W^{-1}] \simeq_P \mathrm{Alg}_O,\]
    if and only if \[O = L'(K(\mathrm{CoEnd}(\Sigma^\infty_C))) \wedge \mathrm{CoEnd}(X)\] for some $X \in \mathrm{Pic}(L'L\s)$ and $L'L\s$ is $1$-semiadditive.
\end{prop}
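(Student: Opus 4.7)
The plan is to handle the two implications separately, using Theorem \ref{thm: diff semiadditive classification} as the bridge between polynomial functors and right modules, and the explicit identification $K(\mathrm{CoEnd}(\Sigma^\infty_{D[W^{-1}]})) \simeq L'K(\mathrm{CoEnd}(\Sigma^\infty_C))$ provided by the preceding proposition on differential sublocalizations to transport data between $C$ and $D[W^{-1}]$.

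For the forward direction, suppose $D[W^{-1}] \simeq_P \mathrm{Alg}_O$. Lemma \ref{lem: diff sa under localization} gives that $D[W^{-1}]$ is differentially $1$-semiadditive, and since Theorem \ref{thm: diff semiadditive classification} detects differential $1$-semiadditivity purely through the symmetric monoidal category of polynomial functors, $\mathrm{Alg}_O$ inherits the property. Corollary \ref{cor: diff sa algebra cat} then forces $\s(\mathrm{Alg}_O) \simeq L'L\s$ to be $1$-semiadditive. Applying Proposition \ref{prop: polynomial equivalence and coend} to the polynomial equivalence produces $X \in \mathrm{Pic}(L'L\s)$ with
\[K(\mathrm{CoEnd}(\Sigma^\infty_{\mathrm{Alg}_O})) \simeq K(\mathrm{CoEnd}(\Sigma^\infty_{D[W^{-1}]})) \wedge \mathrm{CoEnd}(X).\]
Theorem \ref{thm: coend in algebras} together with Koszul biduality for reduced, levelwise dualizable operads identifies the left hand side with $O$, while the differential sublocalization identification rewrites the first factor on the right as $L'K(\mathrm{CoEnd}(\Sigma^\infty_C))$. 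This yields the claimed formula.

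For the backward direction, suppose $O \simeq L'K(\mathrm{CoEnd}(\Sigma^\infty_C)) \wedge \mathrm{CoEnd}(X)$ and $L'L\s$ is $1$-semiadditive. Then $\mathrm{Alg}_O$ is infinitesimally (hence differentially) $1$-semiadditive by Theorem \ref{thm: inf semiadd classification}, and the $1$-semiadditivity collapses divided power right modules over operads in $L'L\s$ onto ordinary right modules. Theorem \ref{thm: diff semiadditive classification} therefore gives symmetric monoidal equivalences
\[(\mathrm{Poly}(D[W^{-1}]^\omega, L'L\s), \wedge) \simeq (\mathrm{RMod}^{<\infty}_{L'K(\mathrm{CoEnd}(\Sigma^\infty_C))}, \circledast),\]
\[(\mathrm{Poly}(\mathrm{Alg}_O^\omega, L'L\s), \wedge) \simeq (\mathrm{RMod}^{<\infty}_O, \circledast),\]
where the second uses Theorem \ref{thm: coend in algebras} to identify the derivative operad with $O$. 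The assignment $R \mapsto R \wedge X^{\wedge \ast}$, where $X^{\wedge \ast}$ carries its tautological right $\mathrm{CoEnd}(X)$-action, supplies a symmetric monoidal equivalence between these two right module categories: invertibility of $X$ provides an inverse by smashing with $(X^\vee)^{\wedge \ast}$, and the coend formula for Day convolution makes $(-) \wedge X^{\wedge \ast}$ distribute over $\circledast$. Composing the three equivalences yields the polynomial equivalence $D[W^{-1}] \simeq_P \mathrm{Alg}_O$.

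The main obstacle is checking that smashing with $X^{\wedge \ast}$ really yields a symmetric monoidal equivalence of right module categories rather than merely an underlying equivalence; however, this is formal once one writes out the Day convolution via the wedge-of-inductions formula and uses the symmetric monoidality of $(-)^{\wedge n}$ applied to the invertible object $X$.
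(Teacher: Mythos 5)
Your proof is correct and follows the same route as the paper's: Lemma \ref{lem: diff sa under localization} for differential $1$-semiadditivity of $D[W^{-1}]$, Corollary \ref{cor: diff sa algebra cat} to force $1$-semiadditivity of $L'L\s$, and Proposition \ref{prop: polynomial equivalence and coend} to pin down the operad. The paper's own proof is terse and only spells out the forward implication; you additionally supply the converse by combining infinitesimal $1$-semiadditivity (hence Theorem \ref{thm: inf semiadd classification}) with the Morita-theoretic smashing with $X^{\wedge \ast}$ from Theorem \ref{thm:poly morita equiv}, which is a reasonable and correct filling-in of what the paper leaves implicit.
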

\begin{proof}
    By Lemma \ref{lem: diff sa under localization},
 $D[W^{-1}]$ is differentially $1$-semiadditive. If it is polynomially equivalent to an algebra category, then by Corollary \ref{cor: diff sa algebra cat} the stabilization $L'L\s$ is $1$-semiadditive.  The homotopy type of the operad is fixed by Proposition \ref{prop: polynomial equivalence and coend}.
\end{proof}

\begin{cor}
    A differential sublocalization of $\T$ is differentially algebraic, if and only if it is infinitesimally $1$-semiadditive.
\end{cor}

\newpage

\bibliographystyle{plain}
\bibliography{main}
\end{document}